\documentclass[aap]{imsart}

\RequirePackage{amsthm, amsmath, amsfonts, amssymb, bbm, color, enumerate, graphicx, mathtools, tikz, relsize}
\usepackage[T1]{fontenc}
\usepackage{eulervm}
\RequirePackage[numbers]{natbib}
\RequirePackage[colorlinks,citecolor=blue,urlcolor=blue]{hyperref}
\numberwithin{equation}{section}

\startlocaldefs
\newcommand{\deltap}{\delta{\scriptstyle{+}}}

\renewcommand{\qq}{\mathbf{q}}
\newcommand{\al}{\alpha}

\newcommand{\tQ}{\tilde{Q}}

\newcommand{\bbZ}{\mathbb{Z}}
\newcommand{\bbZp}{\bbZ_+}
	

\newcommand{\clm}{\mathcal{M}}
\newcommand{\clc}{\mathcal{C}}
\newcommand{\cld}{\mathcal{D}}

\newcommand{\clr}{\mathcal{R}}
\newcommand{\cls}{\mathcal{S}}
\newcommand{\eps}{\epsilon}
\newcommand{\lan}{\langle}
\newcommand{\ran}{\rangle}
\newcommand{\la}{\lambda}
\newcommand{\RR}{\mathbb{R}}
\newcommand{\RRp}{\mathbb{R}_+}
\newcommand{\NN}{\mathbb{N}}
\newcommand{\one}{\mathbf{1}}

\newcommand{\aop}{a_0(p)}
\newcommand{\ao}{a_0}

\newcommand{\mop}{m_0(p)}
\newcommand{\mo}{m_0}
\newcommand{\sigmap}{\sigma(p)} 
\newcommand{\cop}{C_0(p)} 
\newcommand{\co}{C_0} 
\newcommand{\hop}{H_0(p)} 
\newcommand{\ho}{H_0} 

\newcommand{\astar}{a^*}

\newtheorem{theorem}{Theorem}

\newtheorem{lemma}[theorem]{Lemma}
\newtheorem{proposition}[theorem]{Proposition}
\newtheorem{remark}{Remark}

\let\plainqed\qedsymbol
\newcommand{\claimqed}{$\lrcorner$}

\usepackage{fancyhdr}
 
\usepackage{natbib}

\usepackage{amssymb}
\usepackage{amsmath}
\usepackage{amsthm}

\usepackage{soul}

\newcommand{\Expect}[1]{\operatorname{\mathbb{E}}\left[#1\right]}

\newcommand{{\LPC}}{\textbf{LPC}}

\newcommand{\Z}{\mathcal{Z}}
\newcommand{\tZ}{\widetilde{\mathcal{Z}}}
\renewcommand{\tQ}{\widetilde{\mathcal{Q}}}

\allowdisplaybreaks

\renewcommand{\eps}{\varepsilon}

\endlocaldefs

\begin{document}

\begin{frontmatter}
\title{Heavy Traffic Scaling Limits for shortest remaining processing time queues with heavy tailed processing time distributions}
\runtitle{SRPT Queues with Heavy Tailed Processing Times}

\begin{aug}
\author[A]{\fnms{Sayan} \snm{Banerjee}\ead[label=e1]{sayan@email.unc.edu}},
\author[A]{\fnms{Amarjit} \snm{Budhiraja}\ead[label=e2,mark]{amarjit@unc.edu}}
\and
\author[B]{\fnms{Amber L.} \snm{Puha}\ead[label=e3,mark]{apuha@csusm.edu}}
\address[A]{Department of Statistics and Operations Research,
University of North Carolina at Chapel Hill, Hanes Hall, 318 E Cameron Ave \#3260, Chapel Hill, NC 27599
\printead{e1,e2}}

\address[B]{Department of Mathematics,
California State University San Marcos, 333 S.\ Twin Oaks Valley Road, San Marcos, CA 92096-0001
\printead{e3}}
\end{aug}

\begin{abstract}
We study a single server queue operating under the shortest remaining processing time (SRPT) scheduling policy; that is, the server preemptively serves the job with the shortest remaining processing time first.  Since one  needs to keep track of the remaining
processing times of all jobs in the system in order to describe the evolution, a natural state descriptor for an SRPT queue is  a measure valued process  in which the state of the system at a given
time is the finite nonnegative Borel measure on the nonnegative real line that puts a unit atom at the remaining processing time of each
job in system. In this work we are interested in studying the asymptotic behavior of the suitably scaled measure valued state descriptors for a sequence of SRPT queuing systems. Gromoll, Kruk, and Puha (2011) have studied this problem under diffusive scaling (time is scaled by $r^2$ and the mass of the measure normalized by $r$, where $r$ is a scaling parameter approaching infinity). In the setting where the processing time distributions have {\em bounded support}, under suitable conditions, they show that the measure valued state descriptors converge in distribution to the process that at any given time is a single atom located at the right edge of the support of the processing time distribution with the size of the atom fluctuating randomly in time.
In the setting where the processing time distributions have {\em unbounded support}, under suitable conditions, they show that the diffusion scaled measure valued state descriptors converge in distribution to the process that is identically zero.  In Puha (2015) for the setting where the processing time distributions have {\it unbounded support and light tails}, a nonstandard scaling of the queue length process is shown to give rise to a form of state space collapse that results in a nonzero limit.

In the current work we consider the case where processing time distributions have finite second moments and regularly varying tails. 
Results of Puha (2015) suggest that the right scaling  for the measure valued process is governed  by a parameter $c^r$
that is given as a certain inverse function related to the 
tails of the first moment of the processing time distribution. Using this parameter we consider a novel scaling for the measure valued process in which
the time is scaled by a factor of $r^2$, the mass is scaled by the factor $c^r/r$ and the space (representing the remaining processing times)
is scaled by the factor $1/c^r$. We show that the scaled measure valued process converges in distribution  (in the space of paths of measures). 
 In a sharp contrast to results for bounded support and light tailed service time distributions, this time there is no state space collapse and the limiting measures are not concentrated on a single atom. Nevertheless, the description of the limit is  simple and given explicitly in terms of a certain $\mathbb{R}_+$ valued random field  which is determined from a single Brownian motion. Along the way we  establish convergence of suitably scaled workload and queue length processes. We also show that as  the tail of the distribution of job processing times becomes lighter in an
appropriate fashion, the difference between the limiting queue length process and the limiting workload process converges to zero, thereby approaching the behavior of state space collapse.
\end{abstract}

\begin{keyword}[class=MSC2020]
\kwd[Primary ]{60K25, 60F17}
\kwd{Queueing theory}
\kwd{Functional limit theorems}
\kwd[; secondary ]{60G57, 60G60, 68M20}
\end{keyword}

\begin{keyword}
\kwd{Heavy traffic}
\kwd{heavy tails}
\kwd{queueing}
\kwd{shortest remaining processing time}
\kwd{regular variation}
\kwd{measure valued processes}
\kwd{functional central limit theorem}
\kwd{random field}
\kwd{state space collapse}
\kwd{synchronization phenomenon}
\kwd{intertwined SRPT queues}
\kwd{derivative of the Skorohod map}
\end{keyword}

\end{frontmatter}

\section{Introduction}
We study a single-server, single-class queue operating under the shortest remaining processing time (SRPT) service discipline.
Jobs arrive to the queue according to a renewal process.  Each such job has associated with it a processing time, which
is a random variable that represents the amount of time that the server must spend working on this job to complete its service.
The processing times are assumed to be independent and identically distributed.  In an SRPT queue, jobs are served
one at a time such that the job with the shortest remaining processing time is served first.  In particular, upon completing the
service of a given job, the server then takes into service the job in system with the shortest remaining processing time.  This
is done with preemption so that when a job arrives with a processing time that is smaller than the remaining processing time of
the job in service, the server places the job in service on hold and begins serving the job that just arrived.  Processing is done
in a nonidling fashion so that the server idles only when the system is empty.  While SRPT has a large memory requirement
for implementation since remaining processing times of all jobs in the queue
must be known, it  has desirable
optimality properties. In particular, it is the service discipline that minimizes queue length (see Schrage \cite{S68} and Smith \cite{S78}). Therefore, SRPT can serve as a performance
benchmark (e.g.\ Chen and Dong \cite{CD20}). The survey paper \cite{S93} by Schreiber provides nice discussion of early works concerning SRPT.

One challenge associated with a detailed  analysis of SRPT is that, due to the need to keep track of the remaining
processing times of all jobs in the system, the state descriptor for an SRPT queue is infinite dimensional, even for exponentially distributed processing times. 
In order to describe the state of the system, Down, Gromoll, and Puha \cite{DGP1109, DGP909} introduce a measure valued process  in which the state of the system at a given
time is the finite nonnegative Borel measure on the nonnegative real line that puts a unit atom at the remaining processing time of each
job in system.  Under natural modeling assumptions and asymptotic conditions, they prove a fluid limit
theorem (a functional law of large numbers)  for this measure valued state descriptor.  This yields a fluid analog for the response time of jobs in system at time zero as a function
of their remaining processing times at time zero. In the critically loaded case, the rate at which this fluid analog for the response time grows as time tends to infinity is seen to be dependent on the tail behavior of the processing time distribution. 
These results are consistent with  the growth rates obtained in \cite{LWZ11} for steady state mean response times as the traffic intensity increases to one.
In follow on work, Kruk \cite{K16} proves a fluid limit theorem for multiclass SRPT queues that includes convergence of the response times to the expression studied in \cite{DGP1109}, which justifies it as an approximation. Atar, Biswas, Kaspi and Ramanan \cite{ABKR18} develop more general fluid limits for SRPT and other priority queues with time varying arrivals and
service rates.

In this work, we consider a sequence of SRPT queues indexed by a scaling parameter $r$ approaching infinity.
We are interested in studying the asymptotic behavior of the measure valued state descriptors for this sequence of SRPT queuing systems under diffusion
and other suitable scalings. This 
captures the performance deviation of a critically loaded  SRPT queue
from the fluid limit by describing the fluctuations. 
Gromoll, Kruk, and Puha \cite{GKP11} provide a first step in this direction by establishing a diffusion limit theorem (a functional central limit theorem),
for the sequence of measure valued processes. In \cite{GKP11} for the case where the processing time distributions have {\em bounded support}, it is shown that, with standard diffusive scaling (time is scaled by $r^2$ and the mass of the measure normalized by $r$), under natural modeling
assumptions and mild asymptotic and standard heavy traffic conditions, the mass of the (scaled) measure valued state descriptors in the limit concentrates on a single atom located at the right
edge of the support of the processing time distribution with the size of the atom fluctuating randomly in time.  This is similar in spirit to results for static
priority queues where only the queue associated with the lowest priority class is nonempty in the diffusion limit (see \cite{BD01,W71}).  The result for the bounded support case suggests
that for processing time distributions with unbounded support, with standard diffusive scaling, one
should obtain the trivial limit of the zero process for the scaled measure valued process.  This is indeed true under suitable conditions as is also shown
 in \cite{GKP11}.  These results are rederived by Kruk \cite{K19} via an alternative argument that leverages diffusion
limits for earliest deadline first queues obtained in Kruk \cite{K07}.  Although the measure valued processes under the standard diffusion scaling converge to the zero process, the workload  under the diffusive scaling, which is given as the first moment of the state descriptor measure, does not converge to the zero process. Indeed, since SRPT is a nonidling service discipline, the diffusion limit for the workload process
(which is independent of the scheduling policy) corresponds to a semi-martingale reflected Brownian motion (SRBM)  \cite{IW70}.  Heuristically the above results say that, for processing time distributions with unbounded support, SRPT minimizes the queue length so efficiently that, in the diffusion limit, the queue length process is  of a smaller order than the workload process.

This raises the important problem of quantifying the precise difference in orders of the queue length and workload processes.  In  \cite{P15}, Puha studies the case where the the processing time distributions have light tails (rapidly varying with index $-\infty$, e.g. an exponential distribution) and identifies the key quantity that determines the correct scaling for the queue length process. This quantity, denoted as $c^r$
and defined in equation \eqref{def:cr} here, is given in terms of a certain inverse function related to the 
tails of the first moment of the processing time distribution. Using the scaling factor $c^r$, \cite{P15} establishes a state space collapse result that specifies conditions under which 
\begin{equation}\label{eq:lighttail}
	(c^r \hat Q^r, \hat W^r) \mbox{ converges in distribution to }  (W^{\infty}, W^{\infty}), \qquad\hbox{as }r\to\infty,
	\end{equation}
where $\hat Q^r$ and $\hat W^r$ are the queue length and workload processes, respectively, of the $r$-th system with standard diffusive scaling and $W^{\infty}$ is a certain SRBM on $\mathbb{R}_+$.  Although \cite{P15} does not consider the convergence of the measure valued state descriptor, the result in \eqref{eq:lighttail} suggests that with an appropriate scaling, this measure valued process converges in distribution to a process of  Dirac measures at one (with random weights); see Remark \ref{heavylightcomp} for additional comments on this point.

In this work, we study the setting where the processing times have finite second moments and regularly varying tails (see \eqref{eq:regvar}).
{\color{black}Such heavy tailed processing time distributions arise naturally in various application domains, e.g., file transfer models and cloud computing \cite{CSN09, L12},
which motivates us to consider the performance of SRPT in this setting in more detail.  For this, we} study the asymptotic behavior of the full measure valued state descriptor under an appropriate scaling. As in \cite{P15} the quantity $c^r$ is once more central to identifying the correct scaling. The scaled measure valued process, denoted as $\tZ^r(\cdot)$, is defined using three types of scaling: the time is scaled by a factor of $r^2$, the mass is scaled by the factor $c^r/r$ and the space (representing the remaining processing times) is scaled by the factor $1/c^r$; see \eqref{ztildef} for a precise definition. One of our main results (Theorem \ref{meascon}) gives convergence of $\tZ^r(\cdot)$ in distribution, in 
$\mathcal{D}([0,\infty): \clm_F)$ (the space of right continuous functions with left limits equipped with the usual Skorohod topology, where $\clm_F$ is the space of finite nonnegative measures on $\mathbb{R}_+$ with the  topology of weak convergence), to a limit measure valued process $\tZ(\cdot)$. In a sharp contrast to results for bounded support and light tailed service time distributions, this time there is no state space collapse and the limiting measures are not concentrated on a single atom. Nevertheless, the description of the limit is  simple and given explicitly in terms of a certain $\mathbb{R}_+$ valued random field $\{W_a(t), t \in [0, \infty), a \in [0, \infty]\}$ which is determined from a single Brownian motion; see \eqref{eq:GX} -- \eqref{eq:xinfinitydefn}. Roughly speaking, $W_a(\cdot)$ can be interpreted as the asymptotic (diffusion scaled) workload process associated with jobs in the system with remaining processing times at most $ac^r$.
In terms of $\{W_a(\cdot), a\in(0,\infty)\}$, the limiting measure valued process $\tZ(\cdot)$ is characterized as follows: for $t\in[0,\infty)$,
$\tZ(t)(\{0\})=0$, $\tZ(t)([0,\infty))=\int_{[0,\infty)}\frac{1}{x^2}W_x(t)dx$ and
$$
\tZ(t)[a,b] : = \int_a^b\frac{1}{x^2}W_x(t)dx + \frac{W_b(t)}{b} - \frac{W_a(t)}{a}, \quad 0 < a < b < \infty.
$$
Along the way we also establish convergence of suitably scaled workload and queue length processes by proving in Theorem \ref{workfn} that, as $r\to\infty$,
$$(c^r\hat Q^r(\cdot), \hat W^r(\cdot)) \mbox{ converges in distribution to }
\left(\int_0^{\infty} \frac{1}{x^2} W_x(\cdot) dx, W_{\infty}(\cdot)\right)$$
in $\mathcal{D}([0,\infty): \RR_+^2)$, where $\hat Q^r$  and $\hat W^r$ are the queue length process and workload process, respectively, of the $r$-th system with the standard diffusive scaling. 

Results of \cite{P15} and Theorems \ref{workfn} and \ref{meascon}  in the current paper suggest that the phenomenon of state space collapse is closely related to the tail behavior of the service time distributions. In Theorem \ref{collapse} we make this heuristic precise by establishing that if the tail of the distribution of job processing times becomes lighter in an
appropriate fashion, the difference between the limiting queue length process and the limiting workload process converges to zero, thereby approaching the behavior of state space collapse exhibited in \cite{P15}
for light tailed processing time distributions. In Theorem \ref{tailtZ}, we prove another type of
`asymptotic state space collapse' which roughly says that, asymptotically,  the cumulative (scaled) workload due to jobs with remaining processing time
more than $ac^r$ (for large $a$) can be obtained by multiplying the number of such jobs present in the system with the expected value of a (full) processing time conditioned to be more than $a$.

The results of this work give information on response times of jobs with a given remaining processing time
in SRPT queues under heavy traffic.
Understanding the behavior of these response times is of interest as they quantify the `unfair' treatment of 
jobs with large processing times under the SRPT discipline
 \cite{BCM98,SG98,S92,T92}. 
 For Poisson arrivals, steady state mean response times have been studied by Bansal and
Harchol-Balter \cite{BHB01} and Lin, Wierman and Zwart \cite{LWZ11}.  In \cite{BHB01}, the 
steady state mean response and slowdown times are studied, with a focus on heavy tailed processing time distributions,
as are characteristic of empirical workloads.  In particular, \cite{BHB01} shows that the degree of unfairness as compared with processor sharing,
a computer time sharing algorithm widely regarded as fair, is relatively small (see also Wierman and Harchol-Balter \cite{WHB03}
for a broader discussion of fairness). Related to this, results of \cite{DGP1109, DGP909} show that fluid analogs of response
times in SRPT queues are sublinear for very heavy tailed processing distribution, which is a performance improvement over processor sharing.
In the related work \cite{LWZ11}, expressions obtained in Perera \cite{P93} and
Schassberger \cite{S90} are used to establish growth rates for the steady state mean
response times as the traffic intensity increases to one (critical loading or heavy traffic).  The rates that they obtain depend
on the tail behavior of the processing time distribution.  For instance, they grow exponentially for exponential
processing times and polynomially for heavy tailed processing times.  In view of the above results on dependence of key performance metrics for SRPT queues on the tail properties of processing time distributions it is of significant interest to understand the precise relationships between these tail properties and scaling limits of SRPT queues in heavy traffic. The current work contributes toward this goal.

\subsection{Methodology}\label{ss:methods}

We now make some comments on the proof of one of our key results, namely Theorem \ref{workfn}. Central to our analysis are certain truncated workload processes $\{W^r_a(t)\}_{t\ge 0}$, $a \in [0,\infty]$, where
$W_a^r(t)$ gives the amount of work (normalized by $r$) associated with jobs with remaining processing time at most $ac^r$ at time $r^2t$ in the $r$-th system. We show in Theorem \ref{workless} that the joint distribution of $W_{a_1}^r, \ldots, W_{a_k}^r$ for finitely many threshold levels $0 \le a_1<\cdots<a_k\le \infty$
converges to the joint distribution of $W_{a_1}, \ldots, W_{a_k}$ where $\{W_a(t)\}_{t\ge 0}$, $a \in [0,\infty]$, is a random field driven by a {\em single} Brownian motion. 
This novel synchronization phenomenon is a key ingredient in our proofs.
It turns out that the convergence of the full measure valued state descriptor $\tZ^r$ can be 
analyzed through the asymptotic properties of these truncated workload processes. This can be heuristically seen from an elementary integration by parts lemma (Lemma \ref{parts})
that expresses the integral of any $C^1$ function, supported on a compact interval of $(0, \infty)$, with respect to the random measure $\tilde{\mathcal{Z}}^r(\cdot)$ in terms of the rescaled, truncated workload processes. This lemma is independent of the scheduling policy and is potentially useful for analyzing other types of  policies for which one has good control over the associated truncated workload processes. 
Using this lemma together with
Theorem \ref{workless} (which characterizes the limits of these truncated workload processes), along with appropriate tightness arguments, we then establish weak convergence of $
Z_f^r(\cdot) := \langle f, \tZ^r(\cdot)\rangle
$ for 
  piecewise $C^1$ functions $f$ supported on a compact interval of $(0, \infty)$ (Theorem \ref{convplus}). The result is then extended to $f$ having support which is bounded below by a positive number $\delta$ but possibly unbounded above (Lemma \ref{Minfty}). Rest of the work is in sending $\delta \rightarrow 0$. 
  This work, which is done in Section \ref{sec:senddtoz}, is technically the most demanding part of the proof as is suggested by the possible singular behavior of the integrand in \eqref{eq:lQ} near $x=0$. The arguments are based on path decompositions of rescaled, truncated workload processes and their limiting versions into excursions and careful analysis of these excursions using martingale arguments; see additional comments at the beginning of Section \ref{sec:senddtoz}. This is done in Lemmas \ref{fifo}-\ref{flowsup}, which finally lead to the proof of Theorem \ref{workfn}. 
As ingredients in the proofs, we also devise some couplings on SRPT systems started from different initial conditions (for example, the `intertwined SRPT queueing systems' analyzed in Subsection \ref{intsec}), which may be of independent interest.

{\color{black}While the idea for the scaling involving $c^r$ is inspired by the prior work \cite{P15}, which considers lighter tailed processing time distributions, the proofs here are not variants or extensions of those in \cite{P15} .  Indeed, in \cite{P15}, the remaining processing times are shown to asymptotically concentrate around the spatial boosting factors $c^r$ as $r$ tends to infinity.  This is not the case for heavier tailed processing time distribution.  Instead, the remaining processing times spread out in a wider window containing $c^r$ and the concentration arguments in \cite{P15} no longer hold.  To address this, we take a different approach by rescaling the measure valued state descriptor such that mass that would otherwise shift toward infinity in a rather spread out fashion around $c^r$ is brought back into a relevant window that spreads out around one. The asymptotic analysis of this rescaled measure-valued process requires an entirely different machinery and approach from the one used in \cite{P15} as was outlined in the previous paragraph.}

{\color{black}We believe our techniques can be extended to SRPT systems with processing time distributions that \emph{depend on $r$}, provided these distributions (indexed by $r$) satisfy certain uniformity conditions required by our techniques. More general $r$-dependence will require significant extensions of our methods and is left for future work.}

\subsection{Organization}\label{sec:org}
The rest of the article is organized as follows. In Section \ref{sec:mathfram}, we rigorously define the sequence of SRPT systems, the heavy traffic conditions,
the associated scaling and assumptions on the initial conditions. In Section \ref{s:main}, we state our main results. Section \ref{prelim} summarizes
some properties of Skorohod maps, regularly varying functions and the functional central limit theorems and tightness criteria used crucially in the
proofs. Section \ref{proofs} is dedicated to the proofs of our main results.

\subsection{Notation}
\label{sec:notat}
The following notation will be used. Let $\NN$ denote the set of positive integers, $\bbZ$ denote the set of integers, $\bbZp$ denote the set of nonnegative integers,
$\RR$ set of real numbers and $\RRp$ the set of nonnegative real numbers. For $a,b\in\RR$, $a\wedge b$ and $a\vee b$ respectively denote the minimum and maximum of the set $\{a,b\}$.
For a Polish space $S$ and $T\in (0,\infty)$, we denote by $\mathcal{D}([0,T]:S)$ (resp. $\mathcal{D}([0,\infty):S)$) the space of functions that are right continuous and have
finite left limits (RCLL)  from $[0,T]$ (resp. $[0,\infty)$) to $S$, equipped with the usual Skorohod topology. Also, denote by $\mathcal{C}([0,T]:S)$ (resp. $\mathcal{C}([0,\infty):S)$) the space
of continuous functions  from $[0,T]$ (resp. $[0,\infty)$) to $S$, equipped with uniform (resp. local uniform) topology. Denote by $\clm_F$  the space of finite nonnegative
Borel measures on $\RRp$ equipped with the topology of weak convergence.  For $\mu\in \clm_F$ and a Borel measurable function $f$ that is integrable with respect to
$\mu$ or nonnegative, we write $\lan f, \mu \ran = \int f d\mu$, which takes the value infinity if $f$ is nonnegative and nonintegrable.
Note that for $\{\mu_n\}_{n\in\NN}\subset\clm_F$ and $\mu\in\clm_F$, as $n\to\infty$, $\mu_n \to \mu$ in $\clm_F$ if and only if $\lan f, \mu_n \ran \to \lan f, \mu \ran$ for
every real valued, bounded, continuous function $f$ on $\RRp$. The topology of weak convergence can be metrized so that $\clm_F$ and hence $\mathcal{D}([0,T]:\clm_F)$
are Polish spaces.  For a Borel subset $A\subseteq\RRp$, $\one_A$ denotes the indicator of set $A$; that is, $\one_A(x)=1$ if $x\in A$ and $\one_A(x)=0$ if $x\not\in A$.
In addition, $\one$ is used as a shorthand notation for $\one_{\RRp}$.  For $x \in \RRp$, $\delta_x$ is the Dirac measure at $x$ that puts a unit atom at $x$ and
$\delta^+_x := \delta_x \one_{\{x>0\}}$ is the measure in $\clm_F$ that equals $\delta_x$ if $x>0$ and is the zero measure otherwise.
For a real valued, bounded function $f$ on $S$, we define $\|f\|_{\infty} := \sup_{x\in S}|f(x)|$.  For $a\in\RRp$, a real valued function $f$ is said to be $C^1$ on $[a,\infty)$ if
it is defined on an open neighborhood of $[a,\infty)$ in $\RRp$ and is continuously differentiable on this neighborhood. For $S$ valued random variables $X_n$, $n\in\NN$, and $X$,
we denote by $X_n \xrightarrow{d} X$ (resp. $X_n \xrightarrow{P} X$) the convergence in distribution (resp. probability) of $X_n$ to $X$ as $n\to\infty$.
For $f \in  \mathcal{D}([0,\infty) : \RR^d)$,  $0 \le s \le t \le \infty$ and $A>0$, we will write $|f(t\#) - f(s\#)| <  A$ to denote that all of
the following inequalities hold: $\left|f(t) - f(s)\right| <  A$, $\left|f(t) - f(s-)\right| <  A$, $\left|f(t-) - f(s)\right| <  A$, $\left|f(t-) - f(s-)\right| <  A$.

\section{Mathematical framework}
\label{sec:mathfram}
\subsection{The sequence of SRPT queues and state descriptor}\label{ss:sd}
We consider a sequence of SRPT queues indexed by $\clr$, a sequence taking values in $(1, \infty)$ tending to infinity.
For each $r\in\clr$, let $\{\breve{v}_l^r, l \in \NN\}$ be a sequence of strictly positive random variables and let $\qq^r$
be a nonnegative integer valued random variable such that $\sum_{l=1}^{\qq^r}\breve{v}_l^r<\infty$ almost surely (with the convention that this sum is zero if $\qq^r$ is zero).
At time zero, there are $\qq^r$ jobs in the $r$-th system with remaining processing times $\breve{v}_l^r$, $l=1, \ldots, \qq^r$.
For $l=1, \ldots, \qq^r$, we refer to the job in system at time zero associated with $\breve{v}_l^r$ as initial job $l$. 
Conditions on $\qq^r$ and  $\{\breve{v}_l^r\}$ will be specified in Section \ref{initconass}.

Jobs arrive to the $r$-th system according to a delayed renewal process $E^r(\cdot)$ with positive, finite rate $\lambda^{r}$ and
finite, positive initial delay. Let $T^r$ (resp.\ $T_1^r$) denote a random variable having the distribution of a typical inter-arrival time (resp.\ the initial delay) in the $r$-th system. We assume that $T^r$ is positive and has finite standard deviation $\sigma_A^r$.
We also assume that $\Expect{(T_1^r)^2}<\infty$. For $j\in{\mathbb N}$,
we refer to the $j$-th job to arrive after time zero as job $j$.

Upon its arrival to the $r$-th system, each job is assigned a processing time, which is the amount of time it takes the server
to process the work associated with that job.  The processing times are taken to be strictly positive and independent and identically distributed.  Also, the processing time distribution does not depend on $r$, i.e., is the same for all $r$, and is given by a
continuous distribution function $F$ on $\RRp$ such that $F(0)=0$.  It is assumed that $\overline{F}(x) = 1 - F(x)$ is  positive for each $x\in\RRp$ and that
$\overline{F}$ is a regularly varying function with index $-(p+1)$ for some $p>1$; namely, for all $t>0$,
\begin{equation}\label{eq:regvar}
 \bar F(t)>0 \mbox{ and } \lim_{x \rightarrow \infty}\frac{\overline{F}(tx)}{\overline{F}(x)} := t^{-(p+1)}.
\end{equation}
The above condition in particular implies that {\color{black}the} processing time distribution has a finite, positive second moment. The Pareto type 1 distribution with
parameters $m>0$ and $p>1$ (i.e. $\overline{F}(x)=\min(m^{p+1}x^{-p-1},1)$ for $x\in\RRp$) is a basic example of a processing time distribution
that satisfies \eqref{eq:regvar}.

For each $r\in\clr$, $\{\qq^r, \breve{v}_l^r, l \in \NN\}$, $E^r(\cdot)$, and the sequence of processing times are assumed to be mutually
independent of one another.

Jobs in the $r$-th system are served in accordance with the SRPT service discipline; that is, at each time the server preemptively
serves the job in system with the shortest remaining processing time.  For $t\ge 0$, $l=1, \ldots, \qq^r$ and $j=1,\dots,E^r(t)$,
$\breve v_l^r(t)$ and $v_j^r(t)$ denote the remaining processing time at time $t$ of initial job $l$ and job $j$ respectively.
For each $r\in\clr$ and $t\ge 0$, define
$$
\Z^r(t) = \sum_{l=1}^{\qq^r} \delta^+_{\breve{v}_l^r(t)} + \sum_{j=1}^{E^r(t)}\delta^+_{v_j^r(t)}.
$$
Then, for each $r\in\clr$ and $t\ge 0$, $\Z^r(t)\in\clm_F$ has a unit atom at the remaining processing time of each job in system.
Furthermore, for each $r\in\clr$, $\Z^r(\cdot)$ is a stochastic process with sample paths in $\mathcal{D}([0,\infty): \clm_F)$.
We will find it convenient to adopt the abbreviated phrases job {\it size} and job {\it sizes} to refer to a given job's remaining processing
time and the collection of all remaining processing times, respectively, at a given time. Also, a job's {\it initial size} refers to
its processing time upon arrival with initial job $l=1, \ldots, \qq^r$ having initial size $\breve{v}_l^r$ by convention.

\subsection{Heavy Traffic Conditions}\label{ss:ht}
Let $v$ denote a random variable having the distribution of the processing time of an incoming job.
For each $r\in\clr$, write 
$$\rho^r :=\lambda^r\mathbb{E}(v)\qquad\hbox{and}\qquad \rho^r_x :=\lambda^r\mathbb{E}(v\one_{[v \le x]})\qquad\hbox{for all }x\in\RRp.$$
It is assumed that there exists $\kappa\in\RR$ and $\sigma_A,\lambda\in(0,\infty)$ such that as $r\to\infty$,
\begin{equation}
	\label{eq:assuht}
r(\rho^r-1) \rightarrow \kappa, \qquad  \lambda^r \rightarrow \lambda,\qquad\text{and}\qquad \sigma_A^r \rightarrow \sigma_A.
\end{equation}
Note that the first limit above implies $ \lambda = 1/\mathbb{E}(v)$. Henceforth, $\kappa\in\RR$ and $\sigma_A,\lambda\in(0,\infty)$
satisfying \eqref{eq:assuht} are fixed. It is also assumed that
\begin{equation}
\label{eq:assuht2}
\limsup_{r\to\infty}{\mathbb E}(T_1^r)\le\lambda^{-1}
\qquad\text{and}\qquad
\limsup_{r\to\infty}\operatorname{Var}(T_1^r)\vee{\mathbb E}\left[\left(T_1^r-(\lambda^r)^{-1}\right)^2\right]\le\sigma_A^2.
\end{equation}
We note here that, for our results to hold, we only need finiteness of the above $\limsup$s. However, the above assumptions are made to treat the first inter-arrival time in a similar fashion as the later ones and thus to make the analysis less notationally cumbersome.
For $r \in \clr$ and $t\ge 0$, define
$$
\overline{E}^r(t) := \frac{E^r(r^2t)}{r^2} \ \text{ and } \ \widehat{E}^r(t) := \frac{E^r(r^2t) - \lambda^r r^2 t}{r} = r(\overline{E}^r(t)-\lambda^rt).
$$
Assume that as $r \rightarrow \infty$,
\begin{equation}\label{eq:estar}
\widehat{E}^r(\cdot) \xrightarrow{d} E^*(\cdot)
\end{equation}
in $\mathcal{D}([0, \infty): \RR)$, where $E^*(\cdot)$ is a one-dimensional Brownian motion starting from zero with zero drift and variance
$\lambda^3 \sigma_A^2$. This also implies that as, $r \rightarrow \infty$,
\begin{equation}\label{eq:aflln}
\overline{E}^r(\cdot) \xrightarrow{d} \lambda(\cdot),\qquad\text{where}\qquad \lambda(t) := \lambda t\qquad\hbox{for all }t \ge 0.
\end{equation}

\subsection{Scaling}
For $x\in\RRp$, let
\begin{equation}\label{def:S}
S(x) = \frac{1}{\mathbb{E}(v \one_{[v>x]})}.
\end{equation}
The function $S(\cdot)$ plays an important role in our analysis. As shown in \cite{DGP1109,K16},
it has {\color{black}the} same order of magnitude as the response time of jobs with remaining processing time $x$ in the system
at time zero, in the fluid limit.
Here, due to the assumptions on $F(\cdot)$, $S(\cdot)$ is a positive, nondecreasing, continuous function such
that $\lim_{x\to\infty}S(x)=\infty$.  In particular, the right continuous inverse $S^{-1}(\cdot)$ exists and is well defined on all of $\RRp$.  Then,
for $y\in\RRp$, {\color{black}we have}
\begin{equation}\label{def:Sinv}
S^{-1}(y) := \inf\{u >0: S(u) > y\},
\end{equation}
and the function $y \mapsto S^{-1}(y)$ is a nonnegative, nondecreasing, right continuous function which is strictly increasing for $y \in [S(0), \infty)$.  Also, for all $y \in [S(0), \infty)$,
\begin{equation}
	S(S^{-1}(y)) = y.\label{eq:ssinv}
\end{equation}
In \cite{DGP1109}, a version of \eqref{def:Sinv} arises as the left edge of the support of the measure valued fluid model solutions studied there.
For each $r\in\clr$, let
\begin{equation}\label{def:cr}
c^r := S^{-1}(r).
\end{equation}
Note that $c^r = 0$ if $r \le S(0)$ and $c^r>0$ if $r > S(0)$. As we are interested in large values of $r$, from now on, we will assume without loss of generality that the elements of $\mathcal{R}$ are all larger than $S(0)$. Then, \eqref{eq:ssinv} and \eqref{def:cr} imply that for all $r\in\clr$,
\begin{equation}\label{eq:Scr}
S(c^r)=r.
\end{equation}
As noted in the introduction, the quantity  $c^r$,  which was introduced in \cite{P15}, identifies the correct scaling needed  in order to obtain a nontrivial limit for the queue length process in the light
tailed case studied there (see \eqref{eq:lighttail}). We will see that this quantity  is  key for the analysis of regularly varying tails as well. For each $r\in\clr$ and $t\ge 0$, define
\begin{equation}\label{ztildef}
\tZ^r(t) =  \frac{c^r}{r} \sum_{l=1}^{\qq^r} \delta^+_{\breve{v}_l^r(r^2t)/c^r} + \frac{c^r}{r}\sum_{i=1}^{E^r(r^2t)}\delta^+_{v_i^r(r^2t)/c^r}.
\end{equation}
Thus $\tZ^r(\cdot)$ is obtained from $\Z^r(\cdot)$ by adding three types of scaling: the time is scaled by $r^2$, the mass is scaled by $c^r/r$ and the space (representing the job sizes) is scaled by $1/c^r$.

{\color{black}To illustrate this scaling, we consider the Pareto type 1 distribution with parameters $m>0$ and $p>1$ (i.e. $\overline{F}(x)=\min(m^{p+1}x^{-p-1},1)$ for $x\in\RRp$).  Then for $c_p:=m^{1+p}(1+p)/p$ and for each $r\in\clr$ such that $c^r \ge m$, we find that $c^r=(c_pr)^{1/p}$ and $c^r/r=\frac{c_p^{1/p}}{r^{(p-1)/p}}$, which respectively tend to the constants $2m^2r$ and $2m^2$ as $p\searrow 1$ and $m$ and $m/r$ as $p\to\infty$.  The latter is traditional diffusion scaling.  Upon noting that the ratio of two regularly varying functions with the same index is slowly varying, we see that for $F$ satisfying \eqref{eq:regvar} for some $p>1$, $c^r$ takes the form $L_p(r)\sqrt[p]{r}$, $r\in\clr$, for some distribution dependent, slowly varying function $L_p$.  See Section \ref{sec:regvarfunc} for a brief summary of the relevant properties of regularly and slowly varying functions.}

For each $r\in\clr$, $t\ge 0$, and  $f: \mathbb{R}_+ \rightarrow \mathbb{R}$,
define
$$
Z_f^r(t) := \langle f, \tZ^r(t)\rangle.
$$
We will also write, for {\color{black}$a\in [0,\infty] := [0, \infty) \cup\{\infty\}$} and $t \ge 0$, 
\begin{equation}\label{zqldef}
Z_a^{r}(t) := Z_{\one_{[0,a]}}^r(t) = \int_0^{a}\tZ^r(t)(dx).
\end{equation}
For each $r\in\clr$ and $t\ge 0$, we adopt the notation $Q^r(t) = Z_{\one}^r(t) = \int_0^{\infty}\tZ^r(t)(dx)$ so that $Q^r(t)$ represents $c^r$ times the diffusion scaled
queue length in the $r$-th system at time instant $t$.

For all $x\in\RRp$, let $\chi(x) = x$ and $\chi_a(x) := \chi(x) \one_{[0,a]}(x)$ for any $a\in\RRp$.
Also, by convention, $\chi_{\infty}=\chi$.  For each $r\in\clr$, $t\ge 0$ and $a\in [0,\infty]$, define
\begin{equation}\label{trunkworkdef}
W_a^r(t) := Z_{\chi_{a}}^r(t) = \langle \chi_a, \tZ^r(t)\rangle.
\end{equation}
For $r\in\clr$, $a\in\RRp$ and $t\ge 0$, $W_a^r(t)$ is equal to {\color{black}the} amount of work associated with jobs
of size
less or equal
to $ac^r$ at time $t$ in the $r$-th system under diffusion scaling. Further note that for each
$r\in\clr$, $W_{\infty}^r(\cdot)$ is the diffusion scaled workload process and $\lim_{a\to\infty}W_a^r(t)=W_{\infty}^r(t)$ for each $t\ge 0$, almost surely.
Observe that for each $r\in\clr$ and each fixed $a\in[0,\infty]$,
$W_a^r(\cdot)\in\mathcal{D}([0,\infty): \RRp)$.  For each $r\in\clr$, we refer to the collection $\{W_a^r(\cdot), a\in\RRp\}$ as the
{\it rescaled, truncated workload processes}, which is a random field on $\RRp^2$ taking values in $\RRp$.
Also note that for $r\in\clr$ and each fixed $t\ge 0$, $W_{\cdot}^r(t)\in\mathcal{D}([0,\infty): \RRp)$. 

\subsection{Asymptotic Conditions for the Sequence of Initial Conditions}
\label{initconass}
We assume that there exists an $\RRp$ valued, continuous, nondecreasing stochastic process
$\{w^*(a): a\in\RRp\}$, with $w^*(\infty) := \lim_{a \rightarrow \infty} w^*(a)$ satisfying $\mathbb{E}(w^*(\infty)) < \infty$,
such that, as $r\to \infty$,
\begin{equation}\label{eq:assuinitcond}
{\color{black}\left( W_{\cdot}^r(0), W_{\infty}^r(0)\right)}\xrightarrow{d} \left(w^*(\cdot), w^*(\infty)\right)
\end{equation}
in $\mathcal{D}([0, \infty): \RRp)\times \RRp$, 
and
\begin{equation}\label{eq:assuinitcondb}
\{{\color{black} W_{\infty}^r(0)}; r \in \clr\} \mbox{ is uniformly integrable.}
\end{equation}
Note that \eqref{eq:assuinitcond} and \eqref{eq:assuinitcondb} imply that, for any $a\in\RRp$,
$$
\lim_{r \rightarrow \infty}\mathbb{E}\left(W_{{\color{black}a}}^r(0)\right) = \mathbb{E}(w^*(a))\qquad\hbox{and}\qquad \lim_{r \rightarrow \infty}\mathbb{E}\left(W_{{\color{black}\infty}}^r(0)\right) = \mathbb{E}(w^*(\infty)).
$$
We further assume that there exist some $\eta^* \in (0, p-1)$, $\astar> 0$ and $\alpha^* \in (0,p]$ such that
\begin{equation}\label{eq:initcondq1}
\limsup_{r \rightarrow \infty}\sup_{a \in [\astar(c^r)^{-1}, 1]} a^{-(p-\eta^*)}\mathbb{E}\left(W_{{\color{black}a}}^r(0)\right) < \infty
\end{equation}
and
\begin{equation}\label{eq:initcondq2}
\limsup_{a \rightarrow \infty}a^{\alpha^*}\mathbb{E}(w^*(\infty) - w^*(a)) < \infty.
 \end{equation}
Assumption \eqref{eq:initcondq1} insures that the work associated with initial jobs with remaining processing times near zero vanishes at a suitable rate as $r$ tends
to infinity.   Assumption \eqref{eq:initcondq2} insures that the limiting work associated with initial jobs with large remaining processing times vanishes at a suitable rate.
Assumptions \eqref{eq:assuinitcondb} and \eqref{eq:initcondq1} imply that
\begin{equation}\label{eq:Cxifinite}
\sup_{a >0}a^{-(p-\eta^*)}\mathbb{E}\left(w^*(a)\right) < \infty.
\end{equation}
Finally, we assume that for any $a\in\RRp$,
\begin{equation}\label{smalljobas}
Z_{a/c^r}^r(0)= \frac{c^r}{r}\sum_{l=1}^{\qq^r}  \one_{[\breve{v}_l^r \le a]} \xrightarrow{P} 0 \ \text{ as } r \rightarrow \infty.
\end{equation}

\begin{remark}\label{naturalin}
A  guideline for whether Assumptions \eqref{eq:assuinitcond}--\eqref{smalljobas}
are natural is to check whether a sequence of systems such that each system starts from zero jobs  at time zero satisfies these assumptions  at
any fixed positive time $t$. It can be checked from the proofs in Section \ref{proofs} that this is indeed the case; namely, if each system starts
with zero jobs  then at any time $t>0$, Assumptions \eqref{eq:assuinitcond}--\eqref{smalljobas}
are satisfied with $\left( W_{\cdot}^r(0), W_{\infty}^r(0)\right)$ replaced by $\left( W_{\cdot}^r(t), W_{\infty}^r(t)\right)$ for each $r\in\clr$ and $\{\breve{v}_l^r\}_{1\le l \le \qq^r}$ replaced with $\{v_i(r^2t) : 1 \le i \le E^r(r^2t),   v_i(r^2t) >0, \} \cup \{\breve{v}_l^r(r^2t), 1 \le l \le \qq^r, \breve{v}_l^r(r^2t) >0\}$. See Appendix \ref{icverify} for a sketch of how to verify this. 
\end{remark}

\subsection{Some initial conditions satisfying Assumptions \eqref{eq:assuinitcond}--\eqref{smalljobas}}\label{iceg}
We give the following two sets of initial conditions which are easily checkable and satisfy Assumptions \eqref{eq:assuinitcond}--\eqref{smalljobas}.

(I) 
Suppose the following hold:
\begin{itemize}
\item[(i)] for each $r\in\clr$, $\{\breve{v}_l^r : l \ge 1\}$ is a sequence of independent and identically distributed random variables that is independent of $\qq^r$;\\
\item[(ii)] For some $\qq^*$ with $\mathbb{E}(\qq^*)< \infty$, $c^r\qq^r/r \xrightarrow{L^1} \qq^*$ as $r \rightarrow \infty$;\\
\item[(iii)]$\sup_{r \in \clr}\mathbb{E}\left[\left(\breve{v}_1^r/c^r\right)^2\right] < \infty$ and $\breve{v}_1^r/c^r \xrightarrow{d} \breve{v}^*$ as $r \rightarrow \infty$, where $\breve{v}^*$ has a continuous distribution;\\ 
\item[(iv)] there is a random variable $\underline{v}$ that stochastically lower bounds $\breve{v}_1^r/c^r$ for all $r \in \clr$ and satisfies
$$
\limsup_{a \downarrow 0}a^{-(p-1-\eta^*)}\mathbb{P}(\underline{v} \le a) < \infty,
$$
for some $\eta^* \in (0, p-1)$.
\end{itemize}
Then Assumptions \eqref{eq:assuinitcond}--\eqref{smalljobas} are satisfied with $\alpha^* = 1$, $\eta^*$ as in part (iv)
above, any $\astar>0$, $w^*(a) = \qq^*\mathbb{E}\left(\breve{v}^* \one_{[\breve{v}^* \le a]}\right)$ for $a\in\RRp$ and $w^*(\infty) = \qq^*\mathbb{E}\left(\breve{v}^*\right)$. See Appendix \ref{icverify} for a sketch of how to check that assumptions
\eqref{eq:assuinitcond}--\eqref{smalljobas} hold for such a sequence of initial conditions.\\

(II) Another set of conditions for which Assumptions \eqref{eq:assuinitcond}--\eqref{smalljobas} hold is that along with (i) in (I) above, for some $\al >0$, $(c^r)^{1+\al} \qq^r/r\to 0$ in $L^1$ and $\{\breve{v}_1^r/c^r, r \in \clr\}$ is $L^1$ bounded.  In particular, it can be checked that under these conditions, Assumptions \eqref{eq:assuinitcond}--\eqref{smalljobas} hold for any $\alpha^* \in (0,p]$, any $\astar>0$, $\eta^* = (p-1-\al)\vee (p-1)/2$ and $w^*(a) = 0$ for $a \in (0, \infty]$. Note that these conditions are trivially satisfied if each system starts from empty, namely $\qq^r=0$ for all $r\in\clr$.

\section{Main results}\label{s:main}
In this section, we state the five main results in this paper. The conditions introduced in Sections \ref{ss:sd}, \ref{ss:ht}, and \ref{initconass} will be assumed to hold throughout this work and will not be noted explicitly in statements of various results. Thus henceforth, we consider
a sequence (or sequences) of SRPT queues indexed by $\mathcal{R}$ satisfying the above
conditions.

\subsection{A random field governing {\color{black}the} limiting behavior}\label{ss:mainrf}
The first theorem (stated below) gives the important observation that for processing time distributions
with regularly varying tails, the joint limiting behavior
of the truncated workload processes is captured by a random field constructed from a single Brownian motion using the Skorohod map.
For $f \in \mathcal{D}([0, \infty): \RR)$ with $f(0)\ge 0$, let
\begin{equation}
	\label{eq:skormap}\Gamma[f](t):= f(t) - \inf_{0\le s \le t} \left(f(s)\wedge 0\right),\;  t \ge 0.
	\end{equation}
The function $\Gamma$  is known as the one-dimensional Skorohod map.

\begin{theorem}\label{workless}
Let $B$ be a standard real Brownian motion and $(\xi(\cdot), \xi(\infty))$ be a $\mathcal{C}([0, \infty): \RR_+) \times \RRp$ valued random variable with same distribution as $(w^*(\cdot),w^*(\infty))$ that is independent of $B$. For any $k\in\NN$ and any $0 \le a_1 <\dots < a_k\le \infty$, as $r \rightarrow \infty$,
$$
(W_{a_1}^r(\cdot), \dots, W_{a_k}^r(\cdot)) \xrightarrow{d} (W_{a_1}(\cdot), \dots, W_{a_k}(\cdot)) 
$$
in $\mathcal{D}([0, \infty): \RRp^k)$, where for $a\in[0,\infty]$,
\begin{equation}\label{eq:GX}
W_a(\cdot) := \Gamma[X_a](\cdot),
\end{equation}
with $\Gamma$ as in \eqref{eq:skormap} and $\{X_a(\cdot) : a\in[0,\infty]\}$, given as follows: for $t\ge 0$,
\begin{align}
X_0(t)&:=\xi(0)=0,\label{eq:x0defn}\\
X_a(t) &:= \xi(a) + \sigma B(t) + \left(\kappa - \frac{\lambda}{a^p}\right)t, \quad\hbox{for }0<a<\infty,\label{eq:xadefn}\\
X_{\infty}(t)&:= \xi(\infty) + \sigma B(t) + \kappa t,\label{eq:xinfinitydefn}
\end{align}
and $\sigma^2 : = \lambda \operatorname{Var}(v) + \lambda\sigma_A^2$, {\color{black}where $v$ is as in Section \ref{ss:ht} and $\sigma_A$ is as in \eqref{eq:assuht}.}
\end{theorem}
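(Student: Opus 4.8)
The plan is to represent each $W^r_a(\cdot)$ as the image under the one-dimensional Skorohod map $\Gamma$ of an explicit netput process, to prove a joint functional central limit theorem (FCLT) for these netputs \emph{driven by a single Brownian motion}, and to conclude by continuity of $\Gamma$. The starting point is the exact dynamical equation for the unscaled truncated workload $\mathcal{W}^r_a(s):=\sum_{i}v^r_i(s)\mathbf{1}_{\{v^r_i(s)\le ac^r\}}$, where the sum is over all jobs in the $r$-th system at time $s$ with $v^r_i(s)$ the corresponding remaining processing time, so that $W^r_a(t)=\mathcal{W}^r_a(r^2t)/r$. Since SRPT always serves the globally smallest job, whenever $\mathcal{W}^r_a(s)>0$ the job in service has remaining size $\le ac^r$ and $\mathcal{W}^r_a$ decreases at rate one; work enters the window $\{v\le ac^r\}$ either through arrivals of jobs with processing time $\le ac^r$ (cumulative input $A^r_a(s):=\sum_{j\le E^r(s),\,v_j\le ac^r}v_j$) or through jobs being served down across the level $ac^r$ (each such ``crossing'' contributing $ac^r$). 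This gives
\begin{equation*}
\mathcal{W}^r_a(s)=\mathcal{W}^r_a(0)+A^r_a(s)+ac^rC^r_a(s)-s+L^r_a(s),
\end{equation*}
where $C^r_a(s)$ counts the crossings by time $s$ and $L^r_a(s):=\int_0^s\mathbf{1}_{\{\mathcal{W}^r_a(u)=0\}}\,du$ is nondecreasing and flat off $\{\mathcal{W}^r_a=0\}$; since $\mathcal{W}^r_a\ge 0$ this is precisely the Skorohod decomposition, so $\mathcal{W}^r_a=\Gamma\big[\mathcal{W}^r_a(0)+A^r_a+ac^rC^r_a-\mathrm{id}\big]$. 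As $\Gamma$ commutes with positive spatial and temporal scaling, this yields, for each $a\in(0,\infty)$,
\begin{equation*}
W^r_a(\cdot)=\Gamma\Big[\,W^r_a(0)+\tfrac1rA^r_a(r^2\cdot)+\tfrac{ac^r}{r}C^r_a(r^2\cdot)-r(\cdot)\,\Big].
\end{equation*}

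The core is the joint convergence of the netputs $\varphi^r_a(\cdot):=W^r_a(0)+\tfrac1rA^r_a(r^2\cdot)-r(\cdot)$. Writing $\tfrac1rA^r_a(r^2t)-rt=\big[\tfrac1rA^r_a(r^2t)-\rho^r_{ac^r}rt\big]+(\rho^r_{ac^r}-1)rt$ with $\rho^r_x:=\lambda^r\mathbb{E}(v\mathbf{1}_{\{v\le x\}})$, the drift is handled by regular variation: $\mathbb{E}(v\mathbf{1}_{\{v>x\}})=1/S(x)$ and $S$ is regularly varying of index $p$ (since $\bar F$ has index $-(p+1)$, by Karamata applied to $\mathbb{E}(v\mathbf{1}_{\{v>x\}})=x\bar F(x)+\int_x^\infty\bar F(y)\,dy$; see Section~\ref{sec:regvarfunc}), so $S(ac^r)/r=S(ac^r)/S(c^r)\to a^p$ and hence $(\rho^r_{ac^r}-1)r\to\kappa-\lambda/a^p$. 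The fluctuation term is where the \emph{synchronization} appears: for $0<a_i<a_{i+1}$ the ``shell'' increments $\tfrac1r\sum_{j\le E^r(r^2t)}v_j\mathbf{1}_{\{a_ic^r<v_j\le a_{i+1}c^r\}}$ have variance controlled by $\mathbb{E}(v^2\mathbf{1}_{\{v>a_ic^r\}})$, which vanishes because $v$ has a finite second moment and $a_ic^r\to\infty$; hence these increments converge to the deterministic drift differences, and \emph{all} of the randomness is carried by the common total input $\tfrac1r\sum_{j\le E^r(r^2t)}v_j$. Combining a Donsker-type FCLT for the i.i.d.\ processing times, time-changed by $E^r$ (which satisfies \eqref{eq:estar}--\eqref{eq:aflln}), with the arrival-count fluctuations $\widehat E^r$, and using their mutual independence, gives $\tfrac1rA^r_a(r^2\cdot)-\rho^r_{ac^r}r(\cdot)\Rightarrow\sigma B(\cdot)$ for one Brownian motion $B$ with $\sigma^2=\lambda\operatorname{Var}(v)+\lambda\sigma_A^2$, the \emph{same} $B$ for every $a$. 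Together with \eqref{eq:assuinitcond} and the independence of the initial data from arrivals and processing times, this yields $(\varphi^r_{a_1}(\cdot),\dots,\varphi^r_{a_k}(\cdot))\xrightarrow{d}(X_{a_1}(\cdot),\dots,X_{a_k}(\cdot))$ with $(\xi(\cdot),\xi(\infty))$ independent of $B$. The endpoints are special cases: $W^r_0\equiv0=\Gamma[X_0]$ since $\chi_0\equiv0$, while $a=\infty$ is the ordinary diffusion-scaled workload, for which there is no crossing term and the FCLT is classical; note $X_a\to X_\infty$ pointwise as $a\to\infty$, consistently with $\lambda/a^p\to0$ and $\xi(a)\to\xi(\infty)$.

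Granting that $\tfrac{ac^r}{r}C^r_a(r^2\cdot)\xrightarrow{P}0$ uniformly on compacts, the proof concludes: $\varphi^r_a+\tfrac{ac^r}{r}C^r_a(r^2\cdot)\xrightarrow{d}X_a$ jointly over $a_1,\dots,a_k$ (adding a perturbation that converges to zero in probability does not affect the weak limit), so by continuity of the Skorohod map (Section~\ref{prelim}) $(W^r_{a_1},\dots,W^r_{a_k})\xrightarrow{d}(\Gamma[X_{a_1}],\dots,\Gamma[X_{a_k}])=(W_{a_1},\dots,W_{a_k})$ in $\mathcal{D}([0,\infty):\RRp^k)$. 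Equivalently, the monotonicity of $\Gamma$ and the elementary bound $\Gamma[\varphi]\le\Gamma[\varphi+\psi]\le\Gamma[\varphi]+\psi$, valid for any $\psi\ge0$, give directly $\|W^r_a-\Gamma[\varphi^r_a]\|_{\infty,[0,T]}\le\tfrac{ac^r}{r}C^r_a(r^2T)$, so everything reduces to controlling the crossings.

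The main obstacle is exactly this crossing bound. It is delicate because jobs enter the window $\{v\le ac^r\}$ only while that window is empty --- so the server is momentarily working on a large job --- and although this happens on a time set of vanishing relative measure (the scaled idleness $\tfrac1rL^r_a(r^2\cdot)$ converges to the reflection term of $\Gamma[X_a]$), the aggregate displacement $\tfrac{ac^r}{r}C^r_a(r^2\cdot)$ must still be shown to vanish after scaling. My plan is to couple $\mathcal{W}^r_a$ with auxiliary SRPT systems: lower-bound it by the workload of the SRPT queue fed only the jobs with processing time $\le ac^r$ (whose netput is exactly $\varphi^r_a$, so which already converges to $W_a$), and upper-bound it via a suitably \emph{intertwined} system obtained by perturbing the initial data, in the spirit of Subsection~\ref{intsec}. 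The gap between these bounds is governed by the work due to crossed jobs, which I would control using a priori moment estimates showing the queue length is of order $r/c^r$ (hence limiting the number of large jobs available to cross) together with an excursion-by-excursion analysis of $\mathcal{W}^r_a$ near zero that tracks how much server effort each window-empty excursion can divert onto large jobs. This coupling-plus-excursion analysis is the technically substantial part; the FCLT and Skorohod-map steps above are comparatively routine once the crossing estimate is in hand.
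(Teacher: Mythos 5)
Your first two ingredients match the paper's proof of Theorem \ref{workless} almost exactly: the synchronization comes from the fact that the ``shell'' inputs $\tfrac1r\sum_{j\le E^r(r^2t)}v_j\one_{[a c^r<v_j\le bc^r]}$ have vanishing variance (finite second moment of $v$ plus $c^r\to\infty$), so all fluctuations are carried by the common total input and a single Brownian motion $B$ emerges; and the drift $\kappa-\lambda/a^p$ comes from $S(ac^r)/S(c^r)\to a^p$ by Karamata. This is precisely Lemma \ref{jointZ}. The problem is in your comparison step. The bound you propose to use, $\sup_{[0,T]}\bigl|W^r_a-\Gamma[\varphi^r_a]\bigr|\le \tfrac{ac^r}{r}C^r_a(r^2T)$, is correct but the right-hand side does \emph{not} tend to zero: the number of jobs with processing time in $(ac^r,2ac^r]$ arriving by time $r^2T$ is of order $r^2\overline F(ac^r)\asymp r^{(p-1)/p}$ (up to slowly varying corrections), each needing only a sliver of service to cross, so $C^r_a(r^2T)$ can be of order $r^{(p-1)/p}$ and $\tfrac{ac^r}{r}C^r_a(r^2T)=\Theta(1)$. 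Indeed the total work entering the window through crossings must be of order one in diffusion scale, since it is what feeds the reflection term of $W_a$. So ``granting that $\tfrac{ac^r}{r}C^r_a(r^2\cdot)\to 0$'' grants something false, and the reduction collapses.

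What actually makes the theorem work --- and what the paper's Proposition \ref{comp} proves --- is a pathwise statement much stronger than any crossing count: with $Y^r_a=\Gamma[X^r_a]$ the workload of the queue fed \emph{only} by jobs of processing time $\le ac^r$ (your lower bound), one has $Y^r_a(t)\le W^r_a(t)\le Y^r_a(t)+\tfrac{ac^r}{r}$ for \emph{all} $t$, i.e.\ the cumulative effect of arbitrarily many crossings never exceeds a single crossing's worth. The mechanism is structural: under SRPT a job of size $>ac^r$ can be in service only when the window $[0,ac^r]$ is empty, so a crossing can occur only at the start of a busy period of the windowed workload, at which instant the gap $W^r_a-Y^r_a$ equals $ac^r/r$; during the ensuing busy period the gap is nonincreasing (both processes see the same arrivals into the window and drain at rate one, except that $Y^r_a$ may idle while $W^r_a$ is still positive). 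Your proposed remedy --- moment estimates on the queue length plus an excursion-by-excursion accounting of crossings --- aims at bounding $C^r_a$, which is the wrong quantity; no bound on the \emph{number} of crossings can close the argument, since their aggregate size is order one. You need the monotonicity of the gap on busy periods instead. (Incidentally, the intertwining construction of Subsection \ref{intsec} is not needed for the workload sandwich or for Theorem \ref{workless}; in the paper it serves only the queue-\emph{length} comparison \eqref{eq:comp5} used in later results.) Once the sandwich $0\le W^r_a-Y^r_a\le ac^r/r$ is in hand, your FCLT for $X^r_a$, the continuity of $\Gamma$, and $c^r/r\to0$ finish the proof exactly as you describe.
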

Due to \eqref{eq:GX}--\eqref{eq:xinfinitydefn}, $\xi(a)=X_a(0)=W_a(0)$ for all $a\in[0,\infty]$.
The key feature of the above result is that the Brownian motion $B(\cdot)$ that determines $X_a(\cdot)$ is the same for all $a\in[0,\infty]$.
In particular, $a$ only enters in the initial condition and the drift term.  In addition, $W_{\infty}(\cdot)$ is the diffusion limit of the workload process as given in \cite{IW70}.  Theorem \ref{workless} is proved in Section \ref{proofs}
as a consequence of Proposition \ref{comp} and Lemma \ref{jointZ}, stated there. In Proposition \ref{comp}, upper and lower bounds on $W_a^r(t)$ and $Z_a^r(t)$ for each $a\in[0,\infty]$ and $t\ge 0$
are given by coupling it with the workload process and queue length process for a SRPT queueing
system that satisfies all of the assumptions in Section \ref{ss:sd}, except that the renewal arrival process is thinned
to only include jobs with processing time at most $ac^r$. A notion of ordering of two SRPT systems, which we call intertwining, is introduced
in Section \ref{intsec} and used in a crucial way to obtain the queue length bounds in Proposition \ref{comp}.
In Lemma \ref{jointZ}, a functional central limit theorem (FCLT) is established for a finite collection of rescaled, truncated workload processes via the bounds obtained in Proposition \ref{comp} and establishing an FCLT for the bounding processes. Continuity properties of the Skorohod map imply Theorem \ref{workless} as a direct consequence of Lemma \ref{jointZ}.

\subsection{Limits for the queue length process and measure valued state descriptor}\label{ss:qm}
Theorem \ref{workless} can be used in describing the limiting behavior of
$
Z_f^r(\cdot) := \langle f, \tZ^r(\cdot)\rangle
$
for a rich class of functions $f$ as stated in the next theorem. This, in turn, gives distributional asymptotics for the scaled queue length process. Recall that $\chi(x)= x$ and $\one(x)=1$ for $x \in \RR_+$.

\begin{theorem}\label{workfn}
Let $f: [0, \infty) \rightarrow \mathbb{R}$ be any $C^{1}$ function such that $\lim_{x \rightarrow \infty} \frac{f(x)}{x}$ exists
and $\int_1^{\infty} \frac{|f'(x)|}{x^{\alpha^* +1}} dx < \infty$, where $\alpha^*$ is the constant appearing in Assumption \eqref{eq:initcondq2}.
Then, as $r \rightarrow \infty$,
$$
Z_f^r(\cdot) \xrightarrow{d} Z_f(\cdot)
$$
in $\mathcal{D}([0, \infty):\RR)$, where $Z_f$ is a real stochastic process with continuous sample paths, given by the formula
$$
Z_f(t) :=\int_0^{\infty}\left(\frac{f(x)}{x^2} - \frac{f'(x)}{x}\right)W_x(t)dx + \left(\lim_{x \rightarrow \infty}  \frac{f(x)}{x}\right) W_{\infty}(t), \quad t \ge 0.
$$
In particular, as $r\to\infty$,
$$
W_{\infty}^r(\cdot) = Z_{\chi}^r(\cdot) \xrightarrow{d} Z_{\chi}(\cdot)=W_{\infty}(\cdot)  \; \mbox{ and }\; Q^r(\cdot) = Z^r_{\one}(\cdot) \xrightarrow{d} Z_{\one}(\cdot)$$
in $\mathcal{D}([0, \infty):\RRp)$, where $Q(\cdot):=Z_{\one}(\cdot)$ satisfies
\begin{equation}\label{eq:lQ}
Q(t) = \int_0^{\infty}\frac{1}{x^2} W_x(t)dx, \quad t \ge 0.
\end{equation}
\end{theorem}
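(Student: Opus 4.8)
The plan is to reduce Theorem~\ref{workfn} to the joint limit behaviour of the rescaled, truncated workload field $\{W^r_a(\cdot)\}$ obtained in Theorem~\ref{workless}, through an integration-by-parts identity in the \emph{space} variable, and then to enlarge the class of admissible test functions in three stages: piecewise $C^1$ with compact support in $(0,\infty)$; $C^1$ supported in $[\delta,\infty)$ for a fixed $\delta>0$; and finally general $C^1$ on $[0,\infty)$ as in the statement. The identity underlying everything is the following: for fixed $t$, the map $x\mapsto W^r_x(t)=\langle\chi_x,\tZ^r(t)\rangle$ is the distribution function of the finite measure $x\,\tZ^r(t)(dx)$, so $\langle f,\tZ^r(t)\rangle=\int_{(0,\infty)}\frac{f(x)}{x}\,dW^r_x(t)$, and integration by parts in $x$ gives $\langle f,\tZ^r(t)\rangle=\big[\tfrac{f(x)}{x}W^r_x(t)\big]_0^\infty+\int_0^\infty\big(\tfrac{f(x)}{x^2}-\tfrac{f'(x)}{x}\big)W^r_x(t)\,dx$. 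This is Lemma~\ref{parts} when $f$ has compact support in $(0,\infty)$ (both boundary terms vanish); when $\lim_{x\to\infty}f(x)/x=\ell$ the upper term is $\ell W^r_\infty(t)$, and when $f$ vanishes near $0$ the lower one is $0$. The same formula, with $W^r_x$ replaced by $W_x$, holds for the limit objects once one checks absolute convergence of $\int_0^\infty(\tfrac{f(x)}{x^2}-\tfrac{f'(x)}{x})W_x(t)\,dx$; this uses the near-$0$ bound $\Expect{\sup_{t\le T}W_x(t)}\le Cx^{p-\eta^*}$ (so the integrand is $O(x^{p-\eta^*-2})$, integrable since $\eta^*<p-1$) and the near-$\infty$ bound $\Expect{\sup_{t\le T}(W_\infty(t)-W_x(t))}\le Cx^{-\alpha^*}$ (from the Lipschitz property of $\Gamma$ applied to $X_\infty-X_x$, see \eqref{eq:xadefn}--\eqref{eq:xinfinitydefn}, together with \eqref{eq:initcondq2} and $\alpha^*\le p$), combined with $\int_1^\infty|f'(x)|x^{-\alpha^*-1}\,dx<\infty$; the same two moment bounds hold uniformly in $r$ for $W^r_x$ via the thinned-system comparisons of Proposition~\ref{comp} and Assumptions \eqref{eq:initcondq1}--\eqref{eq:Cxifinite}.

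For $f$ piecewise $C^1$ with compact support in $(0,\infty)$ I would combine (i) the joint process convergence $(W^r_{x_1}(\cdot),\dots,W^r_{x_n}(\cdot))\xrightarrow{d}(W_{x_1}(\cdot),\dots,W_{x_n}(\cdot))$ from Theorem~\ref{workless} on a fine spatial grid, (ii) continuity of the Riemann-sum-to-integral map plus a uniform-in-$r$ spatial modulus-of-continuity estimate $\Expect{\sup_{t\le T}(W^r_b(t)-W^r_a(t))}\le\omega(b-a)$ (the added work from jobs of size in $(ac^r,bc^r]$, controlled by \eqref{eq:regvar} and Karamata's theorem, plus the modulus of $w^*$ for the initial part) to bound the discretisation error, and (iii) tightness of $Z^r_f(\cdot)$ in $\mathcal D([0,\infty):\RR)$, to obtain $Z^r_f(\cdot)\xrightarrow{d}Z_f(\cdot)$ with $Z_f(t)=\int_0^\infty(\tfrac{f(x)}{x^2}-\tfrac{f'(x)}{x})W_x(t)\,dx$ and continuous paths (continuity of $W_x(\cdot)=\Gamma[X_x](\cdot)$ and dominated convergence). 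This is Theorem~\ref{convplus}. To pass to $f$ supported in $[\delta,\infty)$ with $\lim f(x)/x=\ell$ and $\int_1^\infty|f'(x)|x^{-\alpha^*-1}\,dx<\infty$, split the integral at a large level $M$: on $[\delta,M]$ the argument just described applies jointly with $W^r_\infty\xrightarrow{d}W_\infty$; for the tail set $g:=f-\ell\chi$, so $g(x)/x\to0$, use the elementary identity $\int_M^\infty(\tfrac{g(x)}{x^2}-\tfrac{g'(x)}{x})\,dx=g(M)/M$ to recombine $\ell W^r_\infty(t)$ with the tail integral into $\tfrac{f(M)}{M}W^r_\infty(t)-\int_M^\infty(\tfrac{g(x)}{x^2}-\tfrac{g'(x)}{x})(W^r_\infty(t)-W^r_x(t))\,dx$, and bound the last integral in expectation by $C\int_M^\infty x^{-\alpha^*}(\tfrac{|g(x)|}{x^2}+\tfrac{|g'(x)|}{x})\,dx$, which is finite and vanishes as $M\to\infty$ (a Fubini step converts the $|g(x)|x^{-2-\alpha^*}$ part into $|g'(x)|x^{-1-\alpha^*}$, integrable by hypothesis). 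Letting $r\to\infty$ then $M\to\infty$, using $f(M)/M\to\ell$ and the analogous tail bound for the limit, yields $Z^r_f(\cdot)\xrightarrow{d}Z_f(\cdot)$ for such $f$; this is Lemma~\ref{Minfty}.

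The main obstacle is removing the cutoff at $\delta$. Given a general $C^1$ function $f$ on $[0,\infty)$ as in the statement, set $f_\delta:=f\,h_\delta$ with $h_\delta$ a fixed smooth function satisfying $\one_{[\delta,\infty)}\le h_\delta\le\one_{[\delta/2,\infty)}$; then $f_\delta$ meets the hypotheses of the previous stage, so $Z^r_{f_\delta}(\cdot)\xrightarrow{d}Z_{f_\delta}(\cdot)$. On the limit side, $Z_{f_\delta}(t)\to Z_f(t)$ as $\delta\downarrow0$ by dominated convergence (the bound $\Expect{W_x(t)}\le Cx^{p-\eta^*}$ makes $\int_{(0,1]}x^{-2}W_x(t)\,dx<\infty$ a.s. because $\eta^*<p-1$, and the spurious $h_\delta'$ contribution is $O(\delta^{p-\eta^*-1})\to0$). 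On the prelimit side, since $|f-f_\delta|\le\|f\|_{L^\infty[0,\delta]}\,\one_{[0,\delta]}$ we get $|Z^r_f(t)-Z^r_{f_\delta}(t)|\le C\,Z^r_\delta(t)$, so everything reduces to the key estimate
$$\lim_{\delta\downarrow0}\ \sup_{r\in\clr}\ \Prob\Big[\ \sup_{t\in[0,T]}Z^r_\delta(t)>\varepsilon\ \Big]=0,\qquad \varepsilon>0,\ T<\infty,$$
that is, that the rescaled count of jobs of size at most $\delta c^r$ is asymptotically negligible uniformly in $r$ --- precisely the singular-near-$0$ difficulty flagged in \eqref{eq:lQ}. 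I expect this to be the genuinely hard point: the limiting mass does not collapse to a single atom, so the estimate cannot be obtained from the workload alone, and it crucially uses the initial-condition assumption \eqref{smalljobas}. As outlined in Section~\ref{ss:methods}, it is established in Section~\ref{sec:senddtoz} by coupling $Z^r_\delta$ with the queue length of a thinned SRPT system via Proposition~\ref{comp}, decomposing the relevant rescaled, truncated workload processes and their Brownian limits into excursions away from $0$, and estimating the excursion contributions by martingale and optional-stopping arguments (Lemmas~\ref{fifo}--\ref{flowsup}).

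Granting the key estimate, a three-$\varepsilon$ argument (prelimit close to $Z^r_{f_\delta}$ uniformly in $r$; weak convergence $Z^r_{f_\delta}\xrightarrow{d}Z_{f_\delta}$; limit $Z_{f_\delta}$ close to $Z_f$), together with tightness transferring through the uniform approximation and with the path continuity and absolute convergence already established for $Z_f$, yields $Z^r_f(\cdot)\xrightarrow{d}Z_f(\cdot)$ in $\mathcal D([0,\infty):\RR)$. Specialising to $f=\chi$ (where $\ell=1$ and the integral term vanishes identically, $\tfrac{\chi(x)}{x^2}-\tfrac{1}{x}\equiv0$) recovers $W^r_\infty(\cdot)=Z^r_\chi(\cdot)\xrightarrow{d}Z_\chi(\cdot)=W_\infty(\cdot)$, and specialising to $f=\one$ (where $\ell=0$) gives $Q^r(\cdot)=Z^r_{\one}(\cdot)\xrightarrow{d}Q(\cdot)$ with $Q(t)=\int_0^\infty x^{-2}W_x(t)\,dx$; this $Q$ is a.s. finite with continuous sample paths by the bounds $\Expect{\sup_{t\le T}W_x(t)}\le Cx^{p-\eta^*}$ (near $0$) and $\Expect{\sup_{t\le T}(W_\infty(t)-W_x(t))}\le Cx^{-\alpha^*}$ (near $\infty$) together with $0<\eta^*<p-1$ and $\alpha^*>0$, which make the $x$-integrand dominated by an integrable function and allow dominated convergence in $t$.
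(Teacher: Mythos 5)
Your proposal follows essentially the same route as the paper: the spatial integration-by-parts identity (Lemma \ref{parts}), convergence for compactly supported test functions via Theorem \ref{workless} on a grid plus Aldous tightness (Theorem \ref{convplus}), the extension to supports $[\delta,\infty)$ with the tail controlled through \eqref{eq:initcondq2} and the Lipschitz property of $\Gamma$ (Lemma \ref{Minfty}), and finally the removal of the cutoff at $\delta$ via the uniform smallness of $\sup_{t\le T}Z^r_\delta(t)$ and of $\int_0^\delta x^{-2}W_x(t)\,dx$, assembled through the three-$\varepsilon$ device of Lemma \ref{seqconv}. The only caveats are cosmetic: the key estimate should be stated with $\limsup_{r\to\infty}$ rather than $\sup_{r\in\clr}$ (which is all Lemma \ref{seqconv} requires and all Lemmas \ref{fifo}--\ref{smallql} deliver), and the hardest ingredient --- the excursion/martingale control of the truncated workload near $x=0$ --- is correctly identified and sketched but, as you acknowledge, deferred rather than proved.
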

Theorem \ref{workfn} is proved in Section \ref{ss:pfworkfn}. An overview of this proof is given in Section \ref{ss:methods}.

The result in Theorem \ref{workfn} can be strengthend  to show that $\tZ^r$ converges in distribution to a measure valued process $\tZ$ in $\mathcal{D}([0,\infty): \clm_F)$.
This is stated in the next theorem, which is proved in Section \ref{sec:proofs}. The proof proceeds via integrating the random measure $\tZ^r$ against a class of test functions and analyzing weak convergence of the collection of processes thus obtained.

\begin{theorem}\label{meascon} As $r\to\infty$,
$$
\tZ^r(\cdot)  \xrightarrow{d} \tZ(\cdot)
$$
in $\mathcal{D}([0,\infty): \clm_F)$, where for each $t\ge 0$, the measure $\tZ(t)$ can be characterized as $\tZ(t)(\{0\})=0$,
$\tZ(t)(\RRp)=Q(t)$ and
$$
\tZ(t)[a,b] : = \int_a^b\frac{1}{x^2}W_x(t)dx + \frac{W_b(t)}{b} - \frac{W_a(t)}{a}, \quad 0 < a < b < \infty.
$$
\end{theorem}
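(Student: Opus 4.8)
The plan is to derive Theorem~\ref{meascon} from Theorem~\ref{workfn} by a tightness-plus-identification argument, using as a separating class of functionals on $\clm_F$ the maps $\mu\mapsto\langle f,\mu\rangle$ with $f=g+c\one$, $g\in C^1_c(\RR_+)$, $c\ge 0$. Every such $f$ is $C^1$ with $\lim_{x\to\infty}f(x)/x=c$ finite and with compactly supported derivative, hence lies in the class covered by Theorem~\ref{workfn}; moreover this family separates points of $\clm_F$ and is closed under addition. I would first record the candidate limit in a form that makes it manifestly a finite nonnegative measure. Since $\Gamma$ is monotone and, for each fixed $t\ge 0$, $a\mapsto X_a(t)$ is nondecreasing on $(0,\infty)$ (both the drift coefficient $\kappa-\lambda a^{-p}$ and the initial value $\xi(a)=w^*(a)$ increase in $a$), the map $x\mapsto W_x(t)=\Gamma[X_x](t)$ is nondecreasing; using $Q(t)=\int_0^\infty x^{-2}W_x(t)\,dx<\infty$ from \eqref{eq:lQ} and $W_x(t)\uparrow W_\infty(t)<\infty$ one checks $W_x(t)/x\to 0$ as $x\downarrow0$ and as $x\to\infty$. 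Hence $\tZ(t)(dx):=x^{-1}\,d_xW_x(t)$ on $(0,\infty)$ with $\tZ(t)(\{0\})=0$ defines an element of $\clm_F$ of total mass $Q(t)$, and integration by parts yields both the interval formula in the statement and the identity $\langle f,\tZ(t)\rangle=Z_f(t)$ for every $f$ in the class of Theorem~\ref{workfn}. Since $t\mapsto Z_f(t)$, $Q(\cdot)$ and $W_\infty(\cdot)$ are continuous and $W_x(\cdot)\to W_\infty(\cdot)$ locally uniformly as $x\to\infty$, $\tZ(\cdot)$ has continuous $\clm_F$-valued sample paths.

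Next I would establish tightness of $\{\tZ^r(\cdot)\}_r$ in $\mathcal{D}([0,\infty):\clm_F)$ via a standard tightness criterion for measure-valued càdlàg processes (of Jakubowski type; see Section~\ref{prelim}). Tightness of the one-dimensional projections $\langle f,\tZ^r(\cdot)\rangle=Z^r_f(\cdot)$ for $f$ in the separating class is immediate, since Theorem~\ref{workfn} in fact gives their convergence. For the compact-containment condition I would use the characterization of relatively compact subsets of $\clm_F$ (uniformly bounded total mass together with $\sup_\mu\mu((L,\infty))\to0$ as $L\to\infty$): the bound $\sup_{t\le T}Q^r(t)\xrightarrow{d}\sup_{t\le T}Q(t)$ (continuous mapping applied to $Q^r\xrightarrow{d}Q$, whose limit is continuous) controls total mass uniformly in $r$, while the elementary inequality $\tZ^r(t)((L,\infty))\le W^r_\infty(t)/L$ — each counted job has remaining size exceeding $Lc^r$ and hence contributes more than $Lc^r$ to the $r$-th (unscaled) workload — combined with $W^r_\infty\xrightarrow{d}W_\infty$ (continuous limit) controls the tail at $+\infty$ uniformly in $r$. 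This gives compact containment on every $[0,T]$, and the criterion yields tightness.

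For the identification, let $\tZ^*(\cdot)$ be the weak limit along any subsequence. For $f$ in the separating class the map $\nu(\cdot)\mapsto\langle f,\nu(\cdot)\rangle$ is continuous from $\mathcal{D}([0,\infty):\clm_F)$ to $\mathcal{D}([0,\infty):\RR)$, so by the continuous mapping theorem and Theorem~\ref{workfn}, $\langle f,\tZ^*(\cdot)\rangle$ has the law of $Z_f(\cdot)=\langle f,\tZ(\cdot)\rangle$; applying this coordinatewise and using that all the limiting processes $Z_{f_i}(\cdot)$ (and $W_a(\cdot)$) are built from a single Brownian motion and the field $\xi$ via Theorems~\ref{workless} and~\ref{workfn} gives the joint statement that $(\langle f_1,\tZ^*(\cdot)\rangle,\dots,\langle f_m,\tZ^*(\cdot)\rangle)$ has the law of $(Z_{f_1}(\cdot),\dots,Z_{f_m}(\cdot))$ for any finite collection in the class. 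To promote this to $\tZ^*\overset{d}{=}\tZ$ I would verify that $\tZ^*$ almost surely has continuous $\clm_F$-valued paths and, crucially, that no mass escapes to $0$ in the limit, i.e.\ $\lim_{a\downarrow0}\limsup_r\Prob{\sup_{t\le T}\tZ^r(t)([0,a])>\eps}=0$ for all $\eps,T>0$. Granting this, a Skorohod-representation argument (bounding $\mu([0,a])\le\langle f_a,\mu\rangle$ for $f_a\in C^1_c(\RR_+)$ with $\one_{[0,a]}\le f_a\le\one_{[0,2a]}$) shows $\sup_{t\le T}\tZ^*(t)([0,a])\to0$ in probability as $a\downarrow0$, so $\tZ^*(t)(\{0\})=0$ and $\tZ^*$ is continuous at $0$; together with continuity of the projections $\langle f,\tZ^*(\cdot)\rangle$ and the compact-containment bound at $\infty$, this gives $\tZ^*\in\mathcal{C}([0,\infty):\clm_F)$ a.s. Since the class separates points of $\clm_F$ and both $\tZ^*$ and $\tZ$ have continuous paths, the matching finite-dimensional distributions force $\tZ^*\overset{d}{=}\tZ$. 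As every subsequence has this limit, $\tZ^r\xrightarrow{d}\tZ$.

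I expect the no-escape-of-mass claim $\lim_{a\downarrow0}\limsup_r\Prob{\sup_{t\le T}\tZ^r(t)([0,a])>\eps}=0$ to be the main obstacle, since it quantifies, uniformly in $r$, the very phenomenon that the $\delta\to0$ analysis underlying Theorem~\ref{workfn} is built to control. One route: sandwich $\one_{[0,a]}$ below a smooth $f_{2a}$ with $f_{2a}(0)=1$ and support in $[0,2a]$, so that $\sup_{t\le T}\tZ^r(t)([0,a])\le\sup_{t\le T}Z^r_{f_{2a}}(t)\xrightarrow{d}\sup_{t\le T}Z_{f_{2a}}(t)$ by Theorem~\ref{workfn}, and then show $\sup_{t\le T}Z_{f_{2a}}(t)\to0$ in probability as $a\downarrow0$; this reduces to $\int_0^a x^{-2}\sup_{s\le T}W_x(s)\,dx\to0$, which follows from the explicit form \eqref{eq:GX}--\eqref{eq:xadefn}: for small $x$ the drift $\kappa-\lambda x^{-p}$ is strongly negative, and a reflection estimate for $\Gamma$ gives $\sup_{s\le T}W_x(s)=O(x^p\log(1/x))$ as $x\downarrow0$, which is $x^{-2}$-integrable near $0$ because $p>1$. (Alternatively, one can propagate the initial-condition bound \eqref{smalljobas} along the SRPT dynamics to obtain the claim directly, in the spirit of Remark~\ref{naturalin}.) The remaining pieces — the tightness criterion, the compact-containment bounds, and the continuous-mapping identification — are routine once Theorems~\ref{workless} and~\ref{workfn} are in hand.
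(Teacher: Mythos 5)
Your proposal is correct in substance and follows the same overall strategy as the paper --- everything rides on Theorem \ref{workfn} (and its compactly supported precursor), plus tightness of the measure-valued sequence and identification of subsequential limits through a separating class of test functionals --- but the machinery differs in a few places worth noting. The paper invokes Roelly's criterion (\cite{roelly1986}): tightness of $\langle f,\tZ^r(\cdot)\rangle$ for a countable dense family of compactly supported continuous $f$ \emph{together with} $f\equiv 1$ already yields tightness in $\mathcal{D}([0,T]:\clm_F)$, so no separate compact-containment argument is needed; the scalar tightness is obtained by approximating continuous compactly supported $f$ by step functions $h\in\mathcal{C}$ and using Theorem \ref{convplus} with $a_1=0$ (Remark \ref{rem:takez}). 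You instead use a Jakubowski-type criterion with explicit compact containment, supplied by the total-mass bound and the elementary tail estimate $\tZ^r(t)((L,\infty))\le W^r_\infty(t)/L$; this is equally valid, though note that no such criterion is actually recorded in Section \ref{prelim}, so you would need to import it. Your explicit realization of the limit as $\tZ(t)(dx)=x^{-1}\,d_xW_x(t)$, using monotonicity and continuity of $x\mapsto W_x(t)$, is a nice addition: it makes the interval formula, the total mass $Q(t)$, and nonnegativity immediate, whereas the paper leaves this implicit. Your sandwich $\tZ^r(t)([0,a])\le Z^r_{f_{2a}}(t)$ with Theorem \ref{workfn} applied to a small-support $C^1$ function is a legitimate substitute for the paper's Remark \ref{rem:takez}, since the hard $\delta\to0$ work is already encapsulated in Theorem \ref{workfn}.

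Two loose ends to tighten. First, for the identification you need \emph{joint} convergence of $(\langle f_1,\tZ^r\rangle,\dots,\langle f_m,\tZ^r\rangle)$; "all limits are built from a single Brownian motion" is not by itself a proof. The clean route (used in the paper) is linearity plus Cram\'er--Wold: $\langle\sum_ic_if_i,\tZ^r\rangle=\sum_ic_i\langle f_i,\tZ^r\rangle$ converges to $Z_{\sum_ic_if_i}=\sum_ic_iZ_{f_i}$ for all real $c_i$ (so your class must be closed under real, not just nonnegative, linear combinations), which pins down the joint law of the projections of any subsequential limit. Second, the pathwise bound $\sup_{s\le T}W_x(s)=O(x^p\log(1/x))$ ignores the initial-condition contribution $\xi(x)$, which is controlled only in expectation via \eqref{eq:Cxifinite}; the correct statement is the probabilistic estimate of Lemmas \ref{BMsup}--\ref{flowsup}, which yields $\sup_{t\le T}\int_0^\delta x^{-2}W_x(t)\,dx\to0$ in probability and suffices for your purposes. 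Finally, the no-escape-of-mass and path-continuity work is not strictly needed for the identification: matching finite-dimensional distributions of c\`adl\`ag processes already force equality in law, which is how the paper concludes via Kallenberg's theorem.
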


\begin{remark}\label{heavylightcomp}
The integral expression \eqref{eq:lQ} in Theorem \ref{workfn} is quite different from the main result (Theorem 3.1) in \cite{P15}, which gives conditions under which light tailed processing times
result in a limit theorem that states $Q(\cdot)=W_{\infty}(\cdot)$ (state space collapse).  While the proofs given here do not cover the light tailed case, the concentration arguments given in \cite{P15}
could be used to argue that the measure valued state descriptors in the light tailed case, scaled as in \eqref{ztildef} above, would
converge to a point mass at one with (random) total mass given by the limiting workload process $W_{\infty}(\cdot)$. Consequently, the rescaled, truncated workload processes $W_x^r(\cdot)$ defined in \eqref{trunkworkdef} above, in the light tailed case, would converge to $W_{\infty}(\cdot)$ for $x> 1$, $W_1(\cdot)$ for $x=1$ and the process that is identically zero otherwise,
and the integral given in \eqref{eq:lQ} would be $W_{\infty}(t)$ for each $t\ge 0$, as it should from the results of \cite{P15}.  The results in Theorem \ref{workfn} and Theorem \ref{meascon} demonstrate that, in contrast to the light tailed
processing time distributions considered in \cite{P15}, heavy tailed processing time distributions do not exhibit state space collapse and the mass of the limiting scaled measure valued state descriptor is distributed as a time-varying random profile over $\RRp$, as opposed to a time-varying randomly sized point mass at one. 
\end{remark}

\subsection{Tail behavior of $\tZ$}\label{ss:tail}

The next result describes the asymptotic behavior of the limiting queue length and limiting workload processes defined in terms of the measure $\tZ$ when attention is restricted to the dynamics of jobs with large remaining processing times.
Let
\begin{equation}
	\label{eqwinftyp}
	W_{\infty}'(t) := t - \sup\{s \le t: W_{\infty}(s) = 0\}, \; t \ge 0,
	\end{equation}
which can be recognized as the duration of the current busy period when $W_{\infty}(t)$ is interpreted as the work in the system at time instant $t$.
We will see in Section \ref{skmpr} that $W_{\infty}'(\cdot)$ arises as the `path-wise derivative' of the Skorohod map with respect to the `drift parameter' of the process on which the map acts, which explains the notation $W_{\infty}'(\cdot)$. We will also assume a stronger version of \eqref{eq:initcondq2} for this result, namely
\begin{equation}\label{initdec}
\lim_{x \rightarrow \infty} x^p(\xi(\infty) - \xi(x)) = 0 \qquad \text{almost surely}.
\end{equation}
In particular, \eqref{initdec} holds when $\xi(\infty)=0$.

\begin{theorem}\label{tailtZ}
Assume \eqref{initdec} holds. For every $t\ge 0$, as $a \rightarrow \infty$,
\begin{align*}
\frac{a^p}{\lambda}\langle \chi \one_{[a, \infty)}, \tZ(t) \rangle &\rightarrow W_{\infty}'(t) \qquad \text{almost surely},\\
\frac{(p+1)a^{p+1}}{p\lambda} \tZ(t)[a, \infty) &\rightarrow W_{\infty}'(t) \qquad \text{almost surely}.
\end{align*}
 In particular, for any $t\ge 0$ such that $W_{\infty}'(t) \neq 0$, as $a \rightarrow \infty$,
\begin{equation}\label{eq:AsymWork}
\frac{\langle \chi \one_{[a, \infty)}, \tZ(t) \rangle}{\mathbb{E}\left(v \ \vert \ v \ge a\right)\tZ(t)[a, \infty)} \rightarrow 1 \qquad \text{almost surely}.
\end{equation}
\end{theorem}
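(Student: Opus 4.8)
The plan is to reduce both limits to a single pathwise asymptotic for the rescaled, truncated workloads, namely that for each fixed $t\ge 0$,
\begin{equation}\label{eq:tailasymp}
a^p\bigl(W_{\infty}(t)-W_a(t)\bigr)\;\longrightarrow\;\lambda\,W_{\infty}'(t)\qquad\text{almost surely as }a\to\infty ,
\end{equation}
and then to read the measure $\tZ(t)$ off the characterization in Theorem~\ref{meascon}. Granting \eqref{eq:tailasymp} for the moment, I would first record two consequences of that characterization. Letting $b\downarrow a$ in the formula for $\tZ(t)[a,b]$ shows $\tZ(t)\{x\}=0$ for every $x\ge 0$; and a layer-cake integration of $\chi_a$ against the interval values of $\tZ(t)$, legitimate because $\int_0^a x^{-1}W_x(t)\,dx<\infty$ (a consequence of $W_x(t)\le W_{\infty}(t)$ and the finiteness of $Q(t)=\int_0^\infty x^{-2}W_x(t)\,dx$ in Theorem~\ref{workfn}), gives $\langle\chi_a,\tZ(t)\rangle=W_a(t)$ for all $a\in[0,\infty]$, whence $\langle\chi\one_{[a,\infty)},\tZ(t)\rangle=W_{\infty}(t)-W_a(t)$. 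Letting instead $b\to\infty$ in the same formula and using $W_b(t)/b\to 0$ (because $W_b(t)\to W_{\infty}(t)<\infty$) gives $\tZ(t)[a,\infty)=\int_a^\infty x^{-2}W_x(t)\,dx-a^{-1}W_a(t)$.

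To prove \eqref{eq:tailasymp}, recall $W_a=\Gamma[X_a]$ from \eqref{eq:GX} and, from \eqref{eq:xadefn}--\eqref{eq:xinfinitydefn}, $X_{\infty}(t)-X_a(t)=a^{-p}\bigl(\lambda t+\delta_a\bigr)$ with $\delta_a:=a^p(\xi(\infty)-\xi(a))\to 0$ almost surely by assumption \eqref{initdec}. Since $X_{\infty}-X_a$ is nonnegative and nondecreasing on $[0,t]$, $W_a(t)\le W_{\infty}(t)$, so \eqref{eq:tailasymp} is trivial on $\{W_{\infty}(t)=0\}$; and on the subevent where $X_{\infty}\ge 0$ throughout $[0,t]$ both reflection terms vanish, so $W_{\infty}(t)-W_a(t)=X_{\infty}(t)-X_a(t)=a^{-p}(\lambda t+\delta_a)$ while $W_{\infty}'(t)=t$, and \eqref{eq:tailasymp} holds there as well. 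In the remaining case set $\tau:=\sup\{s\le t:W_{\infty}(s)=0\}$, so $W_{\infty}'(t)=t-\tau$; the reflection term $L_{\infty}(\cdot):=-\inf_{s\le\cdot}(X_{\infty}(s)\wedge 0)$ is constant on $[\tau,t]$, hence $W_{\infty}(t)=X_{\infty}(t)+L_{\infty}(\tau)$. Writing $W_a(t)=X_a(t)+L_a(t)$ with $L_a(t)=-X_a(\tau_a)$, where $\tau_a$ is the (almost surely unique) location of the infimum of $X_a\wedge 0$ on $[0,t]$: since $X_a\to X_{\infty}$ uniformly on $[0,t]$ and the infimum of $X_{\infty}\wedge 0$ is attained uniquely at $\tau$ with $X_{\infty}$ locally nondegenerate there (its Brownian component), one obtains $\tau_a\to\tau$ and, crucially, $X_{\infty}(\tau_a)-X_{\infty}(\tau)=o(a^{-p})$, so that $L_a(t)=L_{\infty}(\tau)+\lambda\tau\,a^{-p}+o(a^{-p})$ and therefore $W_a(t)=W_{\infty}(t)-\lambda(t-\tau)a^{-p}+o(a^{-p})$, which is \eqref{eq:tailasymp}. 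Equivalently, \eqref{eq:tailasymp} is the instance, in the direction $s\mapsto\lambda s$, of the pathwise differentiability of the Skorohod map in its drift parameter developed in Section~\ref{skmpr} — the route I would actually take.

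With \eqref{eq:tailasymp} established, the first assertion is immediate from $\langle\chi\one_{[a,\infty)},\tZ(t)\rangle=W_{\infty}(t)-W_a(t)$. For the second, set $R_x(t):=W_{\infty}(t)-W_x(t)\ (\ge 0)$; the $2$-Lipschitz property of $\Gamma$ gives $|R_x(t)|\le 2x^{-p}(\lambda t+|\delta_x|)\le C(t)x^{-p}$ for all $x$ past a random threshold, with $C(t)$ independent of $x$ (using $\sup_{x\ge a_0}|\delta_x|\to 0$). Since $\int_a^\infty x^{-2}W_{\infty}(t)\,dx=W_{\infty}(t)/a$, the identity for $\tZ(t)[a,\infty)$ becomes $\tZ(t)[a,\infty)=a^{-1}R_a(t)-\int_a^\infty x^{-2}R_x(t)\,dx$. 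Multiplying by $a^{p+1}$: the first term equals $a^pR_a(t)\to\lambda W_{\infty}'(t)$ by \eqref{eq:tailasymp}, while the substitution $x=au$ turns the second into $\int_1^\infty u^{-p-2}\bigl((au)^pR_{au}(t)\bigr)\,du$, whose integrand tends pointwise to $\lambda W_{\infty}'(t)$ (by \eqref{eq:tailasymp}) and is bounded by $C(t)u^{-p-2}$ uniformly in $u\ge1$ for $a$ large, so by dominated convergence it converges to $\lambda W_{\infty}'(t)/(p+1)$. Hence $a^{p+1}\tZ(t)[a,\infty)\to\frac{p}{p+1}\lambda W_{\infty}'(t)$, which is the second assertion. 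Finally, for \eqref{eq:AsymWork}: on $\{W_{\infty}'(t)\ne 0\}$ the ratio of the two limits gives $\langle\chi\one_{[a,\infty)},\tZ(t)\rangle/\tZ(t)[a,\infty)\sim\frac{p+1}{p}a$; since $\overline{F}$ is regularly varying with index $-(p+1)$, Karamata's theorem gives $\int_a^\infty\overline{F}(s)\,ds\sim a\overline{F}(a)/p$, so $\mathbb{E}(v\one_{[v\ge a]})=a\overline{F}(a)+\int_a^\infty\overline{F}(s)\,ds\sim\frac{p+1}{p}a\overline{F}(a)$ and, by continuity of $F$, $\mathbb{E}(v\mid v\ge a)=\mathbb{E}(v\one_{[v\ge a]})/\overline{F}(a)\sim\frac{p+1}{p}a$; the two asymptotics cancel, yielding \eqref{eq:AsymWork}.

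I expect \eqref{eq:tailasymp} to be the main obstacle: one must show that the last-passage (reflection) term $L_a(t)$ changes, to first order in $a^{-p}$, only through the value of the perturbation $s\mapsto\lambda s$ at the \emph{unperturbed} last zero $\tau$, i.e.\ that the displacement $\tau_a-\tau$ of the minimizer contributes only at order $o(a^{-p})$. This rests on the almost sure uniqueness and local nondegeneracy of the running infimum of $X_{\infty}$ on $[0,t]$ together with a careful treatment of the boundary case $\tau=0$, and is exactly the pathwise Skorohod-map differentiability established in Section~\ref{skmpr}; everything else is soft (two integration-by-parts identities, a Lipschitz bound combined with dominated convergence, and Karamata's theorem).
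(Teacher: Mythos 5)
Your proposal is correct and follows essentially the same route as the paper: the key asymptotic $a^p(W_\infty(t)-W_a(t))\to\lambda W_\infty'(t)$ is obtained by combining the directional-derivative property \eqref{eq:deriv} of the Skorohod map (applied to the drift perturbation $-\lambda a^{-p}\iota$) with the Lipschitz control of the initial-condition gap $\xi(\infty)-\xi(a)$ via \eqref{initdec}, and the two tail identities for $\tZ(t)$ plus Karamata's theorem \eqref{intregv} then finish the argument exactly as in the paper (your dominated-convergence step replaces the paper's explicit $\epsilon$-estimate \eqref{limZ2}, which is an immaterial difference).
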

Theorem \ref{tailtZ} is proved in Section \ref{sec:proofs} and proceeds via connecting the tail mass processes \{$\tZ(t)[a,\infty) : t \ge 0\}$ for large $a$ with the process $\{W_{\infty}'(t) : t \ge 0\}$.

\begin{remark}\label{collapsealarge}
The above result says that if, {\color{black}in the diffusion limit,} we restrict attention to jobs {\color{black}in system} of size more than $a$ (for large $a$), the cumulative workload due to these jobs can be {\color{black}approximated} by multiplying the number of such jobs present in the system with the expected size of an incoming job conditional on it being more than {\color{black}size} $a$.
{\color{black}In other words in the diffusion limit, so few large jobs have entered service by a finite time $t$ that the work associated with such jobs satisfies \eqref{eq:AsymWork}.}
This result can be heuristically understood from the SRPT dynamics under which small jobs are given priority and large jobs remain unprocessed at typical time points when the system has small jobs present.  Theorem \ref{tailtZ} can be seen as a form of asymptotic state space collapse when one restricts attention to jobs with large remaining processing times.
\end{remark}

\subsection{Asymptotic state space collapse as $p \rightarrow \infty$}\label{sec:sc}
As stated in Remark \ref{heavylightcomp}, the limiting scaled queue length process given in Theorem \ref{workfn} differs qualitatively from its light tailed analogue treated in \cite{P15} in that, although the limiting scaled queue length and limiting scaled workload processes are driven by the same Brownian motion $B$, there is no state space collapse as in \cite{P15}. However, as $p \rightarrow \infty$ (that is, the tail of the processing time distribution becomes lighter), we obtain a limiting state space collapse as described in Theorem \ref{collapse} below. As we are interested in large values of $p$ here, we will only consider $p \ge 2$. To make the dependence on $p$ explicit, we consider a family of distributions $\{F^{(p)}(\cdot) : p \ge 2\}$ such that for each $p \ge 2$, $\overline{F}^{(p)}(\cdot) := 1- F^{(p)}(\cdot)$ is a regularly varying function; that is, \eqref{eq:regvar} is satisfied by $\overline{F}^{(p)}(\cdot)$. 
For each $p \ge 2$, consider a sequence of SRPT queues indexed by $\mathcal{R}$ such that the initial conditions $\{\qq^{(p),r}, \breve{v}_l^{(p),r}, l \in \NN, r \in \mathcal{R}\}$ satisfy the assumptions of Section \ref{initconass} and the arrival processes $\{E^r(\cdot), r \in \mathcal{R}\}$ do not depend on $p$. Consequently, $\lambda^{(p)} = 1/\mathbb{E}(v^{(p)})$, where $v^{(p)}$ is distributed as $F^{(p)}(\cdot)$, does not depend on $p$ and we will write this quantity as $\lambda$.
The processing times of jobs for each $p \ge 2$ are distributed as $F^{(p)}(\cdot)$. For each $p \ge 2$, write $\sigmap = \sqrt{\lambda \operatorname{Var}(v^{(p)}) + \lambda\sigma_A^2}$.
For each $p \ge 2$, we let $\xi^{(p)}(\infty)$ denote the limiting initial workload
(i.e. the quantity analogous to $\xi(\infty)$ in Theorem \ref{workless} for the $p$-th system) and let
$\xi^{(p)}(\cdot)$ denote the limiting initial truncated workload process (analogous to $\xi(\cdot)$ in Theorem \ref{workless} for the $p$-th system). We will also elucidate the dependence of $\eta^*$ in Assumption \eqref{eq:initcondq1} by writing it as $\eta^*(p)$. We assume that
\begin{equation}\label{varunibdd}
\sup_{p \ge 2}\mathbb{E}\left[(v^{(p)})^2\right] < \infty,\quad \sup_{p \ge 2}\mathbb{E}\left[\xi^{(p)}(\infty)\right] < \infty,\quad\text{and} \quad \sup_{p\ge 2}\cop<\infty,
\end{equation}
where $\cop :=2\sup_{a >0}a^{-(p-\eta^*(p))}\mathbb{E}\left(\xi^{(p)}(a)\right)$ for each $p\ge 2$.
Writing $Q^{(p)}(\cdot)$ for $Q(\cdot)$ and $W^{(p)}_{\infty}(\cdot)$ for $W_{\infty}(\cdot)$ for each $p\ge 2$ to denote the limiting queue length and limiting workload processes respectively, we have that, for $p\ge 2$,
$$
Q^{(p)}(t) = \int_0^{\infty}\frac{1}{x^2}\Gamma \left[ \xi^{(p)}(x) + \sigmap B(\cdot) + (\kappa - \lambda x^{-p})\iota(\cdot)\right](t)dx, \; t \ge 0,
$$
where $\iota$ denotes the identity map on $[0,\infty)$ and $W^{(p)}_{\infty}(t) = \Gamma [X^{(p)}_{\infty}](t)$
for $X^{(p)}_{\infty}(t) = \xi^{(p)}(\infty) + \sigmap B(t) + \kappa t$, $t \ge 0$.
For the state space collapse result, we will require that $\eta^*(\cdot)$ satisfies
\begin{equation}\label{assp}
\liminf_{p \rightarrow \infty}\frac{p-1-\eta^*(p)}{\log p} = \infty.
\end{equation}
Moreover, we will require for any $a \in (1, \infty)$,
\begin{equation}\label{assp2}
\lim_{p \rightarrow \infty} \mathbb{E}(\xi^{(p)}(\infty) - \xi^{(p)}(a)) = 0.
\end{equation}
Then, \eqref{assp} implies that for large $p$, $\mathbb{E}(\xi^{(p)}(a))$ decreases to zero sufficiently fast with $a$ tending to zero. Also, \eqref{assp2} implies that the main contribution to the limiting initial workload process $\xi^{(p)}(\cdot)$ for large values of $p$ comes from initial jobs with size in $(0,1]$. Note that if the system starts from empty, namely $\qq^{(p),r} = 0$ for all $r \in \mathcal{R}$ and $p \ge 2$, then for any $t \ge 0$ and any $a \in (1, \infty)$, by the Lipschitz property of the Skorohod map given in \eqref{eq:lipSM} below,
$$
\lim_{p \rightarrow \infty} \mathbb{E}\left(W^{(p)}_{\infty}(t) - W^{(p)}_{a}(t)\right) \le 2\lambda t a^{-p} \rightarrow 0 \quad \text{as } \quad p \rightarrow \infty.
$$ 
Hence, by the discussion in Remark \ref{naturalin}, Assumption \eqref{assp2} is indeed a natural assumption on $\xi^{(p)}(\cdot)$.
\begin{theorem}\label{collapse}
Assume that \eqref{varunibdd} and \eqref{assp2} hold and we can choose $p \mapsto \eta^*(p)$ such that $\eta^*(\cdot)$ satisfies \eqref{assp}. Then, for any $T>0$,
$$
\sup_{t \in [0,T]}\left|Q^{(p)}(t) -  W^{(p)}_{\infty}(t)\right| \xrightarrow{P} 0 \quad \text{as } \quad p \rightarrow \infty.
$$
\end{theorem}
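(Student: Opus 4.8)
The plan is to prove the stronger statement $\mathbb{E}\big[\sup_{t\in[0,T]}|Q^{(p)}(t)-W^{(p)}_\infty(t)|\big]\to0$ as $p\to\infty$, which yields the asserted convergence in probability. Write $V^{(p)}_x(t):=\sigmap B(t)+(\kappa-\lambda x^{-p})t$, so that $W^{(p)}_x(\cdot)=\Gamma[\xi^{(p)}(x)+V^{(p)}_x](\cdot)$ for $x\in(0,\infty)$, $W^{(p)}_\infty(\cdot)=\Gamma[\xi^{(p)}(\infty)+\sigmap B+\kappa\iota](\cdot)$, and $Q^{(p)}(t)=\int_0^\infty x^{-2}W^{(p)}_x(t)\,dx$. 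The first step is the elementary identity $\int_1^\infty x^{-2}\,dx=1$, which lets me rewrite $W^{(p)}_\infty(t)=\int_1^\infty x^{-2}W^{(p)}_\infty(t)\,dx$ and hence
$$
Q^{(p)}(t)-W^{(p)}_\infty(t)=\int_0^1 \frac{W^{(p)}_x(t)}{x^2}\,dx-\int_1^\infty \frac{W^{(p)}_\infty(t)-W^{(p)}_x(t)}{x^2}\,dx=:A^{(p)}(t)-C^{(p)}(t).
$$
Since $x\mapsto W^{(p)}_x(t)$ is nondecreasing with $W^{(p)}_x(t)\le W^{(p)}_\infty(t)$ — properties that pass to the limit from the prelimit truncated workloads $W^r_a(t)=\langle\chi_a,\tZ^r(t)\rangle$ via Theorem \ref{workless} — both $A^{(p)}$ and $C^{(p)}$ are nonnegative, so it suffices to control $\mathbb{E}[\sup_{t\le T}A^{(p)}(t)]$ and $\mathbb{E}[\sup_{t\le T}C^{(p)}(t)]$ separately.

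For $C^{(p)}$ I would use the Lipschitz continuity \eqref{eq:lipSM} of the Skorohod map together with \eqref{assp2}. For $x\ge1$ and $t\le T$, since $X^{(p)}_\infty(s)-X^{(p)}_x(s)=(\xi^{(p)}(\infty)-\xi^{(p)}(x))+\lambda x^{-p}s\ge0$, one gets $0\le W^{(p)}_\infty(t)-W^{(p)}_x(t)\le 2(\xi^{(p)}(\infty)-\xi^{(p)}(x))+2\lambda Tx^{-p}$, a bound independent of $t\le T$. Integrating against $x^{-2}$ over $[1,\infty)$ and taking expectations, $\mathbb{E}[\sup_{t\le T}C^{(p)}(t)]\le 2\int_1^\infty x^{-2}\mathbb{E}[\xi^{(p)}(\infty)-\xi^{(p)}(x)]\,dx+\tfrac{2\lambda T}{p+1}$; the last term vanishes, and for the integral \eqref{assp2} gives convergence of the integrand to $0$ for each fixed $x>1$ while \eqref{varunibdd} furnishes the dominating function $x^{-2}\sup_p\mathbb{E}[\xi^{(p)}(\infty)]$, so dominated convergence finishes this term.

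For $A^{(p)}$ I would fix $x^*_p:=1$ if $\kappa\le0$ and $x^*_p:=(\lambda/2\kappa)^{1/p}\wedge1$ if $\kappa>0$; then $x^*_p\to1$ and, for $x\in(0,x^*_p)$, the drift $\kappa-\lambda x^{-p}$ equals $-\mu_x$ with $\tfrac12\lambda x^{-p}\le\mu_x:=\lambda x^{-p}-\kappa\le(\lambda+|\kappa|)x^{-p}$. Splitting the integral at $x^*_p$, the contribution of $[x^*_p,1]$ is at most $((x^*_p)^{-1}-1)\sup_{t\le T}W^{(p)}_\infty(t)$ (using $W^{(p)}_x\le W^{(p)}_\infty$ there), which is negligible since $(x^*_p)^{-1}-1\to0$ and $\mathbb{E}[\sup_{t\le T}W^{(p)}_\infty(t)]$ is bounded uniformly in $p$ — by the elementary bound $\Gamma[g](t)\le g(0)^+ +\sup_{0\le s\le t}(g(t)-g(s))$ applied to $g=\xi^{(p)}(\infty)+\sigmap B+\kappa\iota$, together with $\sup_p\sigmap<\infty$ and $\sup_p\mathbb{E}[\xi^{(p)}(\infty)]<\infty$ from \eqref{varunibdd}. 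On $(0,x^*_p)$, the same elementary bound applied to $g=\xi^{(p)}(x)+V^{(p)}_x$ gives $W^{(p)}_x(t)\le\xi^{(p)}(x)+R^{(p)}_x(t)$, where $R^{(p)}_x(\cdot):=\Gamma[V^{(p)}_x](\cdot)$ is a reflected Brownian motion with variance $(\sigmap)^2$ and drift $-\mu_x$. The initial-data part is bounded using the definition of $\cop$ and \eqref{varunibdd}: $\mathbb{E}[\xi^{(p)}(x)]\le\tfrac12\cop\,x^{p-\eta^*(p)}$, so $\int_0^1 x^{-2}\mathbb{E}[\xi^{(p)}(x)]\,dx\le\tfrac{\cop}{2(p-1-\eta^*(p))}\to0$ — this is exactly the step that invokes Assumption \eqref{assp}. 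The fluctuation part I would bound by the standard running-maximum estimate $\mathbb{E}[\sup_{t\le T}R^{(p)}_x(t)]\le c\,\tfrac{(\sigmap)^2}{\mu_x}(1+\log^+(\mu_x^2T/(\sigmap)^2))$ for a universal $c$; since $\mu_x\asymp\lambda x^{-p}$ uniformly on $(0,1)$ and $\sigmap$ is bounded above and bounded below away from $0$ uniformly in $p$, the right side is at most $C\,x^{p}(1+p\log(1/x))$ with $C=C(T,\lambda,\kappa)$ independent of $p$, whence $\int_0^1 x^{-2}\mathbb{E}[\sup_{t\le T}R^{(p)}_x(t)]\,dx\le C\int_0^1 x^{p-2}(1+p\log(1/x))\,dx=C\big(\tfrac1{p-1}+\tfrac{p}{(p-1)^2}\big)\to0$. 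Assembling the three estimates completes the proof.

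I expect the main obstacle to be the behaviour of $A^{(p)}$ near $x=0$: because $\int_0^1 x^{-2}\,dx=\infty$, the crude comparison of $W^{(p)}_x$ with $W^{(p)}_1$ (whose Lipschitz error $\lambda x^{-p}$ is not $x^{-2}$-integrable) is useless, and the only available gain is the strong negative drift $-\mu_x\asymp-\lambda x^{-p}$ that SRPT forces onto the truncated workload $W^{(p)}_x$. Thus the crux is the running-maximum bound for reflected Brownian motion with negative drift, whose $\sigma^2/\mu$ scaling supplies the decisive factor $x^p$ that defeats the $x^{-2}$ singularity; securing a matching quantitative decay rate for $\mathbb{E}[\xi^{(p)}(x)]$ as $x\downarrow0$, which is what \eqref{assp} encodes, plays the analogous role for the initial-condition term. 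Everything else — the tail term $C^{(p)}$, the boundary layer $[x^*_p,1]$, and the various uniform-in-$p$ moment bounds — is routine once the monotonicity of $x\mapsto W^{(p)}_x(t)$ and the uniform bounds of \eqref{varunibdd} are in hand.
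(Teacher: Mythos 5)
Your proposal is correct, and for the difficult part --- the singularity of $\int_0^1 x^{-2}W^{(p)}_x(t)\,dx$ at $x=0$ --- it takes a genuinely different route from the paper. The paper fixes an $\epsilon$ via the uniform moment bound $\mathcal{B}$ to dispose of $\int_{1-\epsilon}^1$, and then controls $\int_0^{1-\epsilon}$ by invoking Lemma \ref{flowsup} with $\eta=(p-1+\eta^*(p))/2$; that lemma rests on the excursion/optional-stopping analysis of Lemma \ref{BMsup} and delivers a tail-probability bound of order $p\,(1-\epsilon)^{(p-1-\eta^*(p))/2}$, which is precisely why the full strength of \eqref{assp} (the division by $\log p$) is needed there. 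You instead work directly in $L^1$: after the interchange $\mathbb{E}[\sup_t\int(\cdot)]\le\int\mathbb{E}[\sup_t(\cdot)]$, you split off the initial data via $\mathbb{E}[\xi^{(p)}(x)]\le\tfrac12\cop x^{p-\eta^*(p)}$ (for which only $p-1-\eta^*(p)\to\infty$ is needed, a strictly weaker consequence of \eqref{assp}) and bound the fluctuation part by the expected running maximum of a reflected Brownian motion with drift $-\mu_x\asymp-\lambda x^{-p}$, whose $\sigma^2/\mu$ scaling supplies the integrable factor $x^p(1+p\log(1/x))$; your $p$-dependent split point $x^*_p\to1$ also dispenses with the fixed-$\epsilon$ bookkeeping. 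Your approach buys a stronger conclusion ($L^1$ rather than in-probability convergence, with an explicit rate) under a weaker hypothesis, and is arguably cleaner; its one external ingredient is the estimate $\mathbb{E}[\sup_{t\le T}\Gamma[\sigma B-\mu\iota](t)]\le c\,\tfrac{\sigma^2}{\mu}(1+\log^+(\mu^2T/\sigma^2))$, which is not proved in the paper in that form (it is essentially the expectation-level counterpart of what Lemma \ref{BMsup} establishes for tail probabilities) and would need a short proof --- e.g.\ Brownian scaling to reduce to unit drift and variance, followed by a block decomposition of $[0,S]$ and the exponential law of $\sup_t(B(t)-t)$ --- or a citation. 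The remaining pieces (the tail term $C^{(p)}$ via the Lipschitz property and \eqref{assp2} with dominated convergence, and the uniform bound $\mathcal{B}$ on $\mathbb{E}[\sup_{t\le T}W^{(p)}_\infty(t)]$) coincide with the paper's \eqref{collapse2} and \eqref{pnew2}, and your checks of the drift bounds $\tfrac12\lambda x^{-p}\le\mu_x\le(\lambda+|\kappa|)x^{-p}$ on $(0,x^*_p)$ and of $\inf_p\sigmap\ge\sqrt{\lambda}\,\sigma_A>0$ are all sound.
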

Theorem \ref{collapse} is proved in Section \ref{sec:proofs}. The proof essentially proceeds by showing that as $p \rightarrow \infty$, the time varying mass profile of the limiting measure valued state descriptor collapses onto a point mass at one.  {\color{black}For the Pareto Type I example with $\overline{F}^{(p)}(x)=\min\left(\left(\frac{\lambda x(p+1)}{p}\right)^{-p-1},1\right)$ for $x\in\RRp$ and $p\ge 2$, we have $\lambda^{(p)}=\lambda$ and $\operatorname{Var}(v^{(p)})=\lambda^{-2}/(p^2-1)$  for all $p\ge 2$, and the latter tends to zero as $p\to\infty$.  In fact, the measure corresponding to the complementary cumulative distribution function (CCDF) $\overline{F}^{(p)}(\cdot)$ converges weakly to the point mass at $\lambda^{-1}$, i.e., the service times are asymptotically deterministic, which makes the state space collapse rather intuitive.  A somewhat more interesting example based on the Lomax distribution that does not have asymptotically deterministic service times has CCDFs $\overline{G}^{(p)}(x)=\left(1+\frac{\lambda x}{p}\right)^{-p-1}$, $x\in\RRp$, with $\lambda^{(p)}=\lambda$ and $\operatorname{Var}(v^{(p)})=\lambda^{-2}(p+1)/(p-1)$ for all $p\ge 2$.  This gives rise to an exponential rate $\lambda$ distribution in the $p\to\infty$ limit.}

\begin{remark}\label{zerocond}
	Consider the initial condition of the form discussed in (II) of Section \ref{iceg}, namely, along with (i) in (I) of Section \ref{iceg}, suppose for some $\al >0$, $(c^r)^{1+\al} \qq^r/r\to 0$ in $L^1$ and $\{\breve{v}_1^r/c^r, r \in \clr\}$ is $L^1$ bounded (note that $\qq^r=0$ for all $r \in \mathcal{R}$ is a special case).
	In this case one can replace $\al^*$ in Theorem \ref{workfn} with $p$. {\color{black}Also, in this case the assumption \eqref{initdec} in Theorem \ref{tailtZ} can be omitted. Moreover, if we consider a sequence of initial conditions indexed by $p \ge 2$ satisfying (II) of Section \ref{iceg} such that the choice of $\alpha= \alpha(p)$ can be made such that $\alpha(p)/\log p \rightarrow \infty$ as $p \rightarrow \infty$, then the assumptions \eqref{varunibdd}, \eqref{assp} and \eqref{assp2} in Theorem \ref{collapse} can be replaced by the single assumption $\sup_{p \ge 2}\mathbb{E}\left[(v^{(p)})^2\right] < \infty$. This applies, in particular, if $\qq^r=0$ for all $r \in \mathcal{R}$.} 
	\end{remark}

\section{Preliminaries}\label{prelim}
In this section we recall some basic facts and record some well known results that will be used several times in this work.

\subsection{Properties of the Skorohod Map}\label{skmpr}
Recall the Skorohod map $\Gamma$ defined in \eqref{eq:skormap}. {\color{black}The properties of $\Gamma$ summarized here can be found in \cite[Chapter 13.5]{W01}, unless noted otherwise.}
Then, denoting $\mathcal{D}_0([0,\infty): \RR)$ as the space of all $f \in \mathcal{D}([0,\infty): \RR)$
with $f(0)\ge 0$, the map $\Gamma$ is a continuous map from $\mathcal{D}_0([0,\infty): \RR)$ to $\mathcal{D}([0,\infty): \RRp)$.
Furthermore, the following Lipschitz property holds: for all  $f_1$, $f_2 \in \mathcal{D}_0([0,\infty): \RR)$ and  $T \in [0,\infty)$,
\begin{equation}\label{eq:lipSM}
	\sup_{t\in[0,T]}|\Gamma[f_1](t)- \Gamma[f_2](t)| \le 2 \sup_{t\in[0,T]}|f_1(t)- f_2(t)|.
\end{equation}
For any $f \in \mathcal{D}_0([0,\infty)$ and any $t_1, t_2$ such that $0 \le t_1 \le t_2 \le T$, defining functions $g_1(s) = \Gamma[f](t_1)$ and $g_2(s) = \Gamma[f](t_1) + f(s) - f(t_1)$ for $s \in [t_1, t_2]$, note that $\Gamma[g_1](t_2) = \Gamma[f](t_1)$ and $\Gamma[g_2](t_2) = \Gamma[f](t_2)$. Using \eqref{eq:lipSM}, we conclude
\begin{equation}\label{skortimecomp}
|\Gamma[f](t_2) - \Gamma[f](t_1)| \le 2\sup_{t_1 \le s \le t_2}|g_2(s) - g_1(s)| = 2\sup_{t_1 \le s \le t_2}|f(s) - f(t_1)|.
\end{equation} 
The following monotonicity property also holds.  Suppose $f_1, f_2 \in \mathcal{D}_0([0,\infty): \RR)$ are such that, for all $0\le s \le t <\infty$
$f_1(t) - f_1(s) \le f_2(t) - f_2(s)$ and $f_1(0)\le f_2(0)$.
Then, it follows that $f_1(t)\le f_2(t)$ and $\sup_{0\le s\le t} (f_1(t)-f_1(s))\le \sup_{0\le s\le t} (f_2(t)-f_2(s))$ for all $t\ge 0$.  Hence,
\begin{equation}\label{eq:monoskor}
	\Gamma[f_1](t) \le \Gamma[f_2](t) \mbox{ for all } t \ge 0.
	\end{equation}
Let $f \in \mathcal{D}_0([0,\infty): \RR)$. For $\eps \in \RR$, let $f_{\eps}(t):= f(t)+\eps t$, $t\ge 0$. Then for every $t\ge 0$
\begin{equation}\label{eq:deriv}
	\eps^{-1} \left[\Gamma[f_{\eps}](t) - \Gamma[f_0](t)\right] \to t - \sup \{0\le s \le t: f_0(s)=0\} \mbox{ as } \eps \to 0.
\end{equation}
For a proof we refer to 
 \cite[ Theorem 1.1]{mandelbaum2010} (see also pages 1921-1922 of \cite{dieker2014}).

\subsection{Regularly Varying Functions}
\label{sec:regvarfunc}
Recall that we assume that the complementary cumulative distribution function $\bar F$ of the processing time distribution is a regularly varying function with index $-(p+1)$
for some $p>1$, namely \eqref{eq:regvar} is satisfied.  Also recall that $S(\cdot)$ is given by \eqref{def:S}.
A function $L: [0,\infty) \to \RRp$ is called a slowly varying function if
$$\lim_{x\to \infty} \frac{L(tx)}{L(x)} = 1 \mbox{ for all } t >0.$$
We will frequently use the following well known properties of regularly varying functions (see \cite[Theorems 1.2.1, 1.2.4, 1.2.6]{mikosch1999}).
\begin{enumerate}[(a)]
\item From \cite[Remark 1.2.3]{mikosch1999}, if $L(\cdot)$ is slowly varying, then for all $\epsilon>0$,
$$
\lim_{x\to\infty} \frac{L(x)}{x^{\epsilon}}=0\qquad\hbox{and}\qquad\lim_{x\to\infty} \frac{L(x)}{x^{-\epsilon}}=\infty.
$$
\item There exists a slowly varying function $L$ such that
$\overline{F}(x) = \frac{1}{x^{p+1}}L(x)$ for all $x \ge 1$.
Henceforth, such a function $L(\cdot)$ is fixed.
\item From Karamata's Theorem \cite[Theorem 1.2.6 (b)]{mikosch1999} with $\alpha = -p-1$,
\begin{equation}\label{intregv}
\lim_{x \rightarrow \infty} \frac{\int_{x}^{\infty}\overline F(t)dt}{x \overline F(x)} = \frac{1}{p}.
\end{equation}
In particular, the function $z \mapsto \mathbb{E}(v \one_{[v > z]})$ is regularly varying with index $-p$
and therefore
for all $a>0$,
\begin{equation}\label{karama}
\lim_{r \rightarrow \infty}\frac{\mathbb{E}(v \one_{[v > ac^r]})}{\mathbb{E}(v \one_{[v > c^r]})} = \frac{1}{a^p}.
\end{equation}
In fact, using   \cite[Theorem 1.2.4 and Theorem 1.2.6 (b)]{mikosch1999} one has that for all $\delta>0$
\begin{equation}\label{karama2}
\lim_{r \rightarrow \infty} \frac{\mathbb{E}(v \one_{[v > u c^r]})}{\mathbb{E}(v \one_{[v > c^r]})} = \frac{1}{u^p} \quad \text{ uniformly for } u \in [\delta, \infty).
\end{equation}
Also, there exists a slowly varying function $\hat{L}$ such that $\mathbb{E}(v \one_{[v > z]}) = z^{-p}\hat{L}(z)$ for all $z >0$. 
By  \cite[Theorem 1.2.1]{mikosch1999}, $\hat{L}$ can be represented as
\begin{equation}\label{def:hatL}
\hat{L}(z) = c(z) \exp\left(\int_1^z\frac{\epsilon(y)}{y}dy\right), \quad z \ge 1,
\end{equation}
where $c$ and $\epsilon$ are nonnegative Borel measurable functions satisfying $\lim_{x \rightarrow \infty}c(x) = c_0 \in (0,\infty)$ and $\lim_{x \rightarrow \infty}\epsilon(x) \rightarrow 0$.

\item By \eqref{def:S} and {\color{black}\eqref{karama}}, $S(\cdot)$ is regularly varying with index $p$.  Then, by Karamata's theorem (\cite[Theorem 1.2.6 (b)]{mikosch1999}), as $x\to \infty$
$ \frac{S(x)L(x)}{x^p} \to \frac{p}{p+1}
$, where $L(\cdot)$ is given in (b).  Combining this with (a), it follows that for any $\epsilon>0$, there exists $x_{\epsilon}>0$ such that for all $x \ge x_{\epsilon}$,
\begin{equation}\label{Srvbd}
\frac{p}{p+1}x^{p-\epsilon} < S(x) < \frac{p}{p+1}x^{p+\epsilon}.
\end{equation}
By \eqref{Srvbd} {\color{black}with $x=S^{-1}(r)$, \eqref{eq:Scr}, and the fact that $S^{-1}(\cdot)$ is strictly increasing,} it follows that for any $\epsilon>0$, there exists $r_{\epsilon}>0$ such that for $r \ge r_{\epsilon}$,
\begin{equation}\label{crscale}
\left(\frac{(p+1)r}{p}\right)^{1/(p+\epsilon)} < c^r < \left(\frac{(p+1)r}{p}\right)^{1/(p-\epsilon)}.
\end{equation}
In particular,
\begin{equation}\label{limitcroverr}
\lim_{r\to\infty}\frac{c^r}{r}=0.
\end{equation}

\item {\color{black}$S^{-1}(\cdot)$ is regularly varying with index $1/p$.}
	\end{enumerate}
\subsection{ A Functional Central Limit Theorem}\label{ss:fclt}
We will need the following well known functional central limit theorem (cf.  \cite[Proposition A.1]{P15}).
For this, recall the definitions of $\lambda^r$, $E^r(\cdot)$, $\bar E^r(\cdot)$, and $\hat E^r(\cdot)$,
for $r\in\clr$, and  $\lambda$, $\lambda(\cdot)$ and $E^*(\cdot)$ given in Section \ref{ss:ht}.
Also, for $r\in\clr$, let $\lambda^r(t)=\lambda^rt$ for $t\ge 0$.
 
\begin{proposition}
	\label{prop:fclt}
For each $r \in \clr$, let $\{x_k^r\}_{k=1}^{\infty}$ be a sequence of nonnegative independent and identically
distributed random variables, with finite mean $m^r$ and finite standard deviation $s^r$, that is independent of
$E^r(\cdot)$. Suppose that for some finite nonnegative constants $m$ and s, $m^r\to m$ and $s^r \to s$,
as $r\to \infty$. Further suppose that, for each $\delta >0$
$$\lim_{r\to \infty} \mathbb{E} \left[ (x_1^r-m^r)^2 \one_{|x^r_1-m^r|> r\delta}\right]=0.$$
For $r \in \clr$, $n \in \NN$ and $t\in [0,\infty)$, let
$$
X^r(n) = \sum_{k=1}^n x_k^r\qquad \mbox{and}\qquad \hat X^r(t) = (X^r(\lfloor r^2t\rfloor) - \lfloor r^2t\rfloor m^r)/r.
$$
Then, as $r\to \infty$, 
$(\hat E^r(\cdot), \hat X^r(\cdot)) \xrightarrow{d} (E^*(\cdot), X^*(\cdot))$ in $\mathcal{D}([0,\infty):\RR^2)$, where $E^*$ is given as in	
\eqref{eq:estar} and $X^*$ is a Brownian motion starting from zero with zero drift and variance $s^2$, that is independent of $E^*$.
Furthermore, as $r\to \infty$,
$$
[X^r(r^2 \bar E^r(\cdot)) - r^2\lambda^r(\cdot) m^r]/r \xrightarrow{d} X^*(\lambda(\cdot)) + m E^*(\cdot),$$
in $\mathcal{D}([0,\infty):\RR)$.
\end{proposition}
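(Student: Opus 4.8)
The plan is to reduce the statement to three standard ingredients: a triangular-array (Lindeberg--Feller) functional central limit theorem for the centered partial sums; a joint-convergence argument exploiting the independence of $\{x_k^r\}_k$ and $E^r(\cdot)$; and a random-time-change (composition) argument to pass from the first display to the second.

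\emph{Step 1 (FCLT for the partial sums).} Set $\zeta_k^r := (x_k^r - m^r)/r$, so that $\hat X^r(t) = \sum_{k=1}^{\lfloor r^2 t\rfloor}\zeta_k^r$. For each $r$ the variables $\{\zeta_k^r\}_k$ are i.i.d.\ and centered, with $\sum_{k=1}^{\lfloor r^2 t\rfloor}\operatorname{Var}(\zeta_k^r) = (\lfloor r^2 t\rfloor/r^2)(s^r)^2 \to s^2 t$, and the hypothesis $\mathbb{E}[(x_1^r - m^r)^2\one_{\{|x_1^r - m^r| > r\delta\}}]\to 0$ is exactly the Lindeberg condition $\sum_{k=1}^{\lfloor r^2 t\rfloor}\mathbb{E}[(\zeta_k^r)^2\one_{\{|\zeta_k^r|>\delta\}}]\to 0$ (multiply by $\lfloor r^2 t\rfloor/r^2\to t$). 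A standard triangular-array Donsker theorem then gives $\hat X^r(\cdot)\xrightarrow{d}X^*(\cdot)$ in $\mathcal{D}([0,\infty):\RR)$, with $X^*$ a Brownian motion with zero drift and variance $s^2$.

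\emph{Step 2 (joint convergence).} Since $\{x_k^r\}_k$ is independent of $E^r(\cdot)$, the pair $(\hat E^r(\cdot),\hat X^r(\cdot))$ has product law for each $r$. Combining this with the marginal convergences $\hat E^r(\cdot)\xrightarrow{d}E^*(\cdot)$ (assumption \eqref{eq:estar}) and $\hat X^r(\cdot)\xrightarrow{d}X^*(\cdot)$ from Step 1, tightness of each marginal sequence in the Polish space $\mathcal{D}([0,\infty):\RR)$ yields tightness of $\{(\hat E^r,\hat X^r)\}_r$, and every subsequential weak limit has marginals distributed as $E^*$ and $X^*$ and, by the independence for each $r$, equals the product of these laws; hence $(\hat E^r(\cdot),\hat X^r(\cdot))\xrightarrow{d}(E^*(\cdot),X^*(\cdot))$ with $E^*$ and $X^*$ independent, which is the first display.

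\emph{Step 3 (random time change).} Since $r^2\bar E^r(t)=E^r(r^2 t)$ is a nonnegative integer, $\lfloor r^2\bar E^r(t)\rfloor=E^r(r^2 t)$, and a direct computation gives the identity
$$\frac{X^r(r^2\bar E^r(t)) - r^2\lambda^r(t)m^r}{r} = \hat X^r(\bar E^r(t)) + m^r\,\hat E^r(t),\qquad t\ge 0.$$
Because $\bar E^r(t)=\lambda^r t + \hat E^r(t)/r$ with $\lambda^r\to\lambda$, the maps $g\mapsto(t\mapsto\lambda^r t+g(t)/r)$ converge, on paths of bounded oscillation, uniformly on compacts to the constant continuous map $g\mapsto\lambda(\cdot)$; together with Step 2 and a generalized continuous-mapping theorem this gives $(\hat X^r(\cdot),\bar E^r(\cdot),\hat E^r(\cdot))\xrightarrow{d}(X^*(\cdot),\lambda(\cdot),E^*(\cdot))$ in the appropriate product of Skorokhod spaces (the $\bar E^r$-coordinate converging uniformly on compacts since its limit $\lambda(\cdot)$ is continuous; this also recovers \eqref{eq:aflln}). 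Since $X^*$ has continuous paths almost surely and $\lambda(\cdot)$ is continuous and nondecreasing, the composition map $(x,\theta)\mapsto x\circ\theta$ is almost surely continuous at $(X^*,\lambda(\cdot))$ (see, e.g., \cite[Chapter 13]{W01}), so $\hat X^r(\bar E^r(\cdot))\xrightarrow{d}X^*(\lambda(\cdot))$ jointly with $m^r\hat E^r(\cdot)\xrightarrow{d}mE^*(\cdot)$; adding (addition on $\mathcal{D}([0,\infty):\RR)$ is continuous at pairs one of whose coordinates has continuous paths) yields $\hat X^r(\bar E^r(\cdot))+m^r\hat E^r(\cdot)\xrightarrow{d}X^*(\lambda(\cdot))+mE^*(\cdot)$, which is the second display.

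\emph{Main obstacle.} Steps 1 and 2 are routine; the care concentrates in Step 3, where one must upgrade the marginal convergences to the joint convergence of $(\hat X^r,\bar E^r)$ and then invoke continuity of the composition map at a point whose time-change coordinate is the limiting \emph{continuous} path $\lambda(\cdot)$. That each $\bar E^r$ is a pure-jump process while the limit is continuous is precisely what makes the continuous-mapping step applicable, and the cleanest way to record this rigorously is to pass, via the Skorokhod representation theorem, to a common probability space on which all the weak convergences above hold almost surely, where $\bar E^r\to\lambda(\cdot)$ uniformly on compacts and the composition is handled pathwise.
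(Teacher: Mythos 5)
Your proof is correct. Note that the paper does not actually prove Proposition \ref{prop:fclt}; it records it as a well-known result, citing Proposition A.1 of \cite{P15}, and your three steps (triangular-array Lindeberg FCLT, joint convergence via independence of the two driving sequences, and the random-time-change identity $[X^r(r^2\bar E^r(t))-r^2\lambda^r(t)m^r]/r=\hat X^r(\bar E^r(t))+m^r\hat E^r(t)$ followed by the composition and addition continuous-mapping arguments) constitute precisely the standard proof of that result. The only point worth recording explicitly is the passage from convergence in the product space $\mathcal{D}([0,\infty):\RR)\times\mathcal{D}([0,\infty):\RR)$ to convergence in $\mathcal{D}([0,\infty):\RR^2)$ in Step 2, which is licensed here because both limit processes have continuous paths; this is routine and does not affect the validity of the argument.
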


\subsection{Tightness and Convergence Criteria}\label{ss:tcc}
We record here certain convenient tools for establishing tightness and proving weak convergence that will be used several times in this article.\\

\noindent{\bf Aldous' Tightness Criterion}.  The following criterion is a useful tool in proving tightness. Let $\{\mathbb{X}^{r}(\cdot): r \in \mathcal{R}\}$ be a collection of random variables in $\mathcal{D}([0,\infty):\RR)$. We will call a random time $\tau$ a $\mathbb{X}^{r}$-stopping time if for each $t \ge 0$, the event $\{\tau \le t\}$ lies in the $\sigma$-field $\sigma\left(\{\mathbb{X}^{r}(s) : s \le t\}\right)$. The collection $\{\mathbb{X}^{r}(\cdot): r \in \mathcal{R}\}$ is tight if and only if the following two conditions hold:
\begin{enumerate}
\item[(A1)] For each $t \ge 0$,
$$
\lim_{a \rightarrow \infty} \limsup_{r \rightarrow \infty} \mathbb{P}\left(|\mathbb{X}^{r}(t)| \ge a\right) = 0.
$$
\item[(A2)] For each $\epsilon, \delta, T >0$, there exists $\eta_0>0$ and $r_0 \in \mathcal{R}$ such that for any $0<\eta \le \eta_0$ and $r \ge r_0$, if $\tau$ is a $\mathbb{X}^{r}$-stopping time having a discrete, finite range satisfying $\tau \le T$, then
$$
\mathbb{P}\left(|\mathbb{X}^{r}(\tau+\eta) - \mathbb{X}^{r}(\tau)| \ge \delta \right) \le \epsilon.
$$
\end{enumerate}
 (cf. \cite[Theorem 16.10 and Corollary to Theorem 16.8]{billingsley2013})

The following elementary lemma will be used several times in the proofs. We provide the short proof for completeness.
\begin{lemma}\label{seqconv}
Suppose that $(\cls, {\mathbold d})$ is a Polish space, $S^0$ is an $\cls$-valued random variable, $\{S_{m}\}_{m\in \NN}$ is a sequence of $\cls$-valued random variables and $\epsilon^*>0$. For each $\epsilon \in (0, \epsilon^*]$, suppose that there is $b(\epsilon)>0$, a  $\cls$-valued random variable $S^{\epsilon}$, and a sequence of random variables $\{S^{\epsilon}_{m}\}_{m\in \NN}$, with $S^{\epsilon}_{m}$ and $S_{m}$ defined on the same probability space for each $m\in\NN$, such that the following hold:
\begin{enumerate} 
\item  $\limsup_{m\to\infty}\mathbb{P}\left({\mathbold d}(S^{\epsilon}_{m}, S_{m}\right) > b(\epsilon)) < b(\epsilon)$ {\color{black}for each $\epsilon\in(0,\epsilon^*]$} and $\lim_{\epsilon\searrow 0}b(\epsilon)=0$;
\item for each $\epsilon \in (0, \epsilon^*]$, $S^{\epsilon}_{m} \xrightarrow{d} S^{\epsilon}$ as $m\to \infty$;
\item $S^{\epsilon} \xrightarrow{d} S^0$ as $\epsilon \rightarrow 0$.
\end{enumerate}
Then $S_{m} \xrightarrow{d} S^0$ as $m\to \infty$.
\end{lemma}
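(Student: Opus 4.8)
The plan is to metrize weak convergence by the Prokhorov metric $\pi$ on Borel probability measures over the Polish space $(\cls,{\mathbold d})$ and then chain three estimates together via the triangle inequality. The one tool I would isolate first is the elementary coupling bound: if $X,Y$ are $\cls$-valued random variables on a common probability space and $\Prob{{\mathbold d}(X,Y)>\delta}\le\delta$ for some $\delta>0$, then $\pi(\Law{X},\Law{Y})\le\delta$. This holds because for every closed $F\subseteq\cls$ one has $\Prob{X\in F}\le\Prob{Y\in F^{\delta}}+\Prob{{\mathbold d}(X,Y)>\delta}\le\Prob{Y\in F^{\delta}}+\delta$, where $F^{\delta}$ denotes the (closed) $\delta$-neighborhood of $F$, and this is precisely the defining inequality for $\pi(\Law{X},\Law{Y})\le\delta$. (Equivalently, one could run the whole argument through the bounded-Lipschitz metric, using $|\Expect{g(X)}-\Expect{g(Y)}|\le\delta+2\Prob{{\mathbold d}(X,Y)>\delta}$ for functions $g$ with $\|g\|_\infty\le1$ and Lipschitz constant at most $1$.)

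Granting this, I would argue as follows. Fix $\epsilon\in(0,\epsilon^*]$. By hypothesis (1) there is $M(\epsilon)\in\NN$ such that $\Prob{{\mathbold d}(S^{\epsilon}_m,S_m)>b(\epsilon)}\le b(\epsilon)$ for all $m\ge M(\epsilon)$ --- this is the step that uses the assumption that $S^{\epsilon}_m$ and $S_m$ are defined on a common probability space --- and hence, by the coupling bound above, $\pi(\Law{S^{\epsilon}_m},\Law{S_m})\le b(\epsilon)$ for all $m\ge M(\epsilon)$. By hypothesis (2) and the fact that $\pi$ induces the topology of weak convergence, $\pi(\Law{S^{\epsilon}_m},\Law{S^{\epsilon}})\to0$ as $m\to\infty$ for each fixed $\epsilon$, and by hypothesis (3), $\pi(\Law{S^{\epsilon}},\Law{S^0})\to0$ as $\epsilon\searrow0$. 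Then, for $m\ge M(\epsilon)$, the triangle inequality gives
\begin{align*}
\pi(\Law{S_m},\Law{S^0}) &\le \pi(\Law{S_m},\Law{S^{\epsilon}_m})+\pi(\Law{S^{\epsilon}_m},\Law{S^{\epsilon}})+\pi(\Law{S^{\epsilon}},\Law{S^0})\\
&\le b(\epsilon)+\pi(\Law{S^{\epsilon}_m},\Law{S^{\epsilon}})+\pi(\Law{S^{\epsilon}},\Law{S^0}).
\end{align*}
Taking $\limsup_{m\to\infty}$ and using hypothesis (2) to discard the middle term, $\limsup_{m\to\infty}\pi(\Law{S_m},\Law{S^0})\le b(\epsilon)+\pi(\Law{S^{\epsilon}},\Law{S^0})$. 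Now letting $\epsilon\searrow0$ and invoking $b(\epsilon)\to0$ from hypothesis (1) together with hypothesis (3), the right-hand side tends to $0$, so $\limsup_{m\to\infty}\pi(\Law{S_m},\Law{S^0})=0$, i.e.\ $S_m\xrightarrow{d}S^0$.

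I do not expect a genuine obstacle here; this is a standard ``diagonal'' approximation lemma. The only point requiring a moment's care is the passage from the probabilistic closeness in (1) to closeness of the laws in the Prokhorov (equivalently, bounded-Lipschitz) metric, which is exactly where the shared-probability-space hypothesis is used. Everything else is the triangle inequality followed by the two iterated limits $m\to\infty$ and then $\epsilon\searrow0$.
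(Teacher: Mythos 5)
Your proposal is correct and follows essentially the same route as the paper: the paper metrizes weak convergence with the bounded-Lipschitz metric and runs the identical chain (coupling hypothesis (1) to bound the distance between $\Law{S_m}$ and $\Law{S^{\epsilon}_m}$, triangle inequality, $\limsup_{m\to\infty}$, then $\epsilon\searrow 0$), whereas you use the Prokhorov metric with the standard Ky Fan--type coupling bound — a purely cosmetic difference, as your parenthetical remark already acknowledges.
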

\begin{proof}
	For an $\cls$ valued random variable $X$, denote its probability law as $\mu_X$.
	{\color{black}Let $d_{BL}$ denote the bounded-Lipschitz metric for Borel probability measures on $(\cls, {\mathbold d})$.}  Namely,
	for probability measures $\mu, \nu$ on {\color{black}$\cls$,} $d_{BL}(\mu, \nu) = \sup_g |\int g d\mu - \int g d\nu|$ where the supremum is taken over all Lipschitz functions $g:\cls \to \RR$ that are bounded by $1$ and whose Lipschitz constant is also bounded by $1$. To prove the lemma it suffices to show that $d_{BL}(\mu_{S_{m}}, \mu_{S_{0}}) \to 0$ as $m\to \infty$.
By triangle inequality {\color{black}and (1), for all $\epsilon\in(0,\epsilon^*]$ there exists $m_{\epsilon}$ such that for all $m\ge m_{\epsilon}$,}
	\begin{align*}
		d_{BL}(\mu_{S_{m}}, \mu_{S_{0}}) &\le d_{BL}(\mu_{S_{m}}, \mu_{S^{\epsilon}_{m}}) +
		d_{BL}(\mu_{S^{\epsilon}_{m}}, \mu_{S^{\epsilon}}) + d_{BL}(\mu_{S^{\epsilon}}, \mu_{S_{0}})\\
		&\le d_{BL}(\mu_{S_{m}}, \mu_{S^{\epsilon}_{m}}) + d_{BL}(\mu_{S^{\epsilon}}, \mu_{S_{0}}) + 2 b(\epsilon).
		\end{align*}
		Taking limit as $m \to \infty$ in the above {\color{black}gives that, for all $\epsilon\in(0,\epsilon^*]$,}
	$$\limsup_{m\to \infty}d_{BL}(\mu_{S_{m}}, \mu_{S_{0}}) \le d_{BL}(\mu_{S^{\epsilon}}, \mu_{S_{0}}) + 2 b(\epsilon).$$
	By sending $\epsilon\searrow 0$ {\color{black}and using (3) (which implies that
	$\lim_{\epsilon\searrow 0} d_{BL}(\mu_{S^{\epsilon}}, \mu_{S_{0}})=0$), the result follows.}
\end{proof}

\section{Proofs}\label{proofs}
In this section we prove the main theorems stated in Section \ref{s:main}.  {\color{black}To begin, we recall that} we refer to a job's remaining processing
time as its {\it size}{\color{black}.  In addition,} we refer to a job that arrived to the system after time zero as an {\it external job} and a job already in the system at time zero as an {\it initial job}.  Recall that the processing time distribution
does not depended on $r\in\clr$.  For each $r\in\clr$, we assume that the processing times are determined by a common sequence $\{v_i\}_{i=1}^{\infty}$
of independent and identically distributed random variables with common cumulative distribution function $F$ such that $v_i$ denotes the processing time
of the $i$-th external job arriving to the $r$-th SRPT queue. 
Beginning in Section \ref{sec:pfwork}, $F$ is assumed to satisfy \eqref{eq:regvar} henceforth.
For $r\in\clr$, $t\ge 0$, and $1\le i\le E^r(t)$ (resp.\ $1\le i\le \qq^r$), we recall that $v_i^r(t)$ $(resp.\ \breve v_i^r(t)$) denotes the
remaining processing time (or size) at time $t$ of the $i$-th external (resp.\ initial) job in the $r$-th SRPT queue.

We begin by proving some general comparison results for SRPT queueing systems that hold quite generally in that they do not require condition \eqref{eq:regvar}. These comparison results, besides being of independent interest, will be used in the proofs of our main theorems.  

\subsection{Intertwined SRPT queueing systems}\label{intsec}
In this section we consider  SRPT queues as introduced in Section \ref{ss:sd}. We fix $r$ and suppress it from the notation in this section. Also, as in Section \ref{ss:sd}, we assume that the service time distribution $F$ is continuous, but we do not require $\bar F$ to be regularly varying. In fact, even a finite mean is not needed.
Consider two SRPT queueing systems, say $S_1$ and $S_2$, with a common arrival process $E(\cdot)$ (which, as in Section \ref{ss:sd}, is a delayed renewal process), but with (possibly)  different initial conditions. For each $i=1,2$ and $t \ge 0$, let $n^{(i)}(t)$ be the number of jobs in system $S_i$ at time $t$ and let $\{v^i_{(j)}(t) : 1 \le j \le n^{i}(t)\}$ be the ordered collection of job sizes in system $S_i$ at time $t$, with $v^i_{(1)}(t)$ denoting the smallest job at time $t$, $v^i_{(2)}(t)$ denoting the second smallest job at time $t$, and so on.
For $i=1,2$, define $V^i_{0}(t) =0$ and $V^i_{j}(t) := \sum_{k=1}^{j}v^{i}_{(k)}(t),  \ 1 \le j \le n^{i}(t)$. For each $i=1,2$, the state of the system $S_i$ at time $t$ is completely described by the vector $\left(V^i_{0}(t), \dots,V^i_{n^{i}(t)}(t)\right)$. We say that $S_2$ is {\it intertwined} in $S_1$ at time $t$ if there exist integers $k(t) \ge 0$ and $l(t) \ge 1$ such that the following hold: (i) $S_1$ has $k(t) + l(t) - 1$ or $k(t) + l(t)$ jobs and $S_2$ has $k(t) + l(t) $ jobs at time $t$, (ii) $V^1_{j}(t) = V^2_{j}(t)$ for all $0 \le j \le k(t)$, and (iii) for every $1 \le l \le l(t)$, $V^1_{k(t) + l - 1}(t) < V^2_{k(t) + l}(t) < V^1_{k(t) + l}(t)$ (where, by convention, we take $V^1_{k(t) + l(t)}(t) = \infty$ if $S_1$ has $k(t) + l(t) - 1$ jobs at time $t$).  {\color{black}Thus, if $S_2$ is intertwined in $S_1$ at time $t$, we have
\begin{eqnarray*}
0<V_1^2(t)&=&V_1^1(t)<V_2^2(t)=V_2^1(t)<\dots<V_{k(t)}^2(t)=V_{k(t)}^1(t)\text{ and}\\
V_{k(t)}^1(t)&<&V_{k(t)+1}^2(t)<V_{k(t)+1}^1(t)<\dots\\
\dots&<&V_{k(t)+l(t)-1}^1(t)<V_{k(t)+l(t)}^2(t)<V_{k(t)+l(t)}^1(t).
\end{eqnarray*}
On intervals of time when no arrival or departure takes place in either system, each $V_j^i$ decreases at rate one as each server processes the work associated with the shortest job.  Hence, intertwinement is preserved on such intervals.  In the next two lemmas, we argue that intertwinement is preserved at times of a job arrival and a synchronous departure and swapped at times
of an asynchronous departure in that if $S_2$ is intertwined in $S_1$ immediately before such a departure, then $S_1$ is intertwined in $S_2$ immediately following such a departure. This property, in turn, is used to compare the queue length processes of $S_1$ and $S_2$.} A related, but different, notion for comparing the state of two queueing systems with a common arrival process, called work-dominance, was previously introduced by Smith \cite{S78} to establish optimality of SRPT.  

{\color{black}To begin, w}e have the following lemma which states that if one system is intertwined in the other immediately before a job arrival (which is the same for both systems) then
this  intertwining  is preserved immediately after the arrival.

\begin{lemma}\label{intlem}
Suppose $S_1$ and $S_2$ are two SRPT queueing systems with a common arrival process. Almost surely, if at some $t > 0$ a job arrives in the two systems, and $S_2$ is intertwined in $S_1$ just before time $t$, then $S_2$ is intertwined in $S_1$ at time $t$.
\end{lemma}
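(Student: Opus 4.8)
The plan is to track the effect of a single job arrival on the ordered partial-sum vectors $(V^i_0(t),\dots,V^i_{n^i(t)}(t))$, $i=1,2$, and verify that each of the three defining conditions of intertwinement (common arrival, common initial prefix, and the strict interleaving of the remaining coordinates) is restored immediately after insertion. Write $t-$ for the instant just before the arrival, let $v>0$ be the size of the arriving job (common to both systems since the arrival processes coincide), and let $k := k(t-)$, $l := l(t-)$ be the integers witnessing that $S_2$ is intertwined in $S_1$ at time $t-$. Inserting a job of size $v$ into an SRPT system amounts to inserting $v$ into the sorted list of sizes; at the level of partial sums it shifts every $V^i_j$ with index above the insertion point up by $v$ and leaves the lower ones unchanged, while creating one new partial sum at the insertion point. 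The proof is then a case analysis on where $v$ lands relative to the shared prefix $V^1_0=V^2_0,\dots,V^1_k=V^2_k$ and relative to the interleaved tail.

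The first step is to handle the case where $v$ is inserted within the common prefix, i.e. the new job is at least as small as the $(m{+}1)$-st smallest job in each system for some $0\le m\le k$ (more precisely $v^i_{(m)}(t-) < v \le v^i_{(m+1)}(t-)$, which by the prefix agreement happens at the same position $m$ in both systems, using continuity of $F$ to rule out ties almost surely). Here the job becomes the $(m{+}1)$-st smallest in both systems, all coordinates from index $m{+}1$ onward shift up by $v$ in both systems, and one checks directly that conditions (ii) and (iii) persist with $k(t)=k+1$ and $l(t)=l$: the equalities on the prefix are preserved and each strict inequality $V^1_{k+l'-1}(t-) < V^2_{k+l'}(t-) < V^1_{k+l'}(t-)$ is simply translated by $+v$. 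The second step is the case where $v$ lands in the interleaved region; here the subtle point is that within a block where $V^1_{k+l'-1} < V^2_{k+l'} < V^1_{k+l'}$, the job $v$ could fall in one of three sub-intervals: below $V^2_{k+l'}-V^2_{k+l'-1}$ in $S_2$'s gap but below the corresponding $S_1$ gap, between, or above — so one must check that in each sub-case the new partial sum created in each system still slots strictly between the appropriate coordinates of the other system, possibly incrementing $l(t)$ by one while keeping $k(t)=k$. The third step is the boundary case where $v$ exceeds all current jobs in one or both systems, which uses the convention $V^1_{k+l(t-)}=\infty$ (when $S_1$ has one fewer job) and is a straightforward limiting instance of step two; here one must also re-examine condition (i), the count bookkeeping, to confirm that after the arrival $S_1$ has $k(t)+l(t)-1$ or $k(t)+l(t)$ jobs and $S_2$ has $k(t)+l(t)$.

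I expect the main obstacle to be the bookkeeping in step two: because the "common arrival" is inserted at a single size value $v$ but the partial-sum coordinates of $S_1$ and $S_2$ that bracket it are built from different individual job sizes, the position of $v$ in the sorted size-list of $S_1$ can differ by at most one from its position in $S_2$, and one has to argue carefully (using that $S_2$ has exactly one more job than the relevant prefix of $S_1$ accounts for, so its $j$-th smallest size is sandwiched between the $(j{-}1)$-st and $j$-th of $S_1$) that this off-by-one is exactly what is needed to preserve the alternating strict inequalities rather than collapsing or reversing them. Continuity of $F$ is invoked throughout to ensure all inequalities are strict almost surely and no two distinct jobs (in either system, including the new arrival) share a size. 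Once the position of $v$ is pinned down in each system, updating $k(t)$, $l(t)$ and verifying (i)–(iii) is mechanical, so the write-up can afford to present one representative sub-case in full and indicate that the others are analogous.
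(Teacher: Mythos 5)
Your plan is correct and follows essentially the same route as the paper's proof: a case analysis on where the arriving size $v^*$ falls — within the common prefix (so $k(t)=k(t-)+1$, $l(t)=l(t-)$, and the interleaving inequalities simply translate by $v^*$), versus in the interleaved tail (so $k(t)=k(t-)$, $l(t)=l(t-)+1$, with sub-cases according to the position of $v^*$ relative to the bracketing increments of each system), with continuity of $F$ ruling out ties and the $V^1_{k+l(t)}=\infty$ convention absorbing the boundary case. One small caution for the write-up: the sub-cases in the interleaved region are not mere symmetric copies of one another — each invokes a different one of the time-$(t-)$ inequalities (and one of them requires noting $l\ge 2$) — so "one representative sub-case, others analogous" understates the bookkeeping slightly, though no idea is missing.
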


\begin{proof}
Denote the processing time of the entering job at time $t$ by $v^*$. Since $F$ is continuous, $P(v^*=c)=0$ for any $c\ge 0$. This property will be used without additional comments in many of the arguments below.
Note that if $v^* < v^2_{(k(t-) + 1)}(t-)$, then $k(t) = k(t-) + 1$
and $l(t) = l(t-)$.
In this case, for $1 \le l \le l(t)$, $V^i_{k(t) + l}(t) = V^i_{k(t-) + l}(t-) + v^*$ for $i = 1,2$ and as $S_2$ was intertwined in $S_1$ just before time $t$, we obtain $V^1_{k(t) + l - 1}(t) < V^2_{k(t) + l }(t) < V^1_{k(t) + l}(t)$ for all $1 \le l \le l(t)$, thus $S_2$ is intertwined in $S_1$ at time $t$. 
Otherwise,  $k(t) = k(t-)$ and $l(t) = l(t-) + 1$, which we assume henceforth.
For $1 \le l \le l(t)$, we consider the 
four possibilities as follows.\\
 (i) $v^* > \max\{v^1_{(k(t-) + l - 1)}(t-), v^2_{(k(t-) + l)}(t-)\}$,
 in which case,
 $$
 V^1_{k(t) + l - 1}(t) = V^1_{k(t) + l - 1}(t-)\qquad\hbox{and}\qquad V^2_{k(t) + l}(t) = V^2_{k(t) + l}(t-).$$
 Thus, by intertwinement before time $t$, $V^2_{k(t) + l}(t) > V^1_{k(t) + l - 1}(t)$.\\
  (ii) $v^1_{(k(t-) + l - 1)}(t-) < v^* < v^2_{(k(t-) + l)}(t-)$, in which case, $V^1_{k(t) + l - 1}(t) = V^1_{k(t) + l - 1}(t-)$ and $V^2_{k(t) + l}(t) = V^2_{k(t) + l - 1}(t-) + v^*$. As $v^* > v^1_{(k(t-) + l - 1)}(t-)= V^1_{k(t) + l - 1}(t-) - V^1_{k(t) + l - 2}(t-)$, $V^1_{k(t) + l - 2}(t-) < V^2_{k(t) + l-1}(t-)$ by intertwinement before time $t$, we obtain
\begin{align*}
V^2_{k(t) + l}(t) & = V^2_{k(t) + l - 1}(t-) + v^* > V^2_{k(t) + l - 1}(t-)  + (V^1_{k(t) + l - 1}(t-) - V^1_{k(t) + l - 2}(t-))\\
& > V^2_{k(t) + l - 1}(t-)  + (V^1_{k(t) + l - 1}(t-) - V^2_{k(t) + l-1}(t-))\\
& = V^1_{k(t) + l - 1}(t-) = V^1_{k(t) + l - 1}(t).
\end{align*}
(iii) $v^2_{(k(t-) + l)}(t-) < v^* < v^1_{(k(t-) + l - 1)}(t-)$, in which case, we have $V^1_{k(t) + l - 1}(t) = V^1_{k(t) + l - 2}(t-) + v^*$ and $V^2_{k(t) + l}(t) = V^2_{k(t) + l}(t-)$.
Also, since 
$k(t) = k(t-)$ and $l(t) = l(t-) + 1$, 
we have $l \ge 2$. As $v^* < v^1_{(k(t-) + l - 1)}(t-)= V^1_{k(t) + l - 1}(t-) - V^1_{k(t) + l - 2}(t-)$,
\begin{align*}
V^1_{k(t) + l - 1}(t)& = V^1_{k(t) + l - 2}(t-) + v^*\\
& <  V^1_{k(t) + l - 2}(t-) + (V^1_{k(t) + l - 1}(t-) - V^1_{k(t) + l - 2}(t-)) = V^1_{k(t) + l - 1}(t-).
\end{align*}
By intertwinement before time $t$, $V^2_{k(t) + l}(t-) > V^1_{k(t) + l-1}(t-)$. Hence,
$$
V^2_{k(t) + l}(t) = V^2_{k(t) + l}(t-) > V^1_{k(t) + l-1}(t-) > V^1_{k(t) + l - 1}(t).
$$
(iv) $v^* < \min\{v^1_{(k(t-) + l - 1)}(t-), v^2_{(k(t-) + l)}(t-)\}$, in which case, we have $V^1_{k(t) + l - 1}(t) = V^1_{k(t) + l - 2}(t-) + v^*$, $V^2_{k(t) + l}(t) = V^2_{k(t) + l-1}(t-) + v^*$, and $l\ge 2$. 
By intertwinement before time $t$,
$$
V^2_{k(t) + l}(t) = V^2_{k(t) + l-1}(t-) + v^* > V^1_{k(t) + l-2}(t-) + v^* = V^1_{k(t) + l - 1}(t).
$$
As, almost surely, the above are the only four possibilities, we have, almost surely, $V^2_{k(t) + l}(t) > V^1_{k(t) + l - 1}(t)$ for all $1 \le l \le l(t)$. By a symmetric argument, we obtain, almost surely, $V^1_{k(t) + l}(t) > V^2_{k(t) + l}(t)$ for all $1 \le l \le l(t)$. This completes the proof of the lemma.
\end{proof}

The following proposition compares the queue length processes for two SRPT systems started from intertwined configurations and having the same arrival process.
\begin{proposition}\label{intqlc}
Suppose $S_1$ and $S_2$ are two SRPT queueing systems with a common arrival process. Moreover, assume that $S_2$ is intertwined in $S_1$ at time zero. Denote the queue length process for $S_i$ by $Q_i(\cdot)$, $i=1,2$, {\color{black}and assume $Q_2(0) = Q_1(0) + 1$}. Then, almost surely, for any $t \ge 0$,
\begin{equation*}
Q_1(t) \le Q_2(t) \le Q_1(t) + 1.
\end{equation*}
\end{proposition}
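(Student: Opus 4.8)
The plan is to prove, by induction over the (a.s.\ locally finite) sequence of arrival and departure epochs, a statement slightly stronger than the conclusion. Since $F$ is continuous and the arrival process has finitely many points in any bounded interval, over any finite horizon there are a.s.\ only finitely many events, no arrival coincides with a departure, and---as will follow from the structure of intertwinement---no two distinct departures coincide; it therefore suffices to track the state between consecutive events and across each event. The invariant I would propagate is that at every $t\ge0$ one of the following holds: (A) $Q_1(t)=Q_2(t)$ and $S_1$ and $S_2$ have identical size configurations; (B) $Q_1(t)=Q_2(t)$ and $S_1$ is intertwined in $S_2$ at time $t$; (C) $Q_2(t)=Q_1(t)+1$ and $S_2$ is intertwined in $S_1$ at time $t$. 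Each of (A)--(C) immediately gives $Q_1(t)\le Q_2(t)\le Q_1(t)+1$, and the hypotheses ($S_2$ intertwined in $S_1$ at time $0$, $Q_2(0)=Q_1(0)+1$) place us in case (C) at $t=0$.

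For the continuous parts between events I would use the observation already recorded in the excerpt: each $V^i_j$ decreases at unit rate while the job counts are frozen, so all of the equalities and strict inequalities defining ``identical configuration'' and ``intertwinement,'' as well as the job-count comparison, are preserved, and the active case is unchanged. At an arrival the same job, with the same processing time, enters both systems: in case (A) the configurations stay identical; in cases (B) and (C), Lemma~\ref{intlem} (applied with the roles of $S_1$ and $S_2$ assigned appropriately) shows the intertwinement persists, while from its proof the new parameters $k',l'$ satisfy $k'+l'=k+l+1$, so both queue lengths increase by one and the case is unchanged. For a departure I would first establish the departure analogue of Lemma~\ref{intlem}: in the intertwined regime the two smallest jobs are equal when $k\ge1$ and the inner system's smallest job is strictly the smaller when $k=0$, so a departure is either synchronous (the common smallest job leaves both systems) or asynchronous (only the inner system loses its strictly-smallest job). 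A synchronous departure amounts to deleting the common smallest job and reindexing both partial-sum sequences, which leaves case (A) intact and preserves the intertwinement in (B) and (C) with $k'=k-1$, $l'=l$, so the job-count difference---and hence the case---is unchanged. An asynchronous departure can occur in case (C) only with $k=0$; deleting $S_2$'s smallest job and comparing the two systems' partial sums (a short direct check using the defining inequalities of case (C) and the fact that all relevant gaps are constant on the approach to the departure time) shows $S_1$ becomes intertwined in $S_2$ with $k'=0$, $l'=l-1$, and that both systems then hold $l-1$ jobs---so we move to case (B), or to case (A) (both systems empty) when $l=1$. Symmetrically, an asynchronous departure in case (B) forces $k=0$ and, deleting $S_1$'s smallest job, returns us to case (C) with $k'=0$, $l'=l$. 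In every instance the invariant survives, completing the induction.

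The step I expect to be the main obstacle is the job-count bookkeeping at the ``swap'' of an asynchronous departure: one must check not merely that the intertwinement reverses direction but that the inner and outer job counts change in exactly the way keeping $Q_2-Q_1$ inside $\{0,1\}$. In particular the induction has to---and does---preclude ever reaching a configuration in which $S_2$ is intertwined in $S_1$ while $Q_1=Q_2$ and $k=0$; from such a state an asynchronous $S_2$-departure would push $Q_2$ below $Q_1$, and it is precisely the transition analysis above, together with the hypothesis $Q_2(0)=Q_1(0)+1$ (which rules out that configuration at $t=0$), that excludes it for all $t$. The only remaining point, needing just routine care, is that the degenerate state of identical configurations---and in particular the state of both systems being empty---is absorbing under a common arrival stream and SRPT service, so case (A) never leaves.
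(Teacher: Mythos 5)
Your proposal is correct and follows essentially the same route as the paper's proof: arrivals are handled by Lemma \ref{intlem}, synchronous departures trivially preserve intertwinement, and asynchronous departures (which, as you note, can only come from the ``inner'' system and only when $k=0$) swap the intertwinement order while moving $Q_2-Q_1$ between $1$ and $0$. Your event-by-event induction with the explicit three-case invariant is just a more detailed bookkeeping of the alternation that the paper tracks via the sequence of asynchronous departure times $\tau^{as}_i$.
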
 
\begin{proof}
As $S_2$ is intertwined in $S_1$ at time zero, $Q_1(0) = k(0) + l(0) -1$ and $Q_2(0) = k(0) + l(0)$. Define $\tau^{as}_0=0$ and denote by $\tau^{as}_i$, $i \ge 1$, the time of the $i$-th asynchronous departure, i.e., when there is a departure from one system but not the other. For any $i \ge 0$, on the time interval $[\tau^{as}_i, \tau^{as}_{i+1})$, arrivals and departures happen at the same times (synchronously) from both systems. Clearly, if $S_2$ is intertwined in $S_1$ before a synchronous departure, then it remains so after the departure. Also, after any arrival, by Lemma \ref{intlem}, $S_2$ remains intertwined in $S_1$ if it were the case immediately before the arrival.
Thus, if $S_2$ is intertwined in $S_1$ at time $\tau^{as}_i$, then the same property is true for every $t \in [\tau^{as}_i, \tau^{as}_{i+1})$.
Then, for any $t \in [0, \tau^{as}_1)$, $Q_1(t) = k(t) + l(t) -1$ and $Q_2(t) = k(t) + l(t)$, and hence, $Q_2(t) - Q_1(t) = 1$. Moreover, as for any $t \in [0, \tau^{as}_1)$, $V^2_{k(t) + 1}(t) < V^1_{k(t) + 1}(t)$, the first asynchronous departure happens from $S_2$. Thus, $S_1$ is intertwined in $S_2$ at time $\tau^{as}_1$ (that is, the intertwinement order changes) and $Q_1(\tau^{as}_1) = Q_2(\tau^{as}_1) = k(\tau_1^{as}) + l(\tau_1^{as})$. By the same argument as above, we deduce that $S_1$ remains intertwined in $S_2$ on the time interval $[\tau^{as}_1, \tau^{as}_2)$ and  $Q_1(t) = Q_2(t) = k(t) + l(t)$ for all $t \in [\tau^{as}_1, \tau^{as}_2)$. At time $\tau^{as}_2$, departure happens from $S_1$ and the intertwinement order switches again at $\tau^{as}_2$, and so on. Thus, we conclude that $Q_1(t) = k(t) + l(t) -1, Q_2(t) = k(t) + l(t)$ for all $t \in [\tau^{as}_{2k}, \tau^{as}_{2k+1})$, $k \ge 0$, and $Q_1(t) = Q_2(t) = k(t) + l(t)$ for all $t \in [\tau^{as}_{2k+1}, \tau^{as}_{2k+2})$, $k \ge 0$. In particular, this proves the proposition.
\end{proof}

\subsection{Truncated SRPT queues}\label{ss:TrunSRPTQueues}
For each $r \in \clr$ and $a \in [0,\infty]$, we consider an SRPT queue with a thinned external arrival process $E^r_a(\cdot) := \sum_{i=1}^{E^r(\cdot)}\one_{[v_i \le a c^r]}$, which we refer to as the $r$-th $a$-truncated SRPT queue.  When the $i$-th external job arrives to the $r$-th SRPT queue, it is an external job
for the $r$-th $a$-truncated SRPT queue if and only if its processing time $v_i$ is less or equal to $a c^r$.  Similarly, jobs in the $r$-th $a$-truncated SRPT queue at time zero, namely the initial jobs, are those
that are initial jobs in the $r$-th SRPT queue such that $\check v_l^r\le ac^r$ and $1\le l\le \qq^r$.  Then the $r$-th $a$-truncated SRPT queue evolves in time in accordance with the SRPT
service discipline by preemptively serving the job with the shortest size first. For $r\in\clr$, $t \ge 0$ and $1\le i \le E_a^r(t)$, let $v_i^{r,a}(t)$ be the size at time $t$
in the $r$-th $a$-truncated SRPT queue of the $i$-th external arrival to the $r$-th $a$-truncated SRPT queue. Similarly, for $r\in\clr$, $t\ge 0$ and $1\le l\le \qq^r$, let $\breve{v}_l^{r,a}(t)$ be the size at time $t$ in the $r$-th $a$-truncated SRPT queue of the $l$-th initial job in the $r$-th $a$-truncated SRPT queue if $\breve{v}_l^r \le ac^r$, and zero if $\breve{v}_l^r > ac^r$ (the latter case is vacuous if $a = \infty$).  

Define for each $r\in \clr$, $a \in [0, \infty]$ and $t\ge 0$,
\begin{align*}
V_a^r(t) &:= \sum_{i=1}^{E^r(t)}v_i \one_{[v_i \le a c^r]},\\
\hat{V}_a^r(t) &:= \frac{1}{r}\sum_{i=1}^{E^r(r^2t)}v_i \one_{[v_i \le a c^r]} -  r \lambda^r t\mathbb{E}(v \one_{[v \le a c^r]}),\\
X_a^r(t) &:=  \frac{1}{r} \sum_{l=0}^{\qq^r} \breve{v}_l^r \one_{[\breve{v}_l^r \le ac^r]}  +  \frac{1}{r}V_a^r(r^2t) - rt,\\
Y_a^r(t) &:= \Gamma[X_a^r](t).
\end{align*}
Also, for $r\in\clr$, $a \in [0,\infty]$ and $t\ge 0$,
$$
\tQ^r_a(t) := \frac{c^r}{r}\sum_{l=1}^{\qq^r} \delta^{+}_{\breve{v}_l^{r,a}(r^2t)/c^r} + \frac{c^r}{r}\sum_{i=1}^{E^r_a(r^2t)}\delta^{+}_{v_i^{r,a}(r^2t)/c^r}
$$
denotes the scaled measure describing the state of the  $r$-th $a$-truncated SRPT queue at time $r^2t$ and $Q_a^r(t) := \langle \one,\tQ^r_a(t) \rangle$ denotes the scaled queue length in
the $r$-th $a$-truncated SRPT queue at time $r^2t$.  Recall that, for each $r\in \clr$, $a \in [0, \infty]$ and $t\ge 0$, $Z_a^r(t)$ and $W_a^r(t)$ are defined  in \eqref{zqldef} and \eqref{trunkworkdef} respectively.

We have elected to state the results in this section for truncated SRPT queues in terms of scaled processes defined above.  However, since they hold for each $r\in\clr$, one can obtain unscaled versions from these.  Also, as in Section \ref{intsec}, $F$ is required to be continuous, but $\bar F$ is not required to be regularly varying. The following proposition records a key observation comparing the process $\langle \chi \one_{[0,a]}, \tQ_y^r(\cdot)\rangle$ with $Y_a^r(\cdot)$ and $\langle \one_{[0,a]}, \tQ_y^r(\cdot)\rangle$ with $Q_a^{r}(\cdot)$ for $a \le y \le \infty$.

\begin{proposition}\label{comp}
For any $r \in \clr$, $a \in  (0, \infty)$, $a \le y\le\infty$, and $t \ge 0$, we have, almost surely,
\begin{align}
 Y_a^r(t) &\le \langle \chi \one_{[0,a]}, \tQ_y^r(t)\rangle\le Y_a^r(t) +\frac{ac^r}{r},\label{eq:comp4}\\
Q_a^r(t) &\le \langle \one_{[0,a]}, \tQ_y^r(t)\rangle \le Q_a^r(t) +\frac{c^r}{r}.\label{eq:comp5}
\end{align}
In particular, for any $r \in \clr$, $a \in  (0, \infty)$ and $t \ge 0$, we have, almost surely,
\begin{align}
Y_a^r(t) &\le W_a^r(t) \le Y_a^r(t) + \frac{ac^r}{r},\label{eq:comp2}\\
Q_a^{r}(t) &\le Z_a^{r}(t) \le Q_a^{r}(t) + \frac{c^r}{r}.\label{eq:comp3}
\end{align}
Moreover, almost surely, $W_{0}^r(t) = Y_{0}^r(t) = 0, Z_{0}^{r}(t) = Q_{0}^{r}(t) = 0$, $W_{\infty}^r(t) = Y_{\infty}^r(t)$, and $Z_{\infty}^{r}(t) = Q_{\infty}^{r}(t)$ for any $r\in\clr$ and $t \ge 0$.
\end{proposition}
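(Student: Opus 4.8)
The plan is to identify the right-hand quantities in \eqref{eq:comp4}--\eqref{eq:comp5} as (scaled) workload, respectively queue length, of genuine SRPT queues, and then compare those queues pathwise. Recall the discipline-independent identity that for a non-idling single-server queue the workload process is the one-dimensional Skorohod reflection of the net input $(\text{initial work}) + (\text{arrived work}) - (\text{elapsed time})$; applied to the $r$-th $a$-truncated SRPT queue, whose net input is exactly the unscaled version of $X_a^r$, this shows $Y_a^r(t) = \Gamma[X_a^r](t)$ equals $r^{-1}$ times the workload of that queue at time $r^2 t$. On the other side, $\langle\chi\one_{[0,a]},\tQ_y^r(t)\rangle$ (resp.\ $\langle\one_{[0,a]},\tQ_y^r(t)\rangle$) equals $r^{-1}$ (resp.\ $c^r/r$) times the work carried by (resp.\ the number of) jobs of actual size at most $ac^r$ present in the $r$-th $y$-truncated queue at time $r^2 t$. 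Writing $\mathbf A$ for the $a$-truncated queue and $\mathbf A'$ for the collection of jobs of size $\le ac^r$ inside the $y$-truncated queue, it therefore suffices to prove the unscaled inequalities $\mathcal{W}_{\mathbf A}\le\mathcal{W}_{\mathbf A'}\le\mathcal{W}_{\mathbf A}+ac^r$ and $Q_{\mathbf A}\le Q_{\mathbf A'}\le Q_{\mathbf A}+1$ at every time, where $\mathcal{W}$ and $Q$ denote unscaled workload and queue length.

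The crucial structural observation --- which uses only the SRPT rule and continuity of $F$, not \eqref{eq:regvar} --- is that in the $y$-truncated queue a job of size $>ac^r$ is strictly larger than every job of size $\le ac^r$, so SRPT never serves such a ``big'' job while any ``small'' job is present. Hence $\mathbf A'$ itself evolves as an SRPT queue: it has the same initial jobs as $\mathbf A$, receives the same external arrivals (the $i$ with $v_i\le ac^r$), and additionally receives ``crossover'' arrivals of size exactly $ac^r$ at the times $\sigma_1<\sigma_2<\cdots$ at which a big job, in service precisely because no small job is present, is processed down through level $ac^r$. A crossover can occur only when the workload of $\mathbf A'$ is zero, and the resulting size-$ac^r$ job then keeps that workload positive until it departs, which consumes $ac^r$ units of its service; hence $\sigma_{k+1}>\sigma_k+ac^r$, both $\mathcal{W}_{\mathbf A}$ and $\mathcal{W}_{\mathbf A'}$ vanish just before every $\sigma_k$, and on each $[\sigma_k,\sigma_{k+1})$ --- as well as on $[0,\sigma_1)$, where the two queues in fact coincide --- the queues $\mathbf A$ and $\mathbf A'$ are driven by the same external arrival stream.

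The lower bounds then follow at once: the net input of $\mathbf A'$ is that of $\mathbf A$ plus $ac^r$ times the (nondecreasing, pure-jump) crossover-counting process, so monotonicity of the Skorohod map \eqref{eq:monoskor} gives $\mathcal{W}_{\mathbf A}\le\mathcal{W}_{\mathbf A'}$, and since $\mathbf A'$ contains all the jobs of $\mathbf A$ together with the crossover jobs, $Q_{\mathbf A}\le Q_{\mathbf A'}$. For the upper bounds I would induct over the intervals $[\sigma_k,\sigma_{k+1})$. At $\sigma_k$ both queues are empty just before (for $\mathbf A'$ this is the crossover condition, for $\mathbf A$ it follows from the lower bound and positivity of job sizes), so at $\sigma_k$ the queue $\mathbf A'$ restarts from a single job of size $ac^r$ while $\mathbf A$ restarts empty, and on $[\sigma_k,\sigma_{k+1})$ each of $\mathcal{W}_{\mathbf A}$, $\mathcal{W}_{\mathbf A'}$ is the Skorohod reflection of its value at $\sigma_k$ plus the common ``external input minus elapsed time'' process; two such reflections with starting levels $ac^r$ apart stay within $ac^r$ of each other (an elementary consequence of the explicit formula for $\Gamma$, sharper than the factor-$2$ bound \eqref{eq:lipSM}), which gives the workload upper bound. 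For the queue length, at $\sigma_k$ the one-job queue $\mathbf A'$ is intertwined in the empty queue $\mathbf A$ with exactly one more job, so Proposition \ref{intqlc}, applied to the pair of SRPT queues obtained from $\mathbf A$ and $\mathbf A'$ by retaining only external arrivals after $\sigma_k$, yields $Q_{\mathbf A}\le Q_{\mathbf A'}\le Q_{\mathbf A}+1$ throughout $[\sigma_k,\sigma_{k+1})$.

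Gluing these estimates across the locally finitely many crossover intervals yields \eqref{eq:comp4}--\eqref{eq:comp5}; specializing to $y=\infty$, where $\tQ_\infty^r=\tZ^r$ so that $\langle\chi\one_{[0,a]},\tZ^r\rangle=W_a^r$ and $\langle\one_{[0,a]},\tZ^r\rangle=Z_a^r$, gives \eqref{eq:comp2}--\eqref{eq:comp3}, and the ``moreover'' clause is just the degenerate cases $a=0$ (no job ever has size $\le 0$, so $\tQ_0^r\equiv0$, and $X_0^r(t)=-rt$ forces $Y_0^r\equiv0$) and $a=\infty$ (the $\infty$-truncated queue is the original queue, so $\tQ_\infty^r=\tZ^r$ and $X_\infty^r$ is the original net input). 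The step I expect to be most delicate is making the identification ``$\mathbf A'$ is precisely the SRPT queue just described'' fully rigorous: establishing that the crossover times are well defined and locally finite, that the small-job collection genuinely obeys SRPT dynamics driven by the external-plus-crossover arrival stream, and that, almost surely, no external arrival coincides with a crossover (here continuity of $F$ enters); then carrying out the inductive gluing at the $\sigma_k$ without slippage at the transition times. The Skorohod-map inequalities and the invocation of Proposition \ref{intqlc} are, by comparison, routine.
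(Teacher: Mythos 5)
Your proposal is correct and follows essentially the same route as the paper: both arguments hinge on the ``crossover'' events at which a job of the $y$-truncated system is served down to size $ac^r$ (creating a one-job, $ac^r$-work discrepancy), control the workload gap via monotonicity/shift properties of the Skorohod map between such events, and invoke Proposition \ref{intqlc} to get the queue-length bound from the resulting intertwined configuration. The only notable (and welcome) streamlining is your observation that consecutive crossovers are separated by at least $ac^r$ units of time, which yields local finiteness of the partition immediately, whereas the paper partitions by busy periods of $\langle \chi \one_{[0,a]}, \tQ_y^r(\cdot)\rangle$ and needs a separate case analysis to show these do not accumulate.
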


\begin{proof} Fix $r\in\clr$.
Note that, by definition, $W^r_0(t) = Z^{r}_0(t)=Q^{r}_0(t) = 0$ for all $t \ge 0$. Moreover, almost surely, $X_0^r(t)=-rt$ for all $t \ge 0$ and hence $Y^r_0(t) = \Gamma[X^r_0](t) = 0$
for all $t \ge 0$. Also, as $\tQ^r_{\infty}(t) = \tZ^r(t)$ for all $t \ge 0$, $W_{\infty}^r(t) = Y_{\infty}^r(t)$ and $Z_{\infty}^{r}(t) = Q_{\infty}^{r}(t)$ for all $t \ge 0$.
Thus, the assertions in the last line of the lemma 
hold. Also, for each $a\in(0,\infty)$, \eqref{eq:comp2} follows from \eqref{eq:comp4} and \eqref{eq:comp3} follows from \eqref{eq:comp5} upon setting $y=\infty$, since $\tZ^r(\cdot)=\tQ_{\infty}^r(\cdot)$. So it suffices to verify \eqref{eq:comp4} and \eqref{eq:comp5}.

Fix $a \in (0, \infty)$ and $a\le y\le\infty$.  Define {\color{black}the} stopping times $\sigma_{-1}=0$, and for $k \in \bbZp$,
$$
\sigma_{2k} :=\inf\{s \ge \sigma_{2k-1}: \langle \chi \one_{[0,a]}, \tQ_y^r(s)\rangle =0\}, \quad \quad \sigma_{2k+1} :=\inf\{s \ge \sigma_{2k}: \langle \chi \one_{[0,a]}, \tQ_y^r(s)\rangle >0\}.
$$
To show \eqref{eq:comp4} and \eqref{eq:comp5},
we proceed by induction. Observe that, by definition, $Y_a^r(0)=\langle \chi \one_{[0,a]}, \tQ_y^r(0)\rangle$ and $Q_a^r(0)=\langle \one_{[0,a]}, \tQ_y^r(0)\rangle$ since $a\le y$.
Thus, \eqref{eq:comp4} and \eqref{eq:comp5} hold on $[0,\sigma_{-1}]$. 

First consider the case $\langle \chi \one_{[0,a]}, \tQ_y^r(0)\rangle =0$ (which implies that $\langle \one_{[0,a]}, \tQ_y^r(0)\rangle =0$). Then, $\sigma_0=\sigma_{-1}=0$ and $Y_a^r(t) = \langle \chi \one_{[0,a]}, \tQ_y^r(t)\rangle =0$ for all $t \in [0, \sigma_1)$. The map $t \mapsto \langle \chi \one_{[0,a]}, \tQ_y^r(t)\rangle$ increases at $t = \sigma_1$ due to one of the following two events: (i) an external job 
with processing time less or equal to $ac^r$ arrives 
to the system at time $r^2\sigma_1$ or (ii) an initial job with initial size in $(ac^r, yc^r]$ or an external job with processing time
in $(ac^r, yc^r]$ that arrived during the time interval $(0,r^2\sigma_1)$,
in course of getting served, has its size drop to $ac^r$ at time $r^2\sigma_1$. If (i) occurs, $Q_a^r(\sigma_1) = \langle \one_{[0,a]}, \tQ_y^r(\sigma_1)\rangle = \frac{c^r}{r}$, $Y_a^r(\sigma_1) = \langle \chi \one_{[0,a]}, \tQ_y^r(\sigma_1)\rangle \le \frac{ac^r}{r}$. If (ii) occurs, $Q_a^r(\sigma_1) = 0$, $\langle \one_{[0,a]}, \tQ_y^r(\sigma_1)\rangle = \frac{c^r}{r}$,$Y_a^r(\sigma_1) = 0$, and $\langle \chi \one_{[0,a]}, \tQ_y^r(\sigma_1)\rangle  = \frac{ac^r}{r}$. Thus, when $\langle \chi \one_{[0,a]}, \tQ_y^r(0)\rangle =0$, \eqref{eq:comp4} and \eqref{eq:comp5} hold for all $t \in [0, \sigma_1]$.

Suppose that for some $k \in \bbZp$ \eqref{eq:comp4} and \eqref{eq:comp5} hold for all $t \in [0, \sigma_{2k-1}]$ and $$\langle \chi \one_{[0,a]}, \tQ_y^r(\sigma_{2k-1})\rangle > 0$$
(which implies that $\langle \one_{[0,a]}, \tQ_y^r(\sigma_{2k-1})\rangle >0$). We first show that \eqref{eq:comp4} and \eqref{eq:comp5} hold for all $t \in (\sigma_{2k-1}, \sigma_{2k}]$. By virtue of the SRPT dynamics, no job in the $r$-th $y$-truncated SRPT queue at time $r^2\sigma_{2k-1}$ of size greater than $ac^r$ at time $r^2\sigma_{2k-1}$ is served in the $r$-th $y$-truncated SRPT queue during the time interval $[r^2\sigma_{2k-1}, r^2\sigma_{2k})$. Consequently, for any $t \in (\sigma_{2k-1}, \sigma_{2k})$, the 
following four properties are equivalent: (a) $\langle \chi \one_{[0,a]}, \tQ_y^r(t)\rangle - \langle \chi \one_{[0,a]}, \tQ_y^r(t-)\rangle>0$; (b) $E_a^r(r^2t) - E_a^r(r^2t-)>0$; (c)  $X_a^r(t) - X_a^r(t-)>0$; (d) $Y_a^r(t) - Y_a^r(t-)>0$ and, 
when these equivalent properties hold, $\langle \chi \one_{[0,a]}, \tQ_y^r(t)\rangle - \langle \chi \one_{[0,a]}, \tQ_y^r(t-)\rangle =  Y_a^r(t) - Y_a^r(t-)$.
This also shows that for $t\in[\sigma_{2k-1},\sigma_{2k})$ such that $Y_a^r(t)=0$ and $s\in [t, \inf\{u \ge t: Y_a^r(u) > 0\}\wedge \sigma_{2k}{\color{black})}$, $Y_a^r(s)=0$ and $\langle \chi \one_{[0,a]}, \tQ_y^r(s)\rangle=\langle \chi \one_{[0,a]}, \tQ_y^r(t)\rangle -r(s-t)$.
Moreover, for $t \in [\sigma_{2k-1},\sigma_{2k})$ such that $0< Y_a^r(t) \le \langle \chi \one_{[0,a]}, \tQ_y^r(t)\rangle$ and $s \in [t, \inf\{u \ge t: Y_a^r(u) = 0\}]$,
$$
\langle \chi \one_{[0,a]}, \tQ_y^r(s)\rangle - \langle \chi \one_{[0,a]}, \tQ_y^r(t)\rangle = \frac{1}{r}(V_a^r(r^2s) - V_a^r(r^2t)) - r(s-t) = Y_a^r(s) - Y_a^r(t).
$$
From these observations, we conclude that $t \mapsto \langle \chi \one_{[0,a]}, \tQ_y^r(t)\rangle - Y_a^r(t)$ is nonincreasing on the interval $[\sigma_{2k-1}, \sigma_{2k}]$ and decreases only on the set $\{ u \in [\sigma_{2k-1}, \sigma_{2k}]: Y_a^r(u) = 0\}$. This also implies that either $\langle \chi \one_{[0,a]}, \tQ_y^r(u)\rangle = Y_a^r(u)$ for all $u \in (\sigma_{2k-1}, \sigma_{2k}]$ or the first $t\ge\sigma_{2k-1}$ for which
$\langle \chi \one_{[0,a]}, \tQ_y^r(t)\rangle = Y_a^r(t)$ corresponds to $\sigma_{2k}$ when $\langle \chi \one_{[0,a]}, \tQ_y^r(\sigma_{2k})\rangle = Y_a^r(\sigma_{2k})=0$. We conclude that for any $t \in [\sigma_{2k-1}, \sigma_{2k}]$,
\begin{align*}
0&=\langle \chi \one_{[0,a]}, \tQ_y^r(\sigma_{2k})\rangle - Y_a^r(\sigma_{2k}) \le \langle \chi \one_{[0,a]}, \tQ_y^r(t)\rangle - Y_a^r(t)\\
& \le \langle \chi \one_{[0,a]}, \tQ_y^r(\sigma_{2k-1})\rangle - Y_a^r(\sigma_{2k-1}) \le \frac{ac^r}{r},
\end{align*}
where the last inequality holds by the induction hypothesis. Hence, \eqref{eq:comp4} holds for all $t \in (\sigma_{2k-1}, \sigma_{2k}]$. 

Now we show that \eqref{eq:comp5} holds for all $t \in (\sigma_{2k-1}, \sigma_{2k}]$. If $k \in\NN$, then, by definition of
$\sigma_{2k-1}$, $\langle\one_{[0,a]}, \tQ_y^r(\sigma_{2k-1}-)\rangle=0$, and so,
using the induction hypothesis, $Q_a^r(\sigma_{2k-1}-)=0$. Moreover, the arrival times and processing times of all external jobs with processing time
less than or equal to $ac^r$ into both the $r$-th $a$-truncated SRPT queue and the $r$-th $y$-truncated SRPT queue
on the time interval $[r^2\sigma_{2k-1}, r^2\sigma_{2k}]$
are common to both systems. Further, no job in the $r$-th $y$-truncated SRPT queue at time $r^2\sigma_{2k-1}$ of size greater than $ac^r$ at time $r^2\sigma_{2k-1}$ is served in the $r$-th $y$-truncated SRPT queue during the time interval $[r^2\sigma_{2k-1}, r^2\sigma_{2k}]$. Thus, the processes $t \mapsto Q_a^r(t)$ and $t \mapsto \langle \one_{[0,a]}, \tQ_y^r(t)\rangle$ on the time interval $[r^2\sigma_{2k-1}, r^2\sigma_{2k}]$ can be identified with the (scaled) queue length processes of two $r$-th $a$-truncated SRPT queueing systems having the same arrival process, denoted respectively by $S_1^r$ and $S_2^r$, started at time zero and observed till $S_2^r$ has zero jobs. If $k=0$ or if the increase in $t \mapsto \langle \chi \one_{[0,a]}, \tQ_y^r(t)\rangle$ at time $t= \sigma_{2k-1}$ happens due to the arrival of an external job with processing time less than or equal to $ac^r$, then $\langle  \one_{[0,a]}, \tQ_y^r(\sigma_{2k-1})\rangle = Q_a^r(\sigma_{2k-1})$. Thus, in this case, $S_1^r$ and $S_2^r$ start with the same configuration and hence, $Q_a^r(t) = \langle \one_{[0,a]}, \tQ_y^r(t)\rangle$ for all $t \in [\sigma_{2k-1}, \sigma_{2k}]$. On the other hand, the increase in $t \mapsto \langle \chi \one_{[0,a]}, \tQ_y^r(t)\rangle$ at time $t= \sigma_{2k-1}$ may happen due to a job present in the system at a time $s<r^2\sigma_{2k-1}$, with its size in the range $(ac^r, yc^r]$ at time $s$,
getting served in the $y$-th truncated queue and having its size drop to $ac^r$ at time $r^2\sigma_{2k-1}$. {\color{black}In this case, $S_2^r$ starts with one job of size $ac^r$ and $S_1^r$ starts with zero jobs. Hence, $S_2^r$ is intertwined in $S_1^r$ at time zero in the sense of Subsection \ref{intsec} with $k(0)=0$ and $l(0)=1$, and $S_2^r$ has one more job at time zero than $S_1^r$.} By Proposition \ref{intqlc}, for any $t \in [\sigma_{2k-1}, \sigma_{2k}]$,
$$
Q_a^r(t) \le \langle \one_{[0,a]}, \tQ_y^r(t)\rangle \le Q_a^r(t) + \frac{c^r}{r}.
$$
Hence, \eqref{eq:comp5} holds for all $t \in (\sigma_{2k-1}, \sigma_{2k}]$.

To see that \eqref{eq:comp4} and \eqref{eq:comp5} hold for all $t \in (\sigma_{2k}, \sigma_{2k+1}]$, first note that $Y_a^r(t) = \langle \chi \one_{[0,a]}, \tQ_y^r(t)\rangle =0$ for all $t \in (\sigma_{2k}, \sigma_{2k+1})$. Moreover, observe that either $Q_a^r(\sigma_{2k+1}) = \langle \one_{[0,a]}, \tQ_y^r(\sigma_{2k+1})\rangle = \frac{c^r}{r}$ and $Y_a^r(\sigma_{2k+1}) = \langle \chi \one_{[0,a]}, \tQ_y^r(\sigma_{2k+1})\rangle \le \frac{ac^r}{r}$, or $Q_a^r(\sigma_{2k+1}) = 0, \langle \one_{[0,a]}, \tQ_y^r(\sigma_{2k+1})\rangle = \frac{c^r}{r}$, $Y_a^r(\sigma_{2k+1}) = 0$, and $\langle \chi \one_{[0,a]}, \tQ_y^r(\sigma_{2k+1})\rangle  = \frac{ac^r}{r}$.
In both cases, \eqref{eq:comp4} and \eqref{eq:comp5} hold for all $t \in (\sigma_{2k}, \sigma_{2k+1}]$.

Thus, by induction, \eqref{eq:comp4} and \eqref{eq:comp5} hold for all $t\in[0,\lim_{k\to\infty}\sigma_{2k})$.  To complete
the proof, we show that $\lim_{k\to\infty}\sigma_{2k}=\infty$. Suppose first that $\mathbb{E}(v\one_{[v \le ac^r]})>0$. For each $k \in \bbZp$, let $v^*_k$ be the processing time of the first external job to arrive to the $r$-th $y$-truncated SRPT queue
after time $\sigma_{2k}$. Then it is easy to see that for each $k \in \bbZp$, $\sigma_{2k+2} - \sigma_{2k+1} \ge r^{-2}v^*_k\one_{[v^*_k \le ac^r]}$. As $\{v^*_k\one_{[v^*_k \le ac^r]}\}_{k \ge 0}$ is a sequence of independent and identically distributed random variables, where each element has the same distribution as $v\one_{[v \le ac^r]}$, and since $\mathbb{E}(v\one_{[v \le ac^r]})>0$, almost surely, 
$$
\lim_{k \rightarrow \infty} \sigma_{2k}  \ge \lim_{k \rightarrow \infty} \sum_{j=0}^{k-1}(\sigma_{2j+2} - \sigma_{2j+1}) \ge r^{-2}\lim_{k \rightarrow \infty} \sum_{j=0}^{2k-2}v^*_j\one_{[v^*_j \le ac^r]} = \infty.
$$
If $\mathbb{E}(v\one_{[v \le ac^r]})=0$ and $\mathbb{E}(v\one_{[v \le yc^r]})>0$, then almost surely, no external job with processing time less or equal to $ac^r$ arrives into the system and thus almost surely, $\sigma_{2k+2} - \sigma_{2k+1} = r^{-2}ac^r$ for all $k \in \bbZp$, and hence $\lim_{k \rightarrow \infty} \sigma_{2k} = \infty$, as desired. Finally, if
$\mathbb{E}(v\one_{[v \le yc^r]}) = 0$, which implies that $\mathbb{E}(v\one_{[v \le ac^r]})=0$ since $a\le y$, then there exists $k_0 \in \bbZp$ such that $Q^r_y(\sigma_{2k_0})=0$ and thus $\sigma_{2k_0+1} = \infty$. Hence \eqref{eq:comp4} and \eqref{eq:comp5} hold for all $t \in [0,\infty)$. 
\end{proof}
The following lemma compares queue length processes for truncated SRPT queues with different truncations.

\begin{lemma}\label{qlxy}
For all $r\in\clr$, $0 \le x \le y \le \infty$ and $t \ge 0$,
$$
0 \le Q_{y}^r(t) - Q_{x}^r(t) \le \frac{c^r}{r} + x^{-1}Y_{y}^r(t).
$$
\end{lemma}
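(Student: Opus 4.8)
The plan is to fix $r\in\clr$, $0\le x\le y\le\infty$ and $t\ge0$, and to split the scaled queue length of the $r$-th $y$-truncated queue according to whether a job's \emph{current} size is at most $xc^r$ or lies in $(xc^r,yc^r]$. Since $\tQ_y^r(t)$ puts mass only at strictly positive points and is supported on $[0,y]$ after the spatial rescaling, one has the exact identity
\begin{equation*}
Q_y^r(t)=\langle\one,\tQ_y^r(t)\rangle=\langle\one_{[0,x]},\tQ_y^r(t)\rangle+\langle\one_{(x,y]},\tQ_y^r(t)\rangle,
\end{equation*}
with the last term read as $\langle\one_{(x,\infty)},\tQ_\infty^r(t)\rangle$ when $y=\infty$. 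The degenerate case $x=0$ is immediate and can be disposed of first: then $Q_0^r(t)=0$, and $x^{-1}Y_y^r(t)$ equals $+\infty$ unless $Y_y^r(t)=0$, in which case the $y$-truncated queue carries no workload, hence is empty, so $Q_y^r(t)=0$ too. Assume henceforth $x\in(0,\infty)$ (and, without loss, $x<y$, the case $x=y$ being trivial as then $Q_y^r(t)-Q_x^r(t)=0$).

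For the first term on the right I would invoke Proposition \ref{comp}, specifically \eqref{eq:comp5} with truncation level $a=x$ (legitimate since $x\in(0,\infty)$ and $x\le y\le\infty$), which gives
\begin{equation*}
0\ \le\ \langle\one_{[0,x]},\tQ_y^r(t)\rangle-Q_x^r(t)\ \le\ \frac{c^r}{r}.
\end{equation*}
For the second term the estimate is purely elementary: $\tQ_y^r(t)$ is a nonnegative measure, and on $(x,y]$ we have $1\le x^{-1}z$, so $\one_{(x,y]}(z)\le x^{-1}z\,\one_{(x,y]}(z)$ pointwise; integrating against $\tQ_y^r(t)$ and using $\chi\one_{(x,y]}\le\chi$ yields
\begin{equation*}
\langle\one_{(x,y]},\tQ_y^r(t)\rangle\ \le\ \frac{1}{x}\langle\chi\one_{(x,y]},\tQ_y^r(t)\rangle\ \le\ \frac{1}{x}\langle\chi,\tQ_y^r(t)\rangle.
\end{equation*}
It then remains to identify $\langle\chi,\tQ_y^r(t)\rangle$ with $Y_y^r(t)$. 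This is just the non-idling workload identity: the diffusion-scaled workload of the $r$-th $y$-truncated SRPT queue equals the Skorohod reflection of its diffusion-scaled net-input process $X_y^r$, i.e.\ $\langle\chi,\tQ_y^r(t)\rangle=\Gamma[X_y^r](t)=Y_y^r(t)$. For $y=\infty$ this is precisely the identity $W_\infty^r(t)=Y_\infty^r(t)$ recorded at the end of Proposition \ref{comp}; for finite $y$ it is the same statement (equivalently, the $a=y$ case of \eqref{eq:comp4}, in which the slack term $ac^r/r$ is actually absent because no job is ever served down into the interval $[0,yc^r]$ from above, so the inequalities there are equalities).

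Putting the pieces together, $Q_y^r(t)-Q_x^r(t)=\big(\langle\one_{[0,x]},\tQ_y^r(t)\rangle-Q_x^r(t)\big)+\langle\one_{(x,y]},\tQ_y^r(t)\rangle$; the first summand lies in $[0,c^r/r]$ and the second is nonnegative and at most $x^{-1}Y_y^r(t)$, which yields simultaneously $Q_y^r(t)-Q_x^r(t)\ge0$ and $Q_y^r(t)-Q_x^r(t)\le c^r/r+x^{-1}Y_y^r(t)$, as claimed. The only genuine input is Proposition \ref{comp} (plus the routine workload identity), so I do not expect a real obstacle; the one point that needs a little care is to use the exact identity $\langle\chi,\tQ_y^r(t)\rangle=Y_y^r(t)$ rather than the slack bound from \eqref{eq:comp4}, which would introduce an extraneous $\tfrac{y}{x}\,\tfrac{c^r}{r}$ term, together with the trivial bookkeeping at the boundary values $x=0$ and $x=y$.
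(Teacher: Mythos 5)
Your proposal is correct and follows essentially the same route as the paper: decompose $Q_y^r(t)$ into the mass of $\tQ_y^r(t)$ on $[0,x]$ and on $(x,y]$, control the first via \eqref{eq:comp5} with $a=x$, and bound the second by $x^{-1}\langle\chi,\tQ_y^r(t)\rangle \le x^{-1}Y_y^r(t)$ using the pointwise bound $1\le z/x$ on $(x,y]$ together with the non-idling workload identity for the $y$-truncated queue. Your explicit handling of the boundary cases $x=0$ and $x=y$ is a minor addition the paper omits.
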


\begin{proof} Fix $r\in\clr$, $0 \le x \le y \le \infty$ and $t \ge 0$.
Note that, almost surely,
\begin{align}\label{newqdel}
0 \le Q_{y}^r(t) - Q_{x}^r(t) 
&= \int_{0}^{x}\tQ^r_{y}(t)(dz) - {\color{black} Q_{x}^r(t)} + \int_{x}^{y}\tQ^r_{y}(t)(dz)\\
&={\color{black}\langle \one_{[0,x]}, \tQ^r_{y}(t)\rangle - Q_{x}^r(t)}+ \int_{x}^{y}\tQ^r_{y}(t)(dz).\nonumber
\end{align}
By \eqref{eq:comp5} in Proposition \ref{comp} with $a=x$, almost surely,
$$
0 \le {\color{black}\langle \one_{[0,x]}, \tQ^r_{y}(t)\rangle - Q_{x}^r(t)} \le \frac{c^r}{r}.
$$
Using this observation in \eqref{newqdel}, we obtain
\begin{eqnarray}\label{un1}
0 &\le& Q_{y}^r(t) - Q_{x}^r(t) \le \frac{c^r}{r} + \int_{x}^{y}\tQ^r_{y}(t)(dz)\\
&\le& \frac{c^r}{r} + x^{-1}\int_{x}^{y}z\tQ^r_{y}(t)(dz) \le \frac{c^r}{r} + x^{-1}Y_{y}^r(t),\nonumber
\end{eqnarray}
as desired.\end{proof}

\subsection{Proof of Theorem \ref{workless}}\label{sec:pfwork}

The following lemma is a functional central limit theorem for $\{X_{\cdot}^r(\cdot): r\in\clr\}$, which is used below in conjunction with the result in Proposition \ref{comp} to prove Theorem \ref{workless}. For this, recall the definition of {\color{black}$X_a^r$ and $X_a$, $a\in[0,\infty]$, from Section \ref{ss:TrunSRPTQueues} and \eqref{eq:xadefn} respectively}.

\begin{lemma}\label{jointZ}
There exists a probability space on which we are given a Brownian motion $B$ and a $\mathcal{C}([0,\infty):\RRp) \times \RRp$ valued random variable $(\xi(\cdot), \xi(\infty))$ independent of $B$, with same distribution as $(w^*(\cdot),w^*(\infty))$, such that for any $k\in\NN$ and any $0 < a_1 <\dots < a_k \le \infty$, as $r \rightarrow \infty$,
$$
(X_{a_1}^r(\cdot), \dots, X_{a_k}^r(\cdot)) \xrightarrow{d} (X_{a_1}(\cdot), \dots, X_{a_k}(\cdot))
$$
in $\mathcal{C}([0,\infty):\RR^k)$.
\end{lemma}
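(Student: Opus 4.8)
The plan is to split each $X_a^r$ into an initial-data term, a centered fluctuation term, and a deterministic drift, and then to show that the fluctuation terms for all the truncation levels $a_1,\dots,a_k$ converge to \emph{one and the same} Brownian motion --- the synchronization phenomenon. Using the definitions in Section~\ref{ss:TrunSRPTQueues} together with the identities $\tfrac1r V_a^r(r^2t)=\hat V_a^r(t)+r\lambda^r t\,\mathbb{E}(v\one_{[v\le ac^r]})$, $\rho^r_{ac^r}=\rho^r-\lambda^r\mathbb{E}(v\one_{[v>ac^r]})$, and $\tfrac1r\sum_{l=0}^{\qq^r}\breve v_l^r\one_{[\breve v_l^r\le ac^r]}=W_a^r(0)$, one obtains, for $a\in(0,\infty]$ and $t\ge0$,
\[
X_a^r(t)=W_a^r(0)+\hat V_a^r(t)+t\bigl(r(\rho^r-1)-r\lambda^r\,\mathbb{E}(v\one_{[v>ac^r]})\bigr),
\]
the last term being deterministic. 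By \eqref{eq:assuht}, $r(\rho^r-1)\to\kappa$ and $\lambda^r\to\lambda$; and since $S(c^r)=r$ by \eqref{eq:Scr}, $r\,\mathbb{E}(v\one_{[v>ac^r]})=\mathbb{E}(v\one_{[v>ac^r]})/\mathbb{E}(v\one_{[v>c^r]})\to a^{-p}$ by \eqref{karama} (this term being $0$ when $a=\infty$). Hence the drift term converges, uniformly on compact $t$-intervals, to $(\kappa-\lambda a^{-p})t$, and to $\kappa t$ when $a=\infty$, matching \eqref{eq:xadefn}--\eqref{eq:xinfinitydefn}.

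The heart of the argument is the claim that for each fixed $a\in(0,\infty)$ and $T>0$, $\sup_{t\le T}|\hat V_a^r(t)-\hat V_\infty^r(t)|\xrightarrow{P}0$. To prove it, apply Proposition~\ref{prop:fclt} with $x_i^r:=v_i\one_{[v_i>ac^r]}$, so that $\hat V_\infty^r(\cdot)-\hat V_a^r(\cdot)=[X^r(r^2\bar E^r(\cdot))-r^2\lambda^r(\cdot)m^r]/r$ with $m^r=\mathbb{E}(v\one_{[v>ac^r]})$. Because $v$ has a finite second moment and $c^r\to\infty$, dominated convergence gives $m^r\to0$ and $\operatorname{Var}(x_1^r)\le\mathbb{E}(v^2\one_{[v>ac^r]})\to0$, whence the Lindeberg-type condition of Proposition~\ref{prop:fclt} is automatic. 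Proposition~\ref{prop:fclt} then yields convergence in distribution of $\hat V_\infty^r(\cdot)-\hat V_a^r(\cdot)$ to the zero process, which is equivalent to the stated uniform-on-compacts convergence in probability. I expect this to be the main obstacle: it is precisely here that finiteness of the \emph{second} moment is used, so that the mass truncated at scale $ac^r$ --- which is large enough to shift the drift by $\lambda a^{-p}$ --- contributes nothing to the diffusive fluctuation, by the $r$ versus $r^2$ scaling.

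Finally, applying Proposition~\ref{prop:fclt} with $x_i^r:=v_i$ (a sequence with fixed law, mean $1/\lambda$, variance $\operatorname{Var}(v)$, Lindeberg holding by dominated convergence) gives $\hat V_\infty^r(\cdot)\xrightarrow{d}X^*(\lambda(\cdot))+\lambda^{-1}E^*(\cdot)$ with $X^*$ a Brownian motion of variance $\operatorname{Var}(v)$ independent of $E^*$; the right-hand side is a continuous mean-zero process with independent increments of variance $(\lambda\operatorname{Var}(v)+\lambda\sigma_A^2)t=\sigma^2t$, hence equals $\sigma B(\cdot)$ in law for a standard Brownian motion $B$. Since $\{\qq^r,\breve v_l^r\}$ is independent of $E^r(\cdot)$ and of $\{v_i\}$ for every $r$, and $(W_\cdot^r(0),W_\infty^r(0))\xrightarrow{d}(w^*(\cdot),w^*(\infty))$ by \eqref{eq:assuinitcond} (so that, by continuity of $w^*$, the finite-dimensional marginals at $a_1,\dots,a_k$ converge), the joint limit $(W_{a_1}^r(0),\dots,W_{a_k}^r(0),\hat V_\infty^r(\cdot))\xrightarrow{d}(\xi(a_1),\dots,\xi(a_k),\sigma B(\cdot))$ holds on a probability space carrying $\xi\overset{d}{=}w^*$ independent of $B$. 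A converging-together argument --- using the synchronization claim to replace each $\hat V_{a_j}^r$ by $\hat V_\infty^r$ --- combined with the continuous mapping theorem (to add the deterministic drifts) then yields the asserted joint convergence, with $X_{a_j}$ as in \eqref{eq:xadefn}--\eqref{eq:xinfinitydefn}. Since the jumps of $X_{a_j}^r$ are bounded by $a_jc^r/r\to0$, convergence in the Skorohod topology to the continuous limit coincides with convergence in $\mathcal{C}([0,\infty):\RR^k)$. The residual verifications (Lindeberg conditions, the $\mathcal{D}$-to-$\mathcal{C}$ upgrade) are routine.
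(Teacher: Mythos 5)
Your proposal is correct and follows essentially the same route as the paper's proof: the same decomposition of $X_a^r$ into initial condition, centered fluctuation $\hat V_a^r$, and drift, the same use of $S(c^r)=r$ together with \eqref{karama} to identify the limiting drift $\kappa-\lambda a^{-p}$, and the same application of Proposition~\ref{prop:fclt} to the truncated tail $v_i\one_{[v_i>ac^r]}$ (whose mean and variance vanish by the finite second moment) to obtain the synchronization of all levels onto a single Brownian motion. The only cosmetic difference is that you anchor every level to $\hat V_\infty^r$, whereas the paper shows $\hat V_b^r-\hat V_a^r\xrightarrow{d}0$ for general pairs $a<b$; the arguments are otherwise identical.
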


\begin{proof}
Note that for any $r\in\clr$, $a\in(0,\infty)$ and $t\ge 0$,
\begin{eqnarray}
X_{\infty}^r(t) &=& X_{\infty}^r(0)+ \frac{1}{r}V_{\infty}^r(r^2t) - rt = X_{\infty}^r(0)+\hat{V}_{\infty}^r(t) +r(\rho^r - 1)t,\label{infwork}\\
X_{a}^r(t) &=& X_{a}^r(0)+ \frac{1}{r}V_a^r(r^2t) - rt = X_{a}^r(0)+\hat{V}_a^r(t) +r(\rho^r_{ac^r} - 1)t,\label{Zdec}
\end{eqnarray}
where $X_{a}^r(0) = \frac{1}{r} \sum_{l=0}^{\qq^r} \breve{v}_l^r \one_{[\breve{v}_l^r \le ac^r]} $.
Note that for any $r\in\clr$, $a \in (0,\infty)$, and $t\ge 0$, 
\begin{multline}
r(\rho^r_{ac^r} - 1) = r(\rho^r_{ac^r} - \rho^r) + r(\rho^r - 1) = -r\lambda^r \mathbb{E}(v \one_{[v > ac^r]}) + r(\rho^r - 1)\\
= -\lambda^r\frac{\mathbb{E}(v \one_{[v > ac^r]})}{\mathbb{E}(v \one_{[v > c^r]})}r \mathbb{E}(v \one_{[v > c^r]}) + r(\rho^r - 1)\\
 = -\lambda^r\frac{\mathbb{E}(v \one_{[v > ac^r]})}{\mathbb{E}(v \one_{[v > c^r]})} \frac{r}{S(c^r)} + r(\rho^r - 1) = 
 -\lambda^r\frac{\mathbb{E}(v \one_{[v > ac^r]})}{\mathbb{E}(v \one_{[v > c^r]})}  + r(\rho^r - 1),
 \label{eq:htcr}
\end{multline}
where we have used \eqref{eq:Scr} in the final equality. By  \eqref{karama},
\begin{equation*}
\lim_{r \rightarrow \infty}\frac{\mathbb{E}(v \one_{[v > ac^r]})}{\mathbb{E}(v \one_{[v > c^r]})} = \frac{1}{a^p}.
\end{equation*}
Using this and assumption \eqref{eq:assuht} in the above equation, we obtain that for each $a\in(0,\infty)$,
\begin{equation}\label{rhor}
r(\rho^r_{ac^r} - 1) \rightarrow \kappa - \frac{\lambda}{a^p}, \qquad \text{ as } r \rightarrow \infty.
\end{equation}
For $a\in(0,\infty)$, let $m_a^r=\mathbb{E}(v \one_{[v \le ac^r]})$ and $(s_a^r)^2=\text{Var}(v \one_{[v \le ac^r]})$.
Then, finiteness of the second moment of $v$ and the fact that $\lim_{r\to\infty}c^r=\infty$ give that,
for $a\in(0,\infty)$, $\lim_{r\to\infty}m_a^r=\mathbb{E}(v)$, $\lim_{r\to \infty} (s_a^r)^2=\text{Var}(v)$, and for each $\delta >0$
$$
\lim_{r\to \infty} \mathbb{E} \left[ (v \one_{[v \le ac^r]}-m_a^r)^2 \one_{|v \one_{[v \le ac^r]}-m_a^r|> r\delta}\right]=0.
$$
Thus, by Proposition \ref{prop:fclt}, for each $a\in(0,\infty)$,
$
\hat{V}_a^r(\cdot) \xrightarrow{d} \sigma B(\cdot)
$
where $\sigma^2 = \lambda \operatorname{Var}(v) + (\mathbb{E}(v))^2\lambda^3\sigma_A^2 = \lambda \operatorname{Var}(v) + \lambda\sigma_A^2$ and $B$ is a standard Brownian motion. 
Note that, from \eqref{eq:assuinitcond} and assumed mutual independence in Section \ref{sec:mathfram}, we in fact have that, for each $a\in(0,\infty)$,
\begin{equation}\label{martconv}
(X_{\cdot}^r(0), \hat{V}_a^r(\cdot)) \xrightarrow{d} (\xi(\cdot), \sigma B(\cdot))
\end{equation}
in $\mathcal{D}([0,\infty): \RRp)\times \mathcal{D}([0,\infty): \RR)$, where $\xi$ is distributed as $w^*$ and is independent of $B$.

For each $0 < a < b \le \infty$,
$$
\hat{V}_b^r(t) - \hat{V}_a^r(t) = \frac{1}{r}\sum_{i=1}^{E^r(r^2t)}v_i \one_{[ac^r < v_i \le b c^r]} -  r \lambda^rt\mathbb{E}(v \one_{[ac^r < v \le b c^r]}).
$$
Note that by the finiteness of the second moment of $v$ and $\lim_{r\to\infty}c^r=\infty$, for each $0 < a < b \le \infty$,
as $r \rightarrow \infty$,
\begin{equation}\label{eq:vabcr}
\mathbb{E}\left(v \one_{[ac^r < v \le b c^r]}\right) \rightarrow 0 \quad \text{and} \quad \operatorname{Var}(v \one_{[ac^r < v \le b c^r]}) \le \mathbb{E}\left(v^2 \one_{[ac^r < v]}\right) \rightarrow 0.
\end{equation}
Thus, by Proposition \ref{prop:fclt}, for each $0 < a < b \le \infty$,
$$
\hat{V}_b^r(\cdot) - \hat{V}_a^r(\cdot) \xrightarrow{d} 0 \text{ as } r \rightarrow \infty.
$$
This, combined with \eqref{infwork}, \eqref{Zdec} and \eqref{rhor}, gives for each $0 < a < b \le \infty$,
\begin{equation}\label{sup}
(X_b^r(\cdot) - X_b^r(0))  - (X_a^r(\cdot)- X_a^r(0)) + \left(\frac{\lambda}{b^p} - \frac{\lambda}{a^p}\right)(\cdot) \xrightarrow{d} 0 \quad \text{ as } r \rightarrow \infty,
\end{equation}
where $\lambda/b^p$ is taken to be zero if $b = \infty$.

The above convergence together with \eqref{rhor} shows that, for each $i = 1, \ldots, k$
$$X^r_{a_i}(\cdot) = X^r_{a_i}(0) + \hat V^r_{a_i}(\cdot) + (\kappa - \frac{\lambda}{a_i^p})\iota(\cdot) + \eta_i^r(\cdot),$$
where $\eta_i^r(\cdot)\xrightarrow{d} 0 $ as $r\to \infty$ for each $i$.
The result now follows on combining the above convergence with \eqref{martconv}.
\end{proof}

\begin{proof}[Proof of Theorem \ref{workless}]
{\color{black}Lemma \ref{jointZ}, continuity of the Skorohod map $\Gamma$ and the continuous mapping theorem, imply that for all $k\in\NN$
and $0\le a_1<a_2<\cdot<a_k\le\infty$, $(Y_{a_1}^r,Y_{a_2}^r,\dots,Y_{a_k}^r)\xrightarrow{d} (W_{a_1},W_{a_2},\dots,W_{a_k})$.}
The theorem follows from {\color{black}this,} Proposition \ref{comp} and
{\color{black}\eqref{limitcroverr}.}
\end{proof}

\subsection{Proof of Theorem \ref{workfn}}\label{ss:pfworkfn}
Before proceeding, the reader may wish to review the overview of the proof of Theorem \ref{workfn} given in {\color{black}Section \ref{ss:methods}}.
We begin by establishing the result in Lemma \ref{parts} below as a elementary consequence of integration by parts.
In what follows, {\color{black}for $0\le \delta < M<\infty$}, we will write `$\int_{\delta}^M$' to denote integration over the interval $(\delta,M]$. We will also write for any function $h: (\delta,M] \rightarrow \mathbb{R}$ and any $\delta\ge 0$, $h(\deltap) := \lim_{x \searrow \delta} h(x)$, whenever this limit exists.

\begin{lemma}\label{parts}
Suppose that $0<\delta<M< \infty$ and $f: (\delta,M] \rightarrow \mathbb{R}$ is a $C^1$ function such that $f(\deltap)$ and $f'(\deltap)$ exist.
Then, writing $g(x) = f(x)/x$ for $x\in(\delta,M]$, for any $r\in\clr$ and $t \ge 0$, the following holds:
$$
\int_{\delta}^Mf(x) \tZ^r(t)(dx) = - \int_{\delta}^Mg'(x)W_x^r(t)dx + g(M)W_M^r(t) - g(\deltap)W_{\delta}^r(t).
$$
\end{lemma}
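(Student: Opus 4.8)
The plan is to read the identity as nothing more than Lebesgue--Stieltjes integration by parts applied to the nondecreasing function $x\mapsto W_x^r(t)$. Fix $r\in\clr$ and $t\ge 0$. Since $\tZ^r(t)$ is (almost surely) a finite measure on $\RRp$ supported on the finitely many job sizes present in the $r$-th system at time $r^2t$, it has a finite first moment, and the asserted equality is a purely deterministic statement about this fixed measure; I would prove it as such.

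The first step is to record the structural fact that, by \eqref{trunkworkdef}, $W_x^r(t)=\langle\chi_x,\tZ^r(t)\rangle=\int_{[0,x]}y\,\tZ^r(t)(dy)$ for every $x\ge 0$. Hence $x\mapsto W_x^r(t)$ is nondecreasing, right continuous, and bounded on $[\delta,M]$, and the Lebesgue--Stieltjes measure it induces on $(\delta,\infty)$ is exactly $y\,\tZ^r(t)(dy)$; in particular $W_x^r(t)-W_\delta^r(t)=\int_{(\delta,x]}y\,\tZ^r(t)(dy)$ for $x\in[\delta,M]$. Consequently, writing $f(x)=xg(x)$ with $g(x)=f(x)/x$, the left-hand side of the claim becomes
$$
\int_\delta^M f(x)\,\tZ^r(t)(dx)=\int_{(\delta,M]} g(x)\,\bigl(x\,\tZ^r(t)(dx)\bigr)=\int_{(\delta,M]} g(x)\,dW_x^r(t).
$$
Because $\delta>0$ and the one-sided limits $f(\deltap)$ and $f'(\deltap)$ exist, the function $g$ extends to a $C^1$ function on the closed interval $[\delta,M]$ (set $g(\delta):=g(\deltap)$), so $g$ is in particular continuous there and absolutely continuous with $dg(x)=g'(x)\,dx$.

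The last step is to apply integration by parts on $[\delta,M]$ for the right continuous function of bounded variation $x\mapsto W_x^r(t)$ against the continuous function $g$,
$$
\int_{(\delta,M]} g(x)\,dW_x^r(t)+\int_{(\delta,M]} W_x^r(t)\,dg(x)=g(M)W_M^r(t)-g(\deltap)W_\delta^r(t),
$$
and to observe that $\int_{(\delta,M]} W_x^r(t)\,dg(x)=\int_\delta^M g'(x)\,W_x^r(t)\,dx$ since $dg$ is absolutely continuous. Rearranging, and recalling $g(M)=f(M)/M$, gives exactly the stated formula. I do not anticipate a genuine obstacle; the only point needing care is that $x\mapsto W_x^r(t)$ generally has jumps at the atoms of $\tZ^r(t)$, so one must use the form of integration by parts valid for right continuous integrators of bounded variation, but since the integrand $g$ is continuous the left-limit and right-limit versions coincide and no extra jump or boundary correction terms appear. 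The remaining bookkeeping is just verifying that the paper's convention $\int_\delta^M=\int_{(\delta,M]}$ makes the endpoint terms at $\delta$ and $M$ line up, which a direct check on measures consisting of a single atom located inside $(\delta,M)$, at $\delta$, and at $M$ confirms.
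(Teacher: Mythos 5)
Your proof is correct and follows essentially the same route as the paper: both identify $x\,\tZ^r(t)(dx)$ as the Lebesgue--Stieltjes measure of the nondecreasing, right continuous function $x\mapsto W^r_x(t)$ and then integrate by parts against the (absolutely continuous) function $g$. The only cosmetic difference is that you invoke the general Lebesgue--Stieltjes integration-by-parts formula, whereas the paper derives the same identity from scratch by substituting $g(x)=g(\deltap)+\int_\delta^x g'(y)\,dy$ and applying Fubini.
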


\begin{proof} Fix $r\in\clr$ and $t\ge 0$.
Define the finite nonnegative Borel measure $\mu^r(t)$ on $\RRp$ by $\mu^r(t)(dx) := x \tZ^r(t)(dx)$ for $x\in\RRp$. Then, for $0\le a<b$, $\mu^r(t)(a,b] = W_b^r(t) - W_a^r(t)$. Therefore,
\begin{eqnarray*}
\int_{\delta}^Mf(x) \tZ^r(t)(dx) &=& \int_{\delta}^Mg(x) \mu^r(t)(dx) = \int_{\delta}^M\left(\int_{\delta}^xg'(y)dy + g(\deltap)\right) \mu^r(t)(dx)\\
 &=& \int_{\delta}^M\int_{y}^{M}\mu^r(t)(dx)g'(y)dy + g(\deltap)\mu^r(t)(\delta,M]\\
 &=& \int_{\delta}^M\mu^r(t)(y,M]g'(y)dy + g(\deltap)\mu^r(t)(\delta,M]\\
& =&\int_{\delta}^M(W_M^r(t) - W_y^r(t))g'(y)dy + g(\deltap) (W_M^r(t) - W_{\delta}^r(t))\\
 & = &-\int_{\delta}^MW_y^r(t)g'(y)dy + W_M^r(t)(g(M) - g(\deltap))\\
 &\qquad& + g(\deltap) (W_M^r(t) - W_{\delta}^r(t))\\
& =& - \int_{\delta}^Mg'(y)W_y^r(t)dy + g(M)W_M^r(t) - g(\deltap)W_{\delta}^r(t),
\end{eqnarray*}
which proves the lemma.
\end{proof}

Next, the result in Lemma \ref{parts}, along with tightness arguments, is used to establish Theorem \ref{convplus}, which gives convergence in distribution to the desired limit for certain
compactly supported functions with support bounded away from zero.

\begin{theorem}\label{convplus}
Suppose that $J\in\NN$, $0<a_1 < b_1 \le a_2 < b_2 \dots \le a_J < b_J < \infty$, and $f: [0, \infty) \rightarrow \mathbb{R}$ is a $C^1$ function on $(a_j,b_j]$
for each $1 \le j \le J$ and zero on $\left(\cup_{j=1}^J(a_j,b_j]\right)^c$. Also, assume $\lim_{x\searrow a_j}f(x)$ and $\lim_{x\searrow a_j}f'(x)$ exist for each $1 \le j \le J$.
Then, writing $g(x) = f(x)/x$ for $x\in(0,\infty)$, as $r\to\infty$,
\begin{equation}\label{eq:limpt}
\int_0^{\infty}f(x) \tZ^r(\cdot)(dx) \xrightarrow{d} \sum_{j=1}^J\left(- \int_{a_j}^{b_j}g'(x)W_x(\cdot)dx + g(b_j)W_{b_j}(\cdot) - \lim_{x\searrow a_j}g(x)W_{a_j}(\cdot)\right).
\end{equation}
in  $\mathcal{D}([0,\infty) : \mathbb{R})$. 
The limiting process defined by the right side of \eqref{eq:limpt}, in fact, has sample paths in $\mathcal{C}([0,\infty) : \mathbb{R})$ almost surely.
\end{theorem}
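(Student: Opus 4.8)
The plan is to use Lemma \ref{parts} to rewrite $Z_f^r(\cdot)$ in terms of the rescaled truncated workload processes, approximate the resulting spatial integrals by Riemann sums involving only finitely many of these processes, apply Theorem \ref{workless} to the Riemann-sum functionals, and pass to the limit via Lemma \ref{seqconv}. Since the intervals $(a_j,b_j]$ are pairwise disjoint and $f$ vanishes off their union, summing the identity of Lemma \ref{parts} over $j=1,\dots,J$ (applied to $f$ restricted to each $(a_j,b_j]$, whose hypotheses coincide with those assumed here) gives, for every $r\in\clr$ and $t\ge 0$,
\[
Z_f^r(t)=\sum_{j=1}^J\Bigl(-\int_{a_j}^{b_j}g'(x)\,W_x^r(t)\,dx+g(b_j)W_{b_j}^r(t)-\lim_{x\searrow a_j}g(x)\,W_{a_j}^r(t)\Bigr).
\]
Thus the task reduces to identifying the distributional limit of the right-hand side, and the only term not directly governed by Theorem \ref{workless} is the spatial integral $\int_{a_j}^{b_j}g'(x)W_x^r(t)\,dx$.

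To control it, for $N\in\NN$ put $x_{j,i}=a_j+i(b_j-a_j)/N$, $0\le i\le N$, let $Z_f^{r,N}(\cdot)$ be obtained from the display above by replacing each $\int_{a_j}^{b_j}g'(x)W_x^r(t)\,dx$ by $\sum_{i=1}^N g'(x_{j,i})\,W_{x_{j,i}}^r(t)\,(x_{j,i}-x_{j,i-1})$, and let $Z_f^{N}(\cdot)$ be the analogous process built from the limiting field $\{W_x(\cdot)\}$. Two monotonicity facts are central: for each fixed $t$, $a\mapsto W_a^r(t)=\langle\chi_a,\tZ^r(t)\rangle$ is nondecreasing because $\chi_a\le\chi_{a'}$ pointwise when $a\le a'$, and $a\mapsto W_a(t)=\Gamma[X_a](t)$ is nondecreasing by the Skorohod-map monotonicity \eqref{eq:monoskor}, since $a\mapsto X_a$ has nondecreasing initial value $\xi(a)$ (as $\xi$ is distributed as the nondecreasing $w^*$) and nondecreasing increments (the drift coefficient $\kappa-\lambda a^{-p}$ increases in $a$). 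Using these, the fact that $g'$ is bounded and uniformly continuous on each $[a_j,b_j]$, the bounds $W_x^r(t)\le W_\infty^r(t)$ and $W_x(t)\le W_\infty(t)$ for $x\le b_j<\infty$, and a telescoping estimate, one obtains deterministic bounds
\[
\sup_{t\le T}\bigl|Z_f^{r,N}(t)-Z_f^r(t)\bigr|\le C\rho_N\sup_{t\le T}W_\infty^r(t),\qquad \sup_{t\le T}\bigl|Z_f^{N}(t)-Z_f(t)\bigr|\le C\rho_N\sup_{t\le T}W_\infty(t),
\]
with $C=C(f,\{a_j\},\{b_j\})$ and $\rho_N\to 0$ depending only on $N$ and the modulus of continuity of $g'$.

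I would then apply Lemma \ref{seqconv} with $\cls=\mathcal{D}([0,T]:\RR)$ for an arbitrary fixed $T$ (convergence of the restrictions for every $T$ yielding convergence in $\mathcal{D}([0,\infty):\RR)$), $S_m=Z_f^{r_m}$ running over $\clr$, $S^0=Z_f$, $\epsilon=1/N$, $S_m^\epsilon=Z_f^{r_m,N}$ and $S^\epsilon=Z_f^{N}$. Hypothesis (2), that $Z_f^{r,N}\xrightarrow{d}Z_f^N$ as $r\to\infty$ for fixed $N$, holds because $Z_f^{r,N}(\cdot)$ is a fixed ($r$-independent) linear combination of the finitely many processes $W_y^r(\cdot)$ with $y$ in $\{x_{j,i}\}\cup\{a_j\}\cup\{b_j\}$; by Theorem \ref{workless} these converge jointly in $\mathcal{D}([0,\infty):\RR^k)$ to the $W_y(\cdot)$, the limit is continuous so the convergence is local uniform, and the continuous mapping theorem applies to the linear-combination map; $Z_f^N$ is then continuous as a finite linear combination of continuous processes. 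Hypothesis (3), $Z_f^N\xrightarrow{d}Z_f$ as $N\to\infty$, follows from the second deterministic bound above, since $\sup_{t\le T}W_\infty(t)<\infty$ almost surely; this also shows $Z_f$ has continuous sample paths. Hypothesis (1) follows from the first deterministic bound together with tightness of $\{\sup_{t\le T}W_\infty^r(t)\}_{r\in\clr}$, which is a consequence of $W_\infty^r(\cdot)=Z_\chi^r(\cdot)\xrightarrow{d}W_\infty(\cdot)$ (Theorem \ref{workless} with the single index $\infty$) and continuity of the limit; this lets one choose $b(\epsilon)\to 0$ with $\limsup_r\mathbb{P}(\sup_{t\le T}|Z_f^{r,N}(t)-Z_f^r(t)|>b(\epsilon))<b(\epsilon)$. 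Lemma \ref{seqconv} then gives $Z_f^r(\cdot)\xrightarrow{d}Z_f(\cdot)$.

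I expect hypothesis (1) of Lemma \ref{seqconv} to be the only real obstacle, namely making the Riemann-sum approximation to $\int_{a_j}^{b_j}g'(x)W_x^r(t)\,dx$ good uniformly over $r\in\clr$ and $t\le T$. The monotonicity of $a\mapsto W_a^r(t)$ is precisely what turns this into the elementary telescoping estimate above and avoids having to establish a modulus-of-continuity bound for the random field $a\mapsto W_a^r(t)$ that is uniform in $r$; the remaining pieces are routine uses of Lemma \ref{parts}, Theorem \ref{workless}, the continuous mapping theorem, and Lemma \ref{seqconv}.
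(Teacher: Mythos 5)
Your proof is correct, but it takes a genuinely different route from the paper's. The paper proves Theorem \ref{convplus} in two separate stages: (i) tightness of $\int f\,d\tZ^r(\cdot)$ in $\mathcal{D}([0,T]:\RR)$ via Aldous' criterion, which requires fairly delicate increment estimates for $\hat V^r_x$, the arrival process, and the H\"older modulus of the limiting Brownian motion; and (ii) finite-dimensional convergence, obtained by applying Lemma \ref{seqconv} in $\RR^k$ to Riemann-sum approximations, with the approximation error controlled through Proposition \ref{comp}, the Lipschitz property \eqref{eq:lipSM} of the Skorohod map, Karamata-type bounds on $\mathbb{E}(v\one_{[x_jc^r<v\le x_{j+1}c^r]})$, and Assumption \eqref{eq:assuinitcond} for the initial-condition contribution. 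You instead apply Lemma \ref{seqconv} once, directly in the path space $\mathcal{D}([0,T]:\RR)$, which yields tightness and identification of the limit simultaneously and dispenses with the Aldous argument altogether. The simplification that makes this work is your observation that $a\mapsto W_a^r(t)$ is nondecreasing (since $\chi_a\le\chi_{a'}$ and $\tZ^r(t)\ge 0$) and likewise $a\mapsto W_a(t)$ by \eqref{eq:monoskor}, so the Riemann-sum error telescopes to a deterministic bound $C\rho_N\sup_{t\le T}W_\infty^r(t)$ uniform in $t$; tightness of $\{\sup_{t\le T}W_\infty^r(t)\}_{r\in\clr}$ (from Theorem \ref{workless} and continuity of $W_\infty$) then closes condition (1) of Lemma \ref{seqconv}, while condition (2) follows from Theorem \ref{workless} and the continuity of the linear-combination map from $\mathcal{D}([0,\infty):\RR^k)$ to $\mathcal{D}([0,\infty):\RR)$, and the almost sure uniform convergence $Z_f^N\to Z_f$ gives both condition (3) and the continuity of the limit. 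Interestingly, the paper's own bound \eqref{eq:PsinrPsir} already implicitly uses this monotonicity but then routes through Proposition \ref{comp} and the $X^r_x$ processes rather than telescoping, so your argument is a genuine economy. If you write it up, make the choice of $b(1/N)$ explicit, e.g.\ $b(1/N)=\sqrt{C\rho_N}\vee 2G\bigl(1/\sqrt{C\rho_N}\bigr)$ with $G(K):=\limsup_{r}\mathbb{P}\bigl(\sup_{t\le T}W_\infty^r(t)>K\bigr)$, so that condition (1) of Lemma \ref{seqconv} is verified cleanly.
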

\begin{remark}
	\label{rem:takez}
The proof of Theorem \ref{workfn} will show that we can also take $a_1=0$ in Theorem \ref{convplus}. See Remark \ref{rem:aeqze} for details.
\end{remark}
\begin{proof}[Proof of Theorem \ref{convplus}]
 We will prove the theorem for $J=1$. The proof for $J \ge 2$ follows along the same lines (with more cumbersome notation) and is, therefore, omitted. We will write the interval $(a_1, b_1]$ as $(\delta, M]$ with $0< \delta < M < \infty$. Assume $f$ is not identically zero (otherwise the result is trivial). \\
 \emph{Proof of Tightness:} We will use Aldous' tightness criterion stated in Section \ref{ss:tcc}. Note that, for $r\in\clr$ and $t\ge 0$,
$$
\left|\int_{\delta}^Mf(x) \tZ^r(t)(dx) \right| \le \sup_{z \in [\delta, M]} |g(z)|\int_{\delta}^Mx \tZ^r(t)(dx)
= \sup_{z \in [\delta, M]} |g(z)|(W_M^r(t) - W_{\delta}^r(t)).
$$
By Theorem \ref{workless}, $\left\{W_M^r(\cdot) - W_{\delta}^r(\cdot)\right\}_{r\in\clr}$ is tight, which implies that $\left\{\int_{\delta}^Mf(x) \tZ^r(t)(dx)\right\}_{r\in\clr}$ is tight for each fixed $t\ge 0$.
Thus, (A1) of Aldous' tightness criterion holds for\\
$\left\{\int_{\delta}^Mf(x) \tZ^r(\cdot)(dx)\right\}_{r\in\clr}$.

Next we show that (A2) of Aldous' tightness criterion holds for the above sequence as well.
Fix $T \in (0,\infty)$, $\eta \in (0,1)$ and a stopping time $\tau$ that takes values in $[0,T]$. Then, by Lemma \ref{parts}, for $r\in\clr$,
\begin{multline}\label{fcomp}
\left|\int_{\delta}^Mf(x) \tZ^r(\tau + \eta)(dx) - \int_{\delta}^Mf(x) \tZ^r(\tau)(dx)\right|\\
 \le C_g\left(\int_{\delta}^M|W_x^r(\tau+\eta) - W_x^r(\tau)|dx + |W_M^r(\tau+\eta) - W_M^r(\tau)| + |W_{\delta}^r(\tau+\eta) - W_{\delta}^r(\tau)|\right),
\end{multline}
where $C_g := \left(\sup_{z \in [\delta, M]} |g'(z)|\right) + |g(M)| + |g(\deltap)|$. By \eqref{eq:comp2} in Proposition \ref{comp} and \eqref{skortimecomp}, for any $r\in\clr$ and
$x \in [\delta,M]$,
\begin{eqnarray}
|W_x^r(\tau+\eta) - W_x^r(\tau)|
&\le& |Y_x^r(\tau+\eta) - Y_x^r(\tau)| + \frac{xc^r}{r}\label{xzcomp}\\
&\le& 2\sup_{\tau \le s \le \tau+\eta}|X_x^r(s) - X_x^r(\tau)| + \frac{xc^r}{r}.\nonumber
\end{eqnarray}
Thus, for $r\in\clr$,
\begin{eqnarray}
\int_{\delta}^M|W_x^r(\tau+\eta) - W_x^r(\tau)|dx &\le& \int_{\delta}^M\left(|Y_x^r(\tau+\eta) - Y_x^r(\tau)| + \frac{xc^r}{r}\right)dx\label{tight2}\\
&\le& 2\int_{\delta}^M\sup_{\tau \le s \le \tau+\eta}|X_x^r(s) - X_x^r(\tau)|dx + \frac{M^2c^r}{2r}.\nonumber
\end{eqnarray}
Note that for $r\in\clr$, $s \in [\tau, \tau+\eta]$ and $x\in\RRp$,
\begin{equation*}
X_x^r(s) - X_x^r(\tau) = \hat{V}_x^r(s) - \hat{V}_x^r(\tau) +r(\rho^r_{xc^r} - 1)(s-\tau),
\end{equation*}
and hence
\begin{equation}\label{Zd}
\sup_{\tau \le s \le \tau+\eta}|X_x^r(s) - X_x^r(\tau)| \le \sup_{\tau \le s \le \tau+\eta}|\hat{V}_x^r(s) - \hat{V}_x^r(\tau)| +\left|r(\rho^r_{xc^r} - 1)\right|\eta.
\end{equation}
For each $r\in\clr$, define a process $\hat{U}^r(\cdot)$ as follows:
\begin{equation*}
\hat{U}^r(t) := \frac{1}{r}\sum_{i=1}^{\lfloor r^2t\rfloor}\left(v_i \one_{[v_i > \delta c^r]} - \mathbb{E}(v \one_{[v > \delta c^r]})\right),\qquad\hbox{for }t\ge 0.
\end{equation*}
Note that for any $r\in\clr$, $s \in [\tau, \tau+\eta]$ and $x \in [\delta, M]$,
\begin{align}\label{supx}
|\hat{V}_x^r(s) - \hat{V}_x^r(\tau)| &\le |\hat{V}_{\infty}^r(s) - \hat{V}_{\infty}^r(\tau)| + \frac{1}{r}\sum_{i=E^r(r^2\tau)+1}^{E^r(r^2(\tau+\eta))}v_i \one_{[v_i > \delta c^r]} + r \lambda^r\eta\mathbb{E}(v \one_{[v > \delta c^r]})\nonumber\\
&\le  |\hat{V}_{\infty}^r(s) - \hat{V}_{\infty}^r(\tau)| + |\hat{U}^r(\overline{E}^r(\tau + \eta)) - \hat{U}^r(\overline{E}^r(\tau))|\\
&\qquad + \frac{1}{r}\left(E^r(r^2(\tau+\eta)) - E^r(r^2\tau)\right)\mathbb{E}(v \one_{[v > \delta c^r]}) + r \lambda^r\eta\mathbb{E}(v \one_{[v > \delta c^r]}).\nonumber
\end{align}
By Proposition \ref{prop:fclt} as $r\to\infty$, $\hat{V}_{\infty}^r(\cdot) \xrightarrow{d} V^*(\cdot)$  in $\mathcal{D}([0,T+1] : \mathbb{R})$ for some Brownian motion $V^*$ with zero drift and finite variance. Fix $\gamma\in(0,1/2)$. 
Recall the notation $|f(t\#) - f(s\#)| <  A$, from Section \ref{sec:notat},
for  a RCLL function $f$,  $0 \le s \le t \le \infty$ and $A>0$.
For $K>0$, define the set
$$
\Omega(K) := \{|V^*(t\#) - V^*(s\#)| < K\eta^{\gamma} \text{ for all } 0 \le s \le t \le T+ 1 \text{ with } t-s \le \eta\}.
$$
Fix $\epsilon \in (0,1/8)$. Since $V^*$ is Holder continuous with exponent $\gamma$, there exists $K_{\epsilon}$ (not depending on $\eta$) large enough such that $\mathbb{P}(\Omega(K_{\epsilon})) \ge 1-\epsilon.$
Since for any $K>0$, the set
$$
A(K) := \{f \in \mathcal{D}([0,T+1] : \mathbb{R}): |f(t\#) - f(s\#)| < K\eta^{\gamma} \text{ for all } 0 \le s \le t \le T+ 1 \text{ with } t-s \le \eta\}
$$
is nonempty and open in the Skorohod topology by \cite[Chapter 3, Proposition 6.5]{ethier2009markov} and $\hat{V}_{\infty}^r(\cdot) \xrightarrow{d} V^*(\cdot)$ as $r\to\infty$, the Portmanteau theorem implies that there exists $r_0 > 0$ such that for all $r \ge r_0$,
$$
\mathbb{P}\left(\hat{V}_{\infty}^r(\cdot) \in A(K_{\epsilon})\right) \ge 1-2\epsilon,
$$
and consequently, for all $r \ge r_0$,
\begin{equation}\label{munif1}
\mathbb{P}\left(\sup_{\tau \le s \le \tau+\eta}|\hat{V}_{\infty}^r(s) - \hat{V}_{\infty}^r(\tau)| \ge K_{\epsilon} \eta^{\gamma}\right) \le 2\epsilon.
\end{equation}
Recall that 
$
\overline{E}^r(\cdot) \xrightarrow{d} \lambda(\cdot),
$
where $\lambda(t) = \lambda t$ for $t \ge 0$,
and by Proposition \ref{prop:fclt},
$
\hat{U}^r(\cdot) \xrightarrow{d} 0
$
as $r \rightarrow \infty$. Therefore, as $r \rightarrow \infty$,
$
\hat{U}^r(\overline{E}^r(\cdot)) \xrightarrow{d} 0
$
and consequently, there exists $r_1 \ge r_0$ such that for $r \ge r_1$,
\begin{equation}\label{hatXsmall}
\mathbb{P}\left(|\hat{U}^r(\overline{E}^r(\tau + \eta)) - \hat{U}^r(\overline{E}^r(\tau))| > \eta^{\gamma}\right) \le 2\mathbb{P}\left(\sup_{t \in [0,T+1]}|\hat{U}^r(\overline{E}^r(t))| > \eta^{\gamma}/2\right) < \epsilon.
\end{equation}
Now, using the fact that $r\mathbb{E}[v\one_{[v>c^r]}]=1$ due to \eqref{def:S}, \eqref{eq:ssinv} and \eqref{def:cr}, we write the sum of the third and the fourth terms on the right side of \eqref{supx} as
\begin{eqnarray}\label{kara}
&&\frac{1}{r}\left(E^r(r^2(\tau+\eta)) - E^r(r^2\tau)\right)\mathbb{E}(v \one_{[v > \delta c^r]}) + r \lambda^r\eta\mathbb{E}(v \one_{[v > \delta c^r]})\nonumber\\
&&\qquad= \frac{E^r(r^2(\tau +\eta)) - E^r(r^2\tau)}{r^2}\frac{\mathbb{E}(v \one_{[v > \delta c^r]})}{\mathbb{E}(v \one_{[v > c^r]})} + \lambda^r\eta\frac{\mathbb{E}(v \one_{[v > \delta c^r]})}{\mathbb{E}(v \one_{[v > c^r]})}\nonumber\\
&&\qquad= \left(\overline{E}^r(\tau +\eta) - \overline{E}^r(\tau)\right)\frac{\mathbb{E}(v \one_{[v > \delta c^r]})}{\mathbb{E}(v \one_{[v > c^r]})} + \lambda^r\eta\frac{\mathbb{E}(v \one_{[v > \delta c^r]})}{\mathbb{E}(v \one_{[v > c^r]})}.
\end{eqnarray}
As the set
$$
\Omega^*:= \{f \in \mathcal{D}([0,T+1] : \mathbb{R}): |f(t\#) - f(s\#)| < 2\lambda \eta \text{ for all } 0 \le s \le t \le T+ 1 \text{ with } t-s \le \eta\}
$$
is nonempty and open in the Skorohod topology and 
$\overline{E}^r(\cdot) \xrightarrow{d} \lambda(\cdot)$ as $r\to\infty$, there exists $r_2 \ge r_1$ such that for all $r \ge r_2$,
$$
\mathbb{P}\left(\overline{E}^r(\tau +\eta) - \overline{E}^r(\tau) \ge 2\lambda \eta\right) < \epsilon.
$$
Moreover, $\lambda^r \rightarrow \lambda$ as $r \rightarrow \infty$ and \eqref{karama} implies
$$
\lim_{r \rightarrow \infty} \frac{\mathbb{E}(v \one_{[v > \delta c^r]})}{\mathbb{E}(v \one_{[v > c^r]})} = \frac{1}{\delta^p}.
$$
Using these observations in \eqref{kara} gives that there is an $r_3 \ge r_2$ such that for all $r \ge r_3$,
\begin{equation}\label{munif2}
\mathbb{P}\left(\frac{1}{r}\left(E^r(r^2(\tau+\eta)) - E^r(r^2\tau)\right)\mathbb{E}(v \one_{[v > \delta c^r]}) + r \lambda^r\eta\mathbb{E}(v \one_{[v > \delta c^r]}) > \frac{8\lambda \eta}{\delta^p}\right) < \epsilon.
\end{equation}
Using \eqref{supx}, \eqref{munif1}, \eqref{hatXsmall} and \eqref{munif2}, we obtain for $r \ge r_3$,
\begin{multline}\label{munif3}
\mathbb{P}\left(\sup_{x \in [\delta, M]}\sup_{\tau \le s \le \tau+\eta}|\hat{V}_x^r(s) - \hat{V}_x^r(\tau)| > \left(K_{\epsilon} + 1 +\frac{8\lambda}{\delta^p}\right)\eta^{\gamma}\right)\\
\le \mathbb{P}\left(\sup_{\tau \le s \le \tau+\eta}|\hat{V}_{\infty}^r(s) - \hat{V}_{\infty}^r(\tau)| > K_{\epsilon} \eta^{\gamma}\right)
+ \mathbb{P}\left(|\hat{U}^r(\overline{E}^r(\tau + \eta)) - \hat{U}^r(\overline{E}^r(\tau))| > \eta^{\gamma}\right)\\
\quad\quad+ \mathbb{P}\left(\frac{1}{r}\left(E^r(r^2(\tau+\eta)) - E^r(r^2\tau)\right)\mathbb{E}(v \one_{[v > \delta c^r]}) + r \lambda^r\eta\mathbb{E}(v \one_{[v > \delta c^r]})) > \frac{8\lambda\eta}{\delta^p}\right)< 4\epsilon.
\end{multline}
Moreover, by \eqref{eq:htcr} and the uniform convergence in \eqref{karama2},
 $r(\rho^r_{xc^r} - 1) \rightarrow \kappa - \frac{\lambda}{x^p}$ as $r \rightarrow \infty$ uniformly for $x \in [\delta, \infty)$. Thus, there exists $C_1>0$ and $r_4 \ge r_3$ such that for all $r \ge r_4$,
\begin{equation}\label{rhouni}
\sup_{x \in [\delta, M]}|r(\rho^r_{xc^r} - 1)| \le C_1.
\end{equation}
Using \eqref{Zd}, \eqref{munif3} and \eqref{rhouni}, for some $C_2 \in (0,\infty)$ and all $r \ge r_4$,
\begin{equation}\label{Ztight}
\mathbb{P}\left(\sup_{x \in [\delta, M]}\sup_{\tau \le s \le \tau+\eta}|X_x^r(s) - X_x^r(\tau)| > \left(K_{\epsilon} + 1 + \frac{8\lambda}{\delta^p} + C_2\right)\eta^{\gamma}\right) < 4\epsilon.
\end{equation}
Take $r_5 \ge r_4$ such that $\max\{M^2c^r/(2r), Mc^r/r\} < \eta^{\gamma}$ for all $r\ge r_5$ and define $C_3:= 2(M-\delta)\left(K_{\epsilon} + 1 + \frac{8\lambda}{\delta^p} + C_2\right) + 1$. Then, using  \eqref{tight2} and \eqref{Ztight}, we obtain, for all $r\ge r_5$,
\begin{equation}\label{tightint}
\mathbb{P}\left(\int_{\delta}^M|W_x^r(\tau+\eta) - W_x^r(\tau)|dx > C_3\eta^{\gamma}\right) < 4\epsilon.
\end{equation}
Similarly, using \eqref{xzcomp} and \eqref{Ztight} and writing $C_4 := 2\left(K_{\epsilon} + 1 + \frac{8\lambda}{\delta^p} + C_2\right) + 1$,
for $r \ge r_5$, we can show that
\begin{equation}\label{tightint2}
\mathbb{P}\left(|W_M^r(\tau+\eta) - W_M^r(\tau)| + |W_{\delta}^r(\tau+\eta) - W_{\delta}^r(\tau)| > C_4\eta^{\gamma}\right) < 4\epsilon.
\end{equation}
Finally, using \eqref{fcomp}, \eqref{tightint} and \eqref{tightint2}, and the fact that $T$, $\eta$, $\epsilon$ and $\tau$ were arbitrary,
we conclude that for any $T>0$, $\eta \in (0,1)$, $\epsilon \in (0,1/8)$, and stopping time $\tau$ taking values in $[0,T]$, there exists $C^*>0$ and $r^*>0$
such that for any $r \ge r^*$,
\begin{equation}\label{tightII}
\mathbb{P}\left(\left|\int_{\delta}^Mf(x) \tZ^r(\tau + \eta)(dx) - \int_{\delta}^Mf(x) \tZ^r(\tau)(dx)\right|> C^*\eta^{\gamma}\right) < 8\epsilon.
\end{equation}
For instance, $C^*=C_g(C_3 + C_4)$ and $r^*=r_5$. Equation \eqref{tightII} implies that condition (A2) of Aldous' tightness criterion also holds.
Thus,
$\left\{\int_{\delta}^Mf(x) \tZ^r(\cdot)(dx)\right\}_{r\in\clr}$ is tight in $\mathcal{D}([0,T]:\RR)$  by Aldous' tightness criterion.\\

\noindent\emph{Proof of finite dimensional joint convergence: } For $r\in\clr$ and $t\ge 0$, write
\begin{align*}
\Psi^r(t) &:= 
  - \int_{\delta}^Mg'(x)W_x^r(t)dx + g(M)W_M^r(t) - g(\deltap)W_{\delta}^r(t),\\
 \Psi(t) &:= - \int_{\delta}^Mg'(x)W_x(t)dx + g(M)W_M(t) - g(\deltap)W_{\delta}(t).
\end{align*}
Fix $k\in\NN$, $T>0$, and $0\le t_1<\dots<t_k\le T$.  We will use Lemma \ref{seqconv} and Proposition \ref{comp} to show that
\begin{equation}\label{finconmain}
\mathbf{A}^r :=(\Psi^r(t_1),\dots, \Psi^r(t_k)) \xrightarrow{d} \mathbf{A} :=(\Psi(t_1), \dots, \Psi(t_k))
\end{equation}
as $r \rightarrow \infty$. For this, for each $n \in\NN$, let $\delta = x_0 < x_1 < \dots < x_{K_n} = M$ be a partition of mesh $n^{-1}$. For $r\in\clr$, $n\in\NN$, and $t\ge 0$, define
\begin{align*}
\Psi_n^r(t) &:= \sum_{j=0}^{K_n-1}W_{x_j}^r(t) (g(x_j) - g(x_{j+1})) + g(M)W_M^r(t) - g(\deltap)W_{\delta}^r(t),\\
\Psi_n(t) &:= \sum_{j=0}^{K_n-1}W_{x_j}(t) (g(x_j) - g(x_{j+1})) + g(M)W_M(t) - g(\deltap)W_{\delta}(t).
\end{align*}
Observe that for each $n \in\NN$, by Theorem \ref{workless} and the continuous mapping theorem,
\begin{equation}\label{nconv}
\Psi_n^r(\cdot) \xrightarrow{d} \Psi_n(\cdot) \quad \text{ in } \mathcal{D}([0,T]:\mathbb{R}) \quad \text{ as } r \rightarrow \infty.
\end{equation}
By \eqref{nconv}, for each $n \in\NN$,
\begin{equation}\label{nconvfin}
\mathbf{A}_n^r :=(\Psi_n^r(t_1),\dots, \Psi_n^r(t_k))\xrightarrow{d} \mathbf{A}_n:=(\Psi_n(t_1), \dots, \Psi_n(t_k)) \quad \text{ as } r \rightarrow \infty.
\end{equation}
For each $r\in\clr$, $n\in\NN$, and $t\ge 0$, note that
\begin{align}
|\Psi_n^r(t) - \Psi^r(t)| &\le \sum_{j=0}^{K_n -1}\int_{x_j}^{x_{j+1}}|g'(x)|\left(W_x^r(t) - W_{x_j}^r(t)\right)dx,\label{eq:PsinrPsir}\\
|\Psi_n(t) - \Psi(t)| &\le \sum_{j=0}^{K_n -1}\int_{x_j}^{x_{j+1}}|g'(x)|\left(W_x(t) - W_{x_j}(t)\right)dx.\label{eq:PsinPsi}
\end{align}
By \eqref{eq:PsinrPsir}, \eqref{eq:comp2} in Proposition \ref{comp} and the Lipschitz property \eqref{eq:lipSM} of the Skorohod map {\color{black}$\Gamma$}, for any $r\in\clr$, $n\in\NN$, and $t \in [0,T]$,
\begin{multline}\label{psicom}
|\Psi_n^r(t) - \Psi^r(t)| \le \sum_{j=0}^{K_n -1}\int_{x_j}^{x_{j+1}}|g'(x)|\left(\left|Y_x^r(t) - Y_{x_j}^r(t)\right| + \frac{xc^r}{r}\right)dx\\
 \le \frac{Mc^r}{r}\int_{\delta}^{M}|g'(x)|dx + 2\sum_{j=0}^{K_n -1}\int_{x_j}^{x_{j+1}}|g'(x)|\left(\sup_{s \in [0,T]}|X_x^r(s) - X_{x_j}^r(s)|\right)dx.
\end{multline}
Now, for any $0 \le j \le K_n-1$ and any $x \in [x_j, x_{j+1}]$, 
\begin{eqnarray}\label{eq:sumtwo}
	\sup_{s \in [0,T]}|X_x^r(s) - X_{x_j}^r(s)| &\le&  \frac{1}{r}\sum_{l=1}^{\qq^r} \breve{v}_l^r \one_{[x_j c^r < \breve{v}_l^r \le x_{j+1} c^r]}\\
	&&+  \frac{1}{r}\sum_{i=1}^{E^r(r^2T)}v_i \one_{[x_jc^r < v_i \le x_{j+1}c^r]}.\nonumber
\end{eqnarray}
Hence, by \eqref{psicom} and \eqref{eq:sumtwo}, for each $r\in\clr$ and $n\in\NN$,
\begin{equation}
\sup_{0\le t\le T} \left| \Psi_n^r(t)-\Psi^r(t)\right|\le\Delta_{n,1}^r+\Delta_{n,2}^r,\label{PhiBnd}
\end{equation}
where
\begin{align*}
\Delta_{n,1}^r &:= \frac{Mc^r}{r}\int_{\delta}^{M}|g'(x)|dx + 2\sum_{j=0}^{K_{n} -1}\int_{x_j}^{x_{j+1}}|g'(x)| dx\left(\frac{1}{r}\sum_{i=1}^{E^r(r^2T)}v_i \one_{[x_jc^r < v_i \le x_{j+1}c^r]}\right)\\
\Delta_{n,2}^r &:= 2\sum_{j=0}^{K_{n} -1}\int_{x_j}^{x_{j+1}}|g'(x)| dx\left(\frac{1}{r}\sum_{l=1}^{\qq^r} \breve{v}_l^r \one_{[x_j c^r < \breve{v}_l^r \le x_{j+1} c^r]}\right).
\end{align*}
Observe that there exists $C>0$ such that for all $r\in\clr$, $n\in\NN$, and $0\le j\le K_n$,
\begin{align}
\mathbb{E}\left(\frac{1}{r}\sum_{i=1}^{E^r(r^2T)}v_i \one_{[x_jc^r < v_i \le x_{j+1}c^r]}\right) \le CrT\mathbb{E}\left(v \one_{[x_jc^r < v \le x_{j+1}c^r]}\right).\label{inq:1}
\end{align}
Recalling that $r\mathbb{E}\left(v \one_{[v > c^r]}\right) =r/S(c^r)= 1$ for each $r\in\clr$, we can write, for $r\in\clr$ and $0\le j\le K_n$,
\begin{eqnarray}
\mathbb{E}\left(v \one_{[x_jc^r < v \le x_{j+1}c^r]}\right) &=& \mathbb{E}\left(v \one_{[v > x_jc^r]}\right) - \mathbb{E}\left(v \one_{[v > x_{j+1}c^r]}\right)\nonumber\\
&=& \frac{1}{r}\left(\frac{\mathbb{E}\left(v \one_{[v > x_jc^r]}\right)}{\mathbb{E}\left(v \one_{[v > c^r]}\right)} - \frac{\mathbb{E}\left(v \one_{[v > x_{j+1}c^r]}\right)}{\mathbb{E}\left(v \one_{[v > c^r]}\right)}\right).\label{eq:2}
\end{eqnarray}
From the uniform convergence in \eqref{karama2},
for each $n\in\NN$, there exists $r_1(n) >0$ such that for all $r \ge r_1(n)$,  
\begin{equation}\label{inq:3}
\left|\frac{\mathbb{E}(v \one_{[v > u c^r]})}{\mathbb{E}(v \one_{[v > c^r]})} - \frac{1}{u^p}\right| < \frac{1}{n} \quad \text{for all } u \in [\delta, \infty).
\end{equation}
By combining \eqref{inq:1}, \eqref{eq:2}, and \eqref{inq:3}, for any $n\in\NN$, $r \ge r_1(n)$ and any $0 \le j \le K_n-1$,
\begin{eqnarray}
\mathbb{E}\left(\frac{1}{r}\sum_{i=1}^{E^r(r^2T)}v_i \one_{[x_jc^r < v_i \le x_{j+1}c^r]}\right)
&\le &CT\left|\frac{\mathbb{E}(v \one_{[v > x_j c^r]})}{\mathbb{E}(v \one_{[v > c^r]})} - \frac{1}{x_j^p}\right|\nonumber\\
&& + CT\left|\frac{\mathbb{E}(v \one_{[v > x_{j+1} c^r]})}{\mathbb{E}(v \one_{[v > c^r]})} - \frac{1}{x_{j+1}^p}\right|\nonumber\\
&& + CT\left(\frac{1}{x_j^p} - \frac{1}{x_{j+1}^p}\right) \le \frac{2CT}{n} + \frac{CTp}{\delta^{p+1}n}.\nonumber
\end{eqnarray}
Thus, for $n\in\NN$ and $r\ge r_1(n)$,
\begin{equation}
\mathbb{E}\left[ \Delta_{n,1}^r\right]\le \frac{Mc^r}{r}\int_{\delta}^M \left| g'(x)\right| dx+2\left(2CT+\frac{CTp}{\delta^{p+1}}\right)\frac{1}{n}\int_{\delta}^M \left| g'(x)\right| dx.\label{rentight}
\end{equation}
Fix $\epsilon \in (0,1)$. Choose $n_{\epsilon} \in \NN$ such that $2\left(2CT + \frac{CTp}{\delta^{p+1}}\right)\frac{1}{n_{\epsilon}}\int_{\delta}^{M}|g'(x)|dx < \epsilon^2/(4\sqrt{k})$ and
$$
\mathbb{P}\left(\sup_{\left\{\delta<y, z\le M : |z-y|\le n_{\epsilon}^{-1}\right\}}\left|\xi(z) - \xi(y)\right| < \frac{\epsilon}{4\sqrt{k}\int_{\delta}^M|g'(x)|dx}\right) \ge 1-\epsilon/4,
$$
which can be ensured to exist since $\xi(\cdot)$ is continuous and $[\delta,M]$ is compact. Noting that
\begin{align*}
S^{(\epsilon)} &:= \left\lbrace f \in \mathcal{D}([\delta,M] : \mathbb{R}) : |f(z\#) - f(y\#)| < \frac{\epsilon}{4\sqrt{k}\int_{\delta}^M|g'(x)|dx}\right.\\
&\qquad\qquad \left. \forall\ \delta \le y, z \le M \text{ with } \left|z-y\right| \le n_{\epsilon}^{-1}\right\rbrace
\end{align*}
is nonempty and open in the Skorohod topology and by assumption \eqref{eq:assuinitcond}, we obtain $r_2\ge r_1(n_{\epsilon})$ such that for all $r \ge r_2$, 
\begin{equation}\label{intight}
\mathbb{P}\left(\frac{1}{r}\sum_{l=1}^{\qq^r} \breve{v}_l^r \one_{[x_j c^r < \breve{v}_l^r \le x_{j+1} c^r]} \ge \frac{\epsilon}{4\sqrt{k}\int_{\delta}^M|g'(x)|dx} \text{ for some } 0 \le j \le K_{n_{\epsilon}}-1\right) < \epsilon/2.
\end{equation}
Using \eqref{PhiBnd}, \eqref{rentight}, \eqref{intight}, and the choice of $n_{\epsilon}$,
we conclude that for $r \ge r_2$,
\begin{align*}
\mathbb{P}&\left(\sup_{t \in [0,T]}|\Psi_{n_{\epsilon}}^r(t) - \Psi^r(t)| > \frac{\epsilon}{\sqrt{k}} \right) \le \mathbb{P}\left(\Delta_{n_{\epsilon},1}^r(t) > \frac{\epsilon}{2\sqrt{k}}\right) + \mathbb{P}\left(\Delta_{n_{\epsilon},2}^r > \frac{\epsilon}{2\sqrt{k}} \right)\\
 &\qquad \qquad \qquad  \le \frac{2\sqrt{k}Mc^r}{\epsilon r}\int_{\delta}^{M}|g'(x)|dx + \frac{4\sqrt{k}}{\epsilon}\left(2CT + \frac{CTp}{\delta^{p+1}}\right)\frac{1}{n_{\epsilon}}\int_{\delta}^{M}|g'(x)|dx + \frac{\epsilon}{2}\\
 &\qquad \qquad \qquad  \le \frac{2\sqrt{k}Mc^r}{\epsilon r}\int_{\delta}^{M}|g'(x)|dx + \epsilon.
\end{align*}
Therefore, 
\begin{equation}\label{acon}
\limsup_{r}\mathbb{P}\left(\|\mathbf{A}_{n_{\epsilon}}^r - \mathbf{A}^r\|_2 > \epsilon\right) \le \limsup_{r}\mathbb{P}\left(\sqrt{k}\sup_{t \in [0,T]}|\Psi_{n_{\epsilon}}^r(t) - \Psi^r(t)| > \epsilon\right)
\le \epsilon,
\end{equation}
where $\|\cdot\|_2$ denotes the $L^2$-norm in $\mathbb{R}^k$. Thus, condition (1) of Lemma \ref{seqconv} holds with $r$ in place of $m$, $S^{\epsilon}_r =\mathbf{A}_{n_{\epsilon}}^r$, $S_r =\mathbf{A}^r$ and $b(\epsilon) = 2\epsilon$. Condition (2) of Lemma \ref{seqconv} with $S^{\epsilon} = \mathbf{A}_{n_{\epsilon}}$ follows from \eqref{nconvfin}.

Next, recall $W_a(\cdot) = \Gamma[X_a](\cdot)$ and for $\theta>0$, $b>a>0$, write 
$$
\omega(\xi, \theta; [a,b]) := \sup\{|\xi(y) - \xi(x)| : a \le x, y \le b, \  |y-x| \le \theta\}.
$$
By the continuity of $\xi(\cdot)$, for any fixed $a,b$, $\lim_{\theta \rightarrow 0} \omega(\xi, \theta ; [a,b]) = 0$ almost surely.
Using this observation along with \eqref{eq:PsinPsi}, the Lipschitz property \eqref{eq:lipSM} of the Skorohod map
and \eqref{eq:x0defn}-\eqref{eq:xadefn}, for each $n\in\NN$,
\begin{align*}
&\sup_{t \in [0,T]}|\Psi_n(t) - \Psi(t)|\\
&\qquad \le 2T\lambda\sum_{j=0}^{K_n -1}\int_{x_j}^{x_{j+1}}|g'(x)|\left(x^{-p}_j - x^{-p}\right)dx + 2\omega(\xi, n^{-1}; [\delta, M])\int_{\delta}^M|g'(x)|dx \\
&\qquad\le 2T\lambda\sum_{j=0}^{K_n -1}\int_{x_j}^{x_{j+1}}|g'(x)|\frac{p(x-x_j)}{x_j^{p+1}}dx + 2\omega(\xi, n^{-1}; [\delta, M])\int_{\delta}^M|g'(x)|dx\\
&\qquad \le \frac{2 T \lambda p}{\delta^{p+1}n}\int_{\delta}^{M}|g'(x)|dx + 2\omega(\xi, n^{-1}; [\delta, M])\int_{\delta}^M|g'(x)|dx. 
\end{align*}
Thus, $\sup_{t \in [0,T]}|\Psi_n(t) - \Psi(t)|\to 0$ almost surely as  $n \rightarrow \infty$, which implies that
\begin{equation}\label{ancon}
\|\mathbf{A}_n - \mathbf{A}\| \le \sqrt{k}\sup_{t \in [0,T]}|\Psi_n(t) - \Psi(t)| \rightarrow 0 \quad \text{ almost surely as } n \rightarrow \infty.
\end{equation} 
Thus, condition (3) of Lemma \ref{seqconv} holds with $S^{\epsilon} = \mathbf{A}_{n_{\epsilon}}$ and $S^0 = \mathbf{A}$.
The weak convergence in \eqref{finconmain} now follows from \eqref{nconvfin}, \eqref{acon}, \eqref{ancon} and Lemma \ref{seqconv}. 

This completes the proof of the convergence claimed in the theorem. The continuity of the limiting process in the theorem follows from that of Brownian motion and \eqref{skortimecomp}.
\end{proof}

To prove Theorem \ref{workfn} as a consequence of Theorem \ref{convplus}, we need the following result.
\begin{lemma}\label{Minfty}
For $0<\delta < \infty$, consider any $C^1$ function $f: [\delta, \infty) \rightarrow \mathbb{R}$ such that $\lim_{x \rightarrow \infty} \frac{f(x)}{x}$ exists and $\int_1^{\infty} \frac{|f'(x)|}{x^{\alpha^* +1}} < \infty$, where $\alpha^*$ is the constant appearing in Assumption \eqref{eq:initcondq2}. Then, writing $g(x) = f(x)/x$ for $x\in[\delta,\infty)$ and $g(\infty) = \lim_{x \rightarrow \infty} g(x)$, 
the following distributional convergence holds in $\mathcal{D}([0,\infty) : \mathbb{R})$:
\begin{equation}\label{delinf}
\int_{\delta}^{\infty}f(x) \tZ^r(\cdot)(dx) \xrightarrow{d} - \int_{\delta}^{\infty}g'(x)W_x(\cdot)dx + g(\infty)W_{\infty}(\cdot) - g(\delta)W_{\delta}(\cdot),
\end{equation}
as $r\to\infty$, where the right side of \eqref{delinf} defines a stochastic process with
sample paths in $\mathcal{C}([0,\infty) : \mathbb{R})$.
\end{lemma}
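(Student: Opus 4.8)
The plan is to obtain \eqref{delinf} from Theorem \ref{convplus} by truncating $f$ at a large level $M$, passing $r\to\infty$ first and then $M\to\infty$, the limit interchange being organized through Lemma \ref{seqconv} (whose parameter $\epsilon$ I take to be $1/M$). Write $g(x)=f(x)/x$, a $C^1$ function on $[\delta,\infty)$ with $g(\infty):=\lim_{x\to\infty}g(x)$, and for $M>\delta$ put $f_M:=f\one_{(\delta,M]}$. Since $g$ is continuous at $\delta$ and at $M$, Theorem \ref{convplus} with $J=1$, $(a_1,b_1)=(\delta,M)$, gives, as $r\to\infty$,
\[
S^{1/M}_r:=\int_{\delta}^{M}f(x)\,\tZ^r(\cdot)(dx)\ \xrightarrow{d}\ \Psi^{(M)}(\cdot):=-\int_{\delta}^{M}g'(x)W_x(\cdot)\,dx+g(M)W_M(\cdot)-g(\delta)W_{\delta}(\cdot)
\]
in $\mathcal{D}([0,\infty):\RR)$, which is condition (2) of Lemma \ref{seqconv} with $S^{1/M}:=\Psi^{(M)}$; the full sequence is $S_r:=\int_\delta^\infty f(x)\,\tZ^r(\cdot)(dx)$ and the candidate limit $S^0$ is the right-hand side of \eqref{delinf}, denoted $\Psi^{(\infty)}(\cdot)$. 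Using $\int_\delta^M g'(x)\,dx=g(M)-g(\delta)$, one rewrites $\Psi^{(M)}(t)=g(\delta)\big(W_M(t)-W_\delta(t)\big)+\int_\delta^M g'(x)\big(W_M(t)-W_x(t)\big)\,dx$, a form better suited to the estimates below.

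First I would check that $\Psi^{(\infty)}$ is a well-defined process with continuous paths, and simultaneously verify condition (3). The basic estimate is that, by the Lipschitz and monotonicity properties \eqref{eq:lipSM}, \eqref{eq:monoskor} of $\Gamma$ applied to \eqref{eq:x0defn}--\eqref{eq:xinfinitydefn}, $0\le W_\infty(t)-W_x(t)\le 2\big(\xi(\infty)-\xi(x)\big)+2\lambda x^{-p}t$. Combining this with $|g'(x)|\le |f'(x)|/x+|f(x)|/x^2$, the bound $|f(x)|\le Cx$ for large $x$ (from $f(x)/x\to g(\infty)$), the hypothesis $\int_1^\infty |f'(x)|x^{-(\alpha^*+1)}\,dx<\infty$, $\alpha^*\le p$, and $\mathbb{E}(\xi(\infty)-\xi(x))=\mathbb{E}(w^*(\infty)-w^*(x))\le Cx^{-\alpha^*}$ from \eqref{eq:initcondq2}, shows $\int_\delta^\infty|g'(x)|\big(W_\infty(t)-W_x(t)\big)\,dx<\infty$ a.s.; hence the improper integral $\int_\delta^\infty g'(x)W_x(t)\,dx$ converges and $\Psi^{(\infty)}(t)=g(\delta)\big(W_\infty(t)-W_\delta(t)\big)+\int_\delta^\infty g'(x)\big(W_\infty(t)-W_x(t)\big)\,dx$. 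Then $\Psi^{(\infty)}(t)-\Psi^{(M)}(t)=g(M)\big(W_\infty(t)-W_M(t)\big)+\int_M^\infty g'(x)\big(W_\infty(t)-W_x(t)\big)\,dx$, so taking $\sup_{t\le T}$ inside and applying the basic estimate yields
\[
\sup_{t\le T}\big|\Psi^{(\infty)}(t)-\Psi^{(M)}(t)\big|\le 2|g(M)|\big(\xi(\infty)-\xi(M)+\lambda M^{-p}T\big)+2\int_M^\infty |g'(x)|\big(\xi(\infty)-\xi(x)+\lambda x^{-p}T\big)\,dx,
\]
which tends to $0$ a.s.\ as $M\to\infty$: the first term because $g(M)\to g(\infty)$ and $\xi(x)\uparrow\xi(\infty)$ a.s., the second as the tail of an a.s.\ convergent integral. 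This gives $\Psi^{(M)}\to\Psi^{(\infty)}$ in $\mathcal{C}([0,\infty):\RR)$ a.s., hence condition (3), and also shows $\Psi^{(\infty)}$ has continuous paths, being a local uniform limit of the continuous processes $\Psi^{(M)}$.

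Next I would verify condition (1): with $d$ a complete metric on $\mathcal{D}([0,\infty):\RR)$ satisfying $d(\phi,\psi)\le\sum_{T\ge1}2^{-T}\big(1\wedge\sup_{t\le T}|\phi(t)-\psi(t)|\big)$, I must exhibit $b(1/M)\to0$ with $\limsup_r\mathbb{P}\big(d(S^{1/M}_r,S_r)>b(1/M)\big)<b(1/M)$. Since $S_r-S^{1/M}_r=\int_M^\infty f(x)\,\tZ^r(\cdot)(dx)$ and $|f(y)|\le Cy$ for $y\ge M$, one has the crude but uniform bound $\big|\int_M^\infty f(x)\,\tZ^r(t)(dx)\big|\le C\big(W_\infty^r(t)-W_M^r(t)\big)$. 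The main point is the SRPT-specific domination: because short jobs have priority, the (scaled) work at time $r^2t$ due to jobs of size exceeding $Mc^r$ is at most the (scaled) work of all such jobs ever present, so for $t\le T$, $W_\infty^r(t)-W_M^r(t)\le \bar W_M^r(T):=\big(W_\infty^r(0)-W_M^r(0)\big)+\tfrac1r\sum_{i=1}^{E^r(r^2T)}v_i\one_{[v_i>Mc^r]}$, a quantity nondecreasing in $t$. Taking expectations, using $r\mathbb{E}(v\one_{[v>c^r]})=1$, the boundedness in $r$ of $\mathbb{E}(\bar E^r(T))$, the regular-variation limit \eqref{karama}, and \eqref{eq:assuinitcond}--\eqref{eq:assuinitcondb} (which give $\limsup_r\mathbb{E}(W_\infty^r(0)-W_M^r(0))=\mathbb{E}(w^*(\infty)-w^*(M))$), one gets $\limsup_r\mathbb{E}\big(\bar W_M^r(T)\big)\le \mathbb{E}(w^*(\infty)-w^*(M))+C_TM^{-p}$. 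Hence $\limsup_r\mathbb{E}\big[d(S_r,S^{1/M}_r)\big]\le\sum_{T\ge1}2^{-T}\big(1\wedge C\big(\mathbb{E}(w^*(\infty)-w^*(M))+C_TM^{-p}\big)\big)=:\tilde\epsilon(M)$, and $\tilde\epsilon(M)\to0$ as $M\to\infty$ by dominated convergence in the $T$-sum, using $\mathbb{E}(w^*(\infty))<\infty$. Choosing $b(1/M):=2\sqrt{\tilde\epsilon(M)}$ and applying Markov's inequality then gives condition (1).

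With conditions (1)--(3) in hand, Lemma \ref{seqconv} (with $\cls=\mathcal{D}([0,\infty):\RR)$, $\epsilon=1/M$) yields $S_r\xrightarrow{d}\Psi^{(\infty)}$, i.e.\ \eqref{delinf}, and continuity of the limit has already been observed. I expect the main obstacle to be the uniform-in-$r$ tail estimate underlying condition (1): one must show that the mass of $\tZ^r$ carried by large job sizes is asymptotically negligible \emph{uniformly in $r$}, which rests essentially on the SRPT domination $W_\infty^r(t)-W_M^r(t)\le\bar W_M^r(T)$ together with \eqref{karama} for the external arrivals and the initial-condition assumptions for the work present at time zero; the integrability hypothesis $\int_1^\infty|f'(x)|x^{-(\alpha^*+1)}\,dx<\infty$ is needed only to make $\Psi^{(\infty)}$ well-defined and to drive the convergence in condition (3).
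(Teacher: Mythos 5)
Your proposal is correct and follows essentially the same route as the paper's proof: truncate at level $M$, invoke Theorem \ref{convplus} for the truncated functional, control the tail $\int_M^\infty f\,d\tZ^r$ uniformly in $r$ by bounding remaining sizes by initial sizes and taking expectations (via \eqref{eq:assuinitcond}--\eqref{eq:assuinitcondb}, Wald's lemma and \eqref{karama}), control the tail of the limit via the Lipschitz property of $\Gamma$ together with \eqref{eq:initcondq2}, and conclude with Lemma \ref{seqconv}. The only quibble is your attribution of the domination $W^r_\infty(t)-W^r_M(t)\le \bar W^r_M(T)$ to the SRPT priority rule; it holds for any discipline simply because remaining processing times never exceed initial ones, which is exactly how the paper argues.
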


\begin{proof} Fix $T,\delta>0$.
For $\delta<M< \infty$ and $t\ge 0$, define the following:
$$
\Phi^r_{M}(t) := \int_{\delta}^{M}f(x)\tZ^r(t)(dx), \quad \Phi^r_{\infty}(t) := \int_{\delta}^{\infty}f(x)\tZ^r(t)(dx),
$$
and
$$
\Upsilon_{M}(t) := - \int_{\delta}^Mg'(x)W_x(t)dx + g(M)W_{M}(t) - g(\delta)W_{\delta}(t).
$$
We first show that on the time interval $[0,T]$, almost surely, $\int_{\delta}^Mg'(x)W_x(\cdot)dx$ converges uniformly {\color{black}(with respect to time) on the time interval $[0,T]$} as $M\to\infty$,
and hence, the limit
$\int_{\delta}^{\infty}g'(x)W_x(\cdot)dx$ is well defined and continuous on $[0,T]$. Note that for any $M' > M > \delta$,
\begin{align*}
\sup_{t \in [0,T]}\left| \int_M^{M'}g'(x)W_x(t)dx\right| &\le \sup_{t \in [0,T]}\left| \int_M^{M'}g'(x)(W_{\infty}(t) - W_x(t))dx\right|\\
&\qquad + \sup_{t \in [0,T]}\left| \int_M^{M'}g'(x)W_{\infty}(t)dx\right|.
\end{align*}
By \eqref{eq:xadefn}, \eqref{eq:xinfinitydefn}, the Lipschitz property \eqref{eq:lipSM} of the Skorohod map and recalling that $\xi(\infty) := \lim_{u \rightarrow \infty}\xi(u) < \infty$ almost surely by assumption, for any $M' > M > \delta$,
\begin{multline}\label{limc1}
\sup_{t \in [0,T]}\left| \int_M^{M'}g'(x)(W_{\infty}(t) - W_x(t))dx\right|\\
 \le 2T\lambda \int_M^{\infty}|g'(x)|x^{-p}dx + 2\int_M^{\infty}|g'(x)|(\xi(\infty) - \xi(x))dx.
\end{multline}
By Assumption \eqref{eq:initcondq2}, $0<\alpha^*\le p$, and so, by the assumptions on $f$ in the theorem,
$$
\int_1^{\infty}|g'(x)|x^{-p}dx \le \int_1^{\infty}\left(\frac{|f(x)|}{x^{p+2}} + \frac{|f'(x)|}{x^{p+1}}\right)dx \le \int_1^{\infty}\left(\frac{|f(x)|}{x^{\alpha^*+2}} + \frac{|f'(x)|}{x^{\alpha^*+1}}\right)dx < \infty.
$$
Also, by Assumption \eqref{eq:initcondq2}, there exists $C>0$ such that
\begin{equation}\label{eq:haifa1}
\mathbb{E}\left(\xi(\infty) - \xi(x)\right) \le C x^{-\alpha^*} \mbox{ for all } x\ge 1.
\end{equation}
Hence, by Fubini's theorem,
$$
\mathbb{E}\left(\int_1^{\infty}|g'(x)|(\xi(\infty) - \xi(x))dx\right) \le C\int_{1}^{\infty}\left(\frac{|f(x)|}{x^{2}} + \frac{|f'(x)|}{x}\right)x^{-\alpha^*}dx < \infty.
$$
Moreover, for any $M' > M>\delta$,
\begin{equation}\label{limc2}
\sup_{t \in [0,T]}\left| \int_M^{M'}g'(x)W_{\infty}(t)dx\right| = \sup_{t \in [0,T]}W_{\infty}(t)|g(M') - g(M)|.
\end{equation}
Hence, \eqref{limc1}, \eqref{limc2} {\color{black}and the finiteness of $\lim_{x\to\infty}g(x)=g(\infty)$} imply that, almost surely, {\color{black}the sequence $\left\{\int_{\delta}^{M_n}g'(x)W_x(\cdot)dx\right\}_{n=1}^{\infty}$} is uniformly Cauchy {\color{black}on $[0,T]$ for any sequence $\{M_n\}_{n=1}^{\infty}$ such that $\lim_{n\to\infty}M_n=\infty$},
which proves the uniform convergence to the limit $\int_{\delta}^{\infty} g'(x)W_x(\cdot)dx$ as $M\to\infty$. Moreover, by \eqref{eq:xadefn}, \eqref{eq:xinfinitydefn}, and the Lipschitz property \eqref{eq:lipSM} of the Skorohod map, for any $M>\delta$,
\begin{align*}
&\sup_{t \in [0,T]}|g(M)W_M(t) - g(\infty)W_{\infty}(t)|\\
&\le |g(M)|\sup_{t \in [0,T]}|W_M(t) - W_{\infty}(t)|
 + \left(\sup_{t \in [0,T]}|W_{\infty}(t)|\right)|g(\infty) - g(M)|\\
&\le 2\left(\sup_{x \ge \delta} |g(x)|\right) T\lambda M^{-p} + 2\left(\sup_{x \ge \delta} |g(x)|\right)(\xi(\infty) - \xi(M))\\
&\qquad +  \left(\sup_{t \in [0,T]}|W_{\infty}(t)|\right)|g(\infty) - g(M)|. 
\end{align*}
The upper bound in the display immediately above tends to zero as $M\to\infty$.
Thus, we conclude that
$$
\Upsilon_{\infty}(t) := - \int_{\delta}^{\infty}g'(x)W_x(t)dx + g(\infty)W_{\infty}(t) - g(\delta)W_{\delta}(t), \quad t\in[0,T],
$$
is well defined and {\color{black}$\Upsilon_{\infty}:[0,T]\to\RR$ is} continuous and, almost surely,
\begin{equation}\label{Minf1}
\lim_{M \rightarrow \infty}\sup_{t \in [0,T]}\left| \Upsilon_{M}(t) - \Upsilon_{\infty}(t)\right| = 0.
\end{equation}
By Theorem \ref{convplus}, $\Phi_M^r(\cdot)\to\Upsilon_M(\cdot)$ in $\mathcal{D}([0,T]:\RR)$ as $r\to\infty$ for each $M>\delta$.  This
together with \eqref{Minf1} implies that conditions (2) and (3) of Lemma \ref{seqconv} hold.  Thus, in order to show that $\Phi_{\infty}^r(\cdot)\to\Upsilon_{\infty}(\cdot)$
in $\mathcal{D}([0,T]:\RR)$ as $r\to\infty$, it suffices to show that condition (1) of Lemma \ref{seqconv} holds.  For this,
observe that as $\lim_{x \rightarrow \infty} \frac{f(x)}{x}$ exists, there exists a constant $C'>0$ such that $|f(x)| \le C'x$ for all $x \ge \delta$.
Hence, for all $r\in\clr$, $t\in[0,T]$ and $M>\delta$,
\begin{align*}
|\Phi^r_{\infty}(t) - \Phi^r_{M}(t)| &\le \int_M^{\infty}|f(x)|\tZ^r(t)(dx) \le C'\int_M^{\infty}x\tZ^r(t)(dx)\\
&= \frac{C'}{r} \sum_{l=1}^{\qq^r} \breve{v}_l^r(r^2t) \one_{[\breve{v}_l^r(r^2t) > Mc^r]}  + \frac{C'}{r}\sum_{i=1}^{E^r(r^2t)}v_i(r^2t)\one_{[v_i(r^2t) > Mc^r]}\\
&\le \frac{C'}{r} \sum_{l=1}^{\qq^r} \breve{v}_l^r \one_{[\breve{v}_l^r > Mc^r]}  + \frac{C'}{r}\sum_{i=1}^{E^r(r^2T)}v_i\one_{[v_i > Mc^r]}.
\end{align*}
Thus, due to {\color{black}\eqref{eq:assuinitcond} and \eqref{eq:assuinitcondb} (in particular, the limits displayed after \eqref{eq:assuinitcondb}),} the independence of $E^r(\cdot)$ and $\{v_i\}_{i\in\NN}$ for each $r\in\clr$, Wald's lemma, and \eqref{eq:aflln}, there exists $C''>0$
such that for all $M>\delta$,
\begin{align*}
\limsup_{r \rightarrow \infty}\mathbb{E}&\left(\sup_{t \in [0,T]}|\Phi^r_{\infty}(t) - \Phi^r_{M}(t)|\right)\\
 &\le C'\limsup_{r \rightarrow \infty}\mathbb{E}\left(\frac{1}{r}\sum_{l=0}^{\qq^r} \breve{v}_l^r \one_{[\breve{v}_l^r > Mc^r]}\right) +  \limsup_{r \rightarrow \infty}\frac{C'}{r}\mathbb{E}\left(\sum_{i=1}^{E^r(r^2T)}v_i\one_{[v_i > Mc^r]}\right)\\ 
 &{\color{black}= C'\limsup_{r \rightarrow \infty} \left(W_{\infty}^r(0) - W_M^r(0)\right) + \limsup_{r \rightarrow \infty}\frac{C'}{r}\mathbb{E}\left(\sum_{i=1}^{E^r(r^2T)}v_i\one_{[v_i > Mc^r]}\right)}\\
& \le C'\mathbb{E}(\xi(\infty) - \xi(M)) + \limsup_{r \rightarrow \infty}C''rT\mathbb{E}\left(v\one_{[v > Mc^r]}\right).
\end{align*}
Thus, for all $M>\delta$,
\begin{align*}
\limsup_{r \rightarrow \infty}\mathbb{E}&\left(\sup_{t \in [0,T]}|\Phi^r_{\infty}(t) - \Phi^r_{M}(t)|\right)\\
 &\le
C'\mathbb{E}(\xi(\infty) - \xi(M)) + C''T\limsup_{r \rightarrow \infty}\frac{\mathbb{E}\left(v\one_{[v > Mc^r]}\right)}{\mathbb{E}\left(v\one_{[v > c^r]}\right)} \le \frac{CC'}{M^{\alpha^*}} + \frac{C''T}{M^p},
\end{align*}
where we have used \eqref{eq:Scr} in the first inequality, and \eqref{eq:haifa1} and \eqref{karama} in the second inequality.  From this bound it follows from Markov's inequality that,
for all $M>\delta$,
\begin{equation}\label{Minf2}
\limsup_{r \rightarrow \infty}\mathbb{P}\left(\sup_{t \in [0,T]}|\Phi^r_{\infty}(t) - \Phi^r_{M}(t)| > \frac{1}{M^{\al^*/2}}\right) \le \frac{CC'}{M^{\alpha^*/2}} + \frac{C''T}{M^{p-\alpha^*/2}}.
\end{equation}
As previously noted, by Theorem \ref{convplus}, $\Phi^r_{M}(\cdot) \xrightarrow{d} \Upsilon_M(\cdot)$ as $r \rightarrow \infty$ in $\mathcal{D}([0,T]:\RR)$ for each $M>\delta$. This, along with \eqref{Minf1}, \eqref{Minf2} and Lemma \ref{seqconv}, proves that $\Phi^r_{\infty}(\cdot)\xrightarrow{d} \Upsilon_{\infty}(\cdot)$ as $r \rightarrow \infty$
in $\mathcal{D}([0,T]:\RR)$ and that $\Upsilon_{\infty}(\cdot)$ is continuous in $[0,T]$, which proves the lemma since $T>0$ was arbitrary.
\end{proof}

\subsubsection{Sending $\delta \to 0$}\label{sec:senddtoz}

Next we show that the result in Lemma \ref{Minfty} holds for $\delta=0$.
The strategy involved is again to use Lemma \ref{seqconv} to send $\delta\to 0$ in \eqref{delinf}.
In particular, by \eqref{delinf} in Lemma \ref{Minfty}, condition (2) of Lemma \ref{seqconv} holds. Hence, we can apply Lemma \ref{seqconv}
 after we have shown that the left hand side of \eqref{delinf} is close to $\int_0^{\infty} f(x)\tZ^r(\cdot)dx$ in a uniform sense as required to verify condition (1) of
 Lemma \ref{seqconv}, and the right hand side of \eqref{delinf} converges to the appropriate limit as $\delta \rightarrow 0$ to verify condition (3) of
 Lemma \ref{seqconv}. However, showing that these two conditions hold becomes quite technical. To control the left hand side of \eqref{delinf}, we first show that for any $a>0$ (not depending on $r$), the maximum number of jobs
 in the $r$-th $a(c^r)^{-1}$-truncated queue in the time interval $[0,T]$ is small (Lemma \ref{fifo}) by performing an excursion analysis of the workload process. 
However, the estimates obtained by such an analysis turn out to be too crude to show that the number of jobs of size $\le \delta c^r$ is uniformly small on the time interval $[0,T]$ for small $\delta$. For this, we need much more involved analysis making careful use of the SRPT dynamics. Roughly, we show that the workload process corresponding to jobs of size in the interval $[a, \delta c^r]$ can be bounded above by a (reflected) martingale with large negative drift, quantified in \eqref{XdeltaU}. We then decompose the workload process path into excursions between appropriately chosen levels and control these excursions using the upper bounding process to bound the maximum on $[0,T]$. Finally, bounding the queue-length process by a (sufficiently large) multiple of the workload process (see \eqref{un1}), we obtain a `continuity estimate' in Lemma \ref{unifsmallint} which, in turn, gives the bound on $\sup_{t \in [0,T]}Z^r_{\delta}(t)$ required by Lemma \ref{seqconv} to control the left hand side of \eqref{delinf} (see Lemma \ref{smallql}). To control the right hand side of \eqref{delinf}, we again use excursion analysis to show that the integral $\int_{\delta}^{\infty}g'(x)W_x(\cdot)dx$ indeed converges to a finite random variable as $\delta \rightarrow 0$ (Lemmas \ref{BMsup} and \ref{flowsup}). Together, these estimates complete the proof of Theorem \ref{workfn}.

Recall that for $r\in\clr$, $a > 0$, and $t\ge 0$, $r(c^r)^{-1}Q_{a(c^r)^{-1}}^{r}(t)$ is the queue length of the $r$-th $a(c^r)^{-1}$-truncated SRPT queue at time $r^2t$.
Denote by $\Theta$ the collection of all functions $\theta: \clr \to \RRp$ such that $\theta(r)\to 0$ as $r\to \infty$.
\begin{lemma}\label{fifo}
For any $a, T>0$, there exist $\theta \in \Theta$ and $r_0>0$ such that for $r \ge r_0$,
$$
\mathbb{P}\left(\sup_{t \in [0, T]} Q^r_{a(c^r)^{-1}}(t) > \theta(r)\right) \le \theta(r).
$$
\end{lemma}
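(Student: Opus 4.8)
The plan is to study the excursion (busy-period) structure of the workload process of the $r$-th $a(c^r)^{-1}$-truncated SRPT queue. Write $b^r:=a(c^r)^{-1}$, so that every job in that subsystem has size at most $a$; denote its (unscaled) number of jobs at real time $\tau$ by $N^r(\tau):=\tfrac{r}{c^r}Q^r_{b^r}(\tau/r^2)$ and its (unscaled) workload by $V^r(\tau):=rY^r_{b^r}(\tau/r^2)=r\,\Gamma[X^r_{b^r}](\tau/r^2)$, the latter being the workload of that subsystem by the construction in Section~\ref{ss:TrunSRPTQueues}. Its load is $\rho^r_a=\lambda^r\mathbb{E}(v\one_{[v\le a]})$, and since $\bar F(a)>0$ we have $\rho^r_a\to\lambda\mathbb{E}(v\one_{[v\le a]})=:\rho_a<\lambda\mathbb{E}(v)=1$ by \eqref{eq:assuht}; I fix $\rho_*\in(\rho_a,1)$ and $r_0$ with $\rho^r_a\le\rho_*$ for $r\ge r_0$. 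The structural observation driving the argument is that $N^r$ and $V^r$ have the same busy periods (intervals where $V^r>0$), and that on a busy period $[\sigma,\sigma')$ starting after time $0$ one has $N^r(\sigma)=1$, hence $N^r(\tau)\le 1+E^r_{b^r}(\sigma')-E^r_{b^r}(\sigma)\le 1+E^r(\sigma')-E^r(\sigma)$ for $\tau\in[\sigma,\sigma')$; so during a busy period the queue length is dominated by the number of original arrivals in that busy period. The four steps are then: (i) show that, uniformly for large $r$, a generic busy period $B$ of $V^r$ has an exponential tail $\mathbb{P}(B>\ell)\le Ce^{-c\ell}$; (ii) union-bound over the $O(r^2)$ busy periods in $[0,r^2T]$ to deduce they all have length at most $\ell_r:=A\log r$ for $A$ large; (iii) union-bound over windows to bound the number of arrivals in any length-$\ell_r$ window inside $[0,r^2T]$ by some $m_r=O(\log r)$; and (iv) handle the at most one busy period containing time $0$ separately using \eqref{smalljobas}.

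For step (i), I would note that a busy period of length $\ell$ forces at least $\ell-a$ units of truncated work to arrive during it (the workload at the start is the first job's size $\le a$, the server does $\ell$ units of work, and the workload returns to $0$); since those jobs each have size $\le a$ and the inter-arrival times are strictly positive with $\mathbb{E}(T^r)=1/\lambda^r\to1/\lambda$ and $\mathbb{E}((T^r)^2)$ bounded (by \eqref{eq:assuht}--\eqref{eq:assuht2}), a standard exponential-tilting computation applies: conditioning on the number $N$ of arrivals in a length-$\ell$ window and combining $\mathbb{E}[e^{\theta v\one_{[v\le a]}}]<\infty$ with the renewal upper-tail estimate $\mathbb{P}(E^r(\tau+\ell)-E^r(\tau)\ge n)\le e^{\beta\ell}(\mathbb{E}e^{-\beta T^r})^{n}$ (after dominating $E^r(\tau+\ell)-E^r(\tau)$ by $1$ plus an ordinary renewal count) yields $\mathbb{P}(B>\ell)\le Ce^{-(\theta-\beta)\ell}$ provided $\mathbb{E}[e^{\theta v\one_{[v\le a]}}]\,\mathbb{E}[e^{-\beta T^r}]<1$ and $\beta<\theta$; because $\rho^r_a\le\rho_*<1$ these two conditions can be met together (e.g.\ $\beta=\theta(1+\rho_*)/2$ with $\theta$ small), with $c$ and $C$ uniform for $r\ge r_1\ge r_0$. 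For step (ii), the busy periods are dependent and their count is random, but each busy period other than possibly the first starts at an arrival time, which is a stopping time past which the renewal arrival stream and the i.i.d.\ service sequence restart; hence, conditionally on the past, its length is stochastically dominated by a generic busy period. Since the number of busy periods in $[0,r^2T]$ is at most $1+E^r(r^2T)$ and $\mathbb{P}(E^r(r^2T)>2\lambda r^2T)\to0$ by \eqref{eq:aflln}, a union bound over $\le 1+\lceil 2\lambda r^2T\rceil$ busy periods with $\ell_r=A\log r$ ($A$ large relative to $c$) makes $\mathbb{P}(\text{some busy period in }[0,r^2T]\text{ exceeds }\ell_r)\to0$. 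Step (iii) is the analogous union bound over $O(r^2/\ell_r)$ sub-windows using the same renewal upper-tail estimate, producing $m_r=O(\log r)$ with vanishing exceptional probability.

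For step (iv), by \eqref{smalljobas} we have $\delta_r:=\tfrac{c^r}{r}N^r(0)=Z^r_{a/c^r}(0)\xrightarrow{P}0$, so in particular the initial truncated workload satisfies $V^r(0)\le aN^r(0)=\tfrac{ar}{c^r}\delta_r=\op(r/c^r)$; a standard estimate then bounds the length of the busy period containing time $0$ (if any), and hence the number of arrivals during it, by $\op(r/c^r)+\Op(1)$, so that $\tfrac{c^r}{r}\big(N^r(0)+\#\{\text{arrivals in that busy period}\}\big)=\delta_r+\op(1)+\Op(c^r/r)\xrightarrow{P}0$ by \eqref{limitcroverr}. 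Combining (ii)--(iv), on an event of probability tending to $1$ one has
$$
\sup_{t\in[0,T]}Q^r_{b^r}(t)=\tfrac{c^r}{r}\sup_{\tau\in[0,r^2T]}N^r(\tau)\le\tfrac{c^r}{r}\,m_r+\tfrac{c^r}{r}\big(N^r(0)+\#\{\text{arrivals in the busy period containing }0\}\big),
$$
and the right side converges to $0$ in probability, since $\tfrac{c^r}{r}m_r=\tfrac{c^r}{r}O(\log r)\to0$ (by \eqref{crscale}, $c^r=O(r^{1/(p-\epsilon)})$ for some $\epsilon\in(0,p-1)$, hence $c^r\log r/r\to0$). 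Thus $\sup_{t\in[0,T]}Q^r_{b^r}(t)\xrightarrow{P}0$, and the stated form of the lemma follows by the routine device of choosing $r_k$ with $\mathbb{P}(\sup_{t\le T}Q^r_{b^r}(t)>1/k)<1/k$ for $r\ge r_k$ and setting $\theta(r)=1/k$ for $r\in[r_k,r_{k+1})$, which gives $\theta\in\Theta$ and $r_0$ as required. The hardest part will be steps (i)--(ii): establishing the exponential busy-period tail uniformly in $r$ — which uses crucially that the truncated service times are bounded by the fixed constant $a$ and that the inter-arrival times are positive — and then correctly handling the $O(r^2)$ dependent busy periods via the regeneration structure of the renewal arrival process.
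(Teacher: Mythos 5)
Your proposal is correct and follows essentially the same route as the paper's proof: a busy-period decomposition of the strictly subcritical truncated queue, an exponential tail for the size of a generic busy period, a union bound over the $O(r^2)$ busy periods in $[0,r^2T]$ exploiting regeneration at arrival epochs (yielding an $O(c^r\log r/r)$ bound), and a separate treatment of the busy period containing time zero via \eqref{smalljobas}. The only differences are implementation details: the paper indexes busy periods by arrival count and applies Azuma--Hoeffding to bounded-increment martingales (truncating the interarrival times as $T_i^r\wedge l$ to get bounded increments), whereas you work in continuous time with a Chernoff/tilting bound using $\mathbb{E}[e^{-\beta T^r}]<1$, which obviates the truncation but adds your step (iii) of counting arrivals per window.
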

\begin{proof} Fix $a,T>0$.  For $r\in\clr$ and $i\in\NN$,
let $T_i^r$ and $v_i$ respectively denote the inter-arrival time and processing time of the $i$th external job in the SRPT queue. For $r\in\clr$, define $T_0^r=0$. As in Section \ref{ss:sd}, $T_1^r$ is strictly positive and has a finite second moment, but does not necessarily have the same distribution as $T_i^r$ for $i\ge 2$, and $T^r$ is a random variable that is equal in distribution to $T_i^r$ for each $i\ge 2$.
Also, for $r\in\clr$ and $t\ge 0$, denote by $\hat{W}_a^r(t) := rW_{a(c^r)^{-1}}^{r}(r^{-2}t)$,
the (unscaled) workload process of the $r$-th $a(c^r)^{-1}$-truncated SRPT queue. Finally, for $r\in\clr$, define the stopping times $\{K_i^r\}_{i=-1}^{\infty}$ with respect to the filtration $\{\mathcal{F}^r_n\}_{n\in\bbZp}$, where $\mathcal{F}^r_0 := \sigma(\{\qq^r, \breve{v}_l^r: l \in \NN\})$ and, for $n \in\NN$, $\mathcal{F}^r_n := \sigma\left(\{\qq^r, \breve{v}_l^r, T^r_i, v_i : l \in \NN, \le i \le n\}\right)$, as follows:
$$
{\color{black}K_{-1}^r =0,}\quad
K_0^r =0 \text{ if } \hat{W}^r_a(0)=0,\ \text{otherwise }K_0^r = \inf\left\lbrace k \in\bbZp: 
\hat{W}_a^r\left(\left(\sum_{i=0}^{k}T_i^{r}\right)-\right) = 0\right\rbrace,
$$
and, for $j \in\bbZp$,
$$
K_{2j+1}^r := K_{2j}^r+1, \quad K_{2j+2}^r := \inf\left\lbrace k \ge K_{2j+1}^r: 
\hat{W}_a^r\left(\left(\sum_{i=0}^{k}T_i^{r}\right)-\right) = 0\right\rbrace.
$$ 
For $r \in \clr$, $l>0$ and $i\in\NN$, write $T_i^{r,l} := T_i^r \wedge l$ and $v_i^a := v_i \one_{[v_i \le a]}$.
Recall that the processing time distribution does not depend on $r\in\clr$. Moreover, as the distribution function $F$ of the processing time is assumed to satisfy $F(x) <1$ for all $x \in \mathbb{R}$, and as $a < \infty$, we have that $\lambda \mathbb{E}(v_1^a) < 1$.  Also, using \eqref{eq:assuht},
\begin{equation}\label{unifint}
\limsup_{l \rightarrow \infty} \limsup_{r \rightarrow \infty} \mathbb{E}\left(T^r\one_{[T^r > l]}\right) = 0.
\end{equation}
Using these observations, there exist $r_0\in\clr$ and $l,\eta>0$ such that $\lambda_l^r := \left(\mathbb{E}\left(T^r\wedge l\right)\right)^{-1}$, $\mathbb{E}(v_1^a)$, $\lambda^r$ and $\sigma_A^r$ satisfy that for all $r \ge r_0$
\begin{equation}\label{fifo1}
\lambda_l^r\mathbb{E}(v_1^a) < 1- 4\eta, \quad \lambda_l^r \le 2\lambda, \quad \lambda^r \ge \lambda/2, \quad (\sigma_A^r)^2 \le 2\sigma_A^2, \quad \lambda r^2T>1.
\end{equation}
Fix $r_0\in\clr$, and $l,\eta>0$ such that \eqref{fifo1} holds. Note that for any $r\ge r_0$ and $j \in\bbZp$
such that $K^r_{2j} \ge K^r_{2j-1} + 2$ and for any $k \in [K^r_{2j-1}+1, K^r_{2j}-1]$, 
\begin{align}\label{woub}
\hat{W}_a^r\left(\sum_{i=0}^{k}T_i^{r}\right)
&= \hat{W}_a^r\left(\sum_{i=0}^{K^r_{2j-1}}T_i^{r}\right) + \sum_{i=K^r_{2j-1} + 1}^k v_i^a - \sum_{i=K^r_{2j-1} + 1}^k T_i^r\nonumber\\
&\le \hat{W}_a^r\left(\sum_{i=0}^{K^r_{2j-1}}T_i^{r}\right) + \sum_{i=K^r_{2j-1} + 1}^k v_i^a - \sum_{i=K^r_{2j-1} + 1}^k T_i^{r,l}\nonumber\\
&\le \hat{W}_a^r\left(\sum_{i=0}^{K^r_{2j-1}}T_i^{r}\right) + M_1(k) - M_1\left(K^r_{2j-1}\right) + M_2^r(k) - M_2^r\left(K^r_{2j-1}\right)\nonumber\\
&\qquad - 2\eta \lambda^{-1}\left(k-K^r_{2j-1}\right) +\left((\lambda_l^r)^{-1}-\Expect{T_1^{r,l}}\right)\one_{\{j=0\}},\nonumber\\
&\le \hat{W}_a^r\left(\sum_{i=0}^{K^r_{2j-1}}T_i^{r}\right) + M_1(k) - M_1\left(K^r_{2j-1}\right) + M_2^r(k) - M_2^r\left(K^r_{2j-1}\right)\nonumber\\
&\qquad - 2\eta \lambda^{-1}\left(k-K^r_{2j-1}\right) +2\lambda^{-1}\one_{\{j=0\}},
\end{align}
where, for $r\ge r_0$, $M_1$ and $M_2^r$ are martingales (with respect to the filtration $\{\mathcal{F}_n^{r}\}_{n\in\bbZp}$ defined above) given by $M_1(0) = M_2^r(0) = 0$, and for $k \in\NN$,
$$
M_1(k) := \sum_{i=1}^k \left(v_i^a - \mathbb{E}(v_i^a) \right)\quad \hbox{and } \quad M_2^r(k) := -\sum_{i=1}^k \left(T_i^{r,l} - \mathbb{E}(T_i^{r,l})\right).
$$
For $r\ge r_0$, write $\mathcal{T}^r_l(k) := \sum_{i=1}^k T_i^{r,l}$ for $k\in\NN$.
As $v_i^a \le a$ for all $i \in\NN$ and there are at most two jobs in the $r$-th $a(c^r)^{-1}$-truncated SRPT queue at time {\color{black}$\sum_{i=0}^{K^r_{2j-1}}T_i^r$} for each $j \in\NN$, and $r\ge r_0$,
$\hat{W}_a^r\left(\sum_{i=0}^{K^r_{2j-1}}T_i^{r}\right) \le 2a$ for all $j \in\NN$.
For $k \ge k_1 :=2(1+ \lambda a / \eta)$, we have
$2\eta \lambda^{-1}k-2a\ge \eta \lambda^{-1}k+\eta \lambda^{-1}k_1-2a\ge \eta \lambda^{-1}k+ 2\eta \lambda^{-1}\ge \eta \lambda^{-1}k$.
Also, observe that $M_1(\cdot)$ (resp.\ $\tilde M_2^r(\cdot):=M_2^r(\cdot+1)-M_2^r(1)$) is equal in distribution to $M_1(\cdot+j)-M_1(j)$ (resp.\ $M_2^r(\cdot+j)-M_2^r(j))$ for all $j\in\NN$.
Hence, as  $M_1$ and $M^r_2$, $r\ge r_0$, are martingales with bounded increments such that the bounds on the increments do not
depend on $r\ge r_0$, using \eqref{woub} and the Azuma-Hoeffding inequality,  we obtain that for $k \ge k_1 :=2(1+ \lambda a / \eta)$, $j\in\NN$, and $r\ge r_0$, 
\begin{align*}
\mathbb{P}\left(K^r_{2j} - K^r_{2j-1} > k\right) &\le \mathbb{P}\left(2a + M_1(k) + \tilde{M}_2^r(k) - 2\eta \lambda^{-1}k> 0\right)\\
&\le \mathbb{P}\left(M_1(k) > \eta k/(2\lambda)\right) + \mathbb{P}\left(\tilde{M}^r_2(k) > \eta k/(2\lambda)\right) \le 2e^{-Ck}
\end{align*}
for some positive constant $C$ depending on $a$, $\eta$, $\lambda$, and $l$, but not $k\ge k_1$ and $r\ge r_0$.
Note that for any $r\ge r_0$ and $j\in\NN$, the queue
length of the $r$-th $a$-truncated SRPT queue in the time interval $[\mathcal{T}^r_l(K^r_{2j-2}), \mathcal{T}^r_l(K^r_{2j-1})]$
is bounded above by 2 and in the time interval
 $[\mathcal{T}^r_l(K^r_{2j-1}), \mathcal{T}^r_l(K^r_{2j})]$ is bounded above
by $K^r_{2j} - K^r_{2j-1}+1$.
Thus, for any $r\ge r_0$, $k \ge k_1 + 1$ and  $N \in\NN$,
\begin{align}\label{fifo2}
&\mathbb{P}\left(\sup_{t \in [\mathcal{T}^r_l(K^r_{0}), \mathcal{T}^r_l(K^r_{2N})]}\frac{r}{c^r}Q_{a(c^r)^{-1}}^{r}(r^{-2}t) > k\right)\\
&\qquad \le \sum_{j=1}^{N}\mathbb{P}\left(\sup_{t \in [\mathcal{T}^r_l(K^r_{2j-1}), \mathcal{T}^r_l(K^r_{2j})]}\frac{r}{c^r}Q_{a(c^r)^{-1}}^{r}(r^{-2}t) > k\right)\nonumber\\
&\qquad \le \sum_{j= 1}^{N}\mathbb{P}\left(K^r_{2j} - K^r_{2j-1} > k-1\right) \le 2N e^{C}e^{-Ck}.\nonumber
\end{align} 
For all $r\ge r_0$, $K^r_{2j} - K^r_{2j-2} \ge 1$ for all $j \in\NN$ and $\lambda_l^r \le 2\lambda$. Hence, for all $r\ge r_0$ and integers
$N \ge 4 r^2\lambda T +1$, 
\begin{multline}\label{fifo3}
\mathbb{P}\left(\mathcal{T}^r_l(K^r_{2N}) < r^2T\right)\\ \le \mathbb{P}\left(\mathcal{T}^r_l(N) -T_1^{r,l} < r^2T\right)
\le \mathbb{P}\left(\sum_{i= 2}^{N} \left(T_i^{r,l} - \mathbb{E}(T_i^{r,l})\right) < r^2T - (N-1)\lambda^{-1}/2\right)\\
\le \mathbb{P}\left(\sum_{i= 2}^{N} \left(T_i^{r,l} - \mathbb{E}(T_i^{r,l})\right) < -\lambda^{-1}(N-1)/4\right)
\le \frac{16\lambda^2(2\sigma_A^2 + 4\lambda^{-2})}{N-1},
\end{multline}
where we have used $\operatorname{Var}\left(T_i^{r,l}\right) \le \mathbb{E}(T^r)^2 = (\sigma_A^r)^2 + (\lambda^r)^{-2}  \le 2\sigma_A^2 + 4\lambda^{-2}$ for $i\ge 2$ in the last bound.
From \eqref{fifo2} and \eqref{fifo3}, for $r \ge r_0$ and any integers $k \ge k_1$ and $N -1 \in [4 \lambda r^2 T, 5 \lambda r^2 T]$,
\begin{multline*}
\mathbb{P}\left(\sup_{t \in [\mathcal{T}^r_l(K^r_{0}), r^2T]}\frac{r}{c^r}Q_{a(c^r)^{-1}}^{r}(r^{-2}t) > k\right)\\ \le \mathbb{P}\left(\sup_{t \in [\mathcal{T}^r_l(K^r_{0}), \mathcal{T}^r_l(K^r_{2N})]}\frac{r}{c^r}Q_{a(c^r)^{-1}}^{r}(r^{-2}t) > k\right)  + \mathbb{P}\left(\mathcal{T}^r_l(K^r_{2N}) < r^2T\right)\\
\le 2Ne^{C}e^{-Ck} + \frac{16\lambda^2(2\sigma_A^2 + 4\lambda^{-2})}{N -1} \le 10 e^{C} \lambda r^2 Te^{-Ck} + \frac{4\lambda(2\sigma_A^2 + 4\lambda^{-2})}{r^2T}.
\end{multline*}
Taking $r_1\ge r_0$ such that $\lfloor 3\log r_1/C \rfloor + 1\ge k_1$ and $k = \lfloor 3\log r/C \rfloor + 1$, we obtain that for some $r_1\ge r_0$ and all  $r\ge r_1$,
\begin{equation}\label{endsup}
\mathbb{P}\left(\sup_{t \in [r^{-2}\mathcal{T}^r_l(K^r_{0}),T]}Q_{a(c^r)^{-1}}^{r}(t) > \frac{3c^r\log r}{Cr} + \frac{c^r}{r}\right) \le \frac{10e^{C}\lambda T}{r} + \frac{4\lambda(2\sigma_A^2 + 4\lambda^{-2})}{r^2T}.
\end{equation}
Note that $\hat{W}_a^r\left(0\right) = \sum_{l=1}^{\qq^r}\breve{v}_l^r\one_{[\breve{v}_l^r \le a]}$ for $r\in\clr$. Using this in \eqref{woub} (with $j=0$), for any $r\ge r_0$ and $k \in\NN$,
\begin{align}\label{intloweq}
\mathbb{P}\left(K_0^r > k\right) &{\color{black} \ = \mathbb{P}\left(\hat W^r_a\left(\sum_{i=0}^kT_i^r\right)>0, \ K_0^r>k\right)}\nonumber\\
&\le \mathbb{P}\left(\sum_{l=1}^{\qq^r}{\color{black}\breve{v}_l^r}\one_{[\breve{v}_l^r \le a]} + M_1(k) + M^r_2(k) - 2\eta\lambda^{-1}k{\color{black}+2\lambda^{-1}}> 0\right).
\end{align}
Further, note that by Assumption \eqref{smalljobas}, there exists $\theta \in \Theta$ such that $r\theta(r)/c^r \rightarrow \infty$ as $r \rightarrow \infty$ and
for all $r\in\clr$,
\begin{equation}\label{intloweq2}
\mathbb{P}\left(\sum_{l=1}^{\qq^r}\one_{[\breve{v}_l^r \le a]} > r \eta \la^{-1}\theta(r)/(a+1)c^r\right) \le \theta(r).
\end{equation}
Using these observations, we conclude that there exists $r_2\ge r_1$ such that for all $r\ge r_2$, 
\begin{align}\label{startsup}
\mathbb{P}&\left(\sup_{t \in [0, r^{-2}\mathcal{T}^r_l(K^r_{0})]}\frac{r}{c^r}Q_{a(c^r)^{-1}}^{r}(t) > (\eta \la^{-1} +1)\left(1+\frac{r \theta(r)}{c^r}\right)\right)\\
&\le \mathbb{P}\left(\sum_{l=1}^{\qq^r}\one_{[\breve{v}_l^r \le a]} + K^r_0 > (\eta \la^{-1} +1)\left(1+\frac{r \theta(r)}{c^r}\right)\right)\nonumber\\
&\le \mathbb{P}\left(\sum_{l=1}^{\qq^r}\one_{[\breve{v}_l^r \le a]} > \eta \la^{-1}\left(1+\frac{r \theta(r)}{c^r}\right)\right) + \mathbb{P}\left(K^r_0 >  1+\frac{r \theta(r)}{c^r}\right)\nonumber\\
& \le \mathbb{P}\left(\sum_{l=1}^{\qq^r}\one_{[\breve{v}_l^r \le a]} > r \eta \la^{-1}\theta(r)/c^r\right)\nonumber\\
&\qquad+ \mathbb{P}\left(\sum_{l=1}^{\qq^r}\breve{v}_l^r \one_{[\breve{v}_l^r \le a]} + M_1\left(\left\lfloor 1+\frac{r \theta(r)}{c^r}\right\rfloor\right)\right.\nonumber\\
&\qquad\qquad\qquad \left. + M_2^r\left(\left\lfloor 1+\frac{r \theta(r)}{c^r}\right\rfloor\right) - 2\eta \lambda^{-1}r \theta(r)/c^r +2\lambda^{-1}> 0\right)\nonumber\\
&\le \mathbb{P}\left(\sum_{l=1}^{\qq^r} \one_{[\breve{v}_l^r \le a]} > \eta \lambda^{-1}r \theta(r)/c^r\right) + \mathbb{P}\left(\sum_{l=1}^{\qq^r} \breve{v}_l^r\one_{[\breve{v}_l^r \le a]} > \eta \lambda^{-1}r \theta(r)/c^r\right)\nonumber\\
&\qquad + \mathbb{P}\left(M_1\left(\left\lfloor 1+\frac{r \theta(r)}{c^r}\right\rfloor\right) > \eta \lambda^{-1}r \theta(r)/(2c^r) +\lambda^{-1}\right)\nonumber\\
&\qquad \qquad  + \mathbb{P}\left(M^r_2\left(\left\lfloor 1+\frac{r \theta(r)}{c^r}\right\rfloor\right) > \eta \lambda^{-1}r \theta(r)/(2c^r) +\lambda^{-1}\right)\nonumber\\
&\le \mathbb{P}\left(\sum_{l=1}^{\qq^r} \one_{[\breve{v}_l^r \le a]} > \eta \lambda^{-1}r \theta(r)/c^r\right) + \mathbb{P}\left(\sum_{l=1}^{\qq^r} \one_{[\breve{v}_l^r \le a]} > \eta \lambda^{-1}r \theta(r)/a c^r\right)\nonumber\\
&\qquad + \mathbb{P}\left(M_1\left(\left\lfloor 1+\frac{r \theta(r)}{c^r}\right\rfloor\right) > \eta \lambda^{-1}r \theta(r)/(2c^r) +\lambda^{-1}\right)\nonumber\\
&\qquad \qquad + \mathbb{P}\left(M^r_2\left(\left\lfloor 1+\frac{r \theta(r)}{c^r}\right\rfloor\right) > \eta \lambda^{-1}r \theta(r)/(2c^r) +\lambda^{-1}\right)\nonumber\\
&\le 2\theta(r) + 2e^{-Cr \theta(r)/c^r},\nonumber
\end{align}
where we used \eqref{intloweq} in the third inequality and \eqref{intloweq2} and the Azuma-Hoeffding inequality in the last inequality.
Since the upper bounds in \eqref{endsup} and \eqref{startsup} tend to zero as $r \rightarrow \infty$, the lemma follows from \eqref{endsup} and \eqref{startsup}.
\end{proof}

Recall the parameter $\eta^*$ specified in Section \ref{initconass}.  We will need the following technical lemma in what follows.

\begin{lemma}\label{lem:Mrd}
Let {\color{black} $D'\ge 8p$} and $\eta\in(\eta^*,p-1)$. There exist $M_*(\eta)>1$, $r_*(\eta) \ge 1$ and $\delta_*(\eta) \in (0,1)$
such that for all $r \ge r_*(\eta)$ and $\delta \in [2M_*(\eta)(c^r)^{-1}, \delta_*(\eta)]$ the following hold:
\begin{eqnarray}
\frac{2^{3} -2}{2^{2D'\log(1/\delta)} -2}&\le& \delta^{D'},\label{SRWcomp2}\\
\lambda/2 \le \lambda^r &\le& 8\lambda/7,\label{lambdarBnd}\\
{\mathbb E}(T_1^r) &\le& 2\lambda^{-1}\label{lambdarBnd2}\\
\sigma_A^2/2 \le (\sigma_A^r)^2 &\le& 2\sigma_A^2,\label{sigmarBnd}\\
{\mathbb E}\left[\left(T_1^r-(\lambda^{r})^{-1}\right)^2\right] &\le& 2\sigma_A^2,\label{sigmarBnd2}\\
\mathbb{E}\left[\left(v\one_{[v \le \delta c^r]} - \lambda^r\mathbb{E}(v\one_{[v \le \delta c^r]})T_i^r\right)^2\right]&\le& C:=\mathbb{E}\left[v^2\right]+\frac{128\sigma_A^2}{49}, \hbox{ for all }i\in\NN, \label{C}\\
c^r &<& \left(\frac{(p+1)r}{p}\right)^{1/(p-\eta/2)},\label{crup}\\
 \left(\frac{p+1}{p}\right)^{2(p-\eta)/(p-\eta/2)}\frac{1}{Cr^{\eta/(p-\eta/2)}}&\le& \min\left(\frac{2}{\lambda^2\sigma_A^2},194\right),\label{one6th}\\
 \left(\frac{p}{p+1}\right)^{2(p-\eta)/(p-\eta/2)}r^{\eta/(p-\eta/2)}&\le&
 r^2\delta^{2(p-\eta)} ,\label{un20}\\
-\lambda^r\frac{\mathbb{E}(v \one_{[v > \delta c^r]})}{\mathbb{E}(v \one_{[v > c^r]})}  + r(\rho^r - 1) &\le& - \frac{\lambda }{4\delta^{p-\eta}}.\label{eq:NegDrift}
\end{eqnarray}
Moreover, for any $b_0>0$ and any $\eta\in(\eta^*,p-1)$, there exists $\tilde{r}(\eta,b_0) \ge r_*(\eta)$ such that for any $b \ge b_0$, $r \ge \tilde{r}(\eta,b_0)$ and $\delta \in [2M_*(\eta)(c^r)^{-1}, \delta_*(\eta)]$,
\begin{equation}
\mathbb{P}\left(E^r(3br^2\delta^{2(p-\eta)}/4) > \lfloor b \lambda r^2\delta^{2(p-\eta)}\rfloor\right)
\le \left(\frac{p+1}{p}\right)^{2(p-\eta)/(p-\eta/2)}\frac{2^9\lambda\sigma_A^2}{br^{\eta/(p-\eta/2)}}.\label{Econt1}
\end{equation}
\end{lemma}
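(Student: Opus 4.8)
The statement splits into the family of elementary inequalities \eqref{SRWcomp2}--\eqref{eq:NegDrift} and the probabilistic bound \eqref{Econt1}. For the first part the plan is to choose the constants greedily in the order $M_*(\eta)$, $\delta_*(\eta)$, $r_*(\eta)$, so that no choice depends on a later one. The inequalities \eqref{lambdarBnd}--\eqref{sigmarBnd2} hold for all large $r$ directly from \eqref{eq:assuht}--\eqref{eq:assuht2}; \eqref{crup} is \eqref{crscale} with $\epsilon=\eta/2$; \eqref{one6th} holds for large $r$ since $r^{\eta/(p-\eta/2)}\to\infty$; and \eqref{C} follows by expanding the square and bounding its left side by $\operatorname{Var}(v\one_{[v\le\delta c^r]})+(\lambda^r\mathbb{E}(v\one_{[v\le\delta c^r]}))^2(\sigma_A^r)^2\le\mathbb{E}[v^2]+(\rho^r)^2(\sigma_A^r)^2$, which is $\le\mathbb{E}[v^2]+\tfrac{128}{49}\sigma_A^2$ once $\rho^r\le\tfrac87$ and $(\sigma_A^r)^2\le2\sigma_A^2$; none of these involve $\delta$ or $M_*$. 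For \eqref{eq:NegDrift} one uses that $z\mapsto\mathbb{E}(v\one_{[v>z]})$ is regularly varying with index $-p$: by the representation \eqref{def:hatL} (Potter-type bounds), once $M_*(\eta)$ is large enough that $\delta c^r$ lies where these bounds apply, the ratio $\mathbb{E}(v\one_{[v>\delta c^r]})/\mathbb{E}(v\one_{[v>c^r]})$ is at least a constant times $\delta^{-(p-\eta/2)}$ uniformly over $\delta\in[2M_*(c^r)^{-1},1]$ and large $r$; since $r(\rho^r-1)\to\kappa$ is bounded and $\delta^{-(p-\eta/2)}=\delta^{-(p-\eta)}\delta^{-\eta/2}$ with $\delta^{-\eta/2}\to\infty$, taking $\delta_*(\eta)$ small forces the left side of \eqref{eq:NegDrift} below $-\tfrac{\lambda}{4}\delta^{-(p-\eta)}$. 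Finally \eqref{SRWcomp2} holds for $\delta_*(\eta)$ small because $2^{2D'\log(1/\delta)}$ dominates $\delta^{-D'}$, and \eqref{un20} follows from $\delta\ge2M_*(c^r)^{-1}$, \eqref{crup} and $(2M_*)^{2(p-\eta)}\ge1$.

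For \eqref{Econt1} I would first reduce to a renewal tail estimate. Put $s:=br^2\delta^{2(p-\eta)}$ and $N:=\lfloor\lambda s\rfloor=\lfloor b\lambda r^2\delta^{2(p-\eta)}\rfloor$, so the event in \eqref{Econt1} is $\{E^r(3s/4)>N\}$. Since $E^r$ is the delayed renewal process whose $n$-th arrival occurs at $\sum_{i=1}^nT_i^r$, and $E^r$ is integer-valued, this event coincides with $\{\sum_{i=1}^{N+1}T_i^r\le 3s/4\}$, i.e.\ the $(N+1)$-st arrival has occurred by time $3s/4$. The plan is then to bound this by Chebyshev's inequality once we know $3s/4$ lies below $\mathbb{E}\big[\sum_{i=1}^{N+1}T_i^r\big]$ by a fixed multiple of $s$.

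The step making the thresholds uniform in $b\ge b_0$ and $\delta\in[2M_*(c^r)^{-1},\delta_*(\eta)]$ is the observation that $s\to\infty$ as $r\to\infty$ uniformly over that range: by $\delta\ge2M_*(c^r)^{-1}$, \eqref{crup}, and the exponent identity $2-\tfrac{2(p-\eta)}{p-\eta/2}=\tfrac{\eta}{p-\eta/2}$,
\[
s\ \ge\ b\,(2M_*)^{2(p-\eta)}\big(\tfrac{p}{p+1}\big)^{2(p-\eta)/(p-\eta/2)}\,r^{\eta/(p-\eta/2)}\ \ge\ b\,\big(\tfrac{p}{p+1}\big)^{2(p-\eta)/(p-\eta/2)}\,r^{\eta/(p-\eta/2)}.
\]
Hence one fixes $\tilde r(\eta,b_0)\ge r_*(\eta)$ so large that for $r\ge\tilde r(\eta,b_0)$ the quantity $s$ exceeds any prescribed constant, and in particular (using $\lambda^r\le\tfrac{11}{10}\lambda$, valid for large $r$ since $\lambda^r\to\lambda$) $\mathbb{E}\big[\sum_{i=1}^{N+1}T_i^r\big]\ge N(\lambda^r)^{-1}\ge(\lambda s-1)\tfrac{10}{11\lambda}\ge\tfrac78 s$, so the deviation below $\tfrac34 s$ is at least $\tfrac18 s$, while $N+1\le2\lambda s$ and $\operatorname{Var}\big(\sum_{i=1}^{N+1}T_i^r\big)=\operatorname{Var}(T_1^r)+N(\sigma_A^r)^2\le2(N+1)\sigma_A^2\le4\lambda s\,\sigma_A^2$ by \eqref{sigmarBnd} and \eqref{sigmarBnd2} (with $\operatorname{Var}(T_1^r)\le\mathbb{E}[(T_1^r-(\lambda^r)^{-1})^2]$). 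Chebyshev's inequality then gives $\mathbb{P}\big(\sum_{i=1}^{N+1}T_i^r\le\tfrac34 s\big)\le 4\lambda s\,\sigma_A^2/(s/8)^2=2^8\lambda\sigma_A^2/s$; substituting $1/s\le b^{-1}\big(\tfrac{p+1}{p}\big)^{2(p-\eta)/(p-\eta/2)}r^{-\eta/(p-\eta/2)}$ from the display above yields exactly \eqref{Econt1}, since $2^8\le2^9$.

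The main obstacle is not any single estimate but the bookkeeping of the nested quantifiers: one must order the choices $M_*(\eta),\delta_*(\eta),r_*(\eta),\tilde r(\eta,b_0)$ consistently, and for \eqref{Econt1} verify that one threshold $\tilde r(\eta,b_0)$ works simultaneously for every $b\ge b_0$ and every admissible $\delta$ — precisely where the polynomial lower bound $s\ge c\,r^{\eta/(p-\eta/2)}$ from \eqref{crup} is essential, since otherwise $s$ could fail to blow up as $\delta\searrow2M_*(c^r)^{-1}$. A secondary point is matching the constants in \eqref{Econt1} exactly: the prefactor $\big(\tfrac{p+1}{p}\big)^{2(p-\eta)/(p-\eta/2)}$ drops out of inverting \eqref{crup}, and the factor $2^9$ comes with a comfortable margin over the $2^8$ from the Chebyshev step — leaving room to use the slightly sharpened large-$r$ bound $\lambda^r\le\tfrac{11}{10}\lambda$ (available since $\lambda^r\to\lambda$) needed to secure the $\tfrac18 s$ deviation.
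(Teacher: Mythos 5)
Your proposal is correct and follows essentially the same route as the paper: elementary consequences of \eqref{eq:assuht}--\eqref{eq:assuht2} and \eqref{crscale} for \eqref{SRWcomp2}--\eqref{un20}, the Karamata representation \eqref{def:hatL} for the uniform drift bound \eqref{eq:NegDrift}, and a Chebyshev bound on the centered inter-arrival sums for \eqref{Econt1}, with the uniformity in $b$ and $\delta$ secured by the polynomial lower bound on $s=br^2\delta^{2(p-\eta)}$ coming from \eqref{un20}. The only (harmless) cosmetic differences are that you keep the terms $T_1^r$ and $T_{N+1}^r$ in the renewal sum and invoke the sharper large-$r$ bound $\lambda^r\le\tfrac{11}{10}\lambda$ to get deviation $s/8$, whereas the paper drops those terms and works with $\lambda^r\le 8\lambda/7$ to get deviation $s/16$; both land within the stated constant $2^9$.
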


\begin{proof}
By \eqref{eq:assuht}, \eqref{eq:assuht2}, \eqref{crscale} and other elementary considerations, there exist $M_2(\eta)>1$, $r_2(\eta) \ge 1$ and $\delta_2(\eta)\in (0,1)$ such that
\eqref{SRWcomp2}--\eqref{one6th} hold for all $r \ge r_2(\eta)$ and $\delta \in [2M_2(\eta)(c^r)^{-1}, \delta_2(\eta)]$.  Then \eqref{un20} holds for all $r \ge r_2(\eta)$
and $\delta \in [2M_2(\eta)(c^r)^{-1}, \delta_2(\eta)]$ as well, since $2M_2(\eta)\ge 2$ and \eqref{crup} imply that for all $r \ge r_2(\eta)$
and $\delta \in [2M_2(\eta)(c^r)^{-1}, \delta_2(\eta)]$,
 $$
 \left(\frac{p}{p+1}\right)^{\frac{2(p-\eta)}{p-\eta/2}}r^{\eta/(p-\eta/2)} =r^2\left(\frac{p}{(p+1)r}\right)^{\frac{2(p-\eta)}{p-\eta/2}} \le r^2(c^r)^{-2(p-\eta)}\le r^2\delta^{2(p-\eta)}.
 $$
From Section \ref{sec:regvarfunc} (c),
$\mathbb{E}(v \one_{[v > z]}) = z^{-p}\hat{L}(z)$ for all $z >0$, where $\hat{L}$ satisfies \eqref{def:hatL}
for some nonnegative Borel measurable functions $c(\cdot)$ and $\epsilon(\cdot)$, with $c(\cdot)$ satisfying $\lim_{x \rightarrow \infty}c(x) = c_0 \in (0,\infty)$ and
$\epsilon(\cdot)$ satisfying $\epsilon(y) \rightarrow 0$ as $y \rightarrow \infty$.
For any $\eta>0$, we can obtain $M_3(\eta)\ge M_2(\eta)$ such that for all $y,z\ge M_3(\eta)$, $\frac{c(y)}{c(z)} \ge 1/2$ and $\epsilon(y) < \eta$. Hence,
for all 
$\delta \in (0,1)$ and $z \ge M_3(\eta)/\delta$, 
$$
\frac{\hat{L}(\delta z)}{\hat{L}(z)} = \frac{c(\delta z)}{c(z)}\exp\left(-\int_{\delta z}^z\frac{\epsilon(y)}{y}dy\right) \ge \frac{\exp\left(-\eta\int_{\delta z}^z\frac{1}{y}dy\right)}{2} = \frac{\exp\left(-\eta\log (1/\delta)\right)}{2} = \frac{\delta^{\eta}}{2}.
$$
Thus, for all $\delta \in (0,1)$ and $z \ge M_3(\eta)/\delta$,
$$
\frac{\mathbb{E}(v \one_{[v > \delta z]})}{\mathbb{E}(v \one_{[v > z]})} = \frac{(\delta z)^{-p}\hat{L}(\delta z)}{z^{-p}\hat{L}(z)} \ge \frac{1}{2\delta^{p-\eta}}.
$$
From this it follows that for some $0<\delta_3(\eta)\le\delta_2(\eta)$, \eqref{eq:NegDrift} holds for all $r \ge r_2(\eta)$ and $\delta \in [2M_3(\eta)(c^r)^{-1}, \delta_3(\eta)]$. Setting $r_*(\eta)=r_2(\eta)$, $M_*(\eta)=M_3(\eta)$ and $\delta_*(\eta)=\delta_3(\eta)$ completes the proof of \eqref{SRWcomp2}-\eqref{eq:NegDrift}.

To prove \eqref{Econt1}, note that for any $b_0>0$, by \eqref{un20}, we can choose $\tilde{r}(\eta,b_0) \ge r_*(\eta)$ such that for all $r \ge \tilde{r}(\eta,b_0)$ and $\delta \in [2M_*(\eta)(c^r)^{-1}, \delta_*(\eta)]$,
\begin{equation}\label{lalala}
r^2\delta^{2(p-\eta)} \ge 28(\lambda b_0)^{-1}. 
\end{equation}
Using \eqref{lambdarBnd} in the third line below, \eqref{lalala} in the fifth line below, and Chebychev's inequality, \eqref{sigmarBnd}, and \eqref{un20} in the sixth line below,
for all $b \ge b_0$, $r\ge \tilde{r}(\eta,b_0)$, $\delta \in [2M_*(\eta)(c^r)^{-1}, \delta_*(\eta)]$,
\begin{align*}
\mathbb{P}&\left(E^r(3br^2\delta^{2(p-\eta)}/4) > \lfloor b \lambda r^2\delta^{2(p-\eta)}\rfloor\right)
\le \mathbb{P}\left(\sum_{i=2}^{\lfloor b \lambda r^2\delta^{2(p-\eta)} \rfloor}T_i^r <  \frac{3b r^2\delta^{2(p-\eta)}}{4}\right)\nonumber\\
&= \mathbb{P}\left(\sum_{i=2}^{\lfloor b\lambda r^2\delta^{2(p-\eta)} \rfloor}\left(T_i^r - \mathbb{E}(T_i^r)\right)<  \frac{3b r^2\delta^{2(p-\eta)}}{4} - \left(\lfloor b \lambda r^2\delta^{2(p-\eta)} \rfloor -1\right)(\lambda^r)^{-1}\right)\nonumber\\
&\le \mathbb{P}\left(\sum_{i=2}^{\lfloor b \lambda r^2\delta^{2(p-\eta)} \rfloor}\left(T_i^r - \mathbb{E}(T_i^r)\right)< \frac{3b r^2\delta^{2(p-\eta)}}{4} - \frac{7\lambda^{-1}}{8}\left(b \lambda r^2\delta^{2(p-\eta)} - 2\right)\right)\nonumber\\
&= \mathbb{P}\left(\sum_{i=2}^{\lfloor b \lambda r^2\delta^{2(p-\eta)} \rfloor}\left(T_i^r - \mathbb{E}(T_i^r)\right)<  -\frac{br^2\delta^{2(p-\eta)}}{8} + \frac{14\lambda^{-1}}{8}\right)\nonumber\\
&\le \mathbb{P}\left(\sum_{i=2}^{\lfloor b \lambda r^2\delta^{2(p-\eta)} \rfloor}\left(T_i^r - \mathbb{E}(T_i^r)\right)<  -\frac{b r^2\delta^{2(p-\eta)}}{16}\right)\nonumber\\
& \le \frac{2^8 \lambda (\sigma_A^r)^2}{br^2\delta^{2(p-\eta)}} \le \left(\frac{p+1}{p}\right)^{2(p-\eta)/(p-\eta/2)}\frac{2^9\lambda\sigma_A^2}{br^{\eta/(p-\eta/2)}}.\nonumber
\end{align*}
Hence \eqref{Econt1} holds for all $b \ge b_0$, $r\ge \tilde{r}(\eta,b_0)$, $\delta \in [2M_*(\eta)(c^r)^{-1}, \delta_*(\eta)]$.
\end{proof}

\begin{lemma}\label{unifsmallint}
Fix $T>0$. There exist $D_1$, $D_2$, $D_3>0$ such that the following holds: For any $\eta \in (\eta^*, p-1)$, there exist $M(\eta)>1$, $r(\eta) \ge 2$, and $\delta(\eta) \in (0,1)$ such that for all $r \ge r(\eta)$ and $\delta \in [2M(\eta)(c^r)^{-1}, \delta(\eta)]$,
\begin{multline*}
\mathbb{P}\left(\sup_{t \in [0,T]}(Q_{\delta}^r(t) - Q_{\delta/2}^r(t)) > D_1 \delta^{p-1-\eta}\log (\delta^{-1}) + \frac{c^r}{r}\right)\\
 \le 35\delta^{D_2} + \mathbb{P}\left(\frac{1}{r}\sum_{l=1}^{\qq^r}\breve{v}_l^r\one_{[\breve{v}_l^r \le \delta c^r]} > D_3\delta^{p-\eta}\right).
\end{multline*}
\end{lemma}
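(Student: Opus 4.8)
The plan is to control the increment $Q_\delta^r(t) - Q_{\delta/2}^r(t)$ (the scaled number of jobs of size in $(\delta c^r/2, \delta c^r]$, up to the comparison error $c^r/r$) by bounding the associated truncated workload and then invoking an inequality of the type \eqref{un1}/Lemma \ref{qlxy} to pass from workload to queue length. First I would write, using Lemma \ref{qlxy} with $x = \delta/2$, $y=\delta$, that
$$
0 \le Q_\delta^r(t) - Q_{\delta/2}^r(t) \le \frac{c^r}{r} + \frac{2}{\delta}\,Y_\delta^r(t),
$$
so it suffices to show $\sup_{t\in[0,T]} Y_\delta^r(t) \le (D_1/2)\,\delta^{p-\eta}\log(1/\delta)$ with the stated exceptional probability. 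By \eqref{eq:comp2} in Proposition \ref{comp}, $Y_\delta^r(t) \le W_\delta^r(t)$ and $Y_\delta^r(t) \ge W_\delta^r(t) - \delta c^r/r$, and $Y_\delta^r = \Gamma[X_\delta^r]$ where $X_\delta^r$ is the netput process in \eqref{Zdec}. The key structural input is \eqref{eq:NegDrift} of Lemma \ref{lem:Mrd}: for $r\ge r_*(\eta)$ and $\delta$ in the admissible range, the drift of $X_\delta^r$ is at most $-\lambda/(4\delta^{p-\eta})$, i.e.\ strongly negative. So $Y_\delta^r$ is a reflected process with a large negative drift, and we expect its running maximum on $[0,T]$ to be of order $\delta^{p-\eta}\log(1/\delta)$ — the logarithmic factor coming from a union bound over roughly $\delta^{-O(1)}$ excursions.

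The core of the argument is an excursion decomposition of $Y_\delta^r(\cdot)$ on $[0,T]$ into excursions above and below a mesh of levels spaced by $\delta^{p-\eta}$ (or a constant multiple thereof), closely parallel to the proof of Lemma \ref{fifo} but now tracking workload rather than queue length and exploiting the quantitative drift bound. Concretely, I would: (i) split $X_\delta^r$ as $X_\delta^r(t) = X_\delta^r(0) + M^r(t) - (\text{drift})\,t$ where $M^r$ is (via \eqref{prop:fclt}-type considerations) a sum of martingale increments with bounded second moments (the constant $C$ from \eqref{C}); (ii) using \eqref{eq:NegDrift}, show that over any time window of length $\asymp \delta^{2(p-\eta)}$ the drift decreases the netput by $\asymp \delta^{p-\eta}$, while the martingale fluctuation over such a window has standard deviation $\asymp \delta^{p-\eta}$ times a factor controlled by the number of arrivals, which by \eqref{Econt1} is $\lesssim \lambda r^2\delta^{2(p-\eta)}$ with high probability; (iii) apply a maximal inequality (Doob / Azuma-Hoeffding as in Lemma \ref{fifo}, or Kolmogorov-type for the $L^2$ case) to bound the probability that any single up-excursion of $X_\delta^r$ exceeds level $k\delta^{p-\eta}$ by something like $C_k := (\text{geometric in } k)$ times \eqref{SRWcomp2}-type factors; (iv) bound the number of excursions on $[0,T]$ by $\lesssim r^2 T \delta^{-2(p-\eta)}$ and take a union bound, choosing $k \asymp \log(1/\delta)$ so that the total exceptional probability is $\lesssim \delta^{D_2}$; the resulting bound on $\sup_{t}Y_\delta^r(t)$ is then $\asymp \delta^{p-\eta}\log(1/\delta)$, which after multiplying by $2/\delta$ gives the $D_1\delta^{p-1-\eta}\log(1/\delta)$ in the statement. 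The initial-condition contribution $\frac1r\sum_l \breve v_l^r\one_{[\breve v_l^r\le\delta c^r]} = W_\delta^r(0)$ enters through $X_\delta^r(0)$ and is exactly the term isolated on the right-hand side of the claimed bound, handled by the event $\{W_\delta^r(0) > D_3\delta^{p-\eta}\}$; on its complement, $X_\delta^r(0) \le D_3\delta^{p-\eta}$ feeds into the first excursion estimate. The numerical inequalities \eqref{lambdarBnd}--\eqref{un20} of Lemma \ref{lem:Mrd} are precisely what make all these estimates uniform in $r$ and $\delta$ over the admissible ranges, so I would invoke that lemma to fix $M(\eta)$, $r(\eta)$, $\delta(\eta)$ (taking them $\ge$ the corresponding starred quantities).

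The main obstacle I anticipate is the excursion analysis in step (iii): unlike Lemma \ref{fifo}, where one only needs a crude per-busy-period bound, here we need the excursion-maximum estimate to be sharp enough — with the correct power of $\delta$ — so that after summing over $\asymp r^2\delta^{-2(p-\eta)}$ excursions and over the level mesh, the logarithmic choice $k\asymp\log(1/\delta)$ actually drives the probability down to $\delta^{D_2}$ rather than merely $o(1)$. This requires carefully coupling the random number of arrivals in an excursion interval (controlled by \eqref{Econt1}) with the martingale maximal inequality, handling the fact that excursion lengths are themselves random, and keeping all constants independent of $r,\delta$. A secondary technical point is that $Y_\delta^r$ restarts from a possibly nonzero value after an up-crossing, so the decomposition into "up-excursions above a level" and "down-runs to the level" must be set up as in the $[\sigma_{2k-1},\sigma_{2k}]$ / $(\sigma_{2k},\sigma_{2k+1}]$ alternation of Proposition \ref{comp}'s proof, with the drift bound \eqref{eq:NegDrift} guaranteeing the down-runs have length $\lesssim \delta^{p-\eta}$ (deterministically, up to the arrival that can push the workload up by at most $\delta c^r$, i.e.\ $\le \delta$ after scaling). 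Once this machinery is in place, the proof is a bookkeeping exercise in collecting the $35\delta^{D_2}$ from the various union bounds and reading off $D_1,D_2,D_3$.
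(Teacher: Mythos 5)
Your plan follows essentially the same route as the paper: reduce via Lemma \ref{qlxy} to bounding $\sup_{t\in[0,T]}2\delta^{-1}Y_{\delta}^r(t)$, exploit the strongly negative drift \eqref{eq:NegDrift} to write $X_{\delta}^r$ as a martingale minus a drift of order $\delta^{-(p-\eta)}$, compare the reflected process on a mesh of levels spaced $\asymp\delta^{p-\eta}$ with a downward-biased random walk to get geometric decay per level, take $\asymp\log(1/\delta)$ levels, and isolate the initial-condition term $X_{\delta}^r(0)=\frac1r\sum_l\breve v_l^r\one_{[\breve v_l^r\le\delta c^r]}$ as the second probability on the right-hand side. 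The martingale/Doob and \eqref{Econt1} ingredients you cite are exactly the ones used.

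There is, however, one concrete error that would sink the argument as written: your excursion count. In step (iv) and again in your closing paragraph you propose to union-bound over $\asymp r^2T\delta^{-2(p-\eta)}$ excursions. Each excursion's failure probability (crossing the high level $\asymp\delta^{p-\eta}\log(1/\delta)$ before returning to the base level) is $\asymp\delta^{D'}$ with no $r$-dependence, so a union bound over $r^2$-many excursions gives $r^2\delta^{D'}$, which blows up as $r\to\infty$ for fixed $\delta$ and cannot yield the $r$-uniform bound $35\delta^{D_2}$ the lemma demands. The correct count carries no factor of $r$: one must show that each excursion from the base level up to $4B\delta^{p-\eta}$ lasts at least $\epsilon\delta^{2(p-\eta)}$ units of \emph{scaled} time with probability at least $1/2$ (this uses the same maximal inequalities for $\hat V_{\delta}^r$ over windows of length $\epsilon\delta^{2(p-\eta)}$, plus the strong Markov property at arrival epochs and the monotonicity \eqref{eq:monoskor} to dominate each post-excursion segment by a freshly started reflected process), stochastically lower-bound the successive excursion durations by i.i.d.\ copies, and apply Azuma--Hoeffding to conclude that at most $\lfloor\delta^{-D'/2}\rfloor+1$ excursions fit into $[0,T]$ outside an event of probability $e^{-\delta^{-D'/2}/32}$; the mild constraint $T<\epsilon\,\delta(\eta)^{-2(p-\eta)}/4$ is absorbed by shrinking $\delta(\eta)$. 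With that corrected count the union bound closes and the bookkeeping you describe goes through.
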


\begin{proof} Fix {\color{black}$D'\ge 8p$}. For $\eta \in (\eta^*, p-1)$ and $b_0>0$, recall $M_*(\eta)$, $r_*(\eta)$, $\delta_*(\eta)$ and $\tilde{r}(\eta,b_0)$ from Lemma \ref{lem:Mrd}. Set $M(\eta)=M_*(\eta)$ and take $r\ge \max\{r_*(\eta), \tilde{r}(\eta, \lambda^{-1})\}$ and
$\delta\in[2M(\eta)(c^r)^{-1}, \delta_*(\eta)]$. For all $t\ge 0$, by Lemma \ref{qlxy} with $x=\delta/2$ and $y = \delta$, 
\begin{equation}\label{un2}
0 \le Q_{\delta}^r(t) - Q_{\delta/2}^r(t) \le \frac{c^r}{r} + 2\delta^{-1}Y_{\delta}^r(t).
\end{equation}
The major effort of the proof will be to obtain bounds on the probability
that $\sup_{0\le t\le T'}2\delta^{-1}Y_{\delta}^r(t)$ exceeds certain bounds for a suitable $T'\ge T$, which entails a detailed analysis of its excursions. 
To get an overview of the strategy for this, the reader may wish to look ahead to \eqref{eq:chiless1} (where $r(\eta)$, $B\ge 1$ and $\epsilon\in(0,1)$
are constants to be determined in what follows), definitions \eqref{def:taueven}, \eqref{def:tauodd} and \eqref{def:scrN}, \eqref{eq:tauineq} and
\eqref{eq:chiless10}.  In what follows, each of the three terms on the right side of \eqref{eq:chiless10} is bounded above using estimates in \eqref{Nless}, \eqref{chiless}, and \eqref{eq:initterm}.

Recall that $Y_{\delta}^r(t) = \Gamma[X_{\delta}^r](t)$ for $t\ge 0$.  From \eqref{Zdec} and \eqref{eq:htcr}, for $t\ge 0$,
\begin{align*}
X_{\delta}^r(t) 
=  X_{\delta}^r(0)+\hat{V}_{\delta}^r(t) -\lambda^rt\frac{\mathbb{E}(v \one_{[v > \delta c^r]})}{\mathbb{E}(v \one_{[v > c^r]})}  + rt(\rho^r - 1).
\end{align*}
By \eqref{eq:NegDrift}, for all
$0\le s \le t$, 
\begin{equation}\label{XdeltaU}
X_{\delta}^r(t)- X_{\delta}^r(s) \le U^r_{\delta}(t) - U^r_{\delta}(s), \mbox{ where } U^r_{\delta}(t) := \hat{V}_{\delta}^r(t) - \frac{\lambda t}{4\delta^{p-\eta}}.
\end{equation}
For 
$k \in\NN$, write
\begin{eqnarray*}
\widetilde{V}^r_{\delta}(k) &:=& r\hat{V}_{\delta}^r\left(r^{-2}\sum_{i=1}^{k} T_i^r\right)= \sum_{i=1}^{k}v_i\one_{[v_i \le \delta c^r]} - \lambda^r\mathbb{E}(v\one_{[v \le \delta c^r]})\sum_{i=1}^kT_i^r
\end{eqnarray*}
By \eqref{C}, for each $k \in\NN$,
$$
\mathbb{E}\left[\left(\widetilde{V}^r_{\delta}(k)\right)^2\right] \le Ck.
$$
Take any $B\ge1$.
Thus, as 
$\{\widetilde{V}^r_{\delta}(k)\}_{k \in\NN}$ is a martingale (with respect to the filtration $\{\mathcal{F}^r_n\}_{n\in\bbZp}$, where $\mathcal{F}^r_0 := \{\qq^r, \breve{v}_l^r: l \in \NN\}$ and $\mathcal{F}^r_n := \sigma\left(\qq^r, \breve{v}_l^r, T^r_i, v_i : l \in \NN, i \le n\right)$, $n \ge 1$),
using Doob's maximal inequality, \eqref{crup}, recalling $r \ge \tilde{r}(\eta, \lambda^{-1})$ and using \eqref{Econt1} with $b=16B\lambda^{-1}$, 
\begin{align}\label{vhatsup}
\mathbb{P}&\left(\sup_{t \in [0, 12B\lambda^{-1}\delta^{2(p-\eta)}]}\hat{V}_{\delta}^r(t) > B\delta^{p-\eta}/2\right)\\
& = \mathbb{P}\left(\sup_{1 \le k \le E^r\left(12B\lambda^{-1}r^2\delta^{2(p-\eta)}\right)}\hat{V}_{\delta}^r\left(r^{-2}\sum_{i=1}^{k} T_i^r\right) > B\delta^{p-\eta}/2\right)\nonumber\\
&\le \mathbb{P}\left(\sup_{1 \le k \le \lfloor16Br^2\delta^{2(p-\eta)}\rfloor}\hat{V}_{\delta}^r\left(r^{-2}\sum_{i=1}^{k} T_i^r\right) > B\delta^{p-\eta}/2\right)\nonumber\\
&\qquad \qquad \qquad \qquad + \mathbb{P}\left(E^r(12Br^2\lambda^{-1}\delta^{2(p-\eta)}) > \lfloor16Br^2\delta^{2(p-\eta)}\rfloor\right)\nonumber\\
&= \mathbb{P}\left(\sup_{1 \le k \le \lfloor16Br^2\delta^{2(p-\eta)}\rfloor}\widetilde{V}^r_{\delta}(k) > Br\delta^{p-\eta}/2\right)\nonumber\\
&\qquad \qquad \qquad \qquad + \mathbb{P}\left(E^r(12Br^2\lambda^{-1}\delta^{2(p-\eta)}) > \lfloor16Br^2\delta^{2(p-\eta)}\rfloor\right)\nonumber\\
&\le \frac{{\color{black} 16}\mathbb{E}\left[\left(\widetilde{V}^r_{\delta}(\lfloor16Br^2\delta^{2(p-\eta)}\rfloor)\right)^2\right]}{B^2r^2\delta^{2(p-\eta)}}\nonumber\\
&\qquad \qquad \qquad \qquad + \mathbb{P}\left(E^r(12Br^2\lambda^{-1}\delta^{2(p-\eta)}) > \lfloor16Br^2\delta^{2(p-\eta)}\rfloor\right)\nonumber\\
&\le \frac{{\color{black} 256} C}{B} + \left(\frac{p+1}{p}\right)^{2(p-\eta)/(p-\eta/2)}\frac{32\lambda^2\sigma_A^2}{Br^{\eta/(p-\eta/2)}}.\nonumber
\end{align}
Next, from \eqref{XdeltaU}, we see that for any 
integer $i \ge 2$ and $s\ge 0$,
\begin{align}\label{un2.5}
\mathbb{P}&\left(X_{\delta}^r(\cdot + s) \text{ crosses } (i+1)B\delta^{p-\eta} \text{ before } (i-2)B\delta^{p-\eta}\  \Big| \ X_{\delta}^r(s) = iB\delta^{p-\eta},\right.\\
 &\hspace{9cm} \left. E^r(r^2s) - E^r(r^2s-)>0\right)\nonumber\\
&\le \mathbb{P}\left(\sup_{t \in [0, 12B\lambda^{-1}\delta^{2(p-\eta)}]}\hat{V}_{\delta}^r(t+s)-\hat{V}_{\delta}^r(s)  > B\delta^{p-\eta}/2\ \Big| \ X_{\delta}^r(s) = iB\delta^{p-\eta},\right.\nonumber\\
 &\hspace{9cm} \left. E^r(r^2s) - E^r(r^2s-)>0\right)\nonumber\\
&=  \mathbb{P}\left(\sup_{t \in [0, 12B\lambda^{-1}\delta^{2(p-\eta)}]}\hat{V}_{\delta}^r(t) > B\delta^{p-\eta}/2\right)\nonumber
\end{align}
where, in the last step, we have used the strong Markov property of the process $\hat{V}_{\delta}^r(\cdot)$ at the jump times of the process $t \mapsto E^r(r^2 t)$.
Combining  \eqref{vhatsup} and \eqref{un2.5}, setting {\color{black}$B=960C\vee1$}, and using \eqref{one6th},
we conclude that 
for all 
integers $i \ge 2$, $0\le x \le iB\delta^{p-\eta}$, and $s\ge 0$,
\begin{align}\label{un3}
&\mathbb{P}\left(X_{\delta}^r(\cdot + s) \text{ crosses } (i+1)B\delta^{p-\eta} \text{ before } (i-2)B\delta^{p-\eta}\  \Big| \ X_{\delta}^r(s) = x,\right.\\
&\hspace{9.5cm} \left. E^r(r^2s) - E^r(r^2s-)>0\right)\nonumber\\
&\le \mathbb{P}\left(X_{\delta}^r(\cdot + s) \text{ crosses } (i+1)B\delta^{p-\eta} \text{ before } (i-2)B\delta^{p-\eta}\  \Big| \ X_{\delta}^r(s) = iB\delta^{p-\eta},\right.\nonumber\\
 &\hspace{9.5cm} \left. E^r(r^2s) - E^r(r^2s-)>0\right)\nonumber\\
&\le \frac{1}{3}.\nonumber
\end{align}
Using $M(\eta)/c^r\le 2M(\eta)/c^r\le\delta$, $M(\eta)=M_*(\eta)>1$, \eqref{crup}, \eqref{one6th}, $r_*(\eta) \ge 1$ and {\color{black} $B\ge 960 C> 2C$},  
\begin{align}\label{delbd}
\delta c^r/r &= \delta^{p-\eta}\frac{c^r}{r\delta^{p-\eta-1}} \le \delta^{p-\eta}\frac{(c^r)^{p-\eta}}{M(\eta)^{p-\eta-1}r}\\
& \le \delta^{p-\eta}\left(\frac{p+1}{p}\right)^{(p-\eta)/(p-\eta/2)}\frac{r^{(p-\eta)/(p-\eta/2)}}{r}\nonumber\\
& = \delta^{p-\eta}\left(\frac{p+1}{p}\right)^{(p-\eta)/(p-\eta/2)}r^{-\eta/(2p-\eta)} < B\delta^{p-\eta}/2.\nonumber
\end{align}
For
$s\ge 0$, define the following stopping times with respect to the filtration $\{\mathcal{H}_t\}_{t \ge 0}$ given by
$\mathcal{H}_t := \{\qq^r, \breve{v}_l^r, V^r_{\delta}(r^2s), E^r(r^2s) : l \in \NN, s \le t\}$ for $t\ge 0$: 
$\beta_0=s$ and for $k \in\bbZp$, 
\begin{align*}
\beta_{k+1} &:=\inf\{t \ge \beta_k: X_{\delta}^r(t) - X_{\delta}^r(\beta_k) \ge B\delta^{p-\eta}\\
&\qquad \qquad \text{ or } E^r(r^2t) - E^r(r^2t-)>0 \text{ and } X_{\delta}^r(t-) - X_{\delta}^r(\beta_k) \le -2B\delta^{p-\eta}\},
\end{align*}
and write $\widetilde{X}^r_{\delta}(k) := X_{\delta}^r(\beta_k)$.
For any 
$k\in\bbZ_+$,
note that if $X_{\delta}^r(\beta_{k+1}) - X_{\delta}^r(\beta_k) \ge B\delta^{p-\eta}$; that is, if $\beta_{k+1}$ corresponds to an up-crossing of $X_{\delta}^r$, then,  using \eqref{delbd} and that jumps up of $X_{\delta}^r(\cdot)$ are at most of size $\delta c^r/r$, $X_{\delta}^r(\beta_{k+1}) - X_{\delta}^r(\beta_k) \le B\delta^{p-\eta} + \delta c^r/r \le 3B\delta^{p-\eta}/2$. Similarly, for any 
$k\in\bbZ_+$, if $X_{\delta}^r(\beta_{k+1}-) - X_{\delta}^r(\beta_k) \le -2B\delta^{p-\eta}$, then, by 
the same line of reasoning,
$X_{\delta}^r(\beta_{k+1}) - X_{\delta}^r(\beta_k) \le -2B\delta^{p-\eta} + \delta c^r/r \le -3B\delta^{p-\eta}/2$.
Let $\{S_{\delta}(k)\}_{k \in\bbZp}$ be a random walk with $S_{\delta}(0) = 9B\delta^{p-\eta}/2$ and for $k \in\bbZp$,
$$
\mathbb{P}(S_{\delta}(k+1) - S_{\delta}(k) = 3B\delta^{p-\eta}/2) = 1/3 \ \ \ \hbox{and} \ \ \ \mathbb{P}(S_{\delta}(k+1) - S_{\delta}(k)  =-3B\delta^{p-\eta}/2) = 2/3.
$$
Recall that $D'\ge 8p$ was fixed at the onset.  Also note that \eqref{SRWcomp2} implies that $\frac{2^{3} -2}{2^{2D'\log(1/\delta)} -2}\le 1$, which in turn implies that $9/2\le 3D'\log(1/\delta)$.
Then, from \eqref{un3}, the above observations, \eqref{SRWcomp2}, and 
by comparing the sequence 
$\{\widetilde{X}^r_{\delta}(k)\}_{k\in\bbZp}$ with $\{S_{\delta}(k)\}_{k\in\bbZp}$, it follows that, for any $t\ge 0$ and any $x_0 \in [4B\delta^{p-\eta}, 9B\delta^{p-\eta}/2]$,
\begin{align}\label{SRWcomp}
&\mathbb{P}\left(Y_{\delta}^r(t+\cdot) \text{ crosses } 3D'B\delta^{p-\eta}\log(\delta^{-1}) \text{ before } \frac{3B\delta^{p-\eta}}{2}\  \Big| \ Y_{\delta}^r(t) = x_0,\right.\\
&\hspace{10cm} \left.E^r(r^2t) - E^r(r^2t-)>0\right)\nonumber\\
&=\mathbb{P}\left(X_{\delta}^r(t+\cdot) \text{ crosses } 3D'B\delta^{p-\eta}\log(\delta^{-1}) \text{ before } \frac{3B\delta^{p-\eta}}{2}\  \Big| \ X_{\delta}^r(t) = x_0,\right.\nonumber\\
&\hspace{10cm} \left.E^r(r^2t) - E^r(r^2t-)>0\right)\nonumber\\
&\le \mathbb{P}\left(X_{\delta}^r(t+\cdot) \text{ crosses } 3D'B\delta^{p-\eta}\log(\delta^{-1}) \text{ before } \frac{3B\delta^{p-\eta}}{2}\  \Big| \ X_{\delta}^r(t) = \frac{9B\delta^{p-\eta}}{2},\right.\nonumber\\ 
&\hspace{10cm} \left.E^r(r^2t) - E^r(r^2t-)>0\right)\nonumber\\
&\le \mathbb{P}\left(S_{\delta}(\cdot) \text{ crosses } 3D'B\delta^{p-\eta}\log(\delta^{-1}) \text{ before } \frac{3B\delta^{p-\eta}}{2} \right)
\le \frac{2^{3} -2}{2^{2D'\log(1/\delta)} -2}\le \delta^{D'},\nonumber
\end{align}
where, in the second to the last inequality above, we have used the fact that, for the biased random walk $S_{\delta}$, $n \mapsto 2^{2S_{\delta}(n)/(3B\delta^{p-\eta})}$ is a martingale (with respect to the natural filtration generated by $S_{\delta}$) to compute the probability via optional stopping theorem.  
Define the following stopping times (with respect to the filtration $\{\mathcal{H}_t\}_{t \ge 0}$ defined above): $\tau_{-1} = 0$ 
and for $k \in\bbZp$,
\begin{align}
	\tau_{2k} &:=\inf\{t \ge \tau_{2k-1} : E^r(r^2t) - E^r(r^2t-)>0
	 \text{ and } Y^r_{\delta}(t-)\le 2B\delta^{p-\eta}\},\label{def:taueven}\\
\tau_{2k+1} &:=\inf\{t \ge \tau_{2k} :  Y^r_{\delta}(t) \ge 4B\delta^{p-\eta}\},\label{def:tauodd}
\end{align}
and let 
\begin{equation}\label{def:scrN}
\mathcal{N} := \inf\left\lbrace k \in\NN: \sup_{t \in [\tau_{2k-1}, \tau_{2k}]}Y^r_{\delta}(t) \ge 3D'B\delta^{p-\eta}\log(1/\delta)\right\rbrace.
\end{equation}
Due to \eqref{delbd} and since $Y_{\delta}^r$ has upward jumps of size at most $c^r\delta/r$, for each $k\in\NN$, $Y_{\delta}^r(\tau_{2k-1})\in[4B\delta^{p-\eta},9B\delta^{p-\eta}/2]$. As $\delta\le \delta_*(\eta)<1$, by \eqref{SRWcomp}, 
\begin{equation}\label{Nless}
\mathbb{P}(\mathcal{N} \le \lfloor \delta^{-D'/2} \rfloor + 1 ) \le \sum_{k=1}^{\lfloor \delta^{-D'/2} \rfloor + 1} \mathbb{P}\left( \sup_{t \in [\tau_{2k-1}, \tau_{2k}]}Y^r_{\delta}(t) \ge 3D'B\delta^{p-\eta}\log(1/\delta)\right)\le 2\delta^{D'/2}.
\end{equation}
Using \eqref{delbd}, $Y^r_{\delta}(\tau_{2k}) \le 3B\delta^{p-\eta}$ for all $k \in\bbZp$. From \eqref{XdeltaU}, 
it follows that, for each 
$k \in\bbZp$,
$$
t \mapsto (\hat{V}^r_{\delta}(t + \tau_{2k}) - \hat{V}^r_{\delta}(\tau_{2k})) - (X^r_{\delta}(t + \tau_{2k}) - X^r_{\delta}(\tau_{2k})), \ t \ge 0,
$$ 
is nondecreasing in $t$. Thus, by the monotonicity property noted in \eqref{eq:monoskor},
for each
$k \in\bbZp$ and $t\ge 0$,
\begin{eqnarray*}
Y^r_{\delta}(t + \tau_{2k}) &=& \Gamma\left[Y^r_{\delta}(\tau_{2k}) + (X^r_{\delta}(\cdot + \tau_{2k}) - X^r_{\delta}(\tau_{2k}))\right](t)\\
&\le& \Gamma\left[Y^r_{\delta}(\tau_{2k}) + (\hat{V}^r_{\delta}(\cdot + \tau_{2k}) - \hat{V}^r_{\delta}(\tau_{2k}))\right](t)\\
&\le&  \Gamma\left[3B\delta^{p-\eta} + (\hat{V}^r_{\delta}(\cdot + \tau_{2k}) - \hat{V}^r_{\delta}(\tau_{2k}))\right](t).
\end{eqnarray*}
For each 
$k \in\bbZp$, a job arrives to the $r$-th system at time $\tau_{2k}$.  Hence,
by the strong Markov property, $\{\Gamma\left[3B\delta^{p-\eta} + (\hat{V}^r_{\delta}(\cdot + \tau_{2k}) - \hat{V}^r_{\delta}(\tau_{2k}))\right](t)\ : \ t \ge 0 \}$ has the same distribution as the process
$\{\Gamma\left[3B\delta^{p-\eta} + \hat{V}^r_{\delta}(\cdot)\right](t) \ : t \ge 0\}$.
Thus, for each
$d \in\NN$ and $t \ge 0$,
\begin{equation}\label{dom}
\mathbb{P}\left(\sum_{j=0}^{d}(\tau_{2j+1} -\tau_{2j}) \le t\right) \le \mathbb{P}\left(\sum_{j=0}^{d}\chi_j\le t\right),
\end{equation}
where $\{\chi_0,\chi_1,\dots\}$ are independent and identically distributed random variables distributed as
$$
\mathbb{P}\left(\chi_0 \le s\right) = \mathbb{P}\left(\sup_{t \in [0, s]} \Gamma\left[3B\delta^{p-\eta} + \hat{V}^r_{\delta}(\cdot)\right](t) \ge 4B \delta^{p-\eta}\right), \quad s \ge 0.
$$
Recalling $\delta \le \delta_*(\eta) < 1$ and using the Lipschitz property of the Skorohod map noted in \eqref{eq:lipSM}, we obtain that for all
$\epsilon \in (0,1)$,
\begin{equation}\label{taugap}
\mathbb{P}\left(\chi_0 \le \epsilon \delta^{2(p-\eta)}\right) \le \mathbb{P}\left(\sup_{t \in [0, \epsilon \delta^{2(p-\eta)}]} |\hat{V}^r_{\delta}(t)| \ge B \delta^{p-\eta}/2 \right).
\end{equation}
Then given $\epsilon\in(0,1)$, following the same line of reasoning used to obtain
\eqref{vhatsup} and using \eqref{Econt1} with $b=2\epsilon$ (noting $3b/4 > \epsilon$), we obtain for $r \ge \hat{r}(\eta, \epsilon) := \max\{r_*(\eta), \tilde{r}(\eta, \lambda^{-1}), \tilde{r}(\eta,\epsilon)\}$,
\begin{align}\label{vsup}
\mathbb{P}&\left(\sup_{t \in [0, \epsilon \delta^{2(p-\eta)}]}\hat{V}_{\delta}^{r}(t) \ge B\delta^{p-\eta}/4\right)\\
&\le \mathbb{P}\left(\sup_{1 \le k \le \lfloor2\epsilon \lambda r^2\delta^{2(p-\eta)}\rfloor}\hat{V}_{\delta}^{r}\left(r^{-2}\sum_{i=1}^{k} T_i^r\right) > B\delta^{p-\eta}/4\right)\nonumber\\ 
&\qquad + \mathbb{P}\left(E^{r}(\epsilon {r}^2\delta^{2(p-\eta)}) > \lfloor2\epsilon \lambda r^2\delta^{2(p-\eta)}\rfloor\right)\nonumber\\
 &\le \frac{128C \lambda \epsilon}{B^2} + \left(\frac{p+1}{p}\right)^{2(p-\eta)/(p-\eta/2)}\frac{2^8\lambda\sigma_A^2}{\epsilon r^{\eta/(p-\eta/2)}}.\nonumber
\end{align}
Moreover, as $\hat{V}^r_{\delta}(\cdot)$ decreases between successive arrivals of jobs and increases at the
arrival times, for each $\epsilon\in(0,1)$, we have the following lower bound on $\hat{V}^r_{\delta}(\cdot)$
on the time interval $[0, \epsilon \delta^{2(p-\eta)}]$:
\begin{align}\label{vinf0}
\inf_{t \in [0, \epsilon \delta^{2(p-\eta)}]} \hat{V}^{r}_{\delta}(t) &\ge \inf_{0 \le k \le E^{r}(\epsilon r^2 \delta^{2(p-\eta)})} \left(r^{-1}\sum_{i=1}^{k}v_i\one_{[v_i \le \delta c^{r}]} - \lambda^{r}r^{-1}\mathbb{E}(v\one_{[v \le \delta c^{r}]})\sum_{i=1}^{k+1}T_i^{r}\right)\nonumber\\
&\ge \inf_{0 \le k \le E^{r}(\epsilon r^2 \delta^{2(p-\eta)})} \left(r^{-1}\sum_{i=1}^{k}v_i\one_{[v_i \le \delta c^{r}]} - \lambda^{r}r^{-1}\mathbb{E}(v\one_{[v \le \delta c^{r}]})\sum_{i=1}^{k}T_i^{r}\right)\nonumber\\
&\qquad \qquad \qquad - \frac{8\lambda \mathbb{E}(v)}{7r}\sup_{1 \le k \le E^{r}(\epsilon r^2 \delta^{2(p-\eta)})+1}T^{r}_k\nonumber\\
&= \frac{1}{r}\inf_{1 \le k \le E^{r}(\epsilon r^2 \delta^{2(p-\eta)})}\widetilde{V}^{r}_{\delta}(k)- \frac{8}{7r}\sup_{1 \le k \le E^{r}(\epsilon r^2 \delta^{2(p-\eta)})+1}T^{r}_k,
\end{align}
where the bound \eqref{lambdarBnd} was used in the last term.
Once again, following the arguments for obtaining \eqref{vhatsup} in a manner similar to those that arrive at \eqref{vsup}, for $\epsilon\in(0,1)$ and $r \ge \hat{r}(\eta, \epsilon)$,
\begin{multline}\label{vinf1}
\mathbb{P}\left(\frac{1}{r}\inf_{1 \le k \le E^r(\epsilon r^2 \delta^{2(p-\eta)})}\widetilde{V}^r_{\delta}(k) < -B\delta^{p-\eta}/8\right)\\
 \le \frac{512C\lambda \epsilon}{B^2}
+ \left(\frac{p+1}{p}\right)^{2(p-\eta)/(p-\eta/2)}\frac{2^8\lambda\sigma_A^2}{\epsilon r^{\eta/(p-\eta/2)}}.
\end{multline}
Moreover, for any $\epsilon\in(0,1)$,
\begin{multline}\label{vinf2}
\mathbb{P}\left(\frac{8}{7r}\sup_{1 \le k \le E^r(\epsilon r^2 \delta^{2(p-\eta)})+1}T^r_k> B\delta^{p-\eta}/8\right) 
= \mathbb{P}\left(\sup_{1 \le k \le E^r(\epsilon r^2 \delta^{2(p-\eta)})+1}T^r_k> \frac{7B\delta^{p-\eta}r}{64}\right)\\
 \le \mathbb{P}\left(E^r(\epsilon r^2 \delta^{2(p-\eta)}) > \lfloor 2\epsilon \lambda r^2 \delta^{2(p-\eta)}\rfloor\right) + \mathbb{P}\left(\sup_{1 \le k \le \lfloor 2\epsilon \lambda r^2 \delta^{2(p-\eta)}\rfloor+1}T^r_k> \frac{7B\delta^{p-\eta}r}{64}\right).
\end{multline}
Applying a union bound, Chebychev's inequality, and \eqref{lambdarBnd}--\eqref{sigmarBnd2}, it follows that for any $\epsilon\in(0,1)$,
\begin{multline*}
\mathbb{P}\left(\sup_{1 \le k \le \lfloor 2\epsilon \lambda r^2 \delta^{2(p-\eta)}\rfloor+1}T^r_k> \frac{7B\delta^{p-\eta}r}{64}\right) \le (2\epsilon \lambda r^2 \delta^{2(p-\eta)}+1)\max_{k=1,2}\mathbb{P}\left(T^r_k> \frac{7B\delta^{p-\eta}r}{64}\right)\\
 \le (2\epsilon \lambda r^2 \delta^{2(p-\eta)}+1)\left(\frac{64}{7B\delta^{p-\eta}r}\right)^2 \left(\mathbb{E}\left[\left(T^r_1\right)^2\right]\vee\mathbb{E}\left[\left(T^r\right)^2\right]\right) \le 
  \frac{ (2\lambda + (\epsilon r^2 \delta^{2(p-\eta)})^{-1}) \epsilon C_1}{B^2},
 \end{multline*}
 where $C_1=10^2 (2\sigma_A^2 + (2\lambda^{-1})^2)$. Thus, for $\epsilon\in(0,1)$ and $r \ge \hat{r}(\eta, \epsilon)$, by using the above bound and \eqref{Econt1} with $b=2\epsilon$ in \eqref{vinf2},
 we obtain
\begin{multline}\label{vinf3}
\mathbb{P}\left(\frac{8}{7r}\sup_{1 \le k \le E^r(\epsilon r^2 \delta^{2(p-\eta)})}T^r_k> B\delta^{p-\eta}/8\right) \\
\le \left(\frac{p+1}{p}\right)^{2(p-\eta)/(p-\eta/2)}\frac{2^8\lambda\sigma_A^2}{\epsilon r^{\eta/(p-\eta/2)}} +  
 \frac{ (2\lambda + (\epsilon r^2 \delta^{2(p-\eta)})^{-1}) \epsilon C_1}{B^2}.
\end{multline}
From \eqref{vinf0}, \eqref{vinf1} and \eqref{vinf3}, for $\epsilon\in(0,1)$ and $r \ge \hat{r}(\eta, \epsilon)$,
\begin{multline}\label{vinf4}
\mathbb{P}\left(\inf_{t \in [0, \epsilon \delta^{2(p-\eta)}]} \hat{V}^r_{\delta}(t)< -B\delta^{p-\eta}/4\right)\\
 \le\frac{512 C\lambda \epsilon}{B^2} + 2\left(\frac{p+1}{p}\right)^{2(p-\eta)/(p-\eta/2)}\frac{2^8\lambda\sigma_A^2}{\epsilon r^{\eta/(p-\eta/2)}} +
  \frac{ (2\lambda + (\epsilon r^2 \delta^{2(p-\eta)})^{-1}) \epsilon C_1}{B^2}.
\end{multline}
From \eqref{vsup}, \eqref{vinf4}, and as $B\ge1$, we can fix $\epsilon\in(0,1)$ and find $\hat r(\eta) \ge \hat{r}(\eta, \epsilon)$ such that for all $r \ge \hat r(\eta)$,
$$
\mathbb{P}\left(\sup_{t \in [0, \epsilon \delta^{2(p-\eta)}]} |\hat{V}^{r}_{\delta}(t)| \ge B \delta^{p-\eta}/2\right) \le 1/2,
$$
 and hence, from \eqref{taugap},
\begin{equation}\label{chibd}
\mathbb{P}\left(\chi_0 \le \epsilon \delta^{2(p-\eta)}\right) \le 1/2.
 \end{equation}
Henceforth, we fix such an $\epsilon$ and assume $r\ge \hat r(\eta)$.
Applying the Azuma-Hoeffding inequality on the martingale (with respect to its natural filtration)
$$
M^{\chi}_{\ell} := \sum_{k=1}^{\ell}\left(\one_{[\chi_k > \epsilon \delta^{2(p-\eta)}]} - \mathbb{P}\left(\chi_0 > \epsilon \delta^{2(p-\eta)}\right)\right),\qquad \ell\in\bbZp,
$$
and using \eqref{dom} and \eqref{chibd}, for any $d \ge 1$, we obtain 
\begin{multline}\label{chiless}
\mathbb{P}\left(\sum_{j=0}^{d}(\tau_{2j+1} -\tau_{2j}) \le d \epsilon \delta^{2(p-\eta)}/4\right) \le \mathbb{P}\left(\sum_{j=0}^{d}\chi_j\le d \epsilon \delta^{2(p-\eta)}/4\right)\\
\le \mathbb{P}\left(\sum_{j=1}^{d}\one_{[\chi_j > \epsilon \delta^{2(p-\eta)}]} \le d/4\right)
= \mathbb{P}\left(M^{\chi}_d + d\mathbb{P}\left(\chi_0 > \epsilon \delta^{2(p-\eta)}\right) \le d/4\right)\\
\le  \mathbb{P}\left(M^{\chi}_d  \le -d/4\right) \le e^{-d/32}.
\end{multline}

Note that if $Y^r_{\delta}(\tilde{t}) \le 3B\delta^{p-\eta}/2$ for some $\tilde{t}<\tau_0$, then, by definition \eqref{def:taueven}, the time of the arrival immediately following $\tilde{t}$ corresponds to $\tau_0$. By \eqref{delbd}, $Y^r_{\delta}(\tau_0) \le 3B\delta^{p-\eta}/2+\frac{c^r\delta}{r}< 2B\delta^{p-\eta}$, and as $Y^r_{\delta}(\cdot)$ is nonincreasing in the time interval $[\tilde{t}, \tau_0)$, $\sup_{t \in [\tilde{t}, \tau_0]}Y^r_{\delta}(t) < 2B\delta^{p-\eta}$. Consequently, if $Y^r_{\delta}(\cdot)$ attains any value $v > 2B\delta^{p-\eta}$ before $\tau_0$, $Y^r_{\delta}(0) > 3B\delta^{p-\eta}/2$ and the time at which $v$ is attained must be before $Y^r_{\delta}(\cdot)$ down crosses $3B\delta^{p-\eta}/2$. Thus, from the computation \eqref{SRWcomp}, recalling that $Y^r_{\delta}(0) = \frac{1}{r}\sum_{l=1}^{\qq^r}\breve{v}_l^r\one_{[\breve{v}_l^r \le \delta c^r]}$ and using the fact that the process $Y^r_{\delta}(\cdot)$ started from $Y^r_{\delta}(0) = \frac{9B\delta^{p-\eta}}{2}$ stochastically dominates (in a pathwise fashion) the process $Y^r_{\delta}(\cdot)$ started from any value less than or equal to $\frac{9B\delta^{p-\eta}}{2}$,
\begin{align}\label{eq:initterm}
	\mathbb{P}&\left(\sup_{t \in [0,\tau_0]}Y^r_{\delta}(t) > 3D'B\delta^{p-\eta}\log(1/\delta)\right)\\
	& \le \mathbb{P}\left(Y_{\delta}^r(\cdot) \text{ crosses } 3D'B\delta^{p-\eta}\log(1/\delta) \text{ before } \frac{3B\delta^{p-\eta}}{2}\right)\nonumber\\
&\le \mathbb{P}\left(Y_{\delta}^r(\cdot) \text{ crosses } 3D'B\delta^{p-\eta}\log(1/\delta) \text{ before } \frac{3B\delta^{p-\eta}}{2} \ \Big| \ Y^r_{\delta}(0) = \frac{9B\delta^{p-\eta}}{2}\right)\nonumber\\
&\qquad \quad \qquad \qquad \quad \qquad  + \mathbb{P}\left(\frac{1}{r}\sum_{l=1}^{\qq^r}\breve{v}_l^r\one_{[\breve{v}_l^r \le \delta c^r]} > \frac{9B\delta^{p-\eta}}{2}\right)\nonumber\\
&\le \delta^{D'} + \mathbb{P}\left(\frac{1}{r}\sum_{l=1}^{\qq^r}\breve{v}_l^r\one_{[\breve{v}_l^r \le \delta c^r]} > \frac{9B\delta^{p-\eta}}{2}\right).\nonumber
\end{align}\label{eq:chiless1}
Let $0<\delta(\eta)\le\delta_*(\eta)$ be such that $T< \epsilon\delta(\eta)^{-2(p-\eta)}/4$. Choose $r(\eta)\ge\hat r(\eta)$ such that $2M(\eta)(c^r)^{-1}< \delta(\eta)$ for all $r \ge r(\eta)$. For $r\ge r(\eta)$ and $\delta \in [2M(\eta)(c^r)^{-1}, \delta(\eta)]$, by \eqref{un2},
\begin{align}
\mathbb{P}&\left(\sup_{t \in [0,T]}(Q_{\delta}^r(t) - Q_{\delta/2}^r(t)) > 6 D'B \delta^{p-1-\eta}\log (1/\delta) + \frac{c^r}{r}\right)\\
&\le \mathbb{P}\left(\sup_{t \in [0,\epsilon\delta^{-2(p-\eta)}/4]}(Q_{\delta}^r(t) - Q_{\delta/2}^r(t)) > 6 D'B \delta^{p-1-\eta}\log (1/\delta) + \frac{c^r}{r}\right)\nonumber\\
&\le \mathbb{P}\left(\sup_{t \in [0,\epsilon\delta^{-2(p-\eta)}/4]}Y^r_{\delta}(t) > 3D'B \delta^{p -\eta}\log (1/\delta)\right)\nonumber\\
&\le \mathbb{P}\left(\sup_{t \in [0,\tau_0]}Y^r_{\delta}(t) > 3D'B \delta^{p -\eta}\log (1/\delta)\right)\nonumber\\
&+ \mathbb{P}\left(\sup_{t \in (0, \epsilon\delta^{-2(p-\eta)}/4]}Y^r_{\delta}(t) > 3D'B \delta^{p -\eta}\log (1/\delta), \ \sup_{t \in [0, \tau_0]}Y^r_{\delta}(t) \le 3D'B \delta^{p -\eta}\log (1/\delta)\right).\nonumber
\end{align}
Observe that if $\sup_{t \in [0, \tau_0]}Y^r_{\delta}(t) \le 3D'B \delta^{p -\eta}\log (1/\delta)$, then $$\sup_{t \in [0, \tau_{2\mathcal{N}-1})}Y^r_{\delta}(t) \le 3D'B\delta^{p-\eta}\log(1/\delta),$$ where $\mathcal{N}$ is given in \eqref{def:scrN}.  Then, if in addition $\mathcal{N} > \lfloor \delta^{-D'/2} \rfloor + 1$ and $$\sup_{t \in (0, \epsilon\delta^{-2(p-\eta)}/4]}Y^r_{\delta}(t) > 3D'B \delta^{p -\eta}\log (1/\delta),$$
then $\tau_{2\mathcal{N}-1} \le \epsilon\delta^{-2(p-\eta)}/4$ and hence, in this case, 
\begin{equation}\label{eq:tauineq}
\sum_{j=0}^{\lfloor \delta^{-D'/2} \rfloor + 1 }(\tau_{2j+1} -\tau_{2j}) \le \tau_{2\mathcal{N}-1} \le \epsilon\delta^{-2(p-\eta)}/4.
\end{equation}
This together with \eqref{eq:chiless1} gives that for $r\ge r(\eta)$ and $\delta \in [2M(\eta)(c^r)^{-1}, \delta(\eta)]$,
\begin{align}
\mathbb{P}&\left(\sup_{t \in [0,T]}(Q_{\delta}^r(t) - Q_{\delta/2}^r(t)) > 6 D'B \delta^{p-1-\eta}\log (1/\delta) + \frac{c^r}{r}\right)
\label{eq:chiless10}\\
&\le \mathbb{P}\left(\sup_{t \in [0,\tau_0]}Y^r_{\delta}(t) > 3D'B \delta^{p -\eta}\log (1/\delta)\right)\nonumber\\
&\qquad + \mathbb{P}\left(\sum_{j=0}^{\lfloor \delta^{-D'/2} \rfloor + 1 }(\tau_{2j+1} -\tau_{2j}) \le \epsilon\delta^{-2(p-\eta)}/4, \ \mathcal{N} > \lfloor \delta^{-D'/2} \rfloor + 1 \right)\nonumber\\
&\qquad + \mathbb{P}\left(\mathcal{N} \le \lfloor \delta^{-D'/2} \rfloor + 1 \right).\nonumber\
\end{align}
By \eqref{eq:chiless10}, \eqref{eq:initterm}, the fact that $\delta^{D'/2}\epsilon\delta^{-2(p-\eta)}  \le \epsilon \delta^{2(p-\eta)}$ since $D' \ge 8p$, \eqref{Nless}, 
and \eqref{chiless}, we obtain for $r\ge r(\eta)$ and $\delta \in [2M(\eta)(c^r)^{-1}, \delta(\eta)]$,  
\begin{align*}
\mathbb{P}&\left(\sup_{t \in [0,T]}(Q_{\delta}^r(t) - Q_{\delta/2}^r(t)) > 6 D'B \delta^{p-1-\eta}\log (1/\delta) + \frac{c^r}{r}\right)\\
&\le \delta^{D'} + \mathbb{P}\left(\frac{1}{r}\sum_{l=1}^{\qq^r}\breve{v}_l^r\one_{[\breve{v}_l^r \le \delta c^r]} > \frac{9B\delta^{p-\eta}}{2}\right)\\
& \qquad + \mathbb{P}\left(\sum_{j=0}^{\lfloor \delta^{-D'/2} \rfloor + 1}(\tau_{2j+1} -\tau_{2j})\le (\lfloor \delta^{-D'/2} \rfloor + 1 )\epsilon \delta^{2(p-\eta)}/4\right)+2\delta^{D'/2}\\
&\le \mathbb{P}\left(\frac{1}{r}\sum_{l=1}^{\qq^r}\breve{v}_l^r\one_{[\breve{v}_l^r \le \delta c^r]} > \frac{9B\delta^{p-\eta}}{2}\right) + e^{-\delta^{-D'/2}/32} + 3\delta^{D'/2}\\
& \le \mathbb{P}\left(\frac{1}{r}\sum_{l=1}^{\qq^r}\breve{v}_l^r\one_{[\breve{v}_l^r \le \delta c^r]} > \frac{9B\delta^{p-\eta}}{2}\right) + 35\delta^{D'/2},
\end{align*}
where, in the last inequality, we used the fact that $xe^{-x/32}\le 32$ for all $x\ge 1$. This proves the lemma with $D_1 := 6 D'B$, $D_2 := D'/2$ and $D_3 := 9B/2$.
\end{proof}

\begin{lemma}\label{smallql}
Fix $T>0$. Recall the constant $D_2>0$ from Lemma \ref{unifsmallint}. For any $\eta \in (\eta^*, p-1)$, there are $\tilde \theta_{\eta} \in \Theta$, and positive constants $r'(\eta)$, $D'(\eta), \widetilde{D}(\eta)$, $\delta(\eta) \in (0,1), M'(\eta)>1$ such that for all $r \ge r'(\eta)$ and $\delta \in [2M'(\eta)(c^r)^{-1}, \delta(\eta)]$,
$$
\mathbb{P}\left(\sup_{t \in [0,T]}Z^r_{\delta}(t) > D'(\eta) \delta^{p-1-\eta}(1 + \log (\delta^{-1})) + \tilde\theta_{\eta}(r)\right)
\le \widetilde{D}(\eta)\left(\delta^{D_2} +\delta^{\eta - \eta^*}\right) + \tilde \theta_{\eta}(r).$$
\end{lemma}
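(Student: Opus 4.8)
The plan is to derive the bound on $\sup_{t\in[0,T]} Z^r_\delta(t)$ by writing $Z^r_\delta(t)$ as a telescoping sum over a sequence of dyadically shrinking thresholds and applying Lemma \ref{unifsmallint} on each level. Fix $\eta\in(\eta^*,p-1)$, and let $M(\eta), r(\eta),\delta(\eta)$ and $D_1,D_2,D_3$ be as supplied by Lemma \ref{unifsmallint}. Pick an $a>0$ (a fixed constant, not depending on $r$) and let $N=N(r,\delta)$ be the largest integer with $2^{-N}\delta\ge a(c^r)^{-1}$, i.e. $N\approx \log_2(\delta c^r/a)$. Using \eqref{eq:comp3} in Proposition \ref{comp} to replace $Z^r_{\cdot}$ by $Q^r_{\cdot}$ up to an additive $c^r/r$, and telescoping,
\begin{equation}\label{eq:teleplan}
Q^r_\delta(t) = Q^r_{2^{-N}\delta}(t) + \sum_{k=0}^{N-1}\left(Q^r_{2^{-k}\delta}(t) - Q^r_{2^{-(k+1)}\delta}(t)\right).
\end{equation}
The first term on the right of \eqref{eq:teleplan} is exactly (a scaled version of) the queue length of the $r$-th $(2^{-N}\delta)(c^r)^{-1}\cdot c^r$-truncated queue, i.e. of a $b(c^r)^{-1}$-truncated queue with $b=2^{-N}\delta c^r\in[a,2a)$; so Lemma \ref{fifo} applies (after a trivial adjustment absorbing the bounded fluctuation of $b$ into the constant), giving $\theta\in\Theta$ with $\mathbb{P}(\sup_{[0,T]}Q^r_{2^{-N}\delta}(t)>\theta(r))\le\theta(r)$. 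This is where the $\tilde\theta_\eta(r)$ term in the statement comes from.

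For the sum in \eqref{eq:teleplan}, apply Lemma \ref{unifsmallint} with $\delta$ replaced by $\delta_k:=2^{-k}\delta$ for $k=0,\dots,N-1$ (valid since $\delta_k\in[2M(\eta)(c^r)^{-1},\delta(\eta)]$ once $2^{-(N-1)}\delta\ge 2M(\eta)(c^r)^{-1}$, which holds by choosing $a\ge 4M(\eta)$). A union bound over $k$ gives that, except on an event of probability at most
\[
\sum_{k=0}^{N-1}\Big(35\,\delta_k^{D_2}+\mathbb{P}\big(\tfrac1r\textstyle\sum_{l=1}^{\qq^r}\breve v_l^r\one_{[\breve v_l^r\le \delta_k c^r]}>D_3\delta_k^{p-\eta}\big)\Big)+\theta(r),
\]
one has $\sup_{[0,T]}Z^r_\delta(t)\le \theta(r)+(N+1)\tfrac{c^r}{r}+\sum_{k=0}^{N-1}\big(D_1\delta_k^{p-1-\eta}\log(\delta_k^{-1})+\tfrac{c^r}{r}\big)$. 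Since $p-1-\eta>0$, the geometric-type series $\sum_{k\ge 0}2^{-k(p-1-\eta)}(k+\log(\delta^{-1}))$ converges and is bounded by a constant $D'(\eta)$ times $\delta^{p-1-\eta}(1+\log(\delta^{-1}))$; and $(N+2)c^r/r\lesssim (\log c^r) c^r/r\to 0$, so it can be absorbed into $\tilde\theta_\eta(r)$ (using $\lim_r c^r/r=0$ from \eqref{limitcroverr} and the logarithmic growth of $N$). Likewise the $\sum_k 35\delta_k^{D_2}\le 35\delta^{D_2}/(1-2^{-D_2})$ is $O(\delta^{D_2})$.

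The remaining piece is to bound $\sum_{k=0}^{N-1}\mathbb{P}\big(\tfrac1r\sum_{l=1}^{\qq^r}\breve v_l^r\one_{[\breve v_l^r\le \delta_k c^r]}>D_3\delta_k^{p-\eta}\big)$ by $C(\eta)\,\delta^{\eta-\eta^*}$ plus a term in $\Theta$. Here I would use the initial-condition assumptions: by Markov's inequality the $k$-th probability is at most $D_3^{-1}\delta_k^{-(p-\eta)}\mathbb{E}\big(\tfrac1r\sum_l\breve v_l^r\one_{[\breve v_l^r\le\delta_k c^r]}\big)=D_3^{-1}\delta_k^{-(p-\eta)}\mathbb{E}(W^r_{\delta_k}(0))$, and \eqref{eq:initcondq1} gives $\limsup_r \sup_{a\in[\astar(c^r)^{-1},1]}a^{-(p-\eta^*)}\mathbb{E}(W^r_a(0))<\infty$; since $\delta_k\le\delta\le 1$ and $\delta_k\ge 2^{-(N-1)}\delta\ge \astar(c^r)^{-1}$ for suitable $a$, this yields $\mathbb{E}(W^r_{\delta_k}(0))\le C\delta_k^{p-\eta^*}$ for $r$ large, hence the $k$-th term is $\le C'\delta_k^{\eta-\eta^*}$, and summing the geometric series (which converges since $\eta>\eta^*$) gives $O(\delta^{\eta-\eta^*})$. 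The ``$r$ large" caveat (the $\limsup$ in \eqref{eq:initcondq1} is only eventual in $r$) contributes the other term in $\Theta$. \textbf{The main obstacle} I anticipate is the bookkeeping at the bottom scale: one must choose the cutoff constant $a$ (and the threshold $\astar$) consistently so that simultaneously (i) Lemma \ref{fifo} applies to $Q^r_{2^{-N}\delta}$, (ii) every $\delta_k$ for $k\le N-1$ lies in the admissible window $[2M(\eta)(c^r)^{-1},\delta(\eta)]$ of Lemma \ref{unifsmallint}, and (iii) every $\delta_k$ lies in $[\astar(c^r)^{-1},1]$ so that \eqref{eq:initcondq1} is usable; and one must verify that the number of levels $N\asymp\log(\delta c^r)$ does not spoil the $c^r/r\to 0$ cancellations — this is where the slowly-varying estimate $c^r=L_p(r)r^{1/p}$ and $\log c^r\cdot c^r/r\to0$ are needed. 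Collecting all constants into $D'(\eta),\widetilde D(\eta),M'(\eta),r'(\eta)$ and the residual $r$-dependent terms into $\tilde\theta_\eta\in\Theta$ then finishes the proof.
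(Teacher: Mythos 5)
Your proposal is correct and follows essentially the same route as the paper's proof: compare $Z^r_\delta$ to $Q^r_\delta$ via \eqref{eq:comp3}, telescope over dyadic levels $2^{-k}\delta$ down to a scale of order $(c^r)^{-1}$, apply Lemma \ref{unifsmallint} at each level with a union bound, control the initial-condition terms by Markov's inequality and \eqref{eq:initcondq1}, sum the geometric series, handle the bottom level with Lemma \ref{fifo}, and absorb the $O(\log(c^r)\,c^r/r)$ accumulation into $\tilde\theta_\eta$. The only cosmetic difference is that the paper cuts off the telescoping at the level $K$ with $2^{-K-1}\delta<2M'(\eta)(c^r)^{-1}\le 2^{-K}\delta$ and uses the monotonicity in Lemma \ref{qlxy} to discard the remainder, rather than your choice of stopping at $2^{-N}\delta\ge a(c^r)^{-1}$, but this changes nothing of substance.
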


\begin{proof}
By \eqref{eq:comp3} in Proposition \ref{comp}, for any $r \in \clr$ and any $\delta, z \ge 0$,
\begin{equation}\label{smallq1}
\mathbb{P}\left(\sup_{t \in [0,T]}Z^r_{\delta}(t) > z\right) \le \mathbb{P}\left(\sup_{t \in [0,T]}Q^r_{\delta}(t) > z - \frac{c^r}{r}\right).
\end{equation}
Take $D_1$, $D_2$, $D_3$ as in Lemma \ref{unifsmallint}. Choose and fix $\eta \in (\eta^*, p-1)$ and obtain $M(\eta)>1$ and  $r(\eta) \ge 2$, $\delta(\eta) \in (0,1)$ as in Lemma \ref{unifsmallint}. Define $M'(\eta) := M(\eta) \vee \astar$ where $\astar$ appears in Assumption \eqref{eq:initcondq1}. Denote by $\theta_{\eta}$ and $r_0(\eta)$ the map $\theta$ and constant $r_0$ obtained in Lemma \ref{fifo} with $2M(\eta)$ in place of $a$. Define $D'(\eta) := D_1\sum_{k=0}^{\infty}2^{-k(p-1-\eta)} (1 + k \log 2)$. For $\delta \in [2M'(\eta)(c^r)^{-1}, \delta(\eta)]$, let $K(\eta, \delta, r)$ be a nonnegative integer such that $2^{-K(\eta, \delta, r) -1}\delta < 2M'(\eta)(c^r)^{-1} \le 2^{-K(\eta, \delta, r)}\delta$. This, along with \eqref{crup}, implies that for $r\ge r(\eta)$ and $\delta \in [2M'(\eta)(c^r)^{-1}, \delta(\eta)]$,
\begin{equation}\label{Kbd}
K(\eta, \delta, r) \le \log_2\left(\frac{\delta c^r}{2M'(\eta)}\right) \le \log_2\left(\frac{ c^r}{M'(\eta)}\right) \le C'(\eta) \log r,
\end{equation}
where $C'(\eta) =2\log_2(e)/(p-\eta/2)$ depends only on $\eta$ (and $p$).
Observe that for any $r\in\clr$ and $\delta>0$,
\begin{multline}\label{smallq2}
\mathbb{P}\left(\sup_{t \in [0,T]}Q^r_{\delta}(t) > D'(\eta) \delta^{p-1-\eta}[1+\log (1/\delta)] + C'(\eta)\frac{c^r\log r}{r} + \theta_{\eta}(r)\right)\\
\le \mathbb{P}\left(\sup_{t \in [0,T]}\left(Q^r_{\delta}(t) - Q^r_{2M'(\eta)(c^r)^{-1}}(t)\right) > D'(\eta) \delta^{p-1-\eta}[1+\log (1/\delta)]+ C'(\eta)\frac{c^r\log r}{r}\right)\\
+ \mathbb{P}\left(\sup_{t \in [0,T]}Q^r_{2M'(\eta)(c^r)^{-1}}(t) >  \theta_{\eta}(r)\right).
\end{multline}
By Lemma \ref{unifsmallint}, for every $r \ge r(\eta)$ and $\delta \in [2M'(\eta)(c^r)^{-1}, \delta(\eta)]$,
\begin{multline}\label{smallq3}
\mathbb{P}\left(\sup_{t \in [0,T]}\left(Q^r_{2^{-k}\delta}(t) - Q^r_{2^{-k-1}\delta}(t)\right) > D_1 (2^{-k}\delta)^{p-1-\eta}\log (2^k/\delta) + \frac{c^r}{r}\right)\\
 \le 35(2^{-k}\delta)^{D_2} +\mathbb{P}\left(\frac{1}{r}\sum_{l=1}^{\qq^r}\breve{v}_l^r\one_{[\breve{v}_l^r \le 2^{-k}\delta c^r]} > D_3(2^{-k}\delta)^{p-\eta}\right) \ \text{ for all } 0 \le k \le K(\eta, \delta, r).
\end{multline}
By Assumption \eqref{eq:initcondq1} and \eqref{Kbd} (and since $M'(\eta)\ge \astar$), there exist $C'', r''>0$ such that for all $r \ge r''$, $\delta \in [2M'(\eta)(c^r)^{-1}, \delta(\eta)]$, and $0 \le k \le K(\eta,\delta,r)$,
\begin{equation}\label{initialsm}
\mathbb{E}\left(\frac{1}{r}\sum_{l=1}^{\qq^r}\breve{v}_l^r\one_{[\breve{v}_l^r \le 2^{-k}\delta c^r]}\right) \le C''(2^{-k}\delta)^{p-\eta^*}.
\end{equation}
Let $r'(\eta) := \max\{r(\eta), r_0(\eta), r''\}$. For $r\ge r'(\eta)$ and $\delta \in [2M'(\eta)(c^r)^{-1}, \delta(\eta)]$,
since $2^{-K(\eta, \delta, r) -1}\delta < 2M'(\eta)(c^r)^{-1}$, by Lemma \ref{qlxy}, for any $t \ge 0$,
\begin{align*}
\left(Q^r_{\delta}(t) - Q^r_{2M'(\eta)(c^r)^{-1}}(t)\right) &= \left(Q^r_{\delta}(t) - Q^r_{2^{-K(\eta, \delta, r) -1}\delta}(t)\right)\\
&\qquad + \left(Q^r_{2^{-K(\eta, \delta, r) -1}\delta}(t) - Q^r_{2M'(\eta)(c^r)^{-1}}(t)\right)\\
&\le \left(Q^r_{\delta}(t) - Q^r_{2^{-K(\eta, \delta, r) -1}\delta}(t)\right).
\end{align*}
Using this observation, along with \eqref{Kbd}, \eqref{smallq3}, \eqref{initialsm}, Markov's inequality and the union bound, for any $r \ge r'(\eta)$ and $\delta \in [2M'(\eta)(c^r)^{-1}, \delta(\eta)]$,
\begin{align}\label{smallq4}
&\mathbb{P}\left(\sup_{t \in [0,T]}\left(Q^r_{\delta}(t) - Q^r_{2M'(\eta)(c^r)^{-1}}(t)\right) > D'(\eta) \delta^{p-1-\eta}(1 + \log (1/\delta)) + C'(\eta)\frac{c^r\log r}{r} + \frac{c^r}{r}\right)\\
&\le \mathbb{P}\left(\sup_{t \in [0,T]}\left(Q^r_{\delta}(t) - Q^r_{2^{-K(\eta, \delta, r) -1}\delta}(t)\right) > D'(\eta) \delta^{p-1-\eta}(1 + \log (1/\delta)) + C'(\eta)\frac{c^r\log r}{r} + \frac{c^r}{r}\right)\nonumber\\
&\le \sum_{k=0}^{K(\eta, \delta, r)}\mathbb{P}\left(\sup_{t \in [0,T]}\left(Q^r_{2^{-k}\delta}(t) - Q^r_{2^{-k-1}\delta}(t)\right) > D_1 (2^{-k}\delta)^{p-1-\eta}\log (2^{k}/\delta) + c^r/r\right)\nonumber\\
&\le \sum_{k=0}^{K(\eta, \delta, r)}35(2^{-k}\delta)^{D_2} + \sum_{k=0}^{K(\eta, \delta, r)}\mathbb{P}\left(\frac{1}{r}\sum_{l=1}^{\qq^r}\breve{v}_l^r\one_{[\breve{v}_l^r \le 2^{-k}\delta c^r]} > D_3(2^{-k}\delta)^{p-\eta}\right)\nonumber\\
&\le \sum_{k=0}^{K(\eta, \delta, r)}35(2^{-k}\delta)^{D_2} + \sum_{k=0}^{K(\eta, \delta, r)}(D_3(2^{-k}\delta)^{p-\eta})^{-1}C''(2^{-k}\delta)^{p-\eta^*}\nonumber\\
&\le 35\delta^{D_2}\sum_{k=0}^{\infty}2^{-D_2k} + C''(D_3)^{-1}\delta^{\eta - \eta^*}\sum_{k=0}^{\infty}2^{-(\eta - \eta^*)k} \le \widetilde{D}(\eta)\left(\delta^{D_2} +\delta^{\eta - \eta^*}\right),\nonumber
\end{align}
where $\widetilde{D}(\eta) := 35\sum_{k=0}^{\infty}2^{-D_2k} + C''(D_3)^{-1}\sum_{k=0}^{\infty}2^{-(\eta - \eta^*)k} \in (0, \infty)$. Finally, by Lemma \ref{fifo}, for any $r \ge r'(\eta)$,
\begin{equation}\label{smallq5}
 \mathbb{P}\left(\sup_{t \in [0,T]}Q^r_{2M(\eta)(c^r)^{-1}}(t) >  \theta_{\eta}(r)\right) \le \theta_{\eta}(r).
\end{equation}
Taking $\tilde \theta_{\eta}(r) = C'(\eta)\frac{c^r\log r}{r} + \theta_{\eta}(r) + \frac{2c^r}{r}$, the lemma now follows from \eqref{smallq1}, \eqref{smallq2}, \eqref{smallq4} and \eqref{smallq5}.
\end{proof}

\begin{remark}\label{uiinit}
By small modifications of some of the estimates in Lemmas \ref{fifo}, \ref{unifsmallint} and \ref{smallql}, it can in fact be shown that for {\color{black}a sequence of systems such that each system has no jobs in system at time zero,} for any $T>0$ and any $\eta \in (0, p-1)$, there exist positive constants $C, C', C'', r_0$ such that for any $r \ge r_0$, $a \in [(c^r)^{-1},1]$ and $z \ge 0$,
$$
\mathbb{P}\left(\sup_{t \in [0,T]}W_a^r(t) > C a^{p-\eta}z\right) \le \mathbb{P}\left(\sup_{t \in [0,T]}Z_a^r(t) > C a^{p-\eta-1}z\right) \le C'e^{-C''z},
$$
where we have used the elementary bound $W_a^r(t)\le aZ_a^r(t)$ for $t\ge 0$ to obtain the first inequality.
By integrating over $z$, this immediately implies that{\color{black}, in this case,} Assumption \eqref{eq:initcondq1} holds with $W_a^r(0)$ replaced by $W_a^r(t)$ for any fixed $t>0$.
\end{remark}

The next two lemmas concern the limiting random field $\{W_a(\cdot), a\in[0,\infty]\}$.  In preparation for using these two results both in the proof of Theorem \ref{workfn} and in the proof of Theorem \ref{collapse} (which concerns asymptotic state space collapse as $p\to\infty$), the dependence on $p$ of various objects is made explicit in the statements of these lemmas.  In this regard, we remind the reader that $p>1$ is presently fixed and therefore, the asymptotic conditions of Section \ref{sec:sc} need not hold for the results in these lemmas to be true.

Recall $\sigmap = \sqrt{\lambda \operatorname{Var}(v^{(p)}) + \lambda\sigma_A^2}$, where $v^{(p)}$ denotes the job processing time distribution with highlighted dependence on $p$. Also recall $\eta^* = \eta^*(p)$ in Assumption \eqref{eq:initcondq1}.

\begin{lemma}\label{BMsup}
Let $T>0$.  Set $\mop:=\max\{2,\lambda,4\kappa^2/\lambda^2,e^{\lambda/(\sigmap)^2},T\}$, $\aop=\mop^{-1/2p}$, and
$\hop:=8 p(\sigmap)^2/\lambda$.
Then $\aop\in(0,1)$ and for all $a\in(0,\aop)$, $\eta \in (\eta^*(p), p-1)$ and $H\ge \hop$, we have
\begin{multline*}
\mathbb{P}\left(\sup_{t \in [0,T]} W_a^{(p)}(t) > a^{p-\eta} + H a^p \log(1/a)\right)\\
\le \cop a^{\eta - \eta^*(p)} + e^{-\lambda/(2\left(\sigmap\right)^2a^{\eta})} + C(\lambda, \sigmap)a^{2p},
\end{multline*}
where $\cop :=2\sup_{a >0}a^{-(p-\eta^*(p))}\mathbb{E}\left(\xi^{(p)}(a)\right) < \infty$ due to \eqref{eq:Cxifinite} and $C(\lambda, \sigmap) := 2e^{\lambda/(\sigmap)^2} + \frac{16\left(\sigmap\right)^2}{\lambda}$.
\end{lemma}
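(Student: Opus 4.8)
\textbf{Proof plan for Lemma \ref{BMsup}.}

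The plan is to mimic the excursion-analysis argument used in Lemma \ref{unifsmallint}, but now working directly with the limiting reflected process $W_a^{(p)}(\cdot) = \Gamma[X_a^{(p)}](\cdot)$ rather than its prelimit version. First I would fix $p$, $T$, $\eta\in(\eta^*(p),p-1)$ and $H\ge H_0(p)$, and record the elementary consequences of the definition of $m_0(p)$: namely $a_0(p)=m_0(p)^{-1/2p}\in(0,1)$ (since $m_0(p)\ge 2$), and for $a\in(0,a_0(p))$ the inequalities $a^p\le 1/T$, $a^\eta\le a^{p-1}\le a$, $4\kappa^2/\lambda^2\le a^{-2p}$, and $e^{\lambda/(\sigma(p))^2}\le a^{-2p}$, which are exactly the bounds that will be needed to absorb the drift constant $\kappa$ and various numerical factors. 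By \eqref{eq:xadefn}, on $[0,T]$ we have $X_a^{(p)}(t) = \xi^{(p)}(a) + \sigma(p)B(t) + (\kappa - \lambda a^{-p})t$, so the drift $\kappa - \lambda a^{-p}$ is large and negative (at most $-\lambda a^{-p}/2$ for $a$ small, using $|\kappa|t\le|\kappa|T\le$ something controlled by $m_0(p)$); thus $X_a^{(p)}(t)-X_a^{(p)}(s)\le \sigma(p)(B(t)-B(s)) - \frac{\lambda}{2a^p}(t-s) =: U(t)-U(s)$ for $0\le s\le t$, the direct analogue of \eqref{XdeltaU}.

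Next I would split the event into three pieces, exactly as in the proof of Lemma \ref{unifsmallint}: (i) a "bad initial condition" term $\mathbb{P}(\xi^{(p)}(a) > \tfrac12 a^{p-\eta})$, which by Markov's inequality and the definition of $C_0(p)$ is at most $C_0(p)a^{\eta-\eta^*(p)}$ (this uses $2a^{-(p-\eta^*(p))}\mathbb{E}(\xi^{(p)}(a))\le C_0(p)$, hence $\mathbb{E}(\xi^{(p)}(a))\le \tfrac12 C_0(p)a^{p-\eta^*(p)}$, and $a^{p-\eta^*(p)}/a^{p-\eta} = a^{\eta-\eta^*(p)}$); (ii) a "single-excursion overshoot" term controlled by comparing $W_a^{(p)}$ started from a value of order $Ba^{p-\eta}$ (here $B$ plays the role of a fixed multiple, say $B=1$ after rescaling) with a biased random walk, using the martingale $n\mapsto 2^{2S(n)/(\text{step})}$ and optional stopping, giving a bound like $\delta^{D'}$ with the exponent governed by $H\log(1/a)$ — this is where the $H a^p\log(1/a)$ term and the factor $e^{-\lambda/(2(\sigma(p))^2 a^\eta)}$ will come from, after estimating the probability that Brownian motion with drift $-\lambda/(2a^p)$ over a time window of length $\sim a^{2(p-\eta)}$ rises by $\sim a^{p-\eta}$; this probability is $\le e^{-c\lambda a^{\eta-p}\cdot a^{p-\eta}\cdot\ldots}$ — I will need to choose the window length $\sim (\sigma(p))^2 a^{2(p-\eta)}/\lambda$ and invoke the reflection principle / Gaussian tail bound, yielding $e^{-\lambda/(2(\sigma(p))^2 a^\eta)}$; and (iii) a "too many excursions" term bounded by $C(\lambda,\sigma(p))a^{2p}$, obtained by showing the expected number of down-crossing/up-crossing cycles of $W_a^{(p)}$ in time $T$ is small — here $T\le m_0(p)$ and each excursion takes time at least $\sim a^{2(p-\eta)}\ge a^{2p}$ in expectation, so the number of excursions in $[0,T]$ is $O(T a^{-2p})$, and multiplying by the per-excursion overshoot probability plus a Chebyshev bound on the busy-period durations (using $\operatorname{Var}$ of the Gaussian increments) produces the $C(\lambda,\sigma(p))a^{2p}$ term with $C(\lambda,\sigma(p)) = 2e^{\lambda/(\sigma(p))^2} + 16(\sigma(p))^2/\lambda$.

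Concretely, the key steps in order are: (1) verify $a_0(p)\in(0,1)$ and the auxiliary inequalities from the definition of $m_0(p)$; (2) establish the drift domination $X_a^{(p)}(t)-X_a^{(p)}(s)\le U(t)-U(s)$ on $[0,T]$; (3) prove a one-excursion estimate: for a Brownian motion with drift $-\lambda/(2a^p)$ started at height $h\in[4Ba^{p-\eta},\tfrac92 Ba^{p-\eta}]$, the probability of reaching $3D'Ba^{p-\eta}\log(1/a)$ before $\tfrac32 Ba^{p-\eta}$ is $\le a^{D'}$ (optional stopping on the exponential martingale for the biased walk that dominates it), and the probability that a single excursion lasts less than $\epsilon a^{2(p-\eta)}$ is $\le\tfrac12$ for suitable $\epsilon$ (Gaussian tail bound, as in \eqref{vsup}); (4) a Gaussian large-deviation bound giving the $e^{-\lambda/(2(\sigma(p))^2 a^\eta)}$ contribution for the probability that the supremum over the first busy period exceeds the threshold without the initial condition being bad; (5) assemble via a union bound over the $O(a^{-2p})$ excursions, with an Azuma--Hoeffding / Chebyshev argument (as in \eqref{chiless}) to control the total elapsed time, producing the $C(\lambda,\sigma(p))a^{2p}$ term; (6) choose $H_0(p)=8p(\sigma(p))^2/\lambda$ so that $H\ge H_0(p)$ guarantees the exponent $D'$ in step (3) is at least what is needed (roughly $D'\ge$ a fixed constant, and the relation between $D'$ and $H$ is $3D'Ba^{p-\eta}\log(1/a)\approx Ha^p\log(1/a)$ forces $D'\approx H a^\eta/(3B)$, combined with the random-walk bias which contributes the $(\sigma(p))^2/\lambda$ scaling). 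The main obstacle I anticipate is step (3)--(4): getting the \emph{sharp} constants (the $1/2$ in $e^{-\lambda/(2(\sigma(p))^2 a^\eta)}$, the precise $8p$ in $H_0(p)$, and the exact form of $C(\lambda,\sigma(p))$) requires carefully matching the window lengths, the random-walk step sizes, and the exponential-martingale computation, since unlike Lemma \ref{unifsmallint} there is no extra "$c^r/r$" slack to hide suboptimal constants — every numerical factor must be tracked so that the stated clean bound holds for \emph{all} $a\in(0,a_0(p))$ simultaneously, uniformly in the parameters, which is exactly what the later use in the proof of Theorem \ref{collapse} (sending $p\to\infty$) will require.
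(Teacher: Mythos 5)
Your three-part decomposition (bad initial condition via Markov's inequality and the definition of $\cop$; overshoot during the first busy period; counting excursions and controlling their total duration) is exactly the skeleton of the paper's proof, and the drift domination $X_a^{(p)}(t)-X_a^{(p)}(s)\le \sigmap(B(t)-B(s))-\tfrac{\lambda}{2a^p}(t-s)$ is the right first move. But the engine that actually produces the stated constants is misidentified. The paper does \emph{not} re-run the discrete biased-random-walk comparison of Lemma \ref{unifsmallint}, nor any ``Gaussian tail over a window of length $\sim a^{2(p-\eta)}$.'' Instead it exploits the fact that the dominating process $\overline X_a(t)=\xi(a)+\sigma B(t)-\lambda t/(2a^p)$ is an exact drifted Brownian motion, so $t\mapsto e^{\lambda \overline X_a(t)/(\sigma^2 a^p)}$ is a martingale and optional stopping gives \emph{closed-form} two-sided exit probabilities. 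The term $e^{-\lambda/(2(\sigmap)^2a^{\eta})}$ is precisely the ruin probability $\bigl(e^{\lambda/(2\sigma^2a^{\eta})}-1\bigr)/\bigl(e^{\lambda/(\sigma^2a^{\eta})}-1\bigr)$ that $\overline X_a$, started at $a^{p-\eta}/2$, reaches $a^{p-\eta}$ before $0$ during the first busy period $[0,\tau_0^*]$ — one application of optional stopping, not a window estimate; your heuristic would not give this exponent. Likewise the per-excursion overshoot from level $a^p$ to level $Ha^p\log(1/a)$ is $\bigl(e^{\lambda/\sigma^2}-1\bigr)/\bigl(e^{\lambda H\log(1/a)/\sigma^2}-1\bigr)<e^{\lambda/\sigma^2}a^{\lambda H/\sigma^2}$, and the hypothesis $H\ge\hop=8p(\sigmap)^2/\lambda$ enters only to guarantee $\lambda H/(4\sigma^2)\ge 2p$, so that a union bound over $\lfloor a^{-3H\lambda/(4\sigma^2)}\rfloor+2$ excursions still leaves a factor $a^{2p}$. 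There is no parameter $D'$ and no relation ``$3D'Ba^{p-\eta}\log(1/a)\approx Ha^p\log(1/a)$'' to tune.

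The second concrete error is the timescale in your step (iii)/(5): each down-excursion of $W_a^{(p)}$ from $a^p$ to $0$ stochastically dominates the hitting time $\sigma^{-a^p}$ of level $-a^p$ by $\sigma B(t)-\lambda t/(2a^p)$, whose mean is $2a^{2p}/\lambda$ and variance $8a^{4p}\sigma^2/\lambda^3$; the relevant duration scale is therefore $a^{2p}$, not $a^{2(p-\eta)}$ (the latter belongs to the prelimit analysis of Lemma \ref{unifsmallint}, where $\delta^{2(p-\eta)}$ is tied to the discrete arrival process). A single Chebyshev bound on the sum of $\lfloor a^{-3H\lambda/(4\sigma^2)}\rfloor+1$ i.i.d.\ copies of $\sigma^{-a^p}$, combined with $a^{-(H\lambda/(2\sigma^2)-2p)}\ge a^{-2p}>\aop^{-2p}\ge T$, covers $[0,T]$ and yields the $16(\sigmap)^2a^{2p}/\lambda$ piece of $C(\lambda,\sigmap)$; no Azuma--Hoeffding step is needed. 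Since you yourself flag that obtaining the sharp constants is the unresolved obstacle, and the devices you propose (random-walk comparison, Gaussian window bound) would not deliver them, the proposal as written does not close the lemma; replacing those devices with the exponential-martingale exit probabilities for the dominating drifted Brownian motion is the missing idea.
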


\begin{proof} Since $\mop>1$, we have $\aop\in(0,1)$.  
Fix $a\in(0,\aop)$, $\eta \in (\eta^*(p), p-1)$ and $H\ge \hop$.
To ease the notation in this proof, we suppress the dependence on $p$
and write $\mo=\mop$, $\ao=\aop$, $\ho=\hop$, $\sigma=\sigmap$, $\eta^*=\eta^*(p)$ and $\co=\cop$. 
Observe that since $H\log(1/a)>H\log(1/\ao)=H\log(\mo)/2p\ge \lambda H/(2p\sigma^2)\ge 4>1$, we have $Ha^p\log(1/a)>a^p$.
Define the stopping times: $\tau^*_0 := \inf\{t \ge 0:  W_a(t) = 0\}$, and for $k \in\bbZp$,
\begin{align*}
\tau^*_{2k+1} &:= \inf\{t \ge \tau^*_{2k}: W_a(t) = a^p\},\\
\tau^*_{2k+2} &:= \inf\{t \ge \tau^*_{2k+1}: W_a(t)=0 \text{ or } W_a(t) = Ha^p\log(1/a)\}.
\end{align*}
Define $\mathcal{N}^* := \inf\{k\in\NN: W_a(\tau^*_{2k}) = Ha^p\log(1/a)\}$. Since $\kappa<\lambda/(2a^p)$, we have $\kappa - \frac{\lambda}{a^p} < - \frac{\lambda}{2a^p}$. Thus, 
by \eqref{eq:monoskor}, the process $\Gamma\left[ \overline{X}_a\right](\cdot)$ with $\overline{X}_a(t) := \xi(a) + \sigma B(t) - \frac{\lambda t}{2a^p}, \ t \ge 0$,
dominates the process $W_a(\cdot)$ pointwise.  Thus, using the fact that $t \mapsto e^{\lambda \overline{X}_a(t)/(\sigma^2a^p)}$ is a martingale (with respect to the filtration $\{{\mathcal G}_t\}_{t\ge 0}$ given by
${\mathcal G}_t=\sigma\left(\overline{X}_a(0), (B(s), 0\le s\le t)\right)$ for $t\ge 0$), by the optional stopping theorem and the strong Markov property,
\begin{align}\label{startmax}
\mathbb{P}&\left(\sup_{[0, \tau^*_0]} W_a(t) > a^{p-\eta}\right)\\
& \le \mathbb{P}\left(\xi(a) > a^{p-\eta}/2\right) + \mathbb{P}\left(\left\{\sup_{[0, \tau^*_0]} W_a(t) > a^{p-\eta}\right\} \cap \left\{\ \xi(a) \le a^{p-\eta}/2\right\}\right)\nonumber\\
&\le 2a^{\eta - p}\mathbb{E}\left(\xi(a)\right) +
\mathbb{P}\left(\overline{X}_a(t+\cdot) \mbox{ crosses } a^{p-\eta} \mbox{ before } 0 \mid
\overline{X}_a(t)= a^{p-\eta}/2\right)\nonumber\\
&\le 2a^{\eta - p}\mathbb{E}\left(\xi(a)\right) + \frac{e^{\lambda/(2\sigma^2a^{\eta})} - 1}{e^{\lambda/(\sigma^2a^{\eta})} - 1} \le \co a^{\eta - \eta^*} + e^{-\lambda/(2\sigma^2a^{\eta})}.\nonumber
\end{align}
As previously noted, $H\log(1/a)>1$.
This together with an argument using the optional stopping theorem in manner similarly to the above gives
\begin{multline*}
\mathbb{P}\left(W_a(\tau^*_2) = Ha^p\log(1/a)\right) \le 
\mathbb{P}\left(\overline{X}_a(t+\cdot) \mbox{ crosses } Ha^p\log(1/a)\mbox{ before } 0\mid
\overline{X}_a(t)= a^p\right)\\
= \frac{e^{\lambda/\sigma^2} - 1}{e^{\lambda H\log(1/a)/\sigma^2} - 1} < e^{\lambda(1-H\log(1/a))/\sigma^2}=e^{\lambda/\sigma^2}a^{\lambda H/\sigma^2}.
\end{multline*}
{\color{black}Using a union bound and the strong Markov property,} this implies
\begin{multline}\label{BMsup1}
\mathbb{P}\left(\mathcal{N}^* < \lfloor a^{-3H\lambda/(4\sigma^2)} \rfloor + 2\right) \le \left(a^{-3H\lambda/(4\sigma^2)} +1\right)e^{\lambda/\sigma^2}a^{\lambda H/\sigma^2}\\
\le\left(1 +a^{3H\lambda/(4\sigma^2)}\right)e^{\lambda/\sigma^2}a^{\lambda H/4\sigma^2}\le
2 e^{\lambda/\sigma^2}a^{H\lambda/(4\sigma^2)} \le 2e^{\lambda/\sigma^2}a^{2p}.
\end{multline}
Again, by our choice of $a$, $H$, and $\ao$, $a^{-\left(\frac{H\lambda}{2\sigma^2} - 2p\right)}\ge a^{-2p}>\ao^{-2p} \ge T$ and hence
\begin{align}\label{BMsup2}
\mathbb{P}&\left(\sup_{t \in [0,T]} W_a(t) > a^{p-\eta} + H a^p \log(1/a)\right)\\
& \le \mathbb{P}\left(\sup_{[0, \tau^*_0 \wedge T]} W_a(t) > a^{p-\eta}\right) + \mathbb{P}\left(\sup_{t \in [\tau^*_0 \wedge T,a^{-\left(\frac{H\lambda}{2\sigma^2} - 2p\right)}]} W_a(t) > H a^p \log(1/a)\right)\nonumber\\
&\le \mathbb{P}\left(\sup_{[0, \tau^*_0 \wedge T]} W_a(t) > a^{p-\eta}\right) + \mathbb{P}\left(\sum_{k=1}^{\mathcal{N}^*-1}\left(\tau^*_{2k} - \tau^*_{2k-1}\right) < a^{-\left(\frac{H\lambda}{2\sigma^2} - 2p\right)}\right)\nonumber\\
&\le \mathbb{P}\left(\sup_{[0, \tau^*_0 \wedge T]} W_a(t) > a^{p-\eta}\right) + \mathbb{P}\left(\mathcal{N}^* < \lfloor a^{-3H\lambda/(4\sigma^2)} \rfloor + 2\right)\nonumber\\
&\qquad +  \mathbb{P}\left(\sum_{k=1}^{\lfloor a^{-3H\lambda/(4\sigma^2)}\rfloor +1}\left(\tau^*_{2k} - \tau^*_{2k-1}\right) < a^{-\left(\frac{H\lambda}{2\sigma^2} - 2p\right)}, \mathcal{N}^* \ge \lfloor a^{-3H\lambda/(4\sigma^2)} \rfloor + 2\right).\nonumber
\end{align}
Denote by $\sigma^x$ the hitting time of level $x \le 0$ by the process $\{\sigma B(t) - \la t/2a^p, \ t\ge 0\}$, and let $\{\sigma^x_k\}_{k\in\NN}$ be independent and identically distributed copies of $\sigma^x$. For each $x<0$, by the explicit form of the moment generating function of $\sigma^x$ ({\color{black}see Exercise 5.10 in Chapter 3.5.C of \cite{karatzas}}),
$$
\mathbb{E}\left(\sigma^x\right) = \frac{2a^p|x|}{\lambda}\qquad\text{and}\qquad
\operatorname{Var}\left(\sigma^x\right) = \frac{8a^{3p}\sigma^2|x|}{\lambda^3}.
$$
Thus, again using the strong Markov property, $a^{-H\lambda/(4\sigma^2)}\ge a^{-2p}>\ao^{-2p} \ge \lambda$, and Chebyshev's inequality,
\begin{align}\label{BMsup3}
\mathbb{P}&\left(\sum_{k=1}^{\lfloor a^{-3H\lambda/(4\sigma^2)}\rfloor +1}\left(\tau^*_{2k} - \tau^*_{2k-1}\right) < a^{-\left(\frac{H\lambda}{2\sigma^2} - 2p\right)}, \mathcal{N}^* \ge \lfloor a^{-3H\lambda/(4\sigma^2)} \rfloor + 2\right) \\
&\le \mathbb{P}\left(\sum_{k=1}^{\lfloor a^{-3H\lambda/(4\sigma^2)}\rfloor +1}\sigma^{-a^p}_k < a^{-\left(\frac{H\lambda}{2\sigma^2} - 2p\right)}\right)\nonumber\\
&\le \mathbb{P}\left(\sum_{k=1}^{\lfloor a^{-3H\lambda/(4\sigma^2)}\rfloor +1}\left(\sigma^{-a^p}_k - \frac{2a^{2p}}{\lambda}\right) < a^{-\left(\frac{H\lambda}{2\sigma^2} - 2p\right)} - \frac{2a^{2p}a^{-3H\lambda/(4\sigma^2)}}{\lambda}\right)\nonumber\\
&\le \mathbb{P}\left(\sum_{k=1}^{\lfloor a^{-3H\lambda/(4\sigma^2)}\rfloor +1}\left(\sigma^{-a^p}_k - \frac{2a^{2p}}{\lambda}\right) < - \frac{a^{2p}a^{-3H\lambda/(4\sigma^2)}}{\lambda}\right)\nonumber\\
&\le \frac{8\sigma^2a^{4p}\left(\lfloor a^{-3H\lambda/(4\sigma^2)}\rfloor +1\right)}{\lambda a^{4p-3H\lambda/(2\sigma^2)}} \le \frac{16\sigma^2a^{3H\lambda/(4\sigma^2)}}{\lambda}.\nonumber
\end{align}
Finally, using \eqref{startmax}, \eqref{BMsup1} and \eqref{BMsup3} in \eqref{BMsup2}, we obtain the lemma.
\end{proof}

\begin{lemma}\label{flowsup}. Let $T>0$ and let $\aop$, $\hop$, $\cop$ and $C(\lambda,\sigmap)$ be as in Lemma \ref{BMsup}.
Then for all $\delta \in(0, \aop)$ and $\eta \in (\eta^*(p), p-1)$,
\begin{multline*}
\mathbb{P}\left(\sup_{t \in [0,T]}\Big( \int_0^{\delta}x^{-2}W_x^{(p)}(t)dx + \delta^{-1}W_{\delta}^{(p)}(t) \Big) > H(p, \eta) \delta^{p - \eta-1}(1+\log(1/\delta))\right)\\
\le \tilde C(\eta, \eta^*(p), \lambda, \sigmap)\delta^{\eta - \eta^*(p)} + 3C(\lambda, \sigmap)\delta^{2p},
\end{multline*}
where
\begin{align*}
H(p, \eta) &:= \hop\left[1 + \frac{2^{p-1}}{(2^{p-1}-1)} + \frac{(\log 2) 2^{p-1}}{(2^{p-1}-1)^2}\right] + 1 + \sum_{k=1}^{\infty}2^{-(k-1)(p-\eta -1)},\\
\tilde C(\eta, \eta^*(p), \lambda, \sigmap) &:= \left(\cop + \frac{2(\sigmap)^{2}}{\lambda}\left(\sup_{x\in\RRp}xe^{-x}\right)\right)\left(1 + \sum_{k=1}^{\infty}2^{-(k-1)(\eta - \eta^*(p))}\right).
\end{align*}
In particular, $\sup_{t \in [0,T]}\int_0^{\infty}x^{-2}W_x^{(p)}(t)dx < \infty$ almost surely.
\end{lemma}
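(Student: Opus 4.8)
The plan is to prove the estimate in Lemma \ref{flowsup} via a dyadic decomposition of the integral over $(0,\delta)$ into the annuli $(2^{-k}\delta, 2^{-k+1}\delta]$ for $k\ge 1$, together with the boundary term $\delta^{-1}W_\delta^{(p)}(t)$, and then control each piece using Lemma \ref{BMsup}. First I would observe that, since $\{W_x^{(p)}(\cdot)\}$ is (after applying the Skorohod map to the ordered family $X_x^{(p)}$) monotone in $x$ in the sense of \eqref{eq:monoskor} — more precisely, $x\mapsto W_x^{(p)}(t)$ is nondecreasing for each fixed $t$ by the drift ordering $\kappa - \lambda x^{-p}$ being increasing in $x$ and $\xi^{(p)}(\cdot)$ being nondecreasing — one has, for each $k\ge 1$,
$$
\int_{2^{-k}\delta}^{2^{-k+1}\delta} x^{-2} W_x^{(p)}(t)\, dx \le 2^{-k+1}\delta \cdot (2^{-k}\delta)^{-2}\sup_{t\in[0,T]} W_{2^{-k+1}\delta}^{(p)}(t) = \frac{2^{k+1}}{\delta}\sup_{t\in[0,T]} W_{2^{-k+1}\delta}^{(p)}(t),
$$
so that $\sup_{t\in[0,T]}\bigl(\int_0^\delta x^{-2}W_x^{(p)}(t)\,dx + \delta^{-1}W_\delta^{(p)}(t)\bigr)$ is bounded by $\delta^{-1}\sum_{k\ge 0} 2^{k+1}\sup_{t\in[0,T]}W_{2^{-k+1}\delta}^{(p)}(t)$ (with the $k=0$ term being the boundary piece, up to constants).

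Next I would apply Lemma \ref{BMsup} with $a = 2^{-k+1}\delta$ (legitimate since $2^{-k+1}\delta \le \delta < \aop$), $\eta$ as given, and a fixed $H\ge\hop$ (say $H=\hop$), to get that on the complement of an event of probability at most $\cop (2^{-k+1}\delta)^{\eta-\eta^*(p)} + e^{-\lambda/(2(\sigmap)^2 (2^{-k+1}\delta)^\eta)} + C(\lambda,\sigmap)(2^{-k+1}\delta)^{2p}$, we have $\sup_{t\in[0,T]}W_{2^{-k+1}\delta}^{(p)}(t) \le (2^{-k+1}\delta)^{p-\eta} + \hop (2^{-k+1}\delta)^p\log(1/(2^{-k+1}\delta))$. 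Summing the weights $2^{k+1}\delta^{-1}$ against these bounds: the first contribution gives $\sum_k 2^{k+1}\delta^{-1}(2^{-k+1}\delta)^{p-\eta} = \delta^{p-\eta-1}\sum_k 2^{k+1}2^{(-k+1)(p-\eta)} = 4\delta^{p-\eta-1}\sum_k 2^{-(k-1)(p-\eta-1)}\cdot(\text{const})$, which converges because $p-\eta-1>0$; the second (log) contribution gives, after writing $\log(1/(2^{-k+1}\delta)) = (k-1)\log 2 + \log(1/\delta)$, a series of the form $\hop\delta^{p-\eta-1}\sum_k 2^{-(k-1)(p-\eta-1)}((k-1)\log 2 + \log(1/\delta))$, which one resums into the $[1 + \frac{2^{p-1}}{2^{p-1}-1} + \frac{(\log 2)2^{p-1}}{(2^{p-1}-1)^2}]$-type constant $H(p,\eta)$ appearing in the statement. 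For the probabilities, I would union-bound over $k\ge 0$: $\sum_k \cop(2^{-k+1}\delta)^{\eta-\eta^*(p)} = 2^{\eta-\eta^*(p)}\cop\,\delta^{\eta-\eta^*(p)}\sum_k 2^{-(k-1)(\eta-\eta^*(p))}$ (finite since $\eta>\eta^*(p)$), $\sum_k C(\lambda,\sigmap)(2^{-k+1}\delta)^{2p} \le 3C(\lambda,\sigmap)\delta^{2p}$ (geometric, with room to spare), and the exponential terms $e^{-\lambda/(2(\sigmap)^2(2^{-k+1}\delta)^\eta)}$ decay super-geometrically and are dominated by $\frac{2(\sigmap)^2}{\lambda}\sup_{x}xe^{-x}$ times a power of $2^{-k+1}\delta$; bounding $e^{-y}\le y^{-1}\sup_x xe^{-x}$ with $y = \lambda/(2(\sigmap)^2(2^{-k+1}\delta)^\eta)$ converts this tail into $\frac{2(\sigmap)^2}{\lambda}(\sup_x xe^{-x})(2^{-k+1}\delta)^\eta$, which is summable and, bounded crudely by $(2^{-k+1}\delta)^{\eta-\eta^*(p)}$, merges into the $\tilde C(\eta,\eta^*(p),\lambda,\sigmap)\delta^{\eta-\eta^*(p)}$ term. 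Collecting all pieces yields exactly the displayed inequality with the stated $H(p,\eta)$ and $\tilde C$.

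Finally, for the "in particular" assertion that $\sup_{t\in[0,T]}\int_0^\infty x^{-2}W_x^{(p)}(t)\,dx<\infty$ almost surely, I would split $\int_0^\infty = \int_0^{\delta_0} + \int_{\delta_0}^\infty$ for a fixed $\delta_0\in(0,\aop)$. The tail $\int_{\delta_0}^\infty x^{-2}W_x^{(p)}(t)\,dx$ is almost surely finite and has a continuous version in $t$ by the argument already carried out in Lemma \ref{Minfty} (applied with $f\equiv 1$, $\delta=\delta_0$; note $\int_1^\infty |f'(x)|x^{-\alpha^*-1}dx = 0 < \infty$, and $g(x) = 1/x$ has $g(\infty)=0$), so $\sup_{t\in[0,T]}\int_{\delta_0}^\infty x^{-2}W_x^{(p)}(t)\,dx<\infty$ a.s. For the near-zero part, applying the main inequality of this lemma with $\delta = 2^{-n}\delta_0$ and, say, $z_n = 1/(2^{-n}\delta_0)^{(p-\eta-1)/2}$ in a Borel–Cantelli argument (the right side is summable in $n$) shows that almost surely $\sup_{t\in[0,T]}\int_0^{2^{-n}\delta_0} x^{-2}W_x^{(p)}(t)\,dx \to 0$; in particular the full integral near zero is a.s. finite. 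The main obstacle is the bookkeeping in the second paragraph — making sure the dyadic weights $2^{k+1}/\delta$ against the Lemma \ref{BMsup} bounds resum to precisely the constants $H(p,\eta)$ and $\tilde C$ claimed, and verifying the harmless domination of the Gaussian tails $e^{-\lambda/(2(\sigmap)^2 a^\eta)}$ by the polynomial term $\frac{2(\sigmap)^2}{\lambda}(\sup_x xe^{-x})\,a^{\eta-\eta^*(p)}$ uniformly over the dyadic scales; everything else is routine once the monotonicity \eqref{eq:monoskor} is invoked to replace $W_x^{(p)}$ by its value at the right endpoint of each annulus.
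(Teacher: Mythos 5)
Your proposal follows essentially the same route as the paper: dyadic decomposition of $(0,\delta]$ into annuli $(2^{-k}\delta, 2^{-(k-1)}\delta]$, monotonicity of $x\mapsto W_x^{(p)}(t)$ to replace the integrand by its value at the right endpoint, Lemma \ref{BMsup} with $H=\hop$ on each scale, a union bound over $k$, and domination of the Gaussian-tail terms by $\frac{2(\sigmap)^2}{\lambda}(\sup_x xe^{-x})a^{\eta}\le(\cdots)a^{\eta-\eta^*(p)}$. One quantitative point: your annulus estimate uses (interval length)$\times\sup x^{-2}$, written as $2^{-k+1}\delta\cdot(2^{-k}\delta)^{-2}=2^{k+1}/\delta$, which both misstates the interval length ($2^{-k}\delta$, not $2^{-k+1}\delta$) and, more importantly, inflates the weight by a factor of $4$ relative to the exact computation $\int_{2^{-k}\delta}^{2^{-(k-1)}\delta}x^{-2}dx=(2^{-(k-1)}\delta)^{-1}$; since the lemma asserts the bound with the specific constants $H(p,\eta)$ and $\tilde C$, you need the exact integral (as the paper computes) rather than the length-times-sup bound — with that fix the resummation gives precisely the stated constants. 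For the final assertion, the paper bounds $\int_1^\infty x^{-2}W_x(t)dx\le W_\infty(t)$ directly by monotonicity and deduces a.s. finiteness near zero by a short contradiction argument from the main estimate; your Borel--Cantelli variant and appeal to Lemma \ref{Minfty} for the tail also work but are slightly more roundabout.
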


\begin{proof} As in the proof of Lemma \ref{BMsup}, we suppress the dependence on $p$ in this proof to ease the notation in what follows.  Fix $\delta\in(0,\ao)$ and $\eta \in (\eta^*, p-1)$ and set $H=\ho$.
As for any $x_1< x_2$, $X_{x_2}(t) - X_{x_1}(t)$ is nonnegative and nondecreasing in $t$, using the monotonicity property in \eqref{eq:monoskor}, we obtain for $t \ge 0$,
\begin{multline}\label{eq:intxwst}
 \int_0^{\delta}x^{-2}W_x(t)dx = \sum_{k=1}^{\infty}\int_{\delta 2^{-k}}^{\delta 2^{-(k-1)}}x^{-2}W_x(t)dx\\ \le \sum_{k=1}^{\infty}W_{\delta 2^{-(k-1)}}(t)\int_{\delta 2^{-k}}^{\delta 2^{-(k-1)}}x^{-2}dx = \sum_{k=1}^{\infty}\frac{W_{\delta 2^{-(k-1)}}(t)}{\delta 2^{-(k-1)}}.
\end{multline}
By Lemma \ref{BMsup}, for any $k \in\NN$,
\begin{multline}\label{kge1}
\mathbb{P}\left(\sup_{t \in [0,T]} \frac{W_{\delta 2^{-(k-1)}}(t)}{\delta 2^{-(k-1)}} > (\delta 2^{-(k-1)})^{p-\eta-1} + \ho (\delta 2^{-(k-1)})^{p-1} \log \frac{1}{\delta 2^{-(k-1)}}\right)\\
\le \co(\delta 2^{-(k-1)})^{\eta - \eta^*} +e^{-\lambda/(2\sigma^2(\delta 2^{-(k-1)})^{\eta})} +  C(\lambda, \sigma) (\delta 2^{-(k-1)})^{2p}.
\end{multline}
Also,
\begin{multline*}
\ho\sum_{k=1}^{\infty}(\delta 2^{-(k-1)})^{p-1} \log \frac{1}{\delta 2^{-(k-1)}}\\
 = \ho\left[\frac{ 2^{p-1}}{2^{p-1}-1}\delta^{p-1}\log(1/\delta) + \frac{(\log 2) 2^{p-1}}{(2^{p-1}-1)^2}\delta^{p-1}\right]
\le H_1(p)\delta^{p-1}(1+\log(1/\delta)),
\end{multline*}
where
$$
H_1(p) := \ho\left[\frac{2^{p-1}}{2^{p-1}-1} + \frac{(\log 2) 2^{p-1}}{(2^{p-1}-1)^2}\right].
$$
Moreover,
$$
\sum_{k=1}^{\infty}(2^{-(k-1)}\delta)^{p-\eta -1} = H_2(p,\eta) \delta^{p-\eta - 1},
$$
where $H_2(p,\eta) := \sum_{k=1}^{\infty}2^{-(k-1)(p-\eta -1)}$. Using these observations, \eqref{eq:intxwst}, a union bound, and \eqref{kge1},
\begin{multline}\label{flowsup1}
\mathbb{P}\left(\sup_{t \in [0,T]}\int_0^{\delta}x^{-2}W_x(t)dx  > H_1(p)\delta^{p-1}(1+\log(1/\delta)) + H_2(p,\eta) \delta^{p-\eta - 1}\right)\\
\le \mathbb{P}\left(\sup_{t \in [0,T]}\sum_{k=1}^{\infty}\frac{W_{\delta 2^{-(k-1)}}(t)}{\delta 2^{-(k-1)}}  > H_1(p) \delta^{p-1}(1+\log(1/\delta)) + H_2(p,\eta) \delta^{p-\eta - 1}\right)\\
\le \sum_{k=1}^{\infty}\mathbb{P}\left(\sup_{t \in [0,T]} \frac{W_{\delta 2^{-(k-1)}}(t)}{\delta 2^{-(k-1)}} > (\delta 2^{-(k-1)})^{p-\eta-1} + \ho [\delta 2^{-(k-1)}]^{p-1} \log \frac{1}{\delta 2^{-(k-1)}}\right)\\
\le \hat C(\eta, \eta^*, \lambda, \sigma)\delta^{\eta - \eta^*} + C(\lambda, \sigma)\sum_{k=1}^{\infty}(\delta 2^{-(k-1)})^{2p} \le \hat C(\eta, \eta^*, \lambda, \sigma)\delta^{\eta - \eta^*} + 2C(\lambda, \sigma)\delta^{2p},
\end{multline}
where $\hat C(\eta, \eta^*, \lambda, \sigma) := \left(\co + \frac{2\sigma^{2}}{\lambda}\left(\sup_{x\in\RRp}xe^{-x}\right)\right)\left(\sum_{k=1}^{\infty}2^{-(k-1)(\eta - \eta^*)}\right)$.
By taking $k=1$ in \eqref{kge1}, we obtain
\begin{multline}\label{missing}
\mathbb{P}\left(\sup_{t \in [0,T]} \delta^{-1}W_{\delta}(t) > \delta^{p-\eta- 1}+\ho \delta^{p-1}\log\left(\frac{1}{\delta}\right)\right)\\ \le \co\delta^{\eta - \eta^*} + e^{-\lambda/(2\sigma^2\delta^{\eta})} +  C(\lambda, \sigma) \delta^{2p}
\le \left(\co + \frac{2\sigma^{2}}{\lambda}\left(\sup_{x\in\RRp}xe^{-x}\right)\right)\delta^{\eta - \eta^*} + C(\lambda, \sigma)\delta^{2p}.
\end{multline}
The first assertion of the lemma follows from \eqref{flowsup1} and \eqref{missing} upon noting that $\tilde C(\eta, \eta^*, \lambda, \sigma) = \hat C(\eta, \eta^*, \lambda, \sigma) + \left(\co + \frac{2\sigma^{2}}{\lambda}\left(\sup_{x\in\RRp}xe^{-x}\right)\right)$ and\\ $H(p, \eta) = H_1(p) + H_2(p,\eta) + 1 + \frac{8 \sigma^2 p}{\lambda}$. 

Now, we check the last assertion. If $\mathbb{P}\left(\sup_{t \in [0,T]}\int_0^{1}x^{-2}W_x(t)dx = \infty \right) > 0$, by the finiteness of $\sup_{t \in [0,T]}\int_{\delta}^1x^{-2}W_x(t)dx$ for all $\delta\in(0,1]$, there exists $\epsilon>0$ such that
$$
\mathbb{P}\left(\sup_{t \in [0,T]}\int_0^{\delta}x^{-2}W_x(t)dx = \infty \right) \ge \epsilon
$$
for all $\delta>0$, which contradicts the first assertion of the lemma. Thus,
\begin{equation}\label{fini1}
\sup_{t \in [0,T]}\int_0^{1}x^{-2}W_x(t)dx < \infty \text{ almost surely}.
\end{equation}
Moreover, by the monotonicity property noted previously
\begin{equation}\label{fini2}
\sup_{t \in [0,T]}\int_1^{\infty}x^{-2}W_x(t)dx \le \sup_{t \in [0,T]}W_{\infty}(t) < \infty \text{ almost surely}.
\end{equation}
The last assertion of the lemma follows from \eqref{fini1} and \eqref{fini2}.
\end{proof}

\begin{proof}[Proof of Theorem \ref{workfn}] Fix a $C^1$ function $f: [0, \infty) \rightarrow \mathbb{R}$ such that $\lim_{x \rightarrow \infty} \frac{f(x)}{x}$ exists and $\int_1^{\infty} \frac{|f'(x)|}{x^{\alpha^* +1}} < \infty$. Set $g(x)=f(x)/x$ for $x>0$ and define $g(\infty)=\lim_{x \rightarrow \infty}g(x)$.
By Lemma \ref{Minfty}, for each $\delta>0$, as $r \rightarrow \infty$,
\begin{equation}\label{delz1}
\int_{\delta}^{\infty}f(x) \tZ^r(\cdot)(dx) \xrightarrow{d} - \int_{\delta}^{\infty}g'(x)W_x(\cdot)dx + g(\infty)W_{\infty}(\cdot) - g(\delta)W_{\delta}(\cdot).
\end{equation}
Moreover, for all $r\in\clr$,
$\int_{0}^{\infty}f(x) \tZ^r(t)(dx)$
is finite for all $t \in [0,T]$ almost surely. Fix $\eta \in (\eta^*, p-1)$. Define $C_f := \sup_{z \in [0,1]}|f(z)|$ and let $D_2$, $D'(\eta)$ and $\tilde D(\eta)$ as in Lemma \ref{smallql}.  For each $\delta>0$, let
$b(\delta) :=\max\{2C_f  D'(\eta) \delta^{p-1-\eta}\left(1+ \log (1/\delta)\right), 2\widetilde{D}(\eta)\left(\delta^{D_2} +\delta^{\eta - \eta^*}\right)\}$. Then, by Lemma \ref{smallql}, for any $0<\delta \le \delta(\eta)$,
\begin{equation}\label{delz2}
\limsup_{r \rightarrow \infty}\mathbb{P}\left(\sup_{[0,T]}\left|\int_{0}^{\delta}f(x) \tZ^r(t)(dx)\right| > b(\delta)\right) < b(\delta).
\end{equation}  
As $f$ is $C^1$ on $[0, \infty)$, $g(x) \le C_fx^{-1}$ for all $x \in (0,1]$, and $g'(x) = \frac{-f(x)}{x^2} + \frac{f'(x)}{x}, x>0,$ satisfies $|g'(x)| \le C'_fx^{-2}$ for all $x \in (0,1]$ for some constant $C'_f>0$. Thus, by Lemma \ref{flowsup}, 
$
- \int_{0}^{\infty}g'(x)W_x(t)dx + g(\infty)W_{\infty}(t)
$
is well defined and finite for all $t \in [0,T]$ almost surely, $g(\delta)W_{\delta}(\cdot)\to 0$ in probability {\color{black}uniformly over compact time intervals} as $\delta \to 0$, and
\begin{equation}\label{delz3}
- \int_{\delta}^{\infty}g'(x)W_x(\cdot)dx + g(\infty)W_{\infty}(\cdot) - g(\delta)W_{\delta}(\cdot) \xrightarrow{d} - \int_{0}^{\infty}g'(x)W_x(\cdot)dx + g(\infty)W_{\infty}(\cdot)
\end{equation}
as $\delta \rightarrow 0$, {\color{black}in $\mathcal{D}([0, \infty):\RR)$}. By Lemma \ref{flowsup} and the monotonicity of $\int_0^{\delta}x^{-2}W_x(t)dx$ in $\delta$,
$$
 \sup_{t \in [0,T]}\int_0^{\delta}x^{-2}W_x(t)dx \to 0 \mbox{ as } \delta \to 0, \ \text{ almost surely.}
$$
This implies that, almost surely, $\int_{\delta}^{\infty}g'(x)W_x(\cdot)dx$ converges to $\int_{0}^{\infty}g'(x)W_x(\cdot)dx$ as $\delta\to 0$ uniformly in $t \in [0,T]$. Moreover, for any $\delta>0$, by Lemma \ref{Minfty}, $\int_{\delta}^{\infty}g'(x)W_x(\cdot)dx$ lies in $\mathcal{C}([0,T] : \mathbb{R})$. Thus, due to uniform convergence, $\int_{0}^{\infty}g'(x)W_x(\cdot)dx$ lies in $\mathcal{C}([0,T] : \mathbb{R})$ as well.
The theorem follows from this observation, \eqref{delz1}, \eqref{delz2}, \eqref{delz3} and Lemma \ref{seqconv}.
\end{proof}

\begin{remark}\label{rem:aeqze}
	Along the lines of the proof of Theorem \ref{workfn} one can analyze the convergence of $\int_{\delta}^{b_1}f(x) \tZ^r(\cdot)(dx)$ as $\delta \to 0$, where $b_1 \in (0,\infty)$, and conclude that $a_1$ in Theorem \ref{convplus} can be taken to be $0$.
\end{remark}

\subsection{Proofs of Theorems \ref{meascon} and \ref{tailtZ}}\label{sec:proofs}

\begin{proof}[Proof of Theorem \ref{meascon}]  We will use Theorem 2.1 in \cite{roelly1986}. This theorem says the following.  Let $\{f_n\}_{n\ge 1}$ be a countable collection of real-valued continuous functions with compact support on $\RRp$ which is dense in $\clc_0(\RRp)$ [the space of continuous functions on $\RRp$ vanishing at $\infty$ equipped with the uniform metric].
Let $f_0=1$. Suppose that
\begin{equation}\label{measdismain}
(\{Z_f^r(\cdot) =\lan f, \tZ^r(\cdot)\ran, \; r \in\clr\}
\text{ is tight in } \cld([0,T]:\RR) \text{ for every } f \in \{f_n\}_{n\in \NN_0}.
\end{equation}
Then
$\{\tZ^r(\cdot), \; r\in\clr\}$
is tight in $\cld([0,T]:\clm_F)$.

By Theorem \ref{workfn},
\begin{equation}\label{measdis1}
\int_0^{\infty} f_0(x)\tZ^r(\cdot)(dx) \xrightarrow{d} \int_0^{\infty} f_0(x)\tZ(\cdot)(dx) \qquad \mbox{ as } r\to\infty.
\end{equation}
Let
\begin{align*}
\mathcal{C} &:= \left\lbrace h = \sum_{j=1}^J c_j \one_{(a_j, b_j]} : J \in \mathbb{N}, \ 0 \le a_1 < b_1 \le a_2 < b_2 \dots \le a_J < b_J < \infty,\right.\\
&\hspace{10cm} \left. c_j \in \mathbb{R} \text{ for all } 1 \le j \le J\right\rbrace.
\end{align*}
By Theorem \ref{convplus} and Remark \ref{rem:takez}, for any $h \in \mathcal{C}$,
\begin{equation}\label{measdis2}
\int_0^{\infty} h(x)\tZ^r(\cdot)(dx) \xrightarrow{d} \int_0^{\infty} h(x)\tZ(\cdot)(dx) \quad\mbox{ as } \quad r\to\infty.
\end{equation}
Now, fix $T>0$ and take any compactly supported real-valued continuous function $f$ and let $\{h_k\}_{k \in\NN}$ be a sequence in $\mathcal{C}$ such that $\| h_k - f\|_{\infty} \le k^{-1}$ for $k \in\NN$. Thus, for any $k \in\NN$,
$$
\sup_{t\in[0,T]}\left |\int_0^{\infty} h_k(x) \tZ^r(t)(dx) - \int_0^{\infty} f(x) \tZ^r(t)(dx)\right| \le k^{-1}\sup_{t\in[0,T]}\int_0^{\infty} {\color{black}\tZ^r(t)}(dx).
$$
By Theorem \ref{workfn}, and the continuous mapping theorem,
$$
\sup_{t\in[0,T]}\int_0^{\infty} {\color{black}\tZ^r(t)}(dx) \xrightarrow{d} \sup_{t\in[0,T]}Q(t) \qquad\mbox{ as } r\to\infty,
$$
where we recall {\color{black}that} $Q(\cdot) = \int_0^{\infty}x^{-2}W_x(\cdot)dx \in \mathcal{C}([0, \infty):\mathbb{R}_+)$ a.s.
Therefore, by the Portmanteau Theorem,
\begin{multline}\label{measdis3}
\lim_{k \rightarrow \infty}\limsup_{r\to \infty} \mathbb{P}\left(\sup_{t\in[0,T]}\left |\int_0^{\infty} h_k(x) \tZ^r(t)(dx) - \int_0^{\infty} f(x) \tZ^r(t)(dx)\right| > k^{-1/2}\right)\\
\le \lim_{k \rightarrow \infty}\limsup_{r\to \infty} \mathbb{P}\left(\sup_{t\in[0,T]}\int_0^{\infty} \tZ^r(t)(dx) \ge k^{1/2}\right)
\le \lim_{k \rightarrow \infty}\mathbb{P}\left(\sup_{t\in[0,T]}Q(t) \ge k^{1/2}\right) = 0.
\end{multline}
Finally, we have that almost surely,
\begin{equation}\label{measdis4}
\lim_{k \rightarrow \infty}\sup_{t\in[0,T]}\left |\int_0^{\infty} h_k(x) \tZ(t)(dx) - \int_0^{\infty} f(x) \tZ(t)(dx)\right| \le \lim_{k \rightarrow \infty} k^{-1}\sup_{t\in[0,T]}Q(t) = 0.
\end{equation}
By \eqref{measdis2}, \eqref{measdis3}, \eqref{measdis4} and Lemma \ref{seqconv}, we conclude that for any compactly supported real-valued continuous function $f$,
\begin{equation}\label{measdis5}
\int_0^{\infty} f(x) \tZ^r(\cdot)(dx) \xrightarrow{d} \int_0^{\infty} f(x) \tZ(\cdot)(dx), \qquad\mbox{ as } r\to\infty.
\end{equation}
From \eqref{measdis1} and \eqref{measdis5}, \eqref{measdismain} is verified and hence, by Theorem 2.1 in \cite{roelly1986}, $\{\tZ^r(\cdot), \; r\in\clr\}$
is tight in $\cld([0,T]:\clm_F)$.

Suppose along a subsequence $\tZ^r(\cdot)\Rightarrow \tZ^*(\cdot)$ as $r\to\infty$.
By {\color{black}the} continuous mapping theorem, for any $k\in\NN$ and compactly supported real-valued continuous functions $G_1, \ldots, G_k$,
$$\left(\lan G_1, \tZ^r(\cdot)\ran, \ldots, \lan G_k, \tZ^r(\cdot)\ran\right) \Rightarrow 
\left(\lan G_1,  \tZ^*(\cdot)\ran, \ldots, \lan G_k,  \tZ^*(\cdot)\ran\right) \qquad\mbox{ as } r\to\infty.$$
But also, from \eqref{measdis5}, the Cramér–Wold theorem and using the linearity of the integral,
$$\left(\lan G_1, \tZ^r(\cdot)\ran, \ldots, \lan G_k, \tZ^r(\cdot)\ran\right) \Rightarrow 
\left(\lan G_1,  \tZ(\cdot)\ran, \ldots, \lan G_k,  \tZ(\cdot)\ran\right) \qquad\mbox{ as } r\to\infty.$$
Thus, $\left(\lan G_1,  \tZ^*(\cdot)\ran, \ldots, \lan G_k,  \tZ^*(\cdot)\ran\right)$ and $\left(\lan G_1,  \tZ(\cdot)\ran, \ldots, \lan G_k,  \tZ(\cdot)\ran\right)$
are equal in distribution.
This shows that $\tZ^*$ has the same law as $\tZ$ (Theorem 3.1 of \cite{kallenberg1974}) and so $\tZ^r$ converges to $\tZ$ in $\cld([0,T]:\clm_F)$
as $r\to\infty$.
\end{proof}

\begin{proof}[Proof of Theorem \ref{tailtZ}] For $x\in(0,\infty]$ and $t\ge 0$, let $\tilde X_x(t)=\xi(\infty)-\xi(x)+X_x(t)$.  By Theorem \ref{workless},
$$
0\le W_\infty(t)-W_x(t)=\Gamma[X_{\infty}](t)-\Gamma[\tilde X_x](t)+\Gamma[\tilde X_x](t)-\Gamma[X_x](t),\qquad\hbox{for all }t\ge 0.
$$
By the Lipschitz property \eqref{eq:lipSM} of $\Gamma$ and Assumption \eqref{initdec},
$$
\lim_{x\to\infty}\sup_{t\ge 0}\lambda^{-1}x^p\left|\Gamma[\tilde X_x](t)-\Gamma[X_x](t)\right|\le \lim_{x\to\infty}2\lambda^{-1}x^p\left|\xi(\infty)-\xi(x)\right|=0.
$$
The previous two displays together with \eqref{eq:deriv} imply that
$$
\lim_{x \rightarrow \infty} \lambda^{-1}x^p\left(W_x(t) - W_{\infty}(t)\right) =-W_{\infty}'(t),\qquad\hbox{for all }t\ge 0.
$$
Fix $t \ge 0$ and let $\epsilon>0$. There exists $x_0>0$ such that for all $x \ge x_0$,
\begin{equation}\label{limZ1}
\left| \lambda^{-1}x^p\left(W_x(t) - W_{\infty}(t)\right) + W_{\infty}'(t)\right| < \epsilon.
\end{equation}
This implies that, for $a \ge x_0$,
\begin{multline}\label{limZ2}
\left| \int_a^{\infty} x^{-2}W_x(t) dx - \frac{W_{\infty}(t)}{a} + \frac{\lambda}{(p+1)a^{p+1}}W_{\infty}'(t)\right|\\
 \le \int_a^{\infty}\lambda x^{-p-2}\left| \lambda^{-1}x^p\left(W_x(t) - W_{\infty}(t)\right) + W_{\infty}'(t)\right|dx \le  \frac{\lambda \epsilon}{(p+1)a^{p+1}}.
\end{multline}
By Theorem \ref{meascon}, for all $a\in(0,\infty)$,
$$
\tZ(t)[a,\infty) = \int_a^{\infty}\frac{1}{x^2}W_x(t)dx  - \frac{W_a(t)}{a}.
$$
Thus, from \eqref{limZ1} (with $x=a$) and \eqref{limZ2}, for any $a \ge x_0$,
\begin{align}\label{limZ3}
&\left| \tZ(t)[a,\infty) - \frac{p \lambda}{(p+1)a^{p+1}}W_{\infty}'(t) \right|
=\left| \int_a^{\infty}\frac{1}{x^2}W_x(t)dx  - \frac{W_a(t)}{a}- \frac{p \lambda}{(p+1)a^{p+1}}W_{\infty}'(t) \right|\\
&=\left|  \frac{W_{\infty}(t)}{a}- \frac{W_{a}(t)}{a}- \frac{ \lambda}{a^{p+1}}W_{\infty}'(t)+\int_a^{\infty}\frac{1}{x^2}W_x(t)dx  - \frac{W_{\infty}(t)}{a}+ \frac{\lambda}{(p+1)a^{p+1}}W_{\infty}'(t)\right|\nonumber\\
& \le \lambda a^{-p-1}\left| \lambda^{-1}a^p\left(W_a(t) - W_{\infty}(t)\right) + W_{\infty}'(t)\right|\nonumber\\
 &\qquad +
\left| \int_a^{\infty} x^{-2}W_x(t) dx - \frac{W_{\infty}(t)}{a} + \frac{\lambda}{(p+1)a^{p+1}}W_{\infty}'(t)\right|\nonumber \\
&\le \frac{\lambda \epsilon}{a^{p+1}} + \frac{\lambda \epsilon}{(p+1)a^{p+1}}.\nonumber
\end{align}
As $\epsilon>0$ is arbitrary, the first two limits claimed in the theorem follow from \eqref{limZ1} and \eqref{limZ3}. To prove the last limit, note that by the first two limit results of the theorem, for any $t$ such that $W_{\infty}'(t) \neq 0$,
\begin{equation}\label{limZ4}
\frac{p \langle \chi \one_{[a, \infty)}, \tZ(t) \rangle}{(p+1)a\tZ(t)[a, \infty)} \rightarrow 1 \ \text{ as } a \rightarrow \infty.
\end{equation}
Moreover, for each $a>0$,
$$
\frac{p\mathbb{E}\left(v \ \vert \ v > a\right)}{(p+1)a} = \frac{p}{(p+1)a}\left(\frac{a\overline{F}(a) + \int_a^{\infty}\overline{F}(x)dx}{\overline{F}(a)}\right)\\
 = \frac{p}{(p+1)}\left(1 + \frac{\int_a^{\infty}\overline{F}(x)dx}{a\overline{F}(a)}\right).
 $$
This together with \eqref{intregv} gives
\begin{equation}\label{limZ5}
\lim_{a\to\infty}\frac{p\mathbb{E}\left(v \ \vert \ v > a\right)}{(p+1)a} = \frac{p}{(p+1)}\left(1 + \frac{1}{p}\right) = 1.
\end{equation}
The last limit claimed in the theorem follows from \eqref{limZ4} and \eqref{limZ5}.
\end{proof}

\subsection{Proof of Theorem \ref{collapse}}\label{sec:pfsc}

In this section, we prove Theorem \ref{collapse}, which concerns an asymptotic relationship between of the limiting
processes $Q^{(p)}$ and $W^{(p)}$ as $p\to\infty$.  As in Section \ref{sec:sc}, we consider $p\ge 2$ and index all limiting
processes (resp.\ parameters and constants) that depend on $p$ with the superscript $(p)$ (resp.\ an argument of $p$).
In addition, we assume that the asymptotic conditions stated in Section \ref{sec:sc} hold.

\begin{proof}[Proof of Theorem \ref{collapse}]
Recall that, for all $p\ge 2$,
$$
Q^{(p)}(t) = \int_0^{\infty}\frac{1}{x^2}W^{(p)}_x(t)dx, \quad t \ge 0,
$$
where
$$
W^{(p)}_a(t) := \Gamma[X^{(p)}_a](t), \quad {\color{black}t \ge 0, \ a >0},
$$
with $\Gamma$ denoting the Skorohod map and
$$
X^{(p)}_a(t) := \xi^{(p)}(a) + \sigmap B(t) + \left(\kappa - \frac{\lambda}{a^p}\right)t, \quad t \ge 0, \ a >0.
$$
Let $T, \gamma>0$.  
Take any $\vartheta>0$. Note that, for any $p\ge 2$ and $\epsilon \in (0,1)$,
\begin{equation}\label{pnew1}
\sup_{t \in [0,T]}\int_{1-\epsilon}^1x^{-2}W^{(p)}_x(t)dx \le \frac{\epsilon}{1-\epsilon}\sup_{t \in [0,T]}W^{(p)}_{\infty}(t).
\end{equation}
Using the Lipschitz property \eqref{eq:lipSM} of the Skorohod map, that $\sigmap=\sqrt{\lambda \operatorname{Var}(v^{(p)}) + \lambda\sigma_A^2}$ and Assumption \eqref{varunibdd}, for all $p\ge 2$,
\begin{multline}\label{pnew2}
\mathbb{E}\left[\sup_{t \in [0,T]}W^{(p)}_{\infty}(t)\right] \le 2\mathbb{E}\left[\sup_{t \in [0,T]}\left(\xi^{(p)}(\infty) + \sigmap |B(t)| + \kappa t\right)\right]\\
 \le 2\sup_{p \ge 2}\mathbb{E}\left[\xi^{(p)}(\infty)\right] + 2 \sqrt{\sup_{p \ge 2}\left(\lambda \operatorname{Var}(v^{(p)}) + \lambda\sigma_A^2\right)}\mathbb{E}\left[\sup_{t \in [0,T]}|B(t)|\right] + 2\kappa T := \mathcal{B} < \infty,
\end{multline}
where the bound $\mathcal{B}$ does not depend on $p$. Hence, by \eqref{pnew1}, \eqref{pnew2} and Markov's inequality, we can choose $\epsilon \in (0,1)$ such that
\begin{equation}\label{middlesmall}
\mathbb{P}\left(\sup_{t \in [0,T]}\int_{1-\epsilon}^1x^{-2}W^{(p)}_x(t)dx > \gamma/3\right) \le \vartheta \ \text{ for all } p\ge 2.
\end{equation}
By \eqref{assp}, we obtain $p_0'\ge 2$ such that
\begin{equation}\label{eq:petap}
\frac{p-1-\eta^*(p)}{\log p} > \frac{4}{\log((1-\epsilon)^{-1})} \ \text{ for all } p \ge p_0'.
\end{equation}
For each $p\ge 2$, let $\mop, \aop$ be defined as in Lemma \ref{BMsup}.
Since $\mop>1$ for all $p\ge 2$, $\aop\in(0,1)$ for all $p\ge 2$.  Due to \eqref{varunibdd}, $0<\inf_{p\ge 2}\sigmap\le \sup_{p\ge 2}\sigmap<\infty$.  Thus,
$\lim_{p\to\infty}\aop=\lim_{p\to\infty}\mop^{-1/2p}=1$.
Take $p_0 \ge p_0'$ such that for all $p \ge p_0$, $\aop > 1-\epsilon$.  For $p\ge 2$, write 
$$
H'(p) := H(p, (p-1 + \eta^*(p))/2) \ \ \hbox{and} \ \ C'(p,\lambda,\sigmap) :=\tilde C\left((p-1 + \eta^*(p))/2, \eta^*(p), \lambda, \sigmap\right),
$$
where the functions $H$ and $\tilde C$ were defined in Lemma \ref{flowsup}. Then, by Lemma \ref{flowsup}, taking $\delta = 1-\epsilon$ and $\eta = (p-1 + \eta^*(p))/2$, we obtain for any $p \ge p_0$,
\begin{multline}\label{intineold}
\mathbb{P}\left(\sup_{t \in [0,T]}\int_0^{1-\epsilon}x^{-2}W_x^{(p)}(t)dx > H'(p) (1-\epsilon)^{(p -1-\eta^*(p))/2}(1+\log((1-\epsilon)^{-1}))\right)\\
\le C'(p,\lambda, \sigmap)(1-\epsilon)^{(p -1-\eta^*(p))/2} + 3C(\lambda, \sigmap)(1-\epsilon)^{2p}.
\end{multline}
Using the explicit forms of $C'(p,\lambda, \sigmap)$ (defined in Lemma \ref{flowsup}) and $C(\lambda, \sigmap)$ (defined in Lemma \ref{BMsup}), Assumption \eqref{varunibdd}, and \eqref{eq:petap}, and recalling $\inf_{p\ge 2}\sigmap >0$, note that
\begin{align*}
C'(\lambda) &:= \sup_{p \ge 2}C'(p,\lambda, \sigmap)\\
& = \sup_{p \ge 2}\left(\cop + \frac{2(\sigmap)^2}{\lambda}\left(\sup_{x\in\RRp}xe^{-x}\right)\right)\left(1 + \sum_{k=1}^{\infty}2^{-(k-1)(p-1 - \eta^*(p))/2)}\right) < \infty,
\end{align*}
and
\begin{align*}
C(\lambda) := \sup_{p \ge 2} C(\lambda, \sigmap) = \sup_{p \ge 2}\left(2e^{\lambda/(\sigmap)^2} + \frac{16(\sigmap)^2}{\lambda}\right) < \infty.
\end{align*}
Using these observations in \eqref{intineold}, we obtain for any $p \ge p_0$,
\begin{multline}\label{intine}
\mathbb{P}\left(\sup_{t \in [0,T]}\int_0^{1-\epsilon}x^{-2}W_x^{(p)}(t)dx > H'(p) (1-\epsilon)^{(p -1-\eta^*(p))/2}(1+\log((1-\epsilon)^{-1}))\right)\\
\le C'(\lambda)(1-\epsilon)^{(p -1-\eta^*(p))/2} + 3C(\lambda)(1-\epsilon)^{2p}.
\end{multline}
Using \eqref{eq:petap}, we have that $\log p + \frac{p-1-\eta^*(p)}{2}\log(1-\epsilon) \rightarrow -\infty$ as $p \rightarrow \infty$. Exponentiating, we obtain
$$
p(1-\epsilon)^{(p -1-\eta^*(p))/2}(1+\log((1-\epsilon)^{-1})) \rightarrow 0 \text{ as } p \rightarrow \infty.
$$
From this and the explicit form of $H(p,\eta)$ given in Lemma \ref{flowsup}, we conclude that $H'(p) (1-\epsilon)^{(p -1-\eta^*(p))/2}(1+\log((1-\epsilon)^{-1})) \rightarrow 0$ as $p \rightarrow \infty$. Moreover, the right hand side of \eqref{intine} also goes to zero as $p \rightarrow \infty$. Thus,
\begin{equation}\label{collapse1}
\sup_{t \in [0,T]}\int_0^{1-\epsilon}x^{-2}W^{(p)}_x(t)dx \xrightarrow{P} 0 \ \text{ as } p \rightarrow \infty.
\end{equation}
Moreover, by the Lipschitz property \eqref{eq:lipSM} and Assumption \eqref{assp2}, for each $x \in (1, \infty)$, 
\begin{align*}
\mathbb{E}\left(\sup_{t \in [0,T]}|W^{(p)}_x(t) - W^{(p)}_{\infty}(t)|\right) &\le 2\mathbb{E}\left(\sup_{t \in [0,T]}|X^{(p)}_x(t) - X^{(p)}_{\infty}(t)|\right)\\
&\le 2\mathbb{E}\left(\xi^{(p)}(\infty) - \xi^{(p)}(x)\right) + \frac{2\lambda T}{x^p} \rightarrow 0 \ \text{ as } p \rightarrow \infty,
\end{align*}
where {\color{black}we} recall $X^{(p)}_{\infty}(t) = \xi^{(p)}(\infty) + \sigma^{(p)} B(t) + \kappa t, \ t \ge 0$. By the monotonicity property noted in \eqref{eq:monoskor} and using \eqref{pnew2}, for all $p\ge 2$,
$$
\mathbb{E}\left(\sup_{t \in [0,T]}|W^{(p)}_x(t) - W^{(p)}_{\infty}(t)|\right) \le \mathbb{E}\left(\sup_{t \in [0,T]}W^{(p)}_{\infty}(t)\right) \le \mathcal{B}.
$$
Thus, by the dominated convergence theorem, 
$$
\int_{1}^{\infty}x^{-2}\mathbb{E}\left(\sup_{t \in [0,T]}|W^{(p)}_x(t) - W^{(p)}_{\infty}(t)|\right)dx \rightarrow 0 \ \text{ as } p \rightarrow \infty,
$$
which implies
\begin{equation}\label{collapse2}
\sup_{t \in [0,T]}\left| \int_{1}^{\infty}x^{-2}W^{(p)}_x(t)dx -  W^{(p)}_{\infty}(t)\right| \xrightarrow{P} 0 \ \text{ as } p \rightarrow \infty.
\end{equation}
From \eqref{middlesmall}, \eqref{collapse1} and \eqref{collapse2},
\begin{align*}
&\limsup_{p \rightarrow \infty}\mathbb{P}\left(\sup_{t \in [0,T]}\left|\int_{0}^{\infty}x^{-2}W^{(p)}_x(t)dx -  W^{(p)}_{\infty}(t)\right| > \gamma\right)\\
&\le \limsup_{p \rightarrow \infty}\mathbb{P}\left(\sup_{t \in [0,T]}\int_{0}^{1-\epsilon}x^{-2}W^{(p)}_x(t)dx > \gamma/3\right)\\
&\qquad + \limsup_{p \rightarrow \infty} \mathbb{P}\left(\sup_{t \in [0,T]}\int_{1-\epsilon}^1x^{-2}W^{(p)}_x(t)dx > \gamma/3\right)\\
&\qquad + \limsup_{p \rightarrow \infty} \mathbb{P}\left(\sup_{t \in [0,T]}\left| \int_{1}^{\infty}x^{-2}W^{(p)}_x(t)dx -  W^{(p)}_{\infty}(t)\right| > \gamma/3\right) \le \vartheta.
\end{align*}
As $T$, $\gamma$, $\vartheta>0$ are arbitrary, the theorem is proved.
\end{proof}

\appendix

\section{Verifying Assumptions $\eqref{eq:assuinitcond}-\eqref{smalljobas}$ for some initial conditions}\label{icverify}

\subsection{Checking Assumptions \eqref{eq:assuinitcond}--\eqref{smalljobas} at fixed time $t >0$ for a sequence of systems with $\qq^r=0$ for all $r\in\clr$}
Here we sketch how to verify that if each system in the sequence starts
with zero jobs  then at any time $t>0$, Assumptions \eqref{eq:assuinitcond}--\eqref{smalljobas}
are satisfied with $\left( W_{\cdot}^r(0), W_{\infty}^r(0)\right)$ replaced by $\left( W_{\cdot}^r(t), W_{\infty}^r(t)\right)$ for each $r\in\clr$ and $\{\breve{v}_l^r\}_{1\le l \le \qq^r}$ replaced with 
$\{v_i(r^2t) : 1 \le i \le E^r(r^2t), v_i(r^2t) >0\} \cup \{\breve{v}_l^r(r^2t), 1 \le l \le \qq^r, \breve{v}_l^r(r^2t) >0\}$. Fix $t>0$ and note that since $\qq^r=0$ for all $r\in\clr$,
Assumptions \eqref{eq:assuinitcond}--\eqref{smalljobas} hold at time zero.  Thus, Theorem \ref{workless}, along with tightness arguments similar to those in the proof of Theorem \ref{convplus} and the estimates in \eqref{eq:vabcr} and \eqref{delz2}, can be used to show that for any fixed $t >0$, \eqref{eq:assuinitcond} holds with $\left( W_{\cdot}^r(0), W_{\infty}^r(0)\right)$ replaced by $\left( W_{\cdot}^r(t), W_{\infty}^r(t)\right)$ for each $r\in\clr$ and $(w^*(\cdot), w^*(\infty))$ replaced by $(W_{\cdot}(t), W_{\infty}(t))$, where $W$ is defined in Theorem \ref{workless} with $\xi(a)=0$ for all $a\in[0,\infty]$. The uniform integrability assumption \eqref{eq:assuinitcondb} can be shown to hold for $\{W_{\infty}^r(t), r\in\clr\}$ by first noting that for each $r\in\clr$, $W_{\infty}^r(t) = \Gamma[X^r_{\infty}](t)$, where $\Gamma$ is the Skorohod map defined in \eqref{eq:skormap} and $X^r_{\infty}(\cdot)$ is defined in \eqref{infwork} (taking $X^r_{\infty}(0) = 0$). By \eqref{eq:assuht}, the finiteness of $\operatorname{Var}(v)$ and by applications of Doob's $L^2$-maximal inequality and Azuma-Hoeffding inequality, we can obtain for any $t>0$ that $\mathbb{E}\left[\left(\sup_{0 \le s \le t}X^r_{\infty}(s)\right)^{\beta}\right] < \infty$ for any $\beta \in (1,2)$. From this observation and the Lipschitz property of the Skorohod map stated in \eqref{eq:lipSM}, we can deduce $\{W_{\infty}^r(t) : r \in \mathcal{R}\}$ is $L^\beta$-bounded for any $\beta \in (1,2)$ and thus \eqref{eq:assuinitcondb} holds. Assumption \eqref{eq:initcondq1} follows along the same lines as the proof of Lemmas \ref{fifo}, \ref{unifsmallint} and \ref{smallql} (see Remark \ref{uiinit}). Assumption \eqref{eq:initcondq2} follows by recalling that $(w^*(\cdot), w^*(\infty)) = (W_{\cdot}(t), W_{\infty}(t))$ and using the explicit form of $W$ defined in Theorem \ref{workless} and the Lipschitz property \eqref{eq:lipSM} of the Skorohod map. Finally, \eqref{smalljobas} follows from Proposition \ref{comp} and Lemma \ref{fifo}.

\subsection{Checking Assumptions \eqref{eq:assuinitcond}--\eqref{smalljobas} for initial conditions (I) given in Subsection \ref{iceg}}

We first show that \eqref{eq:assuinitcond} holds. For $0 \le x \le \infty$, define $\hat{W}^r(x) := \frac{c^r\qq^r}{r}\mathbb{E}\left(\frac{\breve{v}_1^r}{c^r} \one_{[\breve{v}_1^r \le xc^r]}\right)$. For any $A \in (0, \infty)$,
\begin{align}\label{innat1}
\sup_{x \in [0,A]}&\left|\mathbb{E}\left(\frac{\breve{v}_1^r}{c^r}\one_{[\breve{v}_1^r \le xc^r]}\right) - \mathbb{E}\left(\breve{v}^* \one_{[\breve{v}^* \le x]}\right)\right|\nonumber\\
& \le \sup_{x \in [0,A]}\left| \int_0^x\mathbb{P}\left(z c^r < \breve{v}_1^r \le xc^r\right)dz - \int_0^x\mathbb{P}\left(z < \breve{v}^* \le x\right)dz\right|\nonumber\\
&\le \sup_{x \in [0,A]} \left(x \left|\mathbb{P}\left(\breve{v}_1^r \le xc^r\right) - \mathbb{P}\left(\breve{v}^* \le x\right)\right| + \int_0^x \left|\mathbb{P}\left(\breve{v}_1^r \le z c^r\right) - \mathbb{P}\left(\breve{v}^* \le z\right)\right|dz \right)\nonumber\\
&\le 2A \sup_{x \in [0,A]}\left|\mathbb{P}\left(\breve{v}_1^r \le xc^r\right) - \mathbb{P}\left(\breve{v}^* \le x\right)\right| \rightarrow 0 \quad \text{ as } \quad r \rightarrow \infty
\end{align}
by P\'olya's Theorem \cite[Exercise 3.2.9, Page 107]{durrett}, as $\breve{v}^*$ has a continuous distribution. As the map $x \mapsto \mathbb{E}\left(\breve{v}^* \one_{[\breve{v}^* \le x]}\right)$ is continuous by (iii), it follows from \eqref{innat1} and (iii) that for any $\epsilon>0$, there exists $A \in (0, \infty)$, $\delta>0$ and $r_0 \in \mathcal{R}$ such that for all $r \ge r_0$,
\begin{equation}\label{innat2}
\sup_{0 \le x \le y \le A, \ y-x \le \delta}\mathbb{E}\left(\frac{\breve{v}_1^r}{c^r}\one_{[xc^r < \breve{v}_1^r \le yc^r]}\right) < \epsilon \quad \text{ and } \quad \mathbb{E}\left(\frac{\breve{v}_1^r}{c^r}\one_{[\breve{v}_1^r > Ac^r]}\right) < \epsilon.
\end{equation}
{\color{black}
Also, by (i), for each $r\in\clr$ and $0\le x\le \infty$,
\begin{eqnarray*}
\mathbb{E}\left(W_x^r(0) - \hat{W}^r(x)\right)^2
&=&
\frac{\left(c^r\right)^2\mathbb{E}(\qq^r)}{r^2}\mathbb{E}\left(\frac{\breve{v}_1^r}{c^r} \one_{[\breve{v}_i^r \le xc^r]}-\mathbb{E}\left(\frac{\breve{v}_1^r}{c^r} \one_{[\breve{v}_1^r \le xc^r]}\right)\right)^2\\
&\le& 
\frac{\left(c^r\right)^2\mathbb{E}(\qq^r)}{r^2}\mathbb{E}\left(\frac{\breve{v}_1^r}{c^r} \one_{[\breve{v}_1^r \le xc^r]}\right)^2
\le \frac{\left(c^r\right)^2\mathbb{E}(\qq^r)}{r^2}\mathbb{E}\left(\frac{\breve{v}_1^r}{c^r}\right)^2.
\end{eqnarray*}
This together with} the conditions (ii) and (iii) {\color{black}and \eqref{limitcroverr}} imply that 
\begin{equation}\label{innat3}
\sup_{0 \le x \le \infty}\mathbb{E}\left(W_x^r(0) - \hat{W}^r(x)\right)^2 \rightarrow 0 \quad \text{ as } \quad r \rightarrow \infty.
\end{equation}
Fix any $\epsilon>0$. Note that by (ii), $\sup_{r \in \mathcal{R}}\mathbb{E}\left(\frac{c^r\qq^r}{r}\right) < \infty$. By \eqref{innat2} and \eqref{innat3}, one can obtain a partition $0=x_0 < x_1 <\cdots <x_k<x_{k+1}=\infty$ of $[0,\infty]$ and $r_1 \in \mathcal{R}$ such that the following hold for all $r \ge r_1$:
\begin{equation}\label{part1}
\mathbb{E}\left|W_{x_j}^r(0) - \hat{W}^r(x_j)\right| < \frac{\epsilon}{2(k+2)} \quad \text{ for all } \quad 0 \le j \le k+1,
\end{equation}
and
\begin{equation}\label{part2}
\mathbb{E}\left(\frac{\breve{v}_1^r}{c^r}\one_{[x_jc^r < \breve{v}_1^r \le x_{j+1}c^r]}\right) < \frac{\epsilon}{2 \sup_{r \in \mathcal{R}}\mathbb{E}\left(\frac{c^r\qq^r}{r}\right)} \quad \text{ for all } \quad 0 \le j \le k.
\end{equation}
By the monotonicity of the maps $x \mapsto W_x^r(0)$ and $x \mapsto \hat{W}^r(x)$, one obtains the bound
\begin{align*}
\sup_{x \in [0, \infty]}\left|W_x^r(0) - \hat{W}^r(x)\right| &\le \sup_{0 \le j \le k+1}\left|W_{x_j}^r(0) - \hat{W}^r(x_j)\right| + \sup_{0 \le j \le k}\left|\hat{W}^r(x_{j+1}) - \hat{W}^r(x_j)\right|\\
&\le \sum_{j=0}^{k+1}\left|W_{x_j}^r(0) - \hat{W}^r(x_j)\right| + \sup_{0 \le j \le k}\left|\hat{W}^r(x_{j+1}) - \hat{W}^r(x_j)\right|.
\end{align*}
Hence, using \eqref{part1} and \eqref{part2} and (i), we obtain for $r \ge r_1$
\begin{align*}
\mathbb{E}&\left(\sup_{x \in [0, \infty]}\left|W_x^r(0) - \hat{W}^r(x)\right|\right)\\
&\le \sum_{j=0}^{k+1}\mathbb{E}\left|W_{x_j}^r(0) - \hat{W}^r(x_j)\right|
 + \mathbb{E}\left(\frac{c^r\qq^r}{r}\right)\sup_{0 \le j \le k}\mathbb{E}\left(\frac{\breve{v}_1^r}{c^r}\one_{[x_jc^r < \breve{v}_1^r \le x_{j+1}c^r]}\right)
< \epsilon.
\end{align*}
As $\epsilon>0$ is arbitrary, we conclude
\begin{equation}\label{mainin1}
\lim_{r \rightarrow \infty}\mathbb{E}\left(\sup_{x \in [0, \infty]}\left|W_x^r(0) - \hat{W}^r(x)\right|\right) = 0.
\end{equation}
Note that for any $\epsilon>0$, by the second assertion of \eqref{innat2} and that fact that $\mathbb{E}\left(\breve{v}_1^r/c^r\right) \rightarrow \mathbb{E}\left(\breve{v}^*\right)< \infty$ as $r \rightarrow \infty$, which follows from (iii), we can obtain $A>0$, $r_2 \in \mathcal{R}$ such that for all $r \ge r_2$
\begin{align*}
\sup_{x \in [A, \infty]}&\left|\mathbb{E}\left(\frac{\breve{v}_1^r}{c^r}\one_{[\breve{v}_1^r \le xc^r]}\right) - \mathbb{E}\left(\breve{v}^* \one_{[\breve{v}^* \le x]}\right)\right|\\
& \le \left|\mathbb{E}\left(\breve{v}_1^r/c^r\right) - \mathbb{E}\left(\breve{v}^*\right)\right| + \sup_{x \in [A, \infty]}\left|\mathbb{E}\left(\frac{\breve{v}_1^r}{c^r}\one_{[\breve{v}_1^r > xc^r]}\right) - \mathbb{E}\left(\breve{v}^* \one_{[\breve{v}^* > x]}\right)\right|< \epsilon.
\end{align*}
Combining this with \eqref{innat1}, we obtain
\begin{equation}\label{mainin2}
\lim_{r \rightarrow \infty}\sup_{x \in [0,\infty]}\left|\mathbb{E}\left(\frac{\breve{v}_1^r}{c^r}\one_{[\breve{v}_1^r \le xc^r]}\right) - \mathbb{E}\left(\breve{v}^* \one_{[\breve{v}^* \le x]}\right)\right| = 0.
\end{equation}
Defining $\tilde{W}^r(x) := \frac{c^r\qq^r}{r}\mathbb{E}\left(\breve{v}^* \one_{[\breve{v}^* \le x]}\right)$ for $x \in [0,\infty]$, we conclude from \eqref{mainin1}, \eqref{mainin2} and the fact $\sup_{r \in \clr}\mathbb{E}\left(c^r\qq^r/r\right) < \infty$ that
\begin{equation}\label{mainin}
\lim_{r \rightarrow \infty}\mathbb{E}\left(\sup_{x \in [0, \infty]}\left|W_x^r(0) - \tilde{W}^r(x)\right|\right) = 0.
\end{equation}
Finally as $c^r\qq^r/r \xrightarrow{L^1} \qq^*$ as $r \rightarrow \infty$ by (ii), \eqref{mainin} implies that Assumption \eqref{eq:assuinitcond} holds with the given choice of $w^*(\cdot)$.
In fact we have shown that
\begin{equation}\label{eq:liwrw}
	\lim_{r \rightarrow \infty}\mathbb{E}\left(\sup_{x \in [0, \infty]}\left|W_x^r(0) - w^*(x)\right|\right) = 0.
	\end{equation}
Assumption \eqref{eq:assuinitcondb} follows from the observation that $W_{\infty}^r(0) \xrightarrow{L^1} w^*(\infty)$ which holds by \eqref{eq:liwrw}. Assumption \eqref{eq:initcondq1} is a direct consequence of (i), (ii) and (iv). Assumption \eqref{eq:initcondq2} follows from (i), (ii) and the observation that $\mathbb{E}\left(\breve{v}^*\right) < \infty$ which follows from (iii) and Fatou's Lemma. Assumption \eqref{smalljobas} follows from (ii) and (iii).\\

\textbf{Acknowledgement: }The authors acknowledge the Open Problem Session held during the Seminar on Stochastic Processes, 2019, where the addressed problem was presented as open by AP. The authors also thank the anonymous referees for very helpful advice. 
Research of SB was supported in part by a Junior Faculty Development Award.
AB acknowledges support from the National Science Foundation (DMS-1814894 and DMS-1853968). He is also grateful for the support from Nelder Fellowship from Imperial College, London, where part of this research was completed.
Research of AP was supported in part by National Science Foundation grants DMS-1712974 and DMS-2054505 and the Charles Lee Powell Foundation.
\bibliographystyle{imsart-number}
\bibliography{NSFbib,bibl,scaling,ap}
\end{document}